\documentclass[a4paper,10pt]{article}
\usepackage[margin=0.80in,bottom=0.90in,top=0.85in]{geometry}

\usepackage[english]{babel}
\usepackage{chngcntr}
\usepackage[utf8]{inputenc}
\usepackage{amsmath,mathrsfs}
\usepackage{indentfirst}
\usepackage{fancyhdr}
\usepackage{amssymb}
\usepackage{amsthm}
\usepackage[dvips]{graphicx}
\usepackage{color}
\usepackage{xcolor}
\usepackage{cancel}
\usepackage{subcaption}
\usepackage[dvipsnames]{xcolor}
\usepackage{eucal}
\usepackage{latexsym}
\usepackage{chngcntr}
\usepackage{faktor}
\usepackage{microtype}
\usepackage{newlfont}
\usepackage{enumitem}
\usepackage{geometry}
\usepackage{nicematrix}
\usepackage{tikz}
\usepackage{todonotes}

\usepackage{amsmath, amsfonts, amsthm,amssymb,  bbm}

\usepackage{tikz-cd}
\usepackage{multicol}
\usepackage{cancel}
\usepackage{mathtools}
\usepackage{graphicx}

\usepackage[colorlinks=true,    
            linkcolor=blue,    
            citecolor=blue,     
            urlcolor=blue]{hyperref}

\newcommand{\e}{\epsilon}
\newcommand{\cO}{\mathcal{O}}

\newcommand{\tth}{\mathtt{h}}
\newcommand{\ttf}{\mathtt{f}_\e}
\newcommand{\pa}{\partial}

\newcommand{\de}{\textrm{d}}

\newcommand{\cA}{{\mathcal A}}
\newcommand{\cJ}{{\mathcal J}}

\newcommand{\N}{{\mathbb{N}}}
\newcommand{\R}{{\mathbb{R}}}
\newcommand{\Z}{{\mathbb{Z}}}
\newcommand{\T}{{\mathbb{T}}}

\newcommand{\vet}[2]{\begin{bmatrix}#1 \\ #2 \end{bmatrix}}
\newcommand{\cB}{\mathcal{B}}

\newcommand{\fR}{\mathfrak{R}}

\newcommand{\im}{\textup{i}}

\newcommand{\cL}{\mathcal{L}}

\newcommand{\vertiii}[1]{{\left\vert\kern-0.25ex\left\vert\kern-0.25ex\left\vert #1 
    \right\vert\kern-0.25ex\right\vert\kern-0.25ex\right\vert}}
\newcommand{\opnorm}[1]{{\vert\kern-0.25ex\vert\kern-0.25ex\vert #1 
    \vert\kern-0.25ex\vert\kern-0.25ex\vert}}

\newcommand{\sgn}{\operatorname{sgn}}
\newcommand{\Res}{\operatorname*{Res}}

\theoremstyle{plain}
\newtheorem{lemma}{Lemma}
\newtheorem{theorem}[lemma]{Theorem}
\newtheorem{proposition}[lemma]{Proposition}
\newtheorem{corollary}[lemma]{Corollary}
\newtheorem{remark}[lemma]{Remark}
\newtheorem{definition}[lemma]{Definition}

\theoremstyle{definition} 

\numberwithin{equation}{section}
\counterwithin{lemma}{section}

\title{Modulational spectrum of infinite-depth hydroelastic Stokes waves}

\begin{document}

\author{
Ting-Yang Hsiao%
\thanks{SISSA, Via Bonomea 265, 34136 Trieste, Italy.
\newline
\textit{Emails:}
\texttt{thsiao@sissa.it},
\texttt{yezhang@sissa.it}}
,\quad
Zirui Li%
\thanks{University of Illinois Urbana--Champaign,
1409 W. Green Street, Urbana, IL 61801, USA.
\newline
\textit{Emails:}
\texttt{ziruili2@illinois.edu},
\texttt{cz43@illinois.edu}}
,\quad
Ye Zhang\footnotemark[1]
,\quad
Chengbin Zhu\footnotemark[2]
}

\maketitle

\begin{abstract}
We determine the complete local Bloch spectrum bifurcating from the origin for small-amplitude periodic hydroelastic Stokes waves in infinite depth, under the combined effects of gravity, surface tension, and elastic bending.
Away from the Wilton-type resonance set, we construct a real-analytic Stokes-wave branch and analyze the four eigenvalues emerging from the defective zero eigenvalue of the linearized hydroelastic Euler system. Using analytic spectral perturbation theory and Hamiltonian-reversible
reductions, we decouple them into a Benjamin--Feir pair and a long-wave pair. The long-wave pair remains purely imaginary and has the singular scale $\cO(\sqrt{|\mu|})$, whereas the Benjamin--Feir pair is governed by an
explicit discriminant whose leading sign yields a sharp criterion for modulational stability and instability. We derive the exact non-resonant
phase diagram in the surface-tension-bending parameter plane and identify a bounded stability island generated by elastic bending. In the unstable region, and away from a drift degeneracy, the Benjamin--Feir branches form
a local figure-eight curve. In the zero-bending limit, the reduced coefficients recover the known deep-water gravity and gravity-capillary results, while the change from the finite-depth $\cO(|\mu|)$ long-wave scale to $\cO(\sqrt{|\mu|})$ shows that the infinite-depth problem is singular.
\end{abstract}

\tableofcontents

\section{Introduction}\label{sec1}
Hydroelastic waves arise from the coupling between an inviscid fluid and a
deformable interface, and provide a basic model for waves beneath an ice
sheet or a thin flexible cover. Flexural--gravity waves have been observed
in field measurements of sea ice \cite{DJMF}, while laboratory experiments
have demonstrated nonlinear hydroelastic waves, resonant three-wave
interactions, and hydroelastic wakes beneath thin elastic sheets
\cite{DBF,DBeF,OTLLRDS}.  In this paper, we consider two-dimensional,
irrotational, infinite-depth water waves under gravity, surface tension
\(\kappa\geq0\), and elastic bending \(b>0\).  In Zakharov variables
\((\eta,\psi)\), the elastic contribution is generated by the curvature
energy
\[
\kappa\int_{\mathbb T}\bigl(\sqrt{1+\eta_x^2}-1\bigr)\,\mathrm dx
+
\frac b2\int_{\mathbb T}\sigma(\eta)^2\sqrt{1+\eta_x^2}\,\mathrm dx,
\]
and therefore introduces a quasilinear fourth-order restoring force into
the dynamic boundary condition.

The Benjamin--Feir instability is the long-wave, or modulational,
instability of a periodic wavetrain.  It was discovered for deep-water
gravity waves by Benjamin and Feir \cite{Benjamin,BF}, and has since played
a central role in the spectral theory of water waves.  Rigorous proofs of
the deep-water instability were obtained by Nguyen and Strauss
\cite{NS}, while Berti, Maspero, and Ventura gave a complete description
of the four eigenvalues bifurcating from the origin and proved the
figure-eight geometry of the unstable spectrum \cite{BMV1}; see also
\cite{BMV3,BMV_ed} for the finite-depth theory.  The corresponding
gravity--capillary problem has recently been resolved at the level of the
full Euler equations in \cite{HW2026}.

The hydroelastic problem has a substantial existence theory.  Periodic
and solitary travelling waves have been constructed by a variety of
bifurcation and variational methods; see, among others,
\cite{To2008,BTo,AAS1,AAS2,GHW,GPa}.  Spectral stability of periodic
flexural--gravity waves has also been studied numerically by Floquet
methods \cite{TMPVB,BPW2024}.  More recently, nonlinear modulational
instability in deep water has been obtained through a focusing NLS
approximation in \cite{WY}.  These works, however, do not provide a
complete Euler-level description of all the small Bloch eigenvalues for
the gravity--capillary--bending problem, nor an exact two-parameter
stability diagram. We resolve this problem at the level of the full hydroelastic Euler equations.

\paragraph{The non-resonant Stokes branch.}
After normalizing gravity and the basic wavenumber to one, the linear
dispersion relation at the flat surface is
\[
\omega(\xi)^2
=
|\xi|\bigl(1+\kappa\xi^2+b\xi^4\bigr).
\]
For the \(n\)-th periodic Fourier mode,
\[
\omega_n^2=n(1+\kappa n^2+b n^4),
\qquad
c_n^2=\frac{1+\kappa n^2+b n^4}{n}.
\]
The fundamental mode is resonant with the \(n\)-th harmonic precisely
when \(c_n=c_1\), namely on
\begin{equation}\label{def:infinite-Rn}
\mathfrak R_n
:=
\left\{
(\kappa,b)\in\mathbb R_{\geq0}\times\mathbb R_{>0}:
\kappa+(n^2+n+1)b=\frac1n
\right\},
\qquad n\geq2.
\end{equation}
We set
\begin{equation}\label{def:infinite-R}
\mathfrak R:=\bigcup_{n\geq2}\mathfrak R_n,
\qquad
\mathcal P_\infty
:=
\left(\mathbb R_{\geq0}\times\mathbb R_{>0}\right)\setminus\mathfrak R.
\end{equation}
For \((\kappa,b)\in\mathcal P_\infty\), the flat travelling-wave problem
has a simple kernel in the even--odd symmetry class, and the
Crandall--Rabinowitz theorem yields a real-analytic Stokes branch
\[
\epsilon\longmapsto
(\eta_\epsilon,\psi_\epsilon,c_\epsilon),
\qquad
c_\epsilon=c_{\kappa,b}+\mathcal O(\epsilon^2),
\qquad
c_{\kappa,b}:=\sqrt{1+\kappa+b}.
\]

\paragraph{Main result.}
Let \(\mathcal L_{\mu,\epsilon}\) denote the Bloch operator obtained by
linearizing the hydroelastic Euler system at the Stokes wave, where
\(\mu\) is the Floquet exponent.  At \((\mu,\epsilon)=(0,0)\), the origin
is an isolated defective eigenvalue of algebraic multiplicity four and
geometric multiplicity three.  We prove that, for every
\((\kappa,b)\in\mathcal P_\infty\), the four eigenvalues bifurcating from
the origin can be separated into a Benjamin--Feir pair
\(\lambda_1^\pm\) and a long-wave pair \(\lambda_0^\pm\).

The three coefficients governing the reduced Benjamin--Feir block are
\begin{equation}\label{def:e-coefficients}
\begin{aligned}
e_{11}(\kappa,b)
&:=
-\frac{
88b^2+6b\kappa-54b+2\kappa^2+\kappa+8
}{
8c_{\kappa,b}(14b+2\kappa-1)
},\\
e_{12}(\kappa,b)
&:=
\frac{1+3\kappa+5b}{c_{\kappa,b}},\\
e_{22}(\kappa,b)
&:=
-\frac{
15b^2+3\kappa^2+22b\kappa+30b+6\kappa-1
}{
c_{\kappa,b}^3
},
\end{aligned}
\end{equation}
and we write
\begin{equation}\label{def:breve-c}
\breve c_{\kappa,b}:=2c_{\kappa,b}-e_{12}(\kappa,b).
\end{equation}
Theorem~\ref{Complete BF thm} gives
\begin{equation}\label{lambda1-intro-infinite}
\lambda_1^\pm(\mu,\epsilon)
=
\mathrm{i}\,\frac12\breve c_{\kappa,b}\mu
+
\mathrm{i}\mathcal O(\mu\epsilon^2,\mu^2\epsilon,\mu^3)
\pm
\frac{\mu}{8}
\sqrt{\Delta_{\mathrm{BF}}(\kappa,b;\mu,\epsilon)},
\end{equation}
where
\begin{equation}\label{DeltaBF-intro-infinite}
\Delta_{\mathrm{BF}}(\kappa,b;\mu,\epsilon)
=
8e_{22}(\kappa,b)e_{11}(\kappa,b)\epsilon^2
-
e_{22}(\kappa,b)^2\mu^2
+
\mathcal O(\epsilon^3,\mu\epsilon^2,\mu^2\epsilon,\mu^3).
\end{equation}
The remaining pair is purely imaginary and satisfies
\begin{equation}\label{lambda0-intro-infinite}
\lambda_0^\pm(\mu,\epsilon)
=
\mathrm{i}c_{\kappa,b}\mu
+
\mathrm{i}\mathcal O(\mu\epsilon^2,\mu^2\epsilon,\mu^3)
\pm
\mathrm{i}\sqrt{\mu}\,
\bigl(1+\mathcal O(\epsilon,\mu)\bigr),
\qquad \mu>0.
\end{equation}
The branches for negative Floquet exponents are recovered from
\[
\sigma(\mathcal L_{-\mu,\epsilon})
=
\overline{\sigma(\mathcal L_{\mu,\epsilon})}.
\]

In the sideband scaling \(\mu=\epsilon\nu\), with bounded
\(0<\nu=\mathcal O(1)\), the leading Benjamin--Feir index is
\begin{equation}\label{Ind-infinite}
\operatorname{Ind}_\infty(\kappa,b)
:=
8e_{11}(\kappa,b)e_{22}(\kappa,b).
\end{equation}
If \(\operatorname{Ind}_\infty(\kappa,b)>0\), then, for all sufficiently
small \(\epsilon>0\), the pair \(\lambda_1^\pm\) has opposite nonzero
real parts for
\begin{equation}\label{nu-star-infinite}
0<|\nu|<\nu_*(\kappa,b;\epsilon),
\qquad
\nu_*(\kappa,b;\epsilon)
=
\frac{\sqrt{8e_{22}(\kappa,b)e_{11}(\kappa,b)}}
{|e_{22}(\kappa,b)|}
\bigl(1+\mathcal O(\epsilon)\bigr).
\end{equation}
The corresponding spectral branches form a local figure eight.  If
\(\operatorname{Ind}_\infty(\kappa,b)<0\), all four small eigenvalues
remain purely imaginary in the sideband regime.  On
\(\operatorname{Ind}_\infty=0\), the leading criterion degenerates and
higher-order terms are required.

\begin{figure}[t]
    \centering
    \includegraphics[width=0.68\textwidth]
    {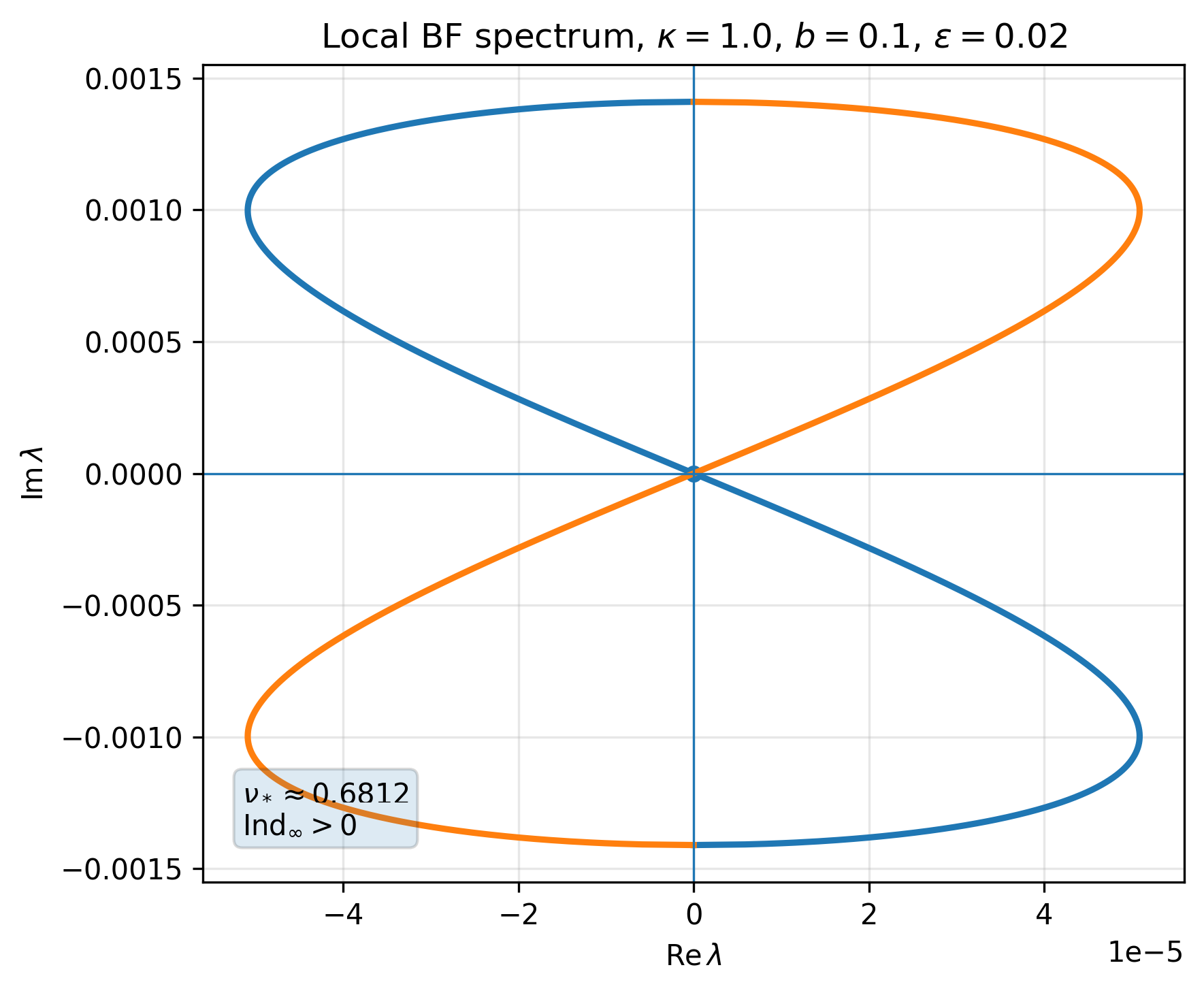}
    \caption{
    Leading-order geometry of the unstable Benjamin--Feir pair for
    \(\kappa=1\), \(b=0.1\), and \(\epsilon=0.02\).  Positive and negative
    Floquet exponents generate the upper and lower halves of the local
    figure-eight curve, respectively.
    }
    \label{fig:infinite-bf-figure-eight}
\end{figure}

\paragraph{The stability diagram.}
Introduce
\[
\begin{aligned}
P_{11}(\kappa,b)
&:=88b^2+(6\kappa-54)b+2\kappa^2+\kappa+8,\\
P_{22}(\kappa,b)
&:=15b^2+(22\kappa+30)b+3\kappa^2+6\kappa-1,\\
R_2(\kappa,b)
&:=14b+2\kappa-1.
\end{aligned}
\]
Then, away from the transition curves \(P_{11}P_{22}=0\),
\begin{equation}\label{eq:sign-index-factorization-intro}
\operatorname{sgn}\operatorname{Ind}_\infty(\kappa,b)
=
\operatorname{sgn}
\bigl(P_{11}(\kappa,b)P_{22}(\kappa,b)R_2(\kappa,b)\bigr).
\end{equation}
Proposition~\ref{prop:exact-phase-diagram} gives the exact decomposition
of the non-resonant parameter plane.  In particular, the stable set has
two qualitatively different components: a low-bending component adjacent
to the second-harmonic resonance and a bounded stability island created
by the sign change of \(P_{11}\).  The latter has no counterpart in the
pure gravity--capillary problem and is a genuinely hydroelastic feature.

\begin{figure}[t]
    \centering
    \includegraphics[width=0.78\textwidth]
    {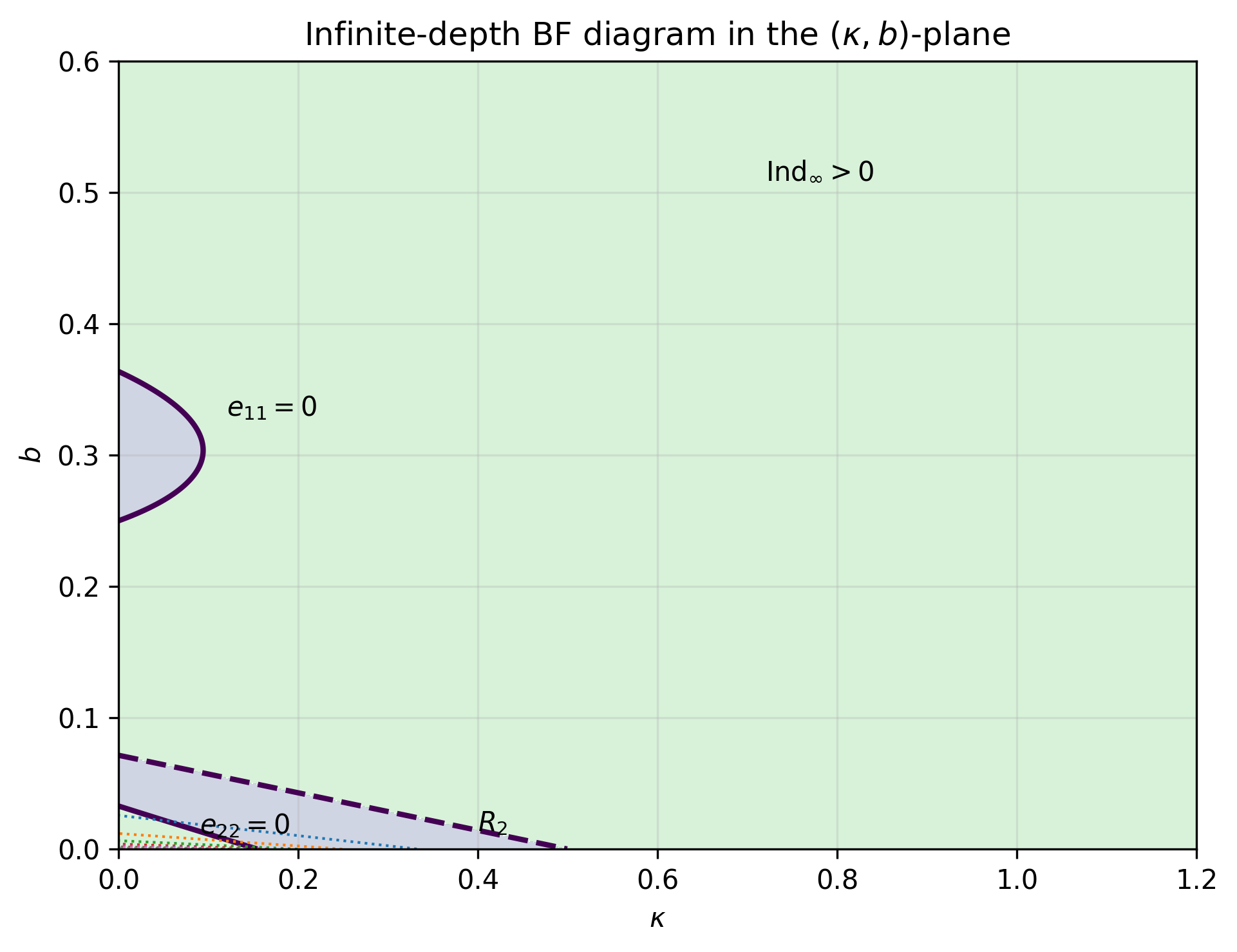}
    \caption{
    Leading-order Benjamin--Feir stability diagram in the
    \((\kappa,b)\)-plane.  The shaded region is characterized by
    \(\operatorname{Ind}_\infty>0\), while the unshaded region has
    \(\operatorname{Ind}_\infty<0\).  The regular transition curves are
    \(P_{11}=0\) and \(P_{22}=0\).  The dashed line is the second-harmonic
    resonance \(R_2=0\), and the dotted curves are the higher resonances
    \(\mathfrak R_n\), \(n\geq3\), all of which are excluded from the
    non-resonant Stokes-wave construction.
    }
    \label{fig:infinite-kappa-b-diagram}
\end{figure}

Each resonance curve \(\mathfrak R_n\) is a line segment in the physical
quadrant, and the family accumulates only at the origin.  Moreover,
\[
\mathfrak R
\subset
\left\{
0\leq\kappa<\frac12,\quad
0<b\leq\frac1{14}
\right\}.
\]
Thus \(\kappa\geq1/2\) or \(b>1/14\) excludes all Wilton-type
resonances.  The resonance curves are exceptional loci for the
bifurcation problem and must not be confused with the regular
Benjamin--Feir transition curves \(P_{11}=0\) and \(P_{22}=0\).

\paragraph{Why the infinite-depth hydroelastic problem is singular.}
Several difficulties prevent the result from following by a routine
modification of the gravity-wave analysis.

First, the physical Bloch multiplier \(|D+\mu|\) is not two-sided
analytic at \(\mu=0\), because its zero Fourier mode is \(|\mu|\).
Analytic perturbation theory can therefore be applied only after
constructing a one-sided analytic continuation on \(\mu\geq0\); the
negative physical spectrum must then be recovered from the reality
symmetry.

Second, the four eigenvalues emerge from a defective zero eigenvalue.
At nonzero amplitude, translation and gauge symmetries provide two
kernel directions, while differentiation along the Stokes branch and
with respect to the Bernoulli parameter produces the associated
generalized vectors.  This structure must be identified before the
Kato basis can be normalized in a way compatible with both the
symplectic form and reversibility.

Third, the two spectral pairs have different scales:
the Benjamin--Feir pair is of order \(\mu\), whereas the long-wave pair
is of order \(\sqrt{\mu}\).  Although this separates the two pairs
spectrally, the Sylvester operator arising in the block-decoupling
degenerates as \(\mu\to0\).  A naive conjugation therefore produces
apparent negative powers of \(\mu\).  The proof uses the precise
Hamiltonian--reversible matrix structure, the exact kernel identities,
and several cancellations in the off-diagonal block to show that the
conjugating transformations remain analytic and bounded.

Finally, the bending energy generates fourth-order
variable-coefficient operators after linearization and flattening.
The explicit index requires the second-order Stokes expansion, the
second-order expansion of the flattened bending operator, and the
corresponding derivatives of the Kato basis.  Keeping these
calculations compatible with self-adjointness, reversibility, and the
entrywise remainder classes is the main perturbative computation of
the paper.

\paragraph{Consistency with limiting problems.}
Although the theorem is formulated for \(b>0\), the reduced coefficients
have a well-defined algebraic limit as \(b\downarrow0\), away from the
second-harmonic pole.  Namely,
\[
\begin{aligned}
e_{11}(\kappa,0)
&=
-\frac{2\kappa^2+\kappa+8}
{8\sqrt{1+\kappa}\,(2\kappa-1)},\\
e_{12}(\kappa,0)
&=
\frac{1+3\kappa}{\sqrt{1+\kappa}},\\
e_{22}(\kappa,0)
&=
-\frac{3\kappa^2+6\kappa-1}{(1+\kappa)^{3/2}},
\end{aligned}
\]
and hence
\begin{equation}\label{eq:b-zero-index-limit}
\operatorname{Ind}_\infty(\kappa,0)
=
\frac{
(2\kappa^2+\kappa+8)(3\kappa^2+6\kappa-1)
}{
(1+\kappa)^2(2\kappa-1)
}.
\end{equation}
Thus the only sign-changing thresholds on the zero-bending boundary are
\[
\kappa_{\mathrm{gc}}
=
\frac{2}{\sqrt3}-1
\qquad\text{and}\qquad
\kappa=\frac12,
\]
in agreement with the deep-water gravity--capillary diagram of
\cite{HW2026}; the Wilton values \(\kappa=1/n\), \(n\geq2\), remain
excluded from the non-resonant statement.

At the pure-gravity point \((\kappa,b)=(0,0)\),
\[
e_{11}=e_{12}=e_{22}=1,
\qquad
\breve c_{0,0}=1,
\qquad
\operatorname{Ind}_\infty(0,0)=8.
\]
Consequently,
\[
\lambda_1^\pm(\mu,\epsilon)
=
\frac{\mathrm i}{2}\mu
+
\mathrm i\mathcal O(\mu\epsilon^2,\mu^2\epsilon,\mu^3)
\pm
\frac{\mu}{8}
\sqrt{
8\epsilon^2-\mu^2
+
\mathcal O(\epsilon^3,\mu\epsilon^2,\mu^2\epsilon,\mu^3)
},
\]
which recovers the deep-water Benjamin--Feir expansion and the
instability threshold
\(\mu=2\sqrt2\,\epsilon(1+\mathcal O(\epsilon))\) of
\cite{BMV1}.

The pure-bending slice \(\kappa=0\) also illustrates a new feature of
the two-parameter problem.  Setting
\[
b_{22}(0)=\frac{4\sqrt{15}-15}{15},
\]
one obtains
\[
\operatorname{Ind}_\infty(0,b)<0
\quad\Longleftrightarrow\quad
b_{22}(0)<b<\frac1{14}
\quad\text{or}\quad
\frac14<b<\frac4{11}.
\]
The second interval is the bending-induced stability island visible in
Figure~\ref{fig:infinite-kappa-b-diagram}.

The zero-bending comparison is a consistency statement for the reduced
coefficients, rather than a uniform operator limit of the proof: at
\(b=0\), the highest differential order drops from four to two.  Likewise,
the present theorem is not obtained by simply sending the depth to
infinity in the finite-depth hydroelastic result \cite{HLZZ2026}.  For
finite depth \(\mathtt h\), the zero-mode Dirichlet--Neumann symbol is
\[
|\mu|\tanh(\mathtt h|\mu|)
\sim
\mathtt h\mu^2
\qquad (\mu\to0),
\]
whereas in infinite depth it is \(|\mu|\).  Hence the limits
\(\mathtt h\to\infty\) and \(\mu\to0\) do not commute, and the long-wave
pair changes from order \(\mu\) in finite depth to order \(\sqrt{\mu}\)
in infinite depth.  This singular change of scale is one of the main
reasons that the infinite-depth problem requires a separate analysis.

\paragraph{Relation to previous work.}
We briefly place the present result within the stability theory of
periodic water waves, distinguishing the pure-gravity,
gravity--capillary, and hydroelastic settings.

\smallskip
\noindent
\emph{Pure-gravity waves.}
The local theory of periodic gravity waves originates in the classical
works of Stokes, Nekrasov, Levi-Civita, and Struik
\cite{stokes,Nek,LC,Struik}.  General modulation criteria for nonlinear
dispersive wavetrains were developed by Lighthill and Whitham
\cite{Li,Whitham}, while Benjamin and Feir
\cite{Benjamin,BF} identified the modulational instability of
deep-water Stokes waves.  Zakharov's Hamiltonian formulation and the
associated envelope equations \cite{Zak68,Zak1,ZK} provided the formal
framework for the subsequent analysis.

The first rigorous proof for the full finite-depth gravity water-wave
problem was obtained by Bridges and Mielke \cite{BrM}.  In infinite
depth, Nguyen and Strauss \cite{NS} proved spectral modulational
instability by Lyapunov--Schmidt reduction, and Yang \cite{Yang21}
obtained an alternative proof using the periodic Evans function; see
also Hur and Yang \cite{HY} for the finite-depth Evans-function theory
and Creedon and Deconinck \cite{CD} for higher-order spectral
asymptotics.

A Hamiltonian--reversible approach was developed by Berti, Maspero, and
Ventura.  They resolved the complete quartet bifurcating from the
origin and established the local figure-eight geometry in deep water
\cite{BMV1}, based on the analytic framework of \cite{BMV2}.  The
finite-depth and critical-depth problems were subsequently treated in
\cite{BMV3,BMV_ed}.  Spectral collisions away from the origin,
including the first and infinitely many high-frequency instability
isolas, were studied in \cite{BMV4,BCMV}.  Related numerical, nonlinear, rotational,
and transverse instability results may be found in
\cite{CDT, Mc1,Mc2,DO,ChenSu,CNS,CNS2,H2024, JRSY,berti2026mclean}.

\smallskip
\noindent
\emph{Gravity--capillary waves.}
The classical modulation calculations of Djordjevi\'c and Redekopp
\cite{DjRe} and Ablowitz and Segur \cite{AS} predicted
surface-tension-dependent stability regions and long--short wave
degeneracies.  Wilton ripples were introduced in \cite{Wilton}, and
analytic resonant branches were constructed in \cite{reeder}.  Hur and
Yang \cite{HY2} developed a rigorous Evans-function theory for both
non-resonant waves and Wilton ripples, while Sun and Wahlén \cite{SW}
derived a spectral stability index and analyzed nonzero crossings.

Further numerical and transverse results appear in \cite{DT,HTW}.
More recently, Hsiao and Maspero \cite{HM2025} determined the complete
small Bloch quartet in finite depth, and Hsiao and Wang \cite{HW2026}
obtained the corresponding deep-water Euler-level description.  The
zero-bending limit of the present coefficients agrees with
\cite{HW2026}, whereas comparison with \cite{HM2025} reveals the
singular change of the long-wave spectral scale in the infinite-depth
limit.

\smallskip
\noindent
\emph{Hydroelastic waves.}
The modelling of flexural--gravity waves and sea-ice interaction is
surveyed in \cite{SDWRL,Squire,KPVB}.  Periodic and solitary
hydroelastic waves have been constructed by bifurcation, variational,
spatial-dynamics, and numerical methods; see
\cite{To2008,BTo,AAS1,AAS2,MVBW,GPa,GHW,AG2024}.

Modulational and spectral stability were studied formally and
numerically in \cite{TMPVB,BPW2024}, and Wan and Yang \cite{WY} proved
nonlinear modulational instability through a focusing NLS
approximation.  Hsiao, Li, Zhang, and Zhu \cite{HLZZ2026} resolved the
complete four-eigenvalue Benjamin--Feir spectrum in finite depth and
derived a three-parameter stability diagram.

The present infinite-depth problem is not a regular limit of that
result: the zero-mode Dirichlet--Neumann symbol changes the long-wave
scale from \(O(\mu)\) to \(O(\sqrt{\mu})\).  We resolve the complete
local Bloch quartet at the Euler level, derive an explicit
Benjamin--Feir discriminant and an exact non-resonant
\((\kappa,b)\)-diagram, identify a bending-induced stability island,
and, away from the drift-degeneracy
\(\breve c_{\kappa,b}=0\), establish the local figure-eight geometry
of the unstable spectrum.

\paragraph{Proof strategy.}
The proof combines the good unknown and an infinite-depth conformal
flattening with Kato's similarity transformation theory.  On the
resulting four-dimensional spectral subspace, we construct a symplectic
and reversible basis and derive a $4\times4$ Hamiltonian normal form.
A pair of structure-preserving conjugations then separates the
Benjamin--Feir block from the long-wave block, from which the complete
small spectrum follows.

\section{The full Benjamin–Feir spectrum of hydroelastic waves}
\paragraph{The infinite-depth hydroelastic problem.}
We first recall the infinite-depth formulation of the two-dimensional
hydroelastic water-wave problem in Zakharov variables. Let
\(\mathbb T:=\mathbb R/2\pi\mathbb Z\). At time \(t\), the fluid occupies the
domain
\[
    \Omega_{\eta}
    :=
    \bigl\{(x,y)\in \mathbb T\times\mathbb R:\ y<\eta(t,x)\bigr\},
\]
whose upper boundary is the graph
\[
    S_t:=\bigl\{(x,y)\in\mathbb T\times\mathbb R:\ y=\eta(t,x)\bigr\}.
\]
The function \(\eta=\eta(t,x)\) denotes the vertical displacement of a thin
elastic cover lying on the free surface. We consider an inviscid,
incompressible, homogeneous, and irrotational fluid of constant density equal
to one. Thus the velocity field is of the form
\[
    \mathbf v=\nabla_{x,y}\Psi ,
\]
where the velocity potential \(\Psi=\Psi(t,x,y)\) is harmonic in
\(\Omega_\eta\). We denote by
\[
    \psi(t,x):=\Psi(t,x,\eta(t,x))
\]
the trace of the velocity potential on the free surface. For a given pair
\((\eta,\psi)\), the potential \(\Psi\) is determined by
\begin{equation}\label{eq:harmonic-extension-infinite}
\left\{
\begin{aligned}
    \Delta \Psi &=0
        &&\text{in }\Omega_{\eta},\\
    \Psi(x,\eta(t,x))&=\psi(t,x)
        &&\text{on }S_t,\\
    \nabla_{x,y}\Psi(x,y)&\to 0
        &&\text{as }y\to-\infty .
\end{aligned}
\right.
\end{equation}
The associated infinite-depth Dirichlet--Neumann operator is defined by
\begin{equation}\label{eq:DN-definition-infinite}
    G(\eta)\psi
    :=
    \Psi_y(x,\eta(x))-\Psi_x(x,\eta(x))\eta_x(x).
\end{equation}
In particular, at the flat surface one has
\[
    G(0)=|D|.
\]

The elastic cover contributes both capillary and bending restoring forces. We
introduce the arclength derivative and the curvature of the graph by
\begin{equation}\label{eq:curvature-arclength-infinite}
    \partial_s
    :=
    \frac{1}{\sqrt{1+\eta_x^2}}\partial_x,
    \qquad
    \sigma(\eta)
    :=
    \partial_x
    \left(
        \frac{\eta_x}{\sqrt{1+\eta_x^2}}
    \right).
\end{equation}
The surface energy is
\begin{equation}\label{eq:surface-energy-infinite}
    \mathcal E(\eta)
    :=
    \kappa
    \int_{\mathbb T}
        \bigl(\sqrt{1+\eta_x^2}-1\bigr)\,\de x
    +
    b
    \int_{\mathbb T}
        \frac12 \sigma(\eta)^2\sqrt{1+\eta_x^2}\,\de x,
\end{equation}
where \(\kappa\geq0\) is the surface-tension coefficient and \(b>0\) is the
bending rigidity. The variational derivative of the surface energy is
\[
    -\kappa\sigma(\eta)
    +
    b\left(
        \partial_s^2\sigma(\eta)
        +
        \frac12\sigma(\eta)^3
    \right).
\]
Consequently, since \(\psi_t=-\nabla_\eta\mathcal H\), the dynamic equation
contains the terms
\[
    +\kappa\sigma(\eta)
    -
    b\left(
        \partial_s^2\sigma(\eta)
        +
        \frac12\sigma(\eta)^3
    \right).
\]
The bending term is the quadratic-curvature hydroelastic contribution appearing
in the membrane model of Toland \cite{To2008}.

After normalizing the gravitational constant to \(g=1\), the equations of
motion reduce to the Craig--Sulem system
\begin{equation}\label{Craig-Sulem formula}
\left\{
\begin{aligned}
    \eta_t
    &=
    G(\eta)\psi,\\
    \psi_t
    &=
    -\eta
    -\frac{\psi_x^2}{2}
    +
    \frac{1}{2(1+\eta_x^2)}
    \left(G(\eta)\psi+\eta_x\psi_x\right)^2
    +
    \kappa\sigma(\eta)
    -
    b\left(
        \partial_s^2\sigma(\eta)
        +
        \frac12\sigma(\eta)^3
    \right).
\end{aligned}
\right.
\end{equation}
Equivalently, \eqref{Craig-Sulem formula} is the Hamiltonian system
\begin{equation}\label{eq:hamiltonian-form-infinite}
    \partial_t
    \begin{bmatrix}
        \eta\\
        \psi
    \end{bmatrix}
    =
    \mathcal J
    \begin{bmatrix}
        \nabla_\eta\mathcal H\\
        \nabla_\psi\mathcal H
    \end{bmatrix},
    \qquad
    \mathcal J
    :=
    \begin{bmatrix}
        0 & \mathrm{Id}\\
        -\mathrm{Id} & 0
    \end{bmatrix},
\end{equation}
where
\begin{equation}\label{eq:hamiltonian-infinite}
    \mathcal H(\eta,\psi)
    :=
    \frac12
    \int_{\mathbb T}
        \psi G(\eta)\psi\,\de x
    +
    \frac12
    \int_{\mathbb T}
        \eta^2\,\de x
    +
    \mathcal E(\eta).
\end{equation}
Here \(\nabla_\eta\mathcal H\) and \(\nabla_\psi\mathcal H\) denote the
\(L^2\)-gradients, namely
\begin{align}\label{defL2gra}
    \de_\eta \mathcal{H}[\tilde{\eta}] = \int_\T \nabla_\eta \mathcal{H}  \, \tilde{\eta} \, \de x, \qquad 
    \de_\psi \mathcal{H} [\tilde{\psi}] = \int_\T \nabla_\psi \mathcal{H}  \, \tilde{\psi} \, \de x.
\end{align}

Finally, the system is reversible with respect to the involution
\begin{equation}\label{rho involution}
    \rho
    \begin{bmatrix}
        \eta(x)\\
        \psi(x)
    \end{bmatrix}
    :=
    \begin{bmatrix}
        \eta(-x)\\
        -\psi(-x)
    \end{bmatrix}.
\end{equation}
Indeed, \(\mathcal H\circ\rho=\mathcal H\), and the corresponding Hamiltonian
vector field anti-commutes with \(\rho\).

\paragraph{Stokes waves.}
In a moving reference frame with constant speed \(c\), the water-wave system
\eqref{Craig-Sulem formula} becomes
 \begin{equation} \label{in the reference frame}
\left\{\begin{aligned}
    \eta_t&=c\eta_x+G(\eta)\psi\\
    \psi_t&=c\psi_x-\eta-\frac{\psi_x^2}{2}+\frac{1}{2(1+\eta^2_x)}\left(G(\eta)\psi+\eta_x\psi_x\right)^2+\kappa\sigma(\eta)-b\left(\partial^2_s \sigma(\eta)+\frac{1}{2}\sigma(\eta)^3\right).
\end{aligned}\right.
\end{equation}
We consider small amplitude {\em Stokes waves solutions}, namely stationary solutions of \eqref{in the reference frame} which we further require to be  $2\pi$-periodic in space.
 The bifurcation of small-amplitude Stokes waves from the trivial solution was first studied for pure gravity water waves by Stokes \cite{stokes}, Levi-Civita \cite{LC}, Nekrasov \cite{Nek}, Struik \cite{Struik}. 
 In our setting, the existence and analyticity of  a bifurcating branch of traveling waves follows from the Crandall--Rabinowitz theorem. We denote by $B(r):=\{x\in\mathbb{R}: |x|<r\}$ the open ball of radius $r$ centered at zero.
 \begin{theorem} [Stokes waves] \label{Thm: Stokes expansion} 
 Let $(\kappa,b)\in(\mathbb R_{\geq0}\times\mathbb R_{>0})\setminus\mathfrak R$, where \(\mathfrak R\) is defined in \eqref{def:infinite-R}. There 
 exist $\e_*:=\e_*(\kappa,b) >0$ and a unique family  of real analytic 
 solutions $(\eta_\e(x), \psi_\e(x), c_\e)$, parameterized by the amplitude $|\e| \leq \e_*$, of 
\begin{equation}\label{travelingWWstokes}
c \, \eta_x+G(\eta)\psi = 0 \, , \quad 
c \, \psi_x -  \eta - \dfrac{\psi_x^2}{2} + 
\dfrac{1}{2(1+\eta_x^2)} \big( G(\eta) \psi + \eta_x \psi_x \big)^2+\kappa\sigma(\eta)-b\left(\partial^2_s \sigma(\eta)+\frac{1}{2}\sigma(\eta)^3\right)
= 0 \, , 
\end{equation}
  such that
 $ \eta_\e (x), \psi_\e (x) $ are $2\pi$-periodic;  $\eta_\e (x) $ is even
and $\psi_\e (x) $ is odd, of the form 
 \begin{equation}\label{exp:Sto}
 \begin{aligned}
  & \eta_\e (x) = \e \cos(x)+\e^2\left(\eta_2^{[0]}+\eta_2^{[2]}\cos(2x)\right) +\mathcal{O}(\e^3), \\ 
  & \psi_\e (x)  =  \e c_{\kappa,b} \sin(x)+\e^2 \psi_2^{[2]}\sin(2x)+\mathcal{O}(\e^3),  \\
  & c_\e = c_{\kappa,b}  +\e^2 c_2+\mathcal{O}(\e^3)\quad \text{where} \quad c_{\kappa,b} := (1+b+\kappa)^{\frac{1}{2}} \, , 
   \end{aligned}
  \end{equation}
and 
\begin{align}\label{expcoef}
 &\eta_{2}^{[0]} = 0, \qquad 
\eta_{2}^{[2]} = \frac{1+b+\kappa}{2\,(1-14\,b-2\kappa)}, \qquad \psi_{2}^{[2]} = -\frac{(1+b+\kappa)^{\frac{3}{2}}}{2\,(14\,b+2\kappa-1)}, 
\\  \label{expc2}
&c_2 = \frac{-88\,b^2 - 6\,b\kappa + 54\,b - 2\kappa^2 - \kappa - 8}{16\,(1+b+\kappa)^{\frac{1}{2}}\,(14\,b+2\kappa-1)}.
\end{align}
More precisely for any  $ \alpha \geq  0 $ and $m > \frac{11}{2}, m + \frac12 \in \N$, there exists $ \e_*>0 $ such that
the map $\e \mapsto (\eta_\e, \psi_\e, c_\e)$ is analytic from $B(\e_*) \to H^{\alpha,m}_{\mathtt{ev}} (\T)\times H^{\alpha,m}_{\mathtt{odd}}(\T)\times \R$, where 
$ H^{\alpha,m}_{\mathtt{ev}}(\T) $, respectively $ H^{\alpha,m}_{\mathtt{odd}}(\T) $, denote the  space of even, respectively odd, 
 real valued $ 2 \pi $-periodic analytic functions
$ u(x) = \sum_{k \in \mathbb{Z}} u_k e^{\im k x} $
such that $ \| u \|_{\alpha,m}^2 := \sum_{k \in \mathbb{Z}} |u_k|^2 \langle k \rangle^{2m} 
e^{2 \alpha |k|} < + \infty$.  
\end{theorem}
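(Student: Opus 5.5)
We plan to apply the Crandall--Rabinowitz theorem to the traveling-wave map
\[
\mathcal F(\eta,\psi,c):=\begin{bmatrix} c\,\eta_x+G(\eta)\psi\\ c\,\psi_x-\eta-\tfrac{\psi_x^2}{2}+\tfrac{(G(\eta)\psi+\eta_x\psi_x)^2}{2(1+\eta_x^2)}+\kappa\sigma(\eta)-b\bigl(\partial_s^2\sigma(\eta)+\tfrac12\sigma(\eta)^3\bigr)\end{bmatrix}.
\]
It is viewed as a map from $H^{\alpha,m}_{\mathtt{ev}}\times H^{\alpha,m}_{\mathtt{odd}}\times\R$ into $H^{\alpha,m-4}_{\mathtt{odd}}\times H^{\alpha,m-4}_{\mathtt{ev}}$. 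The function spaces are the ones fixed in the statement. The first step is to check that $\mathcal F$ is well defined and real analytic near $(0,0,c)$. For the Dirichlet--Neumann part we use the classical analyticity of $\eta\mapsto G(\eta)$ on analytic Sobolev spaces of periodic functions with small $\|\eta\|_{\alpha,m}$. For the curvature terms we note that $\sigma(\eta)$, $\partial_s$ and $\sigma^3$ are analytic compositions of $\eta_x,\eta_{xx}$ with $(1+\eta_x^2)^{-1/2}$. The requirement $m>11/2$ makes $H^{\alpha,m-4}$ an algebra, so that the fourth-order term lands in the target space. The symmetry $\mathcal F\circ(\rho\times\mathrm{id})=-\rho\circ\mathcal F$ shows that the parity classes are preserved: $\eta$ even and $\psi$ odd map to (odd, even).

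Next we compute the linearization at the flat state:
\[
\de_{(\eta,\psi)}\mathcal F(0,0,c)=\begin{bmatrix} c\partial_x & |D|\\ -(1-\kappa\partial_x^2+b\partial_x^4) & c\partial_x\end{bmatrix}.
\]
On $(\cos kx,\sin kx)$ it acts as the $2\times2$ matrix $\begin{bmatrix}-ck & k\\ -(1+\kappa k^2+bk^4) & ck\end{bmatrix}$, whose determinant is $k(1+\kappa k^2+bk^4)-c^2k^2$. At $c=c_{\kappa,b}$ this determinant vanishes for $k=1$. For $k\ge2$ it vanishes exactly when $c_k=c_1$, i.e.\ on $\mathfrak R_k$, which is excluded by hypothesis. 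The mode $k=0$ is handled separately: there the map is $\eta_0\mapsto-\eta_0$, and $\psi$, being odd, has no zero mode. Hence the kernel is one dimensional, spanned by $(\cos x,c_{\kappa,b}\sin x)$. Since $c_k^2\sim bk^3\to\infty$, the determinants grow like $bk^5$. This gives a Fredholm index-zero operator with one-dimensional cokernel. The cokernel is identified through the $L^2$ pairing with the kernel of the adjoint. We then verify the transversality condition: $\partial_c\de\mathcal F=\mathrm{diag}(\partial_x,\partial_x)$ applied to the kernel vector must not lie in the range. This reduces to the nonvanishing of $\partial_c\det$ at $k=1$, which equals $-2c_{\kappa,b}\ne0$. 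Crandall--Rabinowitz then yields the analytic branch. Analyticity in $\e$ follows from the analytic version of the theorem. Uniqueness holds after fixing the normalization that the $\cos x$ coefficient of $\eta_\e$ equals $\e$.

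Finally, we obtain the expansion \eqref{exp:Sto}--\eqref{expc2} by inserting power series into $\mathcal F=0$ and matching orders. This uses $G(\eta)=|D|+G_1(\eta)+G_2(\eta)+\dots$ with $G_1(\eta)=D\eta D-|D|\eta|D|$ and the quadratic expansion of the bending term. At order $\e^2$, the zero mode gives $\eta_2^{[0]}=0$. The reason is that the zero-mode equation is $-\eta_2^{[0]}+\text{(mean of quadratic terms)}=0$, and these means cancel: $\tfrac12(G\psi)^2-\tfrac12\psi_x^2$ has zero average on the first harmonic. The second-harmonic $2\times2$ system has determinant proportional to $14b+2\kappa-1$, and solving it gives $\eta_2^{[2]}$ and $\psi_2^{[2]}$. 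At order $\e^3$ we project onto the first harmonic. The solvability condition, obtained by pairing with the cokernel vector, fixes $c_2$. Here the cubic terms contribute $G_2$, $\tfrac12\sigma^3$, the cubic part of $\partial_s^2\sigma$ and the cubic part of $\kappa\sigma$, and the quadratic terms are coupled with the second-order corrections.

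The main obstacle is this last order-$\e^3$ computation. Expanding $\partial_s^2\sigma(\eta)+\tfrac12\sigma^3$ to cubic order in $\eta$ and collecting all $\cos x$ contributions is error-prone. I would organize it by first writing the bending force as $\eta_{xxxx}+N_3(\eta)+O(\eta^5)$ and evaluating $N_3$ on $\e\cos x$ directly. As a check, the result must reduce to the known gravity--capillary value at $b=0$ and to the value $c_2=\tfrac12$ at $(\kappa,b)=(0,0)$.
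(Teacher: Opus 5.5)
Your strategy is the same as the paper's. You apply Crandall--Rabinowitz to the traveling-wave map and analyse it mode by mode through $2\times2$ blocks, where $\det_k=k^2(c_k^2-c_{\kappa,b}^2)$ vanishes for $k\geq2$ exactly on $\mathfrak R_k$. You check transversality at $k=1$; reducing it to $\partial_c\det M_1\neq0$ is valid, since the adjugate of the rank-one matrix $M_1(c_{\kappa,b})$ is a nonzero multiple of kernel vector times cokernel covector. You then expand order by order, with the second-harmonic determinant $2(14b+2\kappa-1)$ and the third-order solvability condition for $c_2$. That last computation you only describe; the paper lists the first-harmonic projections it needs.

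There is, however, a concrete error in the functional setting, and it breaks the Fredholm step. With target $H^{\alpha,m-4}_{\mathtt{odd}}\times H^{\alpha,m-4}_{\mathtt{ev}}$ the linearization is \emph{not} Fredholm, and the growth $\det_k\sim bk^5$ does not imply that it is. The operator has mixed orders: order one in the first row, order four in $\hat\eta$ in the second. On the $k$-th mode one has
\[
M_k^{-1}=\frac{1}{\det_k}\begin{bmatrix} ck & -k\\ 1+\kappa k^2+bk^4 & -ck\end{bmatrix}.
\]
So the $\sin kx$-coefficient of $\psi$ in a preimage of $(f,g)$ is approximately $\hat f_k/k-c\,\hat g_k/(bk^4)$, where $\hat f_k,\hat g_k$ are the $\sin kx$, $\cos kx$ coefficients of $f,g$. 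Only one derivative is gained on the first component. Hence the range of $\de_{(\eta,\psi)}\mathcal F(0,0,c_{\kappa,b})$ on $H^{\alpha,m}_{\mathtt{ev}}\times H^{\alpha,m}_{\mathtt{odd}}$ is, up to finitely many modes, $H^{\alpha,m-1}\times H^{\alpha,m-4}$. This is dense but not closed in $H^{\alpha,m-4}\times H^{\alpha,m-4}$: for instance $(f,0)$ with $f\in H^{\alpha,m-4}_{\mathtt{odd}}\setminus H^{\alpha,m-1}$ lies in the closure of the range but not in the range. So Crandall--Rabinowitz does not apply as you stated it.

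The repair is to use anisotropic spaces matching these orders, as the paper does, with target $H^{\alpha,m-1}_{\mathtt{odd}}\times H^{\alpha,m-4}_{\mathtt{ev}}$. This is legitimate because $c\eta_x+G(\eta)\psi$ is of order one in $(\eta,\psi)$. The range is then closed of codimension one, and the rest of your argument goes through unchanged.
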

\color{black}
The expansions in 
\eqref{exp:Sto}--\eqref{expc2} are proved in Appendix \ref{sec:App2}. 
The condition $(\kappa,b)\notin\mathfrak R$ is used in Lemma \ref{lem:B0} to ensure that the kernel of the linearized operator at the flat surface is one-dimensional. 
Indeed, $(\kappa,b)\notin\mathfrak R$ is equivalent to
\[
\frac{\sqrt{n(1+\kappa n^2+b n^4)}}{n}
\neq
c_{\kappa,b},
\qquad n\geq2.
\]
Thus the fundamental phase speed is attained only at the first
harmonic. This is precisely the non-resonance condition required for
the kernel of the flat traveling-wave problem in the even--odd
subspace to be one-dimensional.

\paragraph{Linearization.}
We linearize system
\eqref{in the reference frame} at the Stokes waves $(\eta_\epsilon(x),\psi_\epsilon(x))$ given in Theorem \ref{Thm: Stokes expansion} and evaluate $c$ at $c_\epsilon$. 
Using the shape derivative formula \cite{LD, LD book} 
$
\mathrm{d}_\eta G(\eta)[\hat \eta][\psi] = - G(\eta)(B\hat \eta) - \pa_x( V \hat \eta), 
$
where
the functions $(V(x),B(x))$ are the horizontal and vertical components of the velocity field $(\Phi_x,\Phi_y)$ at the free surface and are given by
\begin{align} \label{espV}
   V:= V(x)&:=-B(\eta_\epsilon)_x+(\psi_\epsilon)_x,\\ \label{espB}
  B:=  B(x)&:=\frac{G(\eta_\epsilon)\psi_\epsilon+(\psi_\epsilon)_x(\eta_\epsilon)_x}{1+(\eta_\epsilon)^2_x}=\frac{(\psi_\epsilon)_x-c_\epsilon}{1+(\eta_\epsilon)^2_x} (\eta_\epsilon)_x,
\end{align}
one obtains the real, autonomous linearized  system
\begin{equation} \label{first linear eq}
    \begin{aligned}
        \begin{bmatrix}
            \hat{\eta}_t\\
            \hat{\psi}_t
        \end{bmatrix}=\underbrace{
        \left[\begin{array}{c|c} 
	 -G(\eta_\epsilon)B-\partial_x\circ(V-c_\epsilon) & G(\eta_\epsilon)  \\ 
	\hline 
	-1-BV_x-BG(\eta_\epsilon)\circ B+\kappa\,\mathrm{d}\sigma(\eta_\e)-b \,\mathrm{d}F(\eta_\e) & -(V-c_\epsilon)\partial_x+B G(\eta_\epsilon)
\end{array}\right]}_{:=\mathscr A_\epsilon}
        \begin{bmatrix}
            \hat{\eta}\\
            \hat{\psi}
        \end{bmatrix},
    \end{aligned}
\end{equation}
\normalsize
where
\begin{align} \label{esp l}
   F(\eta):=\partial^2_s \sigma(\eta)+\frac{1}{2}\sigma(\eta)^3,\qquad \mathrm{d} F(\eta_\e)=\pa^4_x+\e^2\left(F^{[1]}_2\pa_x+F^{[2]}_2\pa^2_x+F^{[3]}_2\pa^3_x+F^{[4]}_2\pa^4_x\right)+\mathcal{O}(\e^3).
\end{align}
and
\begin{equation} \label{F12 F22 F32 F42}
\begin{aligned}
    &F^{[1]}_2(x):=-5(\eta_1)_x(\eta_1)_{xxxx}-10(\eta_1)_{xx}(\eta_1)_{xxx}=\frac{15}{2}\sin(2x),\\
    &F^{[2]}_2(x):=-\frac{15}{2}(\eta_1)^2_{xx}-10(\eta_1)_x(\eta_1)_{xxx}=\frac{5}{4}-\frac{35}{4}\cos(2x),\\
    &F^{[3]}_2(x):=-10(\eta_1)_x(\eta_1)_{xx}=-5\sin(2x),\qquad F^{[4]}_2(x):=-\frac{5}{2}(\eta_1)^2_{x}=\frac{5}{4}\cos(2x)-\frac{5}{4}.
\end{aligned}
\end{equation}
We remark that the operator $\mathrm{d} F$ is symmetric. In fact, from \eqref{defL2gra} we know
\[
\de \mathcal{F}(\eta)[\tilde{\eta}] = \int_\T F(\eta)  \, \tilde{\eta} \, \de x, \qquad\mathcal{F}(\eta) := \frac12  \int_{\mathbb T}
        \sigma(\eta)^2\sqrt{1+\eta_x^2}\,\de x.
\]
Taking another differential w.r.t. $\eta$ we obtain
\[
\de^2 \mathcal{F}(\eta)[\tilde{\eta},\hat{\eta}] = \int_\T \de F(\eta)[\hat{\eta}]  \, \tilde{\eta} \, \de x.
\]
As a result, 
\[
\int_\T \de F(\eta)[\hat{\eta}]  \, \tilde{\eta} \, \de x = \de^2 \mathcal{F}(\eta)[\tilde{\eta},\hat{\eta}] = \de^2 \mathcal{F}(\eta)[\hat{\eta},\tilde{\eta}] = \int_\T \de F(\eta)[\tilde{\eta}]  \, \hat{\eta} \, \de x.
\]
Similarly, the linearized curvature operator $\mathrm{d}\sigma(\eta_\epsilon)[\hat{\eta}] = \partial_x \left( (1+(\eta_\epsilon)_x^2)^{-3/2} \hat{\eta}_x \right)$ expands as
\begin{align} \label{dSigma expansion}
    \mathrm{d}\sigma(\eta_\epsilon) = \partial_x^2 + \epsilon^2 \left( \Sigma_2^{[1]} \partial_x + \Sigma_2^{[2]} \partial_x^2 \right) + \mathcal{O}(\epsilon^3),
\end{align}
where the coefficients are given by
\begin{equation} \label{Sigma coefficients}
\begin{aligned}
    \Sigma_2^{[1]}(x) := -3(\eta_1)_x(\eta_1)_{xx} = -\frac{3}{2}\sin(2x), \qquad\Sigma_2^{[2]}(x) := -\frac{3}{2}(\eta_1)_x^2 = -\frac{3}{4} + \frac{3}{4}\cos(2x).
\end{aligned}
\end{equation}
As before, $\de \sigma$ is symmetric. The map  $\e \to (V, B)$ is analytic as a map $B(\e_0) \to H^{\alpha, m-1}_{\mathtt{ev}}(\T) \times H^{\alpha, m-1}_{\mathtt{odd}}(\T)$. 
The real system \eqref{first linear eq} is Hamiltonian, i.e. of the form
$\cJ \cA$ with $\cA$ symmetric with respect to the real scalar product of $L^2(\T, \R^2) = L^2(\T, \R) \times L^2(\T, \R)$.
Moreover the linear operator in \eqref{first linear eq} is reversible, namely it anti-commutes with the involution $\rho$ in \eqref{rho involution}.

Next, we conjugate \eqref{first linear eq} by using the time-independent ``good unknown of Alinhac'' linear transformation 
\begin{align}\label{alinhac}
    \begin{bmatrix}
        \hat{\eta}\\
        \hat{\psi}
    \end{bmatrix}:= Z
    \begin{bmatrix}
        u\\
        v
    \end{bmatrix},~~Z=\begin{bmatrix}
        1&0\\
        B&1
    \end{bmatrix},
    ~~Z^{-1}=\begin{bmatrix}
        1&0\\
        -B&1
    \end{bmatrix},
\end{align}
yielding the linear system 
\begin{align} \label{second linear eq}
    \begin{bmatrix}
        u_t\\
        v_t
    \end{bmatrix}=  \widetilde{\cL}_\e 
    \begin{bmatrix}
        u\\
        v
    \end{bmatrix} \ , \qquad 
    \widetilde{\cL}_\e := \begin{bmatrix}
        -\partial_x\circ (V-c_\epsilon)& G(\eta_\epsilon)\\
        -1-(V-c_\epsilon)B_x+\kappa\,\mathrm{d}\sigma(\eta_\e)-b\,\mathrm{d} F(\eta_\e) ~~& -(V-c_\epsilon)\partial_x
    \end{bmatrix} \ , 
\end{align}
which  is Hamiltonian and reversible since the transformation $Z$ is symplectic, $\mathrm{i.e.}$ $Z^T\mathcal{J} Z=\mathcal{J}$, and satisfies $Z\circ \rho=\rho\circ Z$.

Next, 
we perform a conformal change of variables to flatten 
the water surface. 
By \cite[Appendix A]{BBHM}, 
 there exists a diffeomorphism of $\mathbb{T}$,
 $ x\mapsto x+\mathfrak{p}(x)$, with a small $2\pi$-periodic  function $\mathfrak{p}(x)$, 
 and a small constant $\ttf $, such that, by defining the associated composition operator $ (\mathfrak{P}u)(x) := u(x+\mathfrak{p}(x))$, the Dirichlet-Neumann operator writes as \cite[Lemma A.5]{BBHM}
\begin{equation}\label{Gneta}
 G(\eta_\e) = \pa_x \circ \mathfrak{P}^{-1} \circ {\mathfrak H} 
 \circ \mathfrak{P} \, , 
\end{equation}
where $ {\mathfrak H} $ is the Hilbert transform, i.e. the  Fourier multiplier operator
$$
 \mathfrak{H}(e^{\im j x}):= - \im\, \textup{sign}(j) e^{\im j x} \, , 
 \quad  \forall j \in \Z \setminus \{0\} \, , 
 \quad \mathfrak{H}(1) := 0 \, . 
$$
The function $\mathfrak p(x)$ is determined as a fixed point of 
(see  \cite[formula (A.15)]{BBHM})
\begin{equation} \label{def:ttf}
\mathfrak{p}  = \mathfrak{H}[\eta_\e ( x + \mathfrak{p}(x))]  
  \end{equation}
  As proved in \cite{BMV3}, the map $\e \to \mathfrak{p}$ is analytic as a map $B(\e_0) \to H^s_{\mathtt{odd}}(\T) $.
In addition, in  Appendix \ref{sec:App2} we prove the expansion
\begin{equation}
 \label{expfe}
 \begin{aligned}
&  \mathfrak p(x)  = \e  \sin(x) +\e^2 \frac{2-13b-\kappa}{2(1-14b-2\kappa)} \sin(2x)+\mathcal{O}(\e^3) \, ,
   \end{aligned}
 \end{equation}
Under the symplectic and reversibility-preserving change of variables 
\begin{align}\label{LC}
    h=\mathcal{P}\begin{bmatrix}
        u\\
        v
\end{bmatrix},~~\mathcal{P}=\begin{bmatrix}
    (1+\mathfrak{p}_x)\mathfrak{P} & 0\\
    0 & \mathfrak{P}
\end{bmatrix} \ , 
\end{align}
one  transforms the system \eqref{second linear eq} into the linear system $h_t=\mathcal{L}_\epsilon h$ where $\mathcal{L}_\epsilon$ is the Hamiltonian and reversible real operator
\begin{equation} \label{mathcal L e}
\begin{aligned}
    \mathcal{L}_\epsilon:= \mathcal{L}_{\epsilon}(b) :=\mathcal{P} \widetilde{\cL}_\e \mathcal{P}^{-1} = &\begin{bmatrix}
\partial_x\circ(c_{\kappa,b}+p_\epsilon(x)) &  |D| \\
        -(1+a_\epsilon(x))+\kappa\,\tau_\e-b\,\beta_\e & ~~(c_{\kappa,b}+p_\epsilon(x))\partial_x
    \end{bmatrix}\\
    =&\begin{bmatrix}
        0& \mathrm{Id}\\
        -\mathrm{Id}& 0
    \end{bmatrix}\underbrace{\begin{bmatrix}
 (1+a_\epsilon(x))-\kappa\,\tau_\e+b\,\beta_\e& ~~-(c_{\kappa,b}+p_\epsilon(x))\partial_x\\
 \partial_x\circ(c_{\kappa,b}+p_\epsilon(x))& ~~|D|
    \end{bmatrix}}_{:= \mathfrak{B}_\e},
\end{aligned}    
\end{equation}
where the functions $p_\e(x)$ and $a_\e(x)$ are given by
\begin{equation}\label{def:pa}
c_{\kappa,b}+p_\e(x) :=  \displaystyle{\frac{ c_\e-V(x+\mathfrak{p}(x))}{ 1+\mathfrak{p}_x(x)}} \, , \quad 1+a_\e(x):=   \displaystyle{\frac{1+ (V(x + \mathfrak{p}(x)) - c_\e)
 B_x(x + \mathfrak{p}(x))  }{1+\mathfrak{p}_x(x)}} \,, 
\end{equation}
and the operators $\beta_\e:=\mathfrak{P}\circ\mathrm dF(\eta_\e)\circ\mathfrak{P}^{-1}\circ\frac{1}{1+\mathfrak{p}_x}$ and $\tau_\e:=\mathfrak{P}\circ\mathrm d\sigma(\eta_\e)\circ\mathfrak{P}^{-1}\circ\frac{1}{1+\mathfrak{p}_x}$ are given by
\begin{equation} \label{def:Sigma g}
\begin{aligned}
     \beta_\e: =&\frac{1}{1+\mathfrak{p}_x}\left(\widetilde{\pa_x}\right)^4+\e^2 \sum_{j=1}^4 \frac{F^{[j]}_2(x+\mathfrak{p}(x))}{1+\mathfrak{p}_x}\left(\widetilde{\pa_x}\right)^j+\mathcal{O}(\e^3),
\end{aligned}    
\end{equation}

\begin{equation} \label{tau operator}
\begin{aligned}
    \tau_\epsilon &:= \frac{1}{1+\mathfrak{p}_x} \left(\widetilde{\pa_x}\right)^2 + \epsilon^2 \sum_{j=1}^2 \frac{\Sigma_2^{[j]}(x+\mathfrak{p}(x))}{1+\mathfrak{p}_x} \left(\widetilde{\pa_x}\right)^j + \mathcal{O}(\epsilon^3).
\end{aligned}
\end{equation}
where $\widetilde{\pa_x}:=\pa_x\circ\frac{1}{1+\mathfrak{p}_x}$. The operators $\beta_\varepsilon$ and $\tau_\varepsilon$ are also symmetric. To see this we apply a change of variable to obtain 
\[
\int_\T \mathfrak{P} f(y)  g(y) \de y = \int_\T f(z)  \mathfrak{P}^{-1}\left[\frac{g}{1 + \mathfrak{p}_x} \right](z) \de y.
\]
By the analyticity result of the map $\e \mapsto (V, B)$ given above,  the map
$\e \to (p_\e, a_\e)$ is analytic as a map $B(\e_0) \to H^{m - 1}_{\mathtt{ev}}(\T)\times H^{m - 2}_{\mathtt{ev}}(\T)$.
In Appendix \ref{sec:App2}, we derive their Taylor expansions, which are collected in the following lemmas.
\begin{lemma}\label{lem:pa.exp}
The analytic functions $p_\e (x) $ and $a_\e (x) $  in \eqref{def:pa} 
are even in $ x $, and
\begin{equation}\label{SN1}
p_\epsilon (x) = \epsilon p_1 (x) + \epsilon^2 p_2 (x) + \mathcal{O}(\epsilon^3) \, , \qquad
a_\epsilon (x) = \epsilon a_1(x) +\epsilon^2 a_2 (x) + \mathcal{O}(\epsilon^3) \, ,
\end{equation}
where
\begin{align}\label{pino1fd}
    p_1(x) & = p_1^{[1]} \cos(x)\, , \qquad \quad p_1^{[1]} := -2c_{\kappa,b},\\
 \label{pino2fd}
    p_2(x) & =p_2^{[0]}+p_2^{[2]}\cos(2x),\qquad p_2^{[0]} := c_2+c_{\kappa,b},\qquad p_2^{[2]} := \frac{2c_{\kappa,b}^{3}}{14b+2\kappa-1},
\end{align}
and
\begin{align}
a_1(x) \label{aino1fd} &= a_1^{[1]}\cos(x)\, , \qquad \qquad
a_1^{[1]}:= -(2+b+\kappa)\, , \\
a_2(x)& \label{aino2fd} = a_2^{[0]}+a_2^{[2]}\cos(2x)\, ,\quad \, a_2^{[0]}:=\frac{3b+3\kappa + 4}{2}, \qquad a_2^{[2]} := \frac{46b^2+56b\kappa+35b+10\kappa^2+11\kappa+4}{2(14b+2\kappa - 1)}.
\end{align}
Finally, the self-adjoint operator $\beta_\e$ in \eqref{def:Sigma g} expands 
\begin{align} \label{betaeps}
    \beta_\e=\pa^4_x+\e\beta_1+\e^2\beta_2+\mathcal{O}(\e^3),
\end{align}
where
\begin{align} \label{betaj}
    \beta_j:=b_{j,0}(x)+b_{j,1}(x)\pa_x+b_{j,2}(x)\pa^2_x+b_{j,3}(x)\pa^3_x+b_{j,4}(x)\pa^4_x,\qquad j=1,2
\end{align}
and
\begin{align} \label{b110}
    &b_{1,0}=b^{[1]}_{1,0}\cos(x),\qquad b^{[1]}_{1,0}:=-1,\\
    &b_{1,1}=b^{[1]}_{1,1}\sin(x),\qquad  b^{[1]}_{1,1}:=-5,\\
    &b_{1,2}=b^{[1]}_{1,2}\cos(x),\qquad  b^{[1]}_{1,2}:=10,\\
    &b_{1,3}=b^{[1]}_{1,3}\sin(x),\qquad  b^{[1]}_{1,3}:=10,\\
    &b_{1,4}=b^{[1]}_{1,4}\cos(x),\qquad  b^{[1]}_{1,4}:=-5,\\
    &b_{2,0}:=b_{2,0}^{[0]}+b_{2,0}^{[2]}\cos(2x),\qquad b^{[0]}_{2,0}:=\frac{1}{2}, \qquad b^{[2]}_{2,0}:=\frac{18\,b+30\kappa+33}{2\,\left(14\,b+ 2\kappa-1\right)},\\
    &b_{2,1}:=b^{[2]}_{2,1}\sin(2x),\qquad b^{[2]}_{2,1}:=\frac{5\,\left(86\,b+26\kappa+11\right)}{2\,\left(14\,b+ 2\kappa-1\right)},\\
    &b_{2,2}:=b^{[0]}_{2,2}+b^{[2]}_{2,2}\cos(2x),\quad b^{[0]}_{2,2}:=-\frac{25}{4},\quad b^{[2]}_{2,2}:=-\frac{15\,\left(90\,b+22\kappa+5\right)}{4\,\left(14\,b+ 2\kappa-1\right)},\\
    &b_{2,3}:=b^{[2]}_{2,3}\sin(2x),\qquad b^{[2]}_{2,3}:=-\frac{5\,\left(46\,b+10\kappa+1\right)}{14\,b+ 2\kappa-1},\\ \label{b224}
    &b_{2,4}:=b^{[0]}_{2,4}+b^{[2]}_{2,4}\cos(2x),\quad b^{[0]}_{2,4}:=\frac{25}{4},\quad b^{[2]}_{2,4}:=\frac{5\,\left(46\,b+10\kappa+1\right)}{4\,\left(14\,b+ 2\kappa-1\right)}.
\end{align}
\end{lemma}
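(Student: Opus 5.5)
The proof is a direct Taylor expansion in $\epsilon$ of the explicit formulas \eqref{def:pa} and \eqref{def:Sigma g}. The inputs are the Stokes expansion \eqref{exp:Sto}--\eqref{expc2} of Theorem~\ref{Thm: Stokes expansion}, the expansion \eqref{expfe} of $\mathfrak p$, and the expansion \eqref{esp l}, \eqref{F12 F22 F32 F42} of $\mathrm dF(\eta_\epsilon)$.

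\textbf{Parity.} Since $\eta_\epsilon$ is even, $\psi_\epsilon$ is odd and $\mathfrak p$ is odd, formula \eqref{espB} shows that $B$ is odd, hence $V$ in \eqref{espV} is even. The composition $x\mapsto x+\mathfrak p(x)$ preserves parity, because $x+\mathfrak p(x)$ is odd. Therefore $V(x+\mathfrak p(x))$ and $B_x(x+\mathfrak p(x))$ are even, $\mathfrak p_x$ is even, and $p_\epsilon$, $a_\epsilon$ are even. Analyticity in $\epsilon$ was already noted above the statement.

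\textbf{Expansion of $B$ and $V$.} I would compute $B$ and $V$ to order $\epsilon^2$ using the second identity in \eqref{espB},
$$
B=\frac{(\psi_\epsilon)_x-c_\epsilon}{1+(\eta_\epsilon)_x^2}\,(\eta_\epsilon)_x .
$$
This gives
$$
B=\epsilon c_{\kappa,b}\sin x+\epsilon^2\bigl(-c_{\kappa,b}\sin x\cos x+2c_{\kappa,b}\eta_2^{[2]}\sin 2x\bigr)+\mathcal O(\epsilon^3)
$$
(with the sign conventions checked against the equations), and then $V=(\psi_\epsilon)_x-B(\eta_\epsilon)_x$.

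\textbf{Expansion of $p_\epsilon$ and $a_\epsilon$.} Next I compose with $x+\mathfrak p(x)$, using $f(x+\mathfrak p)=f+\mathfrak p f_x+\mathcal O(\epsilon^3)$ at the orders needed, and divide by
$$
1+\mathfrak p_x=1+\epsilon\cos x+2\epsilon^2\mathfrak p_2\cos 2x+\mathcal O(\epsilon^3).
$$
Collecting the $\epsilon$ and $\epsilon^2$ terms yields $p_1,p_2,a_1,a_2$. As checks:
\begin{itemize}
\item at order $\epsilon$, $p_1=-c_{\kappa,b}\cos x-c_{\kappa,b}\cos x=-2c_{\kappa,b}\cos x$;
\item $a_1=-(1+\kappa+b)\cos x-\cos x$, coming from $-c\,B_x$ and the division, which matches \eqref{aino1fd};
\item the zero mode of $p_2$ receives the contribution $c_2$ together with products of first-order terms, giving $c_2+c_{\kappa,b}$.
\end{itemize}
The factor $(14b+2\kappa-1)^{-1}$ in $p_2^{[2]}$ and $a_2^{[2]}$ enters only through $\eta_2^{[2]}$, $\psi_2^{[2]}$ and $\mathfrak p_2$.

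\textbf{Expansion of $\beta_\epsilon$.} For $\beta_\epsilon$ I expand
$$
\frac{1}{1+\mathfrak p_x}\bigl(\partial_x\circ\tfrac{1}{1+\mathfrak p_x}\bigr)^4
$$
in powers of $\epsilon$. Writing $q:=(1+\mathfrak p_x)^{-1}=1-\epsilon\cos x+\epsilon^2(\cos^2x-2\mathfrak p_2\cos 2x)+\dots$, I move all multiplication operators to the left through repeated Leibniz rules. This produces the coefficient functions of $\partial_x^j$, $j=0,\dots,4$, up to order $\epsilon^2$. I then add the terms
$$
\epsilon^2\sum_j F_2^{[j]}(x)\,\partial_x^j,
$$
for which the composition and the division are trivial at this order.

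\textbf{Check on $\beta_1$.} At order $\epsilon$, the contribution of $q\,\partial_x q\,\partial_x q\,\partial_x q\,\partial_x q$ is
$$
-\cos x\,\partial_x^4-\sum_{k=1}^{4}\partial_x^{\,k-1}\circ\cos x\circ\partial_x^{\,5-k}\cdot\text{(suitably)}.
$$
This reproduces $b_{1,4}=-5\cos x$, $b_{1,3}=10\sin x$, $b_{1,2}=10\cos x$, $b_{1,1}=-5\sin x$, and $b_{1,0}=-\cos x\cdot$(zero-order residue), matching \eqref{b110}.

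\textbf{Main obstacle.} The hard part is the bookkeeping of the $\epsilon^2$ coefficients $b_{2,j}$. Products of two first-order $\cos x$ factors acted on by up to four derivatives produce many terms, which must combine with the $\mathfrak p_2$ contributions and with $F_2^{[j]}$. I would organise this by Fourier mode, separating the $[0]$ and $[2]$ harmonics, and by derivative order. I would verify the result in two ways:
\begin{itemize}
\item \emph{symmetry:} $\beta_\epsilon$ is symmetric, so the coefficients must satisfy the adjointness relations $b_{2,3}=2\,\partial_x b_{2,4}$ and so on, which for instance gives $b_{2,3}^{[2]}=-4b_{2,4}^{[2]}$ (consistent with \eqref{b224});
\item \emph{symbolic computation:} a computer-algebra check of the full composition.
\end{itemize}
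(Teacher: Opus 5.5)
Your proposal takes the same route as the paper's Appendix~\ref{sec:App2}: you Taylor-expand the explicit formulas \eqref{def:pa} and \eqref{def:Sigma g} using the Stokes, $\mathfrak p$ and $\mathrm dF$ expansions, and your intermediate quantities ($B_2$, $V$, $q$, $p_1$, $a_1$, the zero mode of $p_2$) agree with the paper's. The one slip is in your displayed check of $\beta_1$: the order-$\epsilon$ part of $q\,\partial_x q\,\partial_x q\,\partial_x q\,\partial_x q$ is $-\sum_{k=0}^{4}\partial_x^{k}\circ\cos x\circ\partial_x^{4-k}$ (your exponents $k-1,\,5-k$ duplicate the $k=0$ term and drop $\partial_x^4\circ\cos x$), and the hockey-stick identity then gives $-\binom{5}{j+1}(\cos x)^{(j)}$ as the coefficient of $\partial_x^{4-j}$, which is exactly \eqref{b110}.
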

\begin{lemma}\label{lem:tau.exp}
The self-adjoint operator $\tau_\epsilon$ defined in \eqref{tau operator} expands as
\begin{align} \label{taueps}
    \tau_\epsilon = \partial_x^2 + \epsilon \tau_1 + \epsilon^2 \tau_2 + \mathcal{O}(\epsilon^3),
\end{align}
where
\begin{align} \label{tauj}
   \tau_j := \tau_{j,0}(x) + \tau_{j,1}(x)\partial_x + \tau_{j,2}(x)\partial_x^2, \qquad j = 1, 2
\end{align}
and the coefficients are given by
\begin{align}
    \tau_{1,0} &= \tau_{1,0}^{[1]}\cos(x), \qquad \tau_{1,0}^{[1]} := 1, \label{tau10} \\
    \tau_{1,1} &= \tau_{1,1}^{[1]}\sin(x), \qquad \tau_{1,1}^{[1]} := 3, \label{tau11} \\
    \tau_{1,2} &= \tau_{1,2}^{[1]}\cos(x), \qquad \tau_{1,2}^{[1]} := -3, \label{tau12}
\end{align}
and for the second order:
\begin{align}
    \tau_{2,0} &= \tau_{2,0}^{[0]} + \tau_{2,0}^{[2]}\cos(2x), \qquad \tau_{2,0}^{[0]} := -\frac{1}{2}, \qquad \tau_{2,0}^{[2]} := \frac{6b - 6\kappa - 9}{2(14b + 2\kappa - 1)}, \label{tau20} \\
    \tau_{2,1} &= \tau_{2,1}^{[2]}\sin(2x), \qquad \tau_{2,1}^{[2]} := \frac{9(-6b - 2\kappa - 1)}{2(14b + 2\kappa - 1)}, \label{tau21} \\
    \tau_{2,2} &= \tau_{2,2}^{[0]} + \tau_{2,2}^{[2]}\cos(2x), \qquad \tau_{2,2}^{[0]} := \frac{9}{4}, \qquad \tau_{2,2}^{[2]} := \frac{9(6b + 2\kappa + 1)}{4(14b + 2\kappa - 1)}. \label{tau22}
\end{align}
\end{lemma}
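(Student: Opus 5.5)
Lemma~\ref{lem:tau.exp} follows by direct Taylor expansion of the definition \eqref{tau operator} in powers of $\e$, inserting the expansion \eqref{expfe} of $\mathfrak p$ and the coefficients \eqref{Sigma coefficients}, and collecting terms by order in $\e$ and by order of differentiation. Throughout, write $\mathfrak p=\e\mathfrak p_1+\e^2\mathfrak p_2+\mathcal O(\e^3)$ with $\mathfrak p_1=\sin x$ and $\mathfrak p_2=\frac{2-13b-\kappa}{2(1-14b-2\kappa)}\sin(2x)$.

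\emph{Step 1: expand $\widetilde{\pa_x}$ and its square.} Set $q:=\frac{1}{1+\mathfrak p_x}=1-\e\mathfrak p_1'+\e^2\bigl((\mathfrak p_1')^2-\mathfrak p_2'\bigr)+\mathcal O(\e^3)$. Then
\[
q\,\widetilde{\pa_x}^2=q\,\pa_x\, q\,\pa_x\, q .
\]
Normal-order this product by Leibniz' rule into the form $r_0+r_1\pa_x+r_2\pa_x^2$, where
\[
r_2=q^3,\qquad r_1=q^2q'+2q\,(q^2)'/2\cdot(\text{appropriate combinatorics}),\qquad r_0=q\,(q\,q')'.
\]
In practice it is simplest to compute $q\,\pa_x(q\,\pa_x(q\,f))$ for a generic $f$ and read off the three coefficients. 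Expanding to order $\e^2$ with $\mathfrak p_1'=\cos x$ gives the $\e^1$ coefficients $-3\cos x$ (on $\pa_x^2$), $3\sin x$ (on $\pa_x$) and $\cos x$ (on $1$). These match \eqref{tau10}--\eqref{tau12}. This is also a consistency check, since the $\e$-order term of $\mathrm d\sigma$ vanishes and hence $\tau_1$ arises purely from the flattening.

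\emph{Step 2: collect the second-order terms.} The $\e^2$ contributions have two sources:
\begin{enumerate}
\item[(a)] the second-order part of $q\,\widetilde{\pa_x}^2$, which involves $(\mathfrak p_1')^2$, the products of first-order terms, and $\mathfrak p_2'$;
\item[(b)] the terms $\e^2\Sigma_2^{[j]}(x)\,\pa_x^j$ with $j=1,2$, which need only be evaluated at $\e=0$, since composition with $x+\mathfrak p$ and the factors $q$ only contribute at higher order.
\end{enumerate}
Reduce all trigonometric products to Fourier modes $0$ and $2$; the constant and $\cos(2x)$, $\sin(2x)$ parts then give $\tau_{2,j}^{[0]}$ and $\tau_{2,j}^{[2]}$.

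The resonant denominator $14b+2\kappa-1$ enters only through $\mathfrak p_2'$, which contributes linearly to all three coefficients with multipliers read off from Step 1. For instance, the $\pa_x^2$ coefficient gets $-3\mathfrak p_2'$, and the $\pa_x$ coefficient gets $-(\text{derivative terms})$. As a check, $\tau_{2,2}^{[0]}=\tfrac94$ should arise as $6\cdot\tfrac12$ from $(\mathfrak p_1')^2$ plus $-\tfrac34$ from $\Sigma_2^{[2]}$.

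\emph{Step 3: verify symmetry.} Check that the result is formally self-adjoint, i.e.\ $\tau_{j,1}=\tau_{j,2}'$ at each order. We have $\tau_{1,1}=3\sin x=(-3\cos x)'$, and for $j=2$ this forces $\tau_{2,1}^{[2]}=-2\tau_{2,2}^{[2]}$, consistent with \eqref{tau21}--\eqref{tau22}. This relation is guaranteed by the symmetry of $\tau_\e$ noted before the lemma, so it serves as an independent check on the bookkeeping.

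\emph{Main obstacle.} The only real difficulty is the bookkeeping of the $\e^2$ terms in Step 2: the normal ordering of the triple product, the cross terms, and the rational dependence through $\mathfrak p_2$. I would organize it by writing $\mathfrak p_2'=2A\cos(2x)$ with $A=\frac{2-13b-\kappa}{2(1-14b-2\kappa)}$, computing everything as polynomials in $A$, and substituting $A$ only at the end. The zeroth-order coefficient $\tau_{2,0}$ should be verified last.
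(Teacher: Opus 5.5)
Your proposal is correct and follows the paper's proof: the paper likewise normal-orders $q\,\pa_x\,q\,\pa_x\,q$ (your hedged $r_1$ is simply $3q^2q'=(q^3)'$, and $r_0=q(qq')'$). It adds $\e^2\Sigma_2^{[j]}(x)\pa_x^j$ to obtain $\tau_1=-3\mathfrak p_1'\pa_x^2-3\mathfrak p_1''\pa_x-\mathfrak p_1'''$ and $\tau_2=\bigl(\Sigma_2^{[2]}-3\mathfrak p_2'+6(\mathfrak p_1')^2\bigr)\pa_x^2+\bigl(\Sigma_2^{[1]}-3\mathfrak p_2''+12\mathfrak p_1'\mathfrak p_1''\bigr)\pa_x+3(\mathfrak p_1'')^2+4\mathfrak p_1'\mathfrak p_1'''-\mathfrak p_2'''$, and substituting $\mathfrak p_1,\mathfrak p_2$ into these gives all the stated coefficients.
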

\paragraph{Bloch-Floquet expansions.} 
In the following we regard $\mathcal{L}_\e$ as an operator on $L^2(\R,\mathbb{C}^2)$ instead of on $L^2(\T,\mathbb{C}^2)$. To be more precise, we interpret the $D$ as the multiplier $\xi$ on the Fourier transform, namely the operator $m(D)$ is given by the following formula
\[
m(D) u := (m(\xi) \widehat{u}(\xi))^\vee,
\]
where $\widehat{u}$ denotes the Fourier transform of $u$ and $\vee$ denotes the inverse Fourier transform. The following Bloch transform gives an isometry between $L^2(\R,\mathbb{C}^2)$ and $L^2(Q,L^2(\T,\mathbb{C}^2)) \cong L^2(Q) \otimes L^2(\T,\mathbb{C}^2)$:
\begin{align}
    \label{Blocht}
    \mathcal{U} f(\mu,x) = \sqrt{2\pi} \sum_{k \in \Z} f(x + 2\pi k) e^{-\im \mu (2 \pi k + x)} = \frac{1}{\sqrt{2\pi}} \sum_{k \in \Z} \widehat{f}(k + \mu) e^{\im kx},
\end{align}
where $\mu \in Q := \left[-\frac{1}{2},\frac{1}{2}\right)$, $x \in \T$. Here the second equality in \eqref{Blocht} is given by Poisson summation formula first by smooth functions with compact support then by density (see the isometry below). In fact, we have 
\[
\int_Q \int_\T |\mathcal{U} f(\mu,x)|^2 \de \mu \de x = \frac{1}{2\pi} \int_Q \sum_{k \in \Z} |\widehat{f}(k + \mu)|^2 \de \mu  = \frac{1}{2\pi} \int_\R |\widehat{f}(\mu)|^2 \de \mu = \int_\R |f(x)|^2 \de x.
\]

Now for fixed $\mu \in Q$, we define the following operator $\mathcal{L}_{\mu,\e}:=e^{-\im\mu x}\mathcal{L}_\e e^{\im\mu x}$, namely
\begin{equation} \label{mathcal L mu e}
\begin{aligned}
    \mathcal{L}_{\mu,\e}:=&\begin{bmatrix}
(\partial_x+\im\mu)\circ(c_{\kappa,b}+p_\epsilon(x)) &  |D+\mu| \\
        -(1+a_\epsilon(x))+\kappa\tau_{\mu,\e}-b\beta_{\mu,\e} & ~~(c_{\kappa,b}+p_\epsilon(x))(\partial_x+\im\mu)
    \end{bmatrix}\\
    =&\underbrace{\begin{bmatrix}
        0& \mathrm{Id}\\
        -\mathrm{Id}& 0
    \end{bmatrix}}_{=:\mathcal{J}}\underbrace{\begin{bmatrix}
 (1+a_\epsilon(x))-\kappa\tau_{\mu,\e}+b\beta_{\mu,\e}& ~~-(c_{\kappa,b}+p_\epsilon(x))(\partial_x+\im\mu)\\
 (\partial_x+\im\mu)\circ(c_{\kappa,b}+p_\epsilon(x))& ~~|D+\mu| 
    \end{bmatrix}}_{=:\mathfrak{B}_{\mu,\epsilon}},
\end{aligned}    
\end{equation}
where $\tau_{\mu,\e}:=e^{-\im\mu x}\tau_\e e^{\im\mu x}$ and $\beta_{\mu,\e}:=e^{-\im\mu x}\beta_\e e^{\im\mu x}$ are given by the selfadjoint operators
\begin{equation} \label{tau_mu_expansion}
    \tau_{\mu,\epsilon} = (\partial_x + \im\mu)^2 + \epsilon \sum_{j=0}^2 \tau_{1,j}(x) (\partial_x + \im\mu)^j + \epsilon^2 \sum_{j=0}^2 \tau_{2,j}(x) (\partial_x + \im\mu)^j + \mathcal{O}(\epsilon^3),
\end{equation}
\begin{align}\label{Sigma}
    \beta_{\mu,\e}=(\pa_x+\im \mu)^4+\e \sum_{j=0}^4 b_{1,j}(x)\left(\pa_x+\im\mu\right)^{j}+\e^2 \sum_{j=0}^4 b_{2,j}(x)\left(\pa_x+\im\mu\right)^{j}+\mathcal{O}(\e^3).
\end{align}

In fact, if we regard $\mathcal{L}_{\mu,\e}$ as an operator with domain $Y:=H^4(\mathbb{T},\mathbb{C})\times H^1(\mathbb{T},\mathbb{C})$ in the space $X:=L^2(\mathbb{T},\mathbb{C})\times L^2(\mathbb{T},\mathbb{C})$, equipped with the complex scalar product
\begin{align} \label{complex product}
    (f,g):=\frac{1}{2\pi}\int_0^{2\pi} \left(f_1\overline{g_1}+f_2\overline{g_2} \right)\,\mathrm{d}x,~~\forall~f=\vet{f_1}{f_2},~~g=\vet{g_1}{g_2}\in L^2(\mathbb{T},\mathbb{C}^2),
\end{align}
it is not hard to show that $\mathfrak{B}_{\mu,\e}$ is self-adjoint. By definition of \eqref{Blocht} we can check that (noticing that the operator $\mathcal{L}_\e$ in \eqref{mathcal L e} has $2\pi$-periodic coefficients and if $A=Op(a)$ is a pseudo-differential operator with symbol $a(x,\xi)$, which is $2\pi$-periodic in $x$, then $A_\mu:=e^{-\im\mu x}A e^{\im\mu x}=Op(a(x,\xi+\mu))$)
\[
\mathcal{U}(\mathfrak{B}_\e f)(\mu, \cdot) = \mathfrak{B}_{\mu,\e} \, \mathcal{U}f(\mu, \cdot).
\]
{\color{black}In other words, if we define $B_{\e}$ on $L^2(Q,L^2(\T,\mathbb{C}^2))$ by
\[
(B_{\e}f) (\mu)  = \mathfrak{B}_{\mu,\e} (f(\mu)),
\]
we have $\mathfrak B_\e = \mathcal{U}^{-1}{B}_{\e} \mathcal{U}$. From the expression \eqref{Blocht} we know that $\mathcal{J}$ interwines with $\mathcal{U}$. So we obtain that $\mathcal{L}_\e = \mathcal{U}^{-1}{L}_{\e} \mathcal{U}$ with $L_{\e}$ on $L^2(Q,L^2(\T,\mathbb{C}^2))$ by
\[
(L_{\e}f) (\mu)  = \mathcal{L}_{\mu,\e} (f(\mu)).
\]
Note that  we cannot apply the Bloch--Floquet theory  \cite[(d) of Theorem XIII.85]{RS78} directly since $\mathcal{L}_{\mu,\e}$ is not self-adjoint. Instead, with the same spirit with the Bloch--Floquet theory, we apply \cite[Theorem 3.1]{DMT} with $X = Q$, $E = L^2(\T,\mathbb{C}^2)$, and $D = Y$ here. We actually get
\begin{align*}
    \sigma_{L^2(\mathbb{R},\mathbb{C}^2)}(\mathcal{L}_\e)=\bigcup_{\mu\in[-\frac{1}{2},\frac{1}{2})} \sigma_{L^2(\mathbb{T},\mathbb{C}^2)}(\mathcal{L}_{\mu,\e}).
\end{align*}
}



Since $\mathcal{L}_\epsilon$ is a real operator and
\[
\mathcal{L}_{\mu,\epsilon}
=
e^{-\im\mu x}\mathcal{L}_\epsilon e^{\im\mu x},
\]
one has
\begin{align} \label{bloch-reality-symmetry}
\overline{\mathcal{L}_{\mu,\epsilon}}
=
\mathcal{L}_{-\mu,\epsilon},
\qquad
\sigma(\mathcal{L}_{-\mu,\epsilon})
=
\overline{\sigma(\mathcal{L}_{\mu,\epsilon})}.
\end{align}
Consequently, it is sufficient to study nonnegative Floquet exponents.
Moreover, the Bloch spectrum is $1$-periodic in $\mu$, so we may restrict
to $\mu\in[0,\frac12)$. Results for negative Floquet exponents are then
recovered by complex conjugation.

The complex operator $\mathcal{L}_{\mu,\e}$ in \eqref{mathcal L mu e} is complex Hamiltonian and reversible. Recall that if $\mathcal{L} : Y \to X$ is a complex linear operator, we say that it is 
\begin{itemize}
    \item \textbf{Complex Hamiltonian:} if there exists a self-adjoint operator
    $\mathfrak{B}=\mathfrak{B}^*$, where $\mathfrak{B}^*$ denotes the adjoint,
    with domain $Y$, with respect to the complex scalar product
    \eqref{complex product}, such that
    $\mathcal{L}=\mathcal{J}\mathfrak{B}$.

    \item \textbf{Reversible:} if
    \begin{align}\label{reversible}
        \mathcal{L}\circ\overline{\rho}
        =
        -\overline{\rho}\circ\mathcal{L},
        \qquad
        \text{where}
        \qquad
        \overline{\rho}
        \begin{bmatrix}
            \eta(x)\\
            \psi(x)
        \end{bmatrix}
        :=
        \begin{bmatrix}
            \overline{\eta}(-x)\\
            -\overline{\psi}(-x)
        \end{bmatrix}.
    \end{align}
\end{itemize}

The property \eqref{reversible} for $\mathcal{L}_{\mu,\e}$ follows
because $\mathcal{L}_{\e}$ is a real operator which is reversible with
respect to the involution $\rho$ in \eqref{rho involution}.
Equivalently, since
$\mathcal{J}\circ\overline{\rho}
=-\overline{\rho}\circ\mathcal{J}$,
the self-adjoint operator $\mathfrak{B}_{\mu,\e}$ is
reversibility-preserving, i.e.,
\begin{align}\label{B rho=rho B}
    \mathfrak{B}_{\mu,\e}\circ\overline{\rho}
    =
    \overline{\rho}\circ\mathfrak{B}_{\mu,\e}.
\end{align}

The physical family is continuous for $\mu\in Q$, but it is not
two-sided analytic at $\mu=0$, since the zero Fourier mode of
$|D+\mu|$ is equal to $|\mu|$. On the positive Bloch half-branch,
one has \cite[Section 5.1]{NS}
\begin{equation}\label{D+mu-positive}
    |D+\mu|
    =
    |D|+\mu\bigl(\mathrm{sgn}(D)+\Pi_0\bigr),
    \qquad
    0\leq\mu<\frac12,
\end{equation}
where $\mathrm{sgn}(D)$ is the Fourier multiplier with symbol
\begin{equation}\label{sgn D}
    \mathrm{sgn}(k)=1\quad(k>0),
    \qquad
    \mathrm{sgn}(0)=0,
    \qquad
    \mathrm{sgn}(k)=-1\quad(k<0),
\end{equation}
and
\[
    \Pi_0f
    :=
    \frac{1}{2\pi}\int_{\mathbb T}f(x)\,\mathrm{d}x.
\]
To apply analytic perturbation theory on an open neighborhood of the
origin, we extend the operator family in \eqref{D+mu-positive} to
negative values of $\mu$ by the same right-hand side, namely,
\begin{equation}\label{D+mu}
    |D+\mu|
    :=
    |D|+\mu\bigl(\mathrm{sgn}(D)+\Pi_0\bigr),
    \qquad
    |\mu|<\frac12.
\end{equation}
This extension is affine, and hence analytic, in $\mu$, and it agrees
with the physical multiplier $|D+\mu|$ for $\mu\geq0$. In all the
perturbative arguments below, the notation $|D+\mu|$ refers to this
analytic extension. For $\mu<0$, it should therefore not be confused
with the physical Fourier multiplier having symbol $|k+\mu|$.
Consequently,
\[
    (\mu,\e)\longmapsto
    \mathcal{L}_{\mu,\e}
    \in\mathcal{L}(Y,X)
\]
is analytic for $|\mu|$ and $|\e|$ sufficiently small. The physical
Bloch spectrum for negative Floquet exponents is not obtained from the
negative part of this analytic continuation; it is recovered separately
from the reality symmetry \eqref{bloch-reality-symmetry}.

Our goal is to prove the existence of eigenvalues of $\mathcal{L}_{\mu,\e}$ in \eqref{mathcal L mu e} with non zero real part. We remark that the Hamiltonian structure of $\mathcal{L}_{\mu,\e}$ implies that eigenvalues with non zero real part may arise only from multiple eigenvalues of $\mathcal{L}_{\mu,0}$ (``Krein criterion''), because if $\lambda$ is an eigenvalue of $\mathcal{L}_{\mu,\e}$ then also $-\overline{\lambda}$ is, and the total algebraic multiplicity of the eigenvalues is conserved under small perturbation. We now describe the spectrum of $\mathcal{L}_{\mu,0}$.

\paragraph{The spectrum of $\mathcal{L}_{\mu,0}$.} The spectrum of the Fourier multiplier matrix operator 
\begin{equation}\label{eq:unperturbed-operator}
\mathcal{L}_{\mu,0}
:=
\begin{bmatrix}
c_{\kappa,b}(\partial_x+\im\mu) & |D+\mu|\\
-1+\kappa(\partial_x+\im\mu)^2-b(\partial_x+\im\mu)^4 & c_{\kappa,b}(\partial_x+\im\mu)
\end{bmatrix}.
\end{equation}
For $0\leq\mu<1/2$, the spectrum of the physical unperturbed fiber
consists of the purely imaginary eigenvalues
$\{\lambda_k^\pm(\mu),\, k\in\mathbb Z\}$, where
\begin{equation}\label{eq:unperturbed-dispersion}
\lambda_k^\pm(\mu)
:=
\im\Big(
c_{\kappa,b}(\pm k+\mu)\mp \sqrt{(1+\kappa(k\pm\mu)^2+b(k\pm\mu)^4)\,|k\pm\mu|}
\Big).
\end{equation}
For $(\kappa,b)\notin \mathfrak R$ (cf.\ \eqref{def:infinite-R}), the real operator $\mathcal{L}_{0,0}$ possesses the eigenvalue $0$
with algebraic multiplicity $4$,
\[
\lambda_0^+(0)=\lambda_0^-(0)=\lambda_1^+(0)=\lambda_1^-(0)=0,
\]
and geometric multiplicity $3$. A real basis of the kernel of $\mathcal{L}_{0,0}$ is
\begin{equation}\label{eq:eigenfunc-L00-kernel}
f_1^+
:=\vet{(1+b+\kappa)^{-1/4}\cos (x)}{(1+b+\kappa)^{1/4}\sin(x)},
\qquad
f_1^-
:=\vet{-(1+b+\kappa)^{-1/4}\sin(x)}{(1+b+\kappa)^{1/4}\cos(x)},
\qquad
f_0^-:=\vet{0}{1},
\end{equation}
together with the generalized eigenvector
\begin{equation}\label{eq:generalized-eigenfunc-f0plus}
f_0^+:=\vet{1}{0},
\qquad
\mathcal{L}_{0,0}f_0^+=-f_0^-.
\end{equation}

Furthermore, $0$ is an isolated eigenvalue of $\mathcal{L}_{0,0}$, namely
\begin{equation}\label{eq:sigma-decomposition-L00}
\sigma(\mathcal{L}_{0,0})=\sigma'(\mathcal{L}_{0,0})\cup \sigma''(\mathcal{L}_{0,0}),
\qquad
\sigma'(\mathcal{L}_{0,0})=\{0\},
\end{equation}
and
\[
\sigma''(\mathcal{L}_{0,0})
=
\{\lambda_k^\sigma(0):\, k\neq 0,1,\ \sigma=\pm\}.
\]
\color{black}
The generalized kernel of the periodic fiber at $\mu=0$ is described
in Proposition~\ref{prop:periodic-generalized-kernel} below.

\color{black}
By Kato's perturbation theory for any $\mu,\e\neq 0$ sufficiently small, the perturbed spectrum $\sigma(\mathcal{L}_{\mu,\e})$ admits a disjoint decomposition as 
\begin{align} \label{disjoint decomposition of spectrum}
    \sigma(\mathcal{L}_{\mu,\e})=\sigma'(\mathcal{L}_{\mu,\e})\cup \sigma''(\mathcal{L}_{\mu,\e}),
\end{align}
where $\sigma'(\mathcal{L}_{\mu,\e})$ consists of $4$ eigenvalues close to $0$. We denote by $\mathcal{V}_{\mu,\e}$ the spectral subspace associated with $\sigma'(\mathcal{L}_{\mu,\e})$, which has dimension $4$ and it is invariant by $\mathcal{L}_{\mu,\e}$. Our goal is to prove that, for $\e$ small, for values of the Floquet exponent $\mu$ in an interval of order $\e$, the $4\times 4$ matrix which represents the operator $\mathcal{L}_{\mu,\e}:\mathcal{V}_{\mu,\e}\rightarrow \mathcal{V}_{\mu,\e}$ possesses a pair of eigenvalues close to zero with opposite non zero real parts. 

Before stating our main result, let us introduce a notation that we shall use throughout the paper.

\begin{itemize}
\item
\textbf{Notation:} We write $r(\mu,\epsilon)
=
\mathcal O\bigl(
\mu^{m_1}\epsilon^{n_1},\ldots,
\mu^{m_p}\epsilon^{n_p}
\bigr)$, where $m_j,n_j\in\mathbb N_0$ and $m_j+n_j\geq1$, if $r$ is
analytic and
\[
\|r(\mu,\epsilon)\|_X
\leq
C\sum_{j=1}^p
|\mu|^{m_j}|\epsilon|^{n_j}
\]
for $(\mu,\epsilon)$ sufficiently small.
\end{itemize}

Our main spectral result is the following one:
\begin{theorem}[Complete Benjamin--Feir spectrum]\label{Complete BF thm}
Let $(\kappa,b)\in(\mathbb R_{\geq0}\times\mathbb R_{>0})\setminus\mathfrak{R}$, where $\mathfrak{R}$ is defined in \eqref{def:infinite-R}.
There exist $\epsilon_0,\mu_0>0$ such that, for any $0<\mu<\mu_0$ and $0\leq\epsilon<\epsilon_0$, there exists a
symplectic and reversibility-preserving change of coordinates on $\mathcal{V}_{\mu,\epsilon}$, equivalently a
symplectic and reversible basis $\widetilde{\mathcal{G}}_{\mu,\epsilon}$ of $\mathcal{V}_{\mu,\epsilon}$, in which
the operator $\mathcal{L}_{\mu,\epsilon}:\mathcal{V}_{\mu,\epsilon}\to\mathcal{V}_{\mu,\epsilon}$ is represented by a
$4\times4$ block-diagonal matrix of the form
\begin{equation}\label{US-diag}
\begin{pmatrix}
U & 0\\
0 & S
\end{pmatrix},
\end{equation}
where $U$ and $S$ are $2\times2$ matrices with identical diagonal entries, of the form
\begin{equation}\label{U-S-infinite}
\begin{aligned}
U &=
\begin{pmatrix}
\im\,\frac{1}{2}\breve{c}_{\kappa,b}\,\mu
+\im\,\mathcal{O}(\mu\epsilon^2,\mu^2\epsilon,\mu^3)
&
-\,\frac{e_{22}}{8}\,\mu^2+\mathcal{O}(\mu^2\epsilon,\mu^3)
\\[1.2ex]
-\,e_{11}\,\epsilon^2+\frac{e_{22}}{8}\,\mu^2
+\mathcal{O}(\epsilon^3,\mu\epsilon^2,\mu^2\epsilon,\mu^3)
&
\im\,\frac{1}{2}\breve{c}_{\kappa,b}\,\mu
+\im\,\mathcal{O}(\mu\epsilon^2,\mu^2\epsilon,\mu^3)
\end{pmatrix},
\\
S &=
\begin{pmatrix}
\im c_{\kappa,b}\mu+\im\,\mathcal{O}(\mu\epsilon^2,\mu^2\epsilon,\mu^3)
&
\mu+\mathcal{O}(\mu^2\epsilon,\mu^3)
\\[1.2ex]
-1-\kappa\mu^2
+\mathcal{O}(\epsilon^3,\mu\epsilon^2,\mu^2\epsilon,\mu^3)
&
\im c_{\kappa,b}\mu+\im\,\mathcal{O}(\mu\epsilon^2,\mu^2\epsilon,\mu^3)
\end{pmatrix}.
\end{aligned}
\end{equation}
Here $\breve{c}_{\kappa,b}:=2c_{\kappa,b}-e_{12}$, the constants $e_{11}$, $e_{12}$, $e_{22}$ are defined in
Proposition~\ref{lem:matrix-representation-G}, and $c_{\kappa,b}=(1+b+\kappa)^{1/2}$.
The basis $\widetilde{\mathcal{G}}_{\mu,\epsilon}$ is obtained from the basis $\mathcal{G}$ in
Lemma~\ref{lem:basis-G} by the block-decoupling transformations constructed in Section~\ref{sec:BD}; in the original basis
$\mathcal{G}$ the matrix of $\mathscr{L}_{\mu,\epsilon}$ has a nonzero off-diagonal block $F$ as in \eqref{4.19}.

The eigenvalues of $U$ are
\begin{equation}\label{lambda1-infinite}
\lambda_1^\pm(\mu,\epsilon)
=
\im\,\frac{1}{2}\breve{c}_{\kappa,b}\,\mu
+\im\,\mathcal{O}(\mu\epsilon^2,\mu^2\epsilon,\mu^3)
\pm
\frac{\mu}{8}\sqrt{\Delta_{\mathrm{BF}}(\kappa,b;\mu,\epsilon)},
\end{equation}
where $\Delta_{\mathrm{BF}}$ is the Benjamin--Feir discriminant defined in
\eqref{BF-discriminant-block}--\eqref{BFDF} below.
The pair $\lambda_1^\pm$ has nonzero real parts if and only if
\[
\Delta_{\mathrm{BF}}(\kappa,b;\mu,\epsilon)>0.
\]

The eigenvalues of $S$ are a purely imaginary pair
\begin{equation}\label{lambda0-infinite}
\lambda_0^\pm(\mu,\epsilon)
=
\im c_{\kappa,b}\mu
+\im\,\mathcal{O}(\mu\epsilon^2,\mu^2\epsilon,\mu^3)
\pm
\im\sqrt{\mu}\,(1+\mathcal{O}(\epsilon,\mu)).
\end{equation}
For $\epsilon=0$, the eigenvalues $\lambda_1^\pm(\mu,0)$ and $\lambda_0^\pm(\mu,0)$ agree with the expansions obtained from the unperturbed dispersion relation \eqref{eq:unperturbed-dispersion}.
\end{theorem}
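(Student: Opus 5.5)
The plan follows the Berti--Maspero--Ventura scheme, adapted to the analytic continuation \eqref{D+mu}.

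\textbf{Step 1 (Kato basis).} Since $0$ is an isolated eigenvalue of $\mathcal L_{0,0}$ of algebraic multiplicity four, for $(\mu,\epsilon)$ small the Riesz projector $P_{\mu,\epsilon}$ onto the four eigenvalues near zero is analytic. Kato's transformation operator $U_{\mu,\epsilon}$ is built from $P_{\mu,\epsilon}$ and $P_{0,0}$. Applying it to $\{f_1^+,f_1^-,f_0^+,f_0^-\}$ gives a basis $\mathcal F=\{f_k^\sigma(\mu,\epsilon)\}$ of $\mathcal V_{\mu,\epsilon}$. This basis is symplectic, i.e.\ $(\mathcal J f_k^\sigma,f_{k'}^{\sigma'})$ is the canonical pairing, and it is reversible. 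Both properties hold because $\mathcal L_{\mu,\epsilon}$ is complex Hamiltonian and reversible, so $U_{\mu,\epsilon}$ is symplectic and commutes with $\overline\rho$. In this basis $\mathcal L_{\mu,\epsilon}$ is represented by $\mathtt J_4\mathtt B_{\mu,\epsilon}$, with $\mathtt B=\mathtt B^*$ and entries $(\mathfrak B_{\mu,\epsilon}f_k^\sigma,f_{k'}^{\sigma'})$. Reversibility forces $\mathtt B$ to have the pattern: diagonal entries real, off-diagonal entries within the $f^+,f^-$ pairs purely imaginary. I then Taylor-expand $f_k^\sigma$ to second order in $\epsilon$ and first order in $\mu$, using Lemmas~\ref{lem:pa.exp}--\ref{lem:tau.exp} and \eqref{expfe}. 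This yields the expansion of $\mathtt B_{\mu,\epsilon}=\begin{pmatrix}E&F\\F^*&G\end{pmatrix}$, which I take to be the content of Proposition~\ref{lem:matrix-representation-G} and Lemma~\ref{lem:basis-G}. The expected outcome is
\begin{itemize}
\item $E_{11}=e_{11}\epsilon^2(1+r)-\tfrac{e_{22}}8\mu^2+\dots$ and $E_{12}=\im(\tfrac12 e_{12}\mu+\dots)$;
\item $G_{11}=1+\kappa\mu^2+\dots$ and $G_{22}=\mu+\dots$, where the term $\mu$ comes from the $\mu\Pi_0$ part of $|D+\mu|$ acting on $f_0^-$;
\item $F=\mathcal O(\mu\epsilon,\epsilon^3)$.
\end{itemize}
In the computation I will exploit the exact kernel identities at $\mu=0$: translation gives $\partial_x(\eta_\epsilon,\psi_\epsilon)$, the gauge gives $(0,1)$, and the $\epsilon$- and Bernoulli-derivatives give the generalized vectors. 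These kill several $\epsilon$-only entries, e.g.\ $F$ vanishes at $\mu=0$ to all orders except through known structural terms. Because the basis is normalized symplectically, the factor $\tfrac18$ and the shift $\im c_{\kappa,b}\mu$ from the transport part combine into the stated diagonal $\tfrac{\im}{2}\breve c_{\kappa,b}\mu$.

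\textbf{Step 2 (block decoupling).} This is the main obstacle. Following Section~\ref{sec:BD}, I conjugate by $\exp(S^{(1)})$ and then $\exp(S^{(2)})$, with $S^{(j)}=\mathtt J_4\begin{pmatrix}0&\Sigma\\ \Sigma^*&0\end{pmatrix}$ Hamiltonian and reversibility-preserving. First a single Taylor step removes the $\mathcal O(\mu\epsilon)$ part of $F$. Then the remaining off-diagonal block is removed by solving the Sylvester equation
\[
\mathtt J_2E\,X-X\,\mathtt J_2G=\text{RHS}.
\]
The difficulty is that $\mathtt J_2G$ has eigenvalues $\pm\im\sqrt\mu$ and $\mathtt J_2E=\mathcal O(\mu,\epsilon^2)$, so the Sylvester operator has inverse of size $\mu^{-1/2}$ or even $\mu^{-1}$ in some components. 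I will write the Sylvester system explicitly in the $4$ real unknowns of $X$ allowed by reversibility. The plan is to show that the right-hand side entries carry matching factors of $\mu$: after the first step these are of the form $\mu\,\mathcal O(\epsilon,\mu)$ in exactly the components paired with $G_{22}=\mu$. This uses the exact identity $F(0,\epsilon)=0$ and the structure $G_{22}=\mu(1+\dots)$, so that the solution $X$ is analytic and $\mathcal O(\epsilon,\mu)$. Once $X$ is bounded, the new diagonal blocks change only by terms of the size of the stated remainders ($\mathcal O(\mu\epsilon^2,\mu^2\epsilon,\mu^3)$ on the diagonal and $\mathcal O(\epsilon^3,\mu\epsilon^2,\dots)$ in the lower-left entries). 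A final fixed-point (Neumann series) argument kills the residual off-diagonal terms while preserving these bounds, giving \eqref{US-diag}--\eqref{U-S-infinite}.

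\textbf{Step 3 (eigenvalues).} For $U$ with equal diagonal $\im a$ and off-diagonal product $U_{12}U_{21}$, the eigenvalues are $\im a\pm\sqrt{U_{12}U_{21}}$. Expanding
\[
U_{12}U_{21}=\tfrac{\mu^2}{64}\bigl(8e_{22}e_{11}\epsilon^2-e_{22}^2\mu^2+\dots\bigr)
\]
gives \eqref{lambda1-infinite}, with $\Delta_{\mathrm{BF}}$ defined as $64\mu^{-2}U_{12}U_{21}$; this is analytic after factoring $\mu^2$ out of $U_{12}$. For $S$ the product is $-\mu(1+\mathcal O(\epsilon,\mu))$, so its eigenvalues are $\im c_{\kappa,b}\mu\pm\im\sqrt\mu(1+\dots)$, which are purely imaginary for $\mu>0$. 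Finally, at $\epsilon=0$ all operators are Fourier multipliers, and the eigenvalues can be checked directly against \eqref{eq:unperturbed-dispersion} for $k=0,1$.
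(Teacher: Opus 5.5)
Your architecture (Kato basis, reduction to $\mathsf J_4\mathsf B_{\mu,\epsilon}$, symplectic--reversible block decoupling, explicit $2\times2$ eigenvalues) is the paper's. Step 3 is correct once the blocks have the stated form. The gap is in Step 2, where the actual work lies.

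First, the identity $F(0,\epsilon)=0$ on which you rest the Sylvester estimate is not available. At $\mu=0$, reversibility and the kernel identities $\mathcal B_\epsilon g_1^-(0,\epsilon)=\mathcal B_\epsilon g_0^-(0,\epsilon)=0$ kill $F_{12},F_{21},F_{22}$. But $F_{11}(0,\epsilon)=(\mathcal B_\epsilon g_1^+(0,\epsilon),g_0^+(0,\epsilon))$ pairs the two reversible generalized vectors and is only shown to be $\mathcal O(\epsilon^3)$, as your own Step 1 bound $F=\mathcal O(\mu\epsilon,\epsilon^3)$ concedes. This entry carries no factor $\mu$. It must therefore either be removed or be shown not to feed the $\mu^{-1}$-singular directions of the Sylvester operator. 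The paper removes it exactly, before any homological equation, with the non-singular symplectic shear $\widetilde Y=I_4+m\begin{pmatrix}0&-\mathsf P\\ \mathsf Q&0\end{pmatrix}$, $m=-F_{11}/G_{11}$. This is legitimate because $G_{11}\approx1$ is non-degenerate. Your ``first Taylor step removing the $\mathcal O(\mu\epsilon)$ part of $F$'' is not of this kind: it already requires inverting the singular Sylvester operator.

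Second, and more seriously, the coarse bounds $F=\mathcal O(\mu\epsilon,\epsilon^3)$ and $X=\mathcal O(\epsilon,\mu)$ do not control the entry Step 3 relies on. For $\Delta_{\mathrm{BF}}=64\mu^{-2}U_{12}U_{21}$ to be analytic, $U_{12}=E_{22}$ must stay divisible by $\mu^2$. In the Sylvester system the unknown paired with $G_{22}\approx\mu$ is $x_{11}\approx F_{22}/\mu$, and the first Lie correction to $E_{22}$ is $-x_{11}F_{22}+x_{12}F_{21}$. If $F_{22}$ were only $\mathcal O(\mu\epsilon)$, this correction would be of size $\mu\epsilon^2$. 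That would put an $\epsilon^4/\mu$ term into $\Delta_{\mathrm{BF}}$ and give a $\sqrt\mu$-type splitting of the Benjamin--Feir pair.

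What is needed is the entrywise structure of Proposition~\ref{lem:matrix-representation-G}: $F_{12},F_{21}=\mathcal O(\mu\epsilon,\mu^3)$, $F_{22}=\mathcal O(\mu^2\epsilon,\mu^3)$, and $E_{22}=-\tfrac{e_{22}}{8}\mu^2+\mathcal O(\mu^2\epsilon,\mu^3)$. The last two use, besides the kernel identities, the zero-average normalization $\langle(g_1^-(0,\epsilon))_2\rangle_x=0$. This is exactly why the paper replaces $\mathcal F$ by $\mathcal G$: in the Kato basis, the $\mu\Pi_0$ part of $|D+\mu|$ contributes $\mu\,\langle(f_1^-(0,\epsilon))_2\rangle_x^2$ to $E_{22}$ and a linear-in-$\mu$ term to $F_{22}$.

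With this structure, the explicit inverse $\mathsf A^{-1}=\mu^{-1}(\cdots)$ gives:
\begin{itemize}
\item $X=\mathcal O(\epsilon,\mu^2)$;
\item diagonal corrections of admissible size, e.g.\ $\widetilde E_{22}=\mathcal O(\mu^2\epsilon^2,\mu^4\epsilon,\mu^5)$;
\item a residual coupling $F^{(2)}=\mu^2\,\mathcal O(\epsilon^3,\mu\epsilon^2,\mu^3\epsilon,\mu^5)$.
\end{itemize}
That factor $\mu^2$ is what lets the final fixed point, with generator $\mu S^{(2)}$, close despite $[\mathsf A^{(2)}]^{-1}=\mu^{-1}\mathsf B(\mu,\epsilon)$. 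A plain Neumann series on a residual that is merely small would lose a factor $\mu^{-1}$ at each iteration.
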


\begin{corollary}[Benjamin--Feir instability criterion]\label{cor:BF-index-infinite}
Let $(\kappa,b)\in(\mathbb R_{\geq0}\times\mathbb R_{>0})\setminus\mathfrak R$.
For positive Floquet exponents, consider the sideband scaling
\[
    \mu=\epsilon\nu,\qquad 0<\nu=\mathcal O(1).
\]
Then the Benjamin--Feir discriminant satisfies
\begin{equation}\label{BFDF-sideband-corollary}
\Delta_{\mathrm{BF}}(\kappa,b;\epsilon\nu,\epsilon)
=
\epsilon^2 e_{22}(\kappa,b)
\Bigl(8e_{11}(\kappa,b)-e_{22}(\kappa,b)\nu^2\Bigr)
+\mathcal O(\epsilon^3),
\end{equation}
locally uniformly for bounded positive $\nu$. Define
\[
    \operatorname{Ind}_\infty(\kappa,b)
    :=8e_{11}(\kappa,b)e_{22}(\kappa,b).
\]
If $\operatorname{Ind}_\infty(\kappa,b)>0$, then for all sufficiently
small $\epsilon>0$ there is a nonempty interval
\begin{equation}\label{nu-star-corollary}
    0<\nu<\nu_*(\kappa,b;\epsilon),
    \qquad
    \nu_*(\kappa,b;\epsilon)
    =
    \frac{\sqrt{8e_{22}(\kappa,b)e_{11}(\kappa,b)}}
    {|e_{22}(\kappa,b)|}\,(1+\mathcal O(\epsilon)),
\end{equation}
on which the pair $\lambda_1^\pm(\epsilon\nu,\epsilon)$ has opposite
nonzero real parts. By the symmetry
\[
\sigma(\mathcal{L}_{-\mu,\epsilon})
=
\overline{\sigma(\mathcal{L}_{\mu,\epsilon})},
\]
the same conclusion holds for the corresponding negative Floquet
exponents. Hence the full unstable sideband interval is
\[
0<|\nu|<\nu_*(\kappa,b;\epsilon).
\]
If $\operatorname{Ind}_\infty(\kappa,b)<0$, then the four small
eigenvalues remain purely imaginary in the corresponding small-amplitude
sideband regime. On the transition set
$\operatorname{Ind}_\infty(\kappa,b)=0$, the leading quadratic criterion
degenerates and higher-order terms in the discriminant are required.
\end{corollary}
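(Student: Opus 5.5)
The corollary follows from Theorem~\ref{Complete BF thm} by inserting the sideband scaling into the Benjamin--Feir discriminant and reading off its sign. First, I will use the explicit form of $\Delta_{\mathrm{BF}}$ from \eqref{BF-discriminant-block}--\eqref{BFDF}, i.e.\ the discriminant of the block $U$ in \eqref{U-S-infinite}. Since $U$ has equal diagonal entries, its eigenvalues are the common diagonal entry plus or minus $\sqrt{U_{12}U_{21}}$. Writing $U_{12}U_{21}=\frac{\mu^2}{64}\Delta_{\mathrm{BF}}$ gives
\[
\Delta_{\mathrm{BF}}=8e_{22}e_{11}\epsilon^2-e_{22}^2\mu^2+\mathcal O(\epsilon^3,\mu\epsilon^2,\mu^2\epsilon,\mu^3),
\]
where the remainder is obtained by dividing the product of the remainders of the off-diagonal entries by $\mu^2$; the structure of $U_{12}$, which is $\mu^2$ times an analytic factor, guarantees this division is legitimate.

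Setting $\mu=\epsilon\nu$ with $\nu$ in a compact subset of $(0,\infty)$, every remainder monomial $\epsilon^3,\mu\epsilon^2,\mu^2\epsilon,\mu^3$ becomes $\epsilon^3$ times a bounded power of $\nu$. The remainders are analytic and bounded by sums of such monomials, so this gives \eqref{BFDF-sideband-corollary} locally uniformly in $\nu$.

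For the sign analysis, factor out $\epsilon^2$ and write $\Delta_{\mathrm{BF}}/\epsilon^2=g(\nu)+\epsilon\, r(\nu,\epsilon)$, where $g(\nu)=e_{22}(8e_{11}-e_{22}\nu^2)$ and $r$ is analytic and bounded. The coefficient $e_{22}\neq0$ whenever $\operatorname{Ind}_\infty\neq0$.
\begin{itemize}
\item \emph{Case $\operatorname{Ind}_\infty>0$.} Here $g(0)=\operatorname{Ind}_\infty>0$, $g$ is strictly decreasing in $\nu^2$, and $g$ has the simple positive root $\nu_0=\sqrt{8e_{11}e_{22}}/|e_{22}|$ with $g'(\nu_0)\neq0$. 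The implicit function theorem applied to $g(\nu)+\epsilon r(\nu,\epsilon)=0$ near $\nu_0$ gives a root $\nu_*=\nu_0(1+\mathcal O(\epsilon))$. The discriminant is positive on $(0,\nu_*)$: on $[\delta,\nu_0-\delta]$ because $g\ge c>0$ there, and near $\nu_0$ by monotonicity. Near $\nu=0$ the remainder is $\mathcal O(\epsilon^3)$ uniformly, since the bound holds for all bounded $\nu$, so positivity persists down to $\nu=0$ for $\epsilon$ small. Then \eqref{lambda1-infinite} shows that $\lambda_1^\pm$ have real parts $\pm\frac{\mu}{8}\sqrt{\Delta_{\mathrm{BF}}}\neq0$, because the remaining terms are purely imaginary.
\item \emph{Case $\operatorname{Ind}_\infty<0$.} Here $g(\nu)\le \operatorname{Ind}_\infty-e_{22}^2\nu^2<0$ uniformly, so $\Delta_{\mathrm{BF}}<0$ for small $\epsilon$. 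The square root is then imaginary, $\lambda_1^\pm$ is purely imaginary, and $\lambda_0^\pm$ is purely imaginary by \eqref{lambda0-infinite}.
\item \emph{Case $\operatorname{Ind}_\infty=0$.} The leading term is degenerate, and the corollary makes no claim beyond noting this.
\end{itemize}

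Negative exponents follow from \eqref{bloch-reality-symmetry}: conjugation preserves the moduli of real parts. It must be applied to the physical fiber, which coincides with the analytic continuation only for $\mu\ge0$.

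The main obstacle is the claim that the real part of $\lambda_1^\pm$ is exactly $\pm\frac{\mu}{8}\mathrm{Re}\sqrt{\Delta_{\mathrm{BF}}}$. This requires $\Delta_{\mathrm{BF}}$ to be real, which follows from the Hamiltonian--reversible structure of $U$: its diagonal entries are purely imaginary and its off-diagonal entries are real. I will invoke this structure as part of the normal form in Theorem~\ref{Complete BF thm} rather than reprove it.
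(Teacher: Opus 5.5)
Your proposal is correct and follows the route the paper itself takes. The paper states the corollary without a separate proof, as a direct consequence of substituting $\mu=\epsilon\nu$ into \eqref{BFDF}, reading off the sign of $\Delta_{\mathrm{BF}}$ in \eqref{lambda1-infinite}, and handling negative exponents through \eqref{bloch-reality-symmetry} on the physical fiber; it reuses exactly this reasoning in the proof of Corollary~\ref{cor:figure-eight}. Your implicit-function and monotonicity argument for $\nu_*$, together with your checks that $\Delta_{\mathrm{BF}}$ is real and that $E^{(3)}_{22}$ is divisible by $\mu^2$, simply spells out details the paper leaves to the reader.
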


\begin{corollary}[Figure-eight geometry]
\label{cor:figure-eight}
Assume that
\[
\operatorname{Ind}_{\infty}(\kappa,b)>0,
\qquad
\breve c_{\kappa,b}\neq 0.
\]
Then, for all sufficiently small $\epsilon>0$, there exists
\[
\mu_*(\epsilon)
=
\epsilon\frac{\sqrt{\operatorname{Ind}_{\infty}(\kappa,b)}}
{|e_{22}(\kappa,b)|}
\big(1+O(\epsilon)\big)
\]
such that the curves
\[
\Gamma_\epsilon^+
:=
\bigcup_{\sigma=\pm}
\left\{
\lambda_1^\sigma(\mu,\epsilon):
0\leq \mu\leq\mu_*(\epsilon)
\right\},
\qquad
\Gamma_\epsilon^-:=\overline{\Gamma_\epsilon^+},
\]
are simple closed curves satisfying
\[
\Gamma_\epsilon^+\cap\Gamma_\epsilon^-=\{0\}.
\]
They intersect transversely at the origin. Hence
$\Gamma_\epsilon^+\cup\Gamma_\epsilon^-$ is a local figure-eight curve (see Figure~\ref{fig:infinite-bf-figure-eight}).
\end{corollary}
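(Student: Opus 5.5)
The plan is to read the geometry directly off the eigenvalue formula \eqref{lambda1-infinite} for the block $U$ of Theorem~\ref{Complete BF thm}, in the sideband variable $\mu=\epsilon\nu$.

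\emph{Step 1: the spectral picture is symmetric.} First I record that, by the Hamiltonian--reversible structure of $U$ in \eqref{U-S-infinite}, the diagonal entries of $U$ are equal and purely imaginary, $\im\alpha(\mu,\epsilon)$, while the off-diagonal entries $\beta,\gamma$ are real. Hence
\[
\lambda_1^\pm=\im\alpha\pm\sqrt{\beta\gamma},
\qquad
\beta\gamma=\frac{\mu^2}{64}\Delta_{\mathrm{BF}},
\]
with $\alpha,\Delta_{\mathrm{BF}}$ real-analytic. So whenever $\Delta_{\mathrm{BF}}>0$,
\[
\operatorname{Re}\lambda_1^\pm=\pm\frac{\mu}{8}\sqrt{\Delta_{\mathrm{BF}}},
\qquad
\operatorname{Im}\lambda_1^\pm=\alpha.
\]

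\emph{Step 2: locate the top point $\mu_*(\epsilon)$.} Set
\[
D(\nu,\epsilon):=\epsilon^{-2}\Delta_{\mathrm{BF}}(\kappa,b;\epsilon\nu,\epsilon)=e_{22}\bigl(8e_{11}-e_{22}\nu^2\bigr)+\mathcal O(\epsilon),
\]
which is analytic in $(\nu,\epsilon)$ by Corollary~\ref{cor:BF-index-infinite}. At $\epsilon=0$ its positive root is $\nu_0=\sqrt{\operatorname{Ind}_\infty}/|e_{22}|$. The root is simple, since $\partial_\nu D=-2e_{22}^2\nu_0\neq0$. The implicit function theorem then gives an analytic root $\nu_*(\epsilon)=\nu_0(1+\mathcal O(\epsilon))$, with $D>0$ on $(0,\nu_*)$. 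This defines $\mu_*(\epsilon):=\epsilon\nu_*(\epsilon)$, which has the claimed form.

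\emph{Step 3: each branch is a graph over the imaginary axis.} On $[0,\nu_*]$ write
\[
x(\nu)=\frac{\epsilon^2\nu}{8}\sqrt{D(\nu,\epsilon)},
\qquad
y(\nu)=\alpha(\epsilon\nu,\epsilon)=\epsilon\Bigl(\tfrac12\breve c_{\kappa,b}\nu+\mathcal O(\epsilon)\Bigr).
\]
Since $\alpha/\epsilon$ is analytic in $(\nu,\epsilon)$, Cauchy estimates give $y'(\nu)=\epsilon\bigl(\tfrac12\breve c_{\kappa,b}+\mathcal O(\epsilon)\bigr)$. This is nonzero because $\breve c_{\kappa,b}\neq0$, so $y$ is strictly monotone. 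Consequently the two branches $\lambda_1^\pm$ are the graphs $x=\pm X(y)$ over the interval between $y(0)=0$ and $y_*=y(\nu_*)$. Here $X>0$ in the interior, $X$ vanishes at both endpoints, and the branches meet at $\lambda=0$ (where $\mu=0$) and at the collision point $\lambda_1^+=\lambda_1^-=\im y_*$ (where $\Delta_{\mathrm{BF}}=0$). Two graphs of $\pm X$ glued at their common endpoints form a simple closed curve, which is $\Gamma^+_\epsilon$. The curve $\Gamma^-_\epsilon=\overline{\Gamma^+_\epsilon}$ is its reflection in the real axis. Because $y$ has the fixed sign of $\breve c_{\kappa,b}$ away from $\nu=0$, the curve $\Gamma^+_\epsilon$ lies strictly in one open half-plane except at $0$, so $\Gamma^+_\epsilon\cap\Gamma^-_\epsilon=\{0\}$.

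\emph{Step 4: transversality at the origin.} Near $\nu=0$ we have $\sqrt D=\sqrt{\operatorname{Ind}_\infty}+\mathcal O(\epsilon,\nu)$, which is analytic because $D(0,\epsilon)>0$. Hence the tangent slopes of the branches of $\Gamma^+_\epsilon$ at $0$ are
\[
\frac{dx}{dy}=\pm\frac{\epsilon\sqrt{\operatorname{Ind}_\infty}}{4\,\breve c_{\kappa,b}}\bigl(1+\mathcal O(\epsilon)\bigr),
\]
finite and nonzero. The reflected curve $\Gamma^-_\epsilon$ has the opposite slopes, so the four tangent rays at $0$ are pairwise distinct. Thus the two loops cross transversally and form a figure eight.

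The main obstacle I expect is the remainder control in Steps 1 and 3. One must check that the remainders in $U$ keep their real/imaginary character, so that $\Delta_{\mathrm{BF}}$ is genuinely real and the splitting into real and imaginary parts above is exact. One must also check that $\alpha$ divided by $\epsilon$ is analytic in $(\nu,\epsilon)$ uniformly, so that differentiating the $\mathcal O$-terms in $\nu$ is legitimate. I would obtain both from the reversibility of the basis $\widetilde{\mathcal G}_{\mu,\epsilon}$ and from the analyticity in $(\mu,\epsilon)$ supplied by Theorem~\ref{Complete BF thm}.
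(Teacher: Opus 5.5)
Your proposal is correct and follows the same route as the paper's proof. Both arguments split $\lambda_1^\pm=\im\alpha\pm\frac{\mu}{8}\sqrt{\Delta_{\mathrm{BF}}}$, locate the positive zero of the discriminant in the sideband variable $\mu=\epsilon\nu$, and use $\partial_\mu\alpha=\frac12\breve c_{\kappa,b}+\mathcal O(\epsilon^2)\neq0$ to get a simple closed loop lying in one half-plane; reflecting by conjugation and reading off two distinct tangent lines at the origin then gives the figure eight. Your implicit-function argument for $\nu_*(\epsilon)$, the graph description $x=\pm X(y)$, and the explicit tangent slopes only make explicit steps that the paper leaves implicit.
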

\begin{proof}
Set $\mu=\epsilon\nu$. By Corollary~\ref{cor:BF-index-infinite},
the discriminant has a unique positive zero
\[
\nu_*(\epsilon)
=
\frac{\sqrt{\operatorname{Ind}_{\infty}(\kappa,b)}}
{|e_{22}(\kappa,b)|}
+O(\epsilon),
\]
and is positive for $0<\nu<\nu_*(\epsilon)$. Write
\[
\lambda_1^\pm(\mu,\epsilon)
=
\im\alpha(\mu,\epsilon)\pm\beta(\mu,\epsilon),
\]
where
\[
\alpha(\mu,\epsilon)
=
\frac12\breve c_{\kappa,b}\mu
+O(\mu\epsilon^2,\mu^2\epsilon,\mu^3),
\qquad
\beta(\mu,\epsilon)
=
\frac{\mu}{8}\sqrt{\Delta_{\mathrm BF}(\kappa,b;\mu,\epsilon)}.
\]
Thus $\beta>0$ in $(0,\mu_*(\epsilon))$ and vanishes at the
endpoints. Moreover,
\[
\partial_\mu\alpha
=
\frac12\breve c_{\kappa,b}+O(\epsilon^2)\neq0,
\]
so $\alpha$ is strictly monotone. Therefore the two positive-Floquet
branches form a simple closed curve lying, except for the origin, in
one open half-plane. The negative-Floquet branches form its complex
conjugate, and the two loops meet only at the origin. Finally,
\[
\partial_\mu\lambda_1^\pm(0,\epsilon)
=
\frac{\im}{2}\breve c_{\kappa,b}
\pm
\frac{\epsilon}{8}
\sqrt{\operatorname{Ind}_{\infty}(\kappa,b)}
+O(\epsilon^2),
\]
which gives two distinct tangent lines at the origin.
\end{proof}

\begin{remark}
At $\epsilon=0$, and away from the possible degeneracy $e_{22}=0$, \eqref{lambda1-infinite} gives
\[
\lambda_1^\pm(\mu,0)
=
\im\,\frac{1}{2}\breve{c}_{\kappa,b}\mu
\pm
\im\,\frac{|e_{22}|}{8}\,\mu^2
+\mathcal{O}(\mu^3),
\]
in agreement with the Krein splitting of the eigenvalue branches emanating from the fourfold zero eigenvalue of
$\mathcal{L}_{0,0}$.
If $e_{22}=0$, the displayed $\mu^2$ splitting vanishes and the next-order terms in the unperturbed expansion must be kept.
\end{remark}

\subsection{Exact non-resonant phase diagram}\label{subsec:phase-diagram}

The coefficients entering the Benjamin--Feir index are
\begin{equation}\label{eq:e11-e22-phase}
\begin{aligned}
e_{11}(\kappa,b)
&=
-\frac{P_{11}(\kappa,b)}
{8\sqrt{1+\kappa+b}\,R_2(\kappa,b)},
\qquad
e_{22}(\kappa,b)
&=
-\frac{P_{22}(\kappa,b)}
{(1+\kappa+b)^{3/2}},
\end{aligned}
\end{equation}
where
\begin{equation}\label{eq:P11-P22-R2}
\begin{aligned}
P_{11}(\kappa,b)
&:=
88b^2+(6\kappa-54)b+2\kappa^2+\kappa+8,\\
P_{22}(\kappa,b)
&:=
15b^2+(22\kappa+30)b+3\kappa^2+6\kappa-1,\\
R_2(\kappa,b)
&:=
14b+2\kappa-1.
\end{aligned}
\end{equation}
Consequently,
\begin{equation}\label{eq:sign-index-factorization}
\operatorname{sgn}\operatorname{Ind}_\infty(\kappa,b)
=
\operatorname{sgn}
\bigl(
P_{11}(\kappa,b)P_{22}(\kappa,b)R_2(\kappa,b)
\bigr)
\end{equation}
for every admissible parameter with
\(e_{11}(\kappa,b)e_{22}(\kappa,b)\neq0\).

Set
\[
\kappa_{22}
:=
-1+\frac{2}{\sqrt3},
\qquad
\kappa_{11}
:=
\frac{-125+30\sqrt{22}}{167},
\]
and define
\begin{align}
b_{22}(\kappa)
&:=
-1-\frac{11}{15}\kappa
+\frac{2}{15}\sqrt{19\kappa^2+60\kappa+60},
&&
0\leq\kappa\leq\kappa_{22},
\label{eq:b22-curve}\\
b_{\pm}(\kappa)
&:=
\frac{
27-3\kappa
\pm\sqrt{25-250\kappa-167\kappa^2}
}{88},
&&
0\leq\kappa\leq\kappa_{11},
\label{eq:bpm-curves}\\
b_R(\kappa)
&:=
\frac{1-2\kappa}{14},
&&
0\leq\kappa<\frac12.
\label{eq:bR-curve}
\end{align}

\begin{proposition}[Exact leading-order phase diagram]
\label{prop:exact-phase-diagram}
In the admissible set \(\mathcal P_\infty\), away from the regular
transition curves \(P_{11}=0\) and \(P_{22}=0\), the modulationally
stable set is the disjoint union
\begin{equation}\label{eq:stable-set-exact}
\begin{aligned}
\mathcal S_{\mathrm{low}}
&:=
\Bigl\{
0\leq\kappa<\kappa_{22},
\quad
b_{22}(\kappa)<b<b_R(\kappa)
\Bigr\}\\
&\hspace{2.5em}
\cup
\Bigl\{
\kappa_{22}\leq\kappa<\tfrac12,
\quad
0<b<b_R(\kappa)
\Bigr\},\\
\mathcal S_{\mathrm{island}}
&:=
\Bigl\{
0\leq\kappa<\kappa_{11},
\quad
b_-(\kappa)<b<b_+(\kappa)
\Bigr\},
\end{aligned}
\end{equation}
intersected with \(\mathcal P_\infty\).
On these sets
\(\operatorname{Ind}_\infty(\kappa,b)<0\), and the four small Bloch
eigenvalues remain purely imaginary in the sideband regime.
At every other non-resonant parameter away from
\(P_{11}P_{22}=0\), one has
\(\operatorname{Ind}_\infty(\kappa,b)>0\) and a nonempty interval of
unstable sideband exponents.

Moreover, in the physical quadrant, \(P_{22}=0\) is the graph
\(b=b_{22}(\kappa)\) for \(0\leq\kappa<\kappa_{22}\), while
\(P_{11}=0\) consists of the two graphs \(b=b_\pm(\kappa)\) for
\(0\leq\kappa<\kappa_{11}\). Their relative positions are
\begin{equation}\label{eq:curve-ordering}
0
<
b_{22}(\kappa)
<
b_R(\kappa)
<
b_-(\kappa)
<
b_+(\kappa),
\qquad
0\leq\kappa<\kappa_{11}.
\end{equation}
At \(\kappa=\kappa_{11}\), the two roots of \(P_{11}\) coalesce:
\(b_-(\kappa_{11})=b_+(\kappa_{11})\).
\end{proposition}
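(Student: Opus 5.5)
The plan is to reduce the sign of $\operatorname{Ind}_\infty$ to an elementary sign analysis of three explicit polynomials, using \eqref{eq:sign-index-factorization}. From \eqref{eq:e11-e22-phase}, $8e_{11}e_{22}=P_{11}P_{22}/\bigl((1+\kappa+b)^{2}R_2\bigr)$, and $R_2\neq0$ on $\mathcal P_\infty$ because $\mathfrak R_2=\{R_2=0\}$. Hence $\operatorname{sgn}\operatorname{Ind}_\infty=\operatorname{sgn}(P_{11}P_{22}R_2)$. The stable set $\{\operatorname{Ind}_\infty<0\}$ is then exactly where an odd number of the three factors are negative. The stability and instability conclusions themselves follow from Corollary~\ref{cor:BF-index-infinite}, so the whole proof is the description of the sign regions in the quadrant $\kappa\ge0$, $b>0$.

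\textbf{Sign of each factor in $b$ at fixed $\kappa$.} We treat each factor as a quadratic or linear function of $b$.

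For $P_{22}$, the discriminant in $b$ is $16(19\kappa^2+60\kappa+60)>0$, and the larger root is $b_{22}(\kappa)$ as in \eqref{eq:b22-curve}. The product of the roots has the sign of $3\kappa^2+6\kappa-1$. So for $0\le\kappa<\kappa_{22}$ exactly one root is positive, namely $b_{22}(\kappa)>0$, and $P_{22}<0$ precisely for $0<b<b_{22}(\kappa)$. For $\kappa\ge\kappa_{22}$, both roots are $\le0$, so $P_{22}>0$ for all $b>0$.

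For $P_{11}$, the discriminant is $4(25-250\kappa-167\kappa^2)$. This is nonnegative iff $\kappa\le\kappa_{11}$, with equality (coalescence $b_-=b_+$) exactly at $\kappa=\kappa_{11}$. The roots are $b_\pm(\kappa)$ as in \eqref{eq:bpm-curves}. Their sum and product are positive, so both roots are positive and $P_{11}<0$ exactly on $b_-<b<b_+$. For $\kappa>\kappa_{11}$, $P_{11}>0$.

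Finally, $R_2<0$ iff $b<b_R(\kappa)$, which is possible only for $\kappa<\tfrac12$.

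\textbf{Curve ordering \eqref{eq:curve-ordering}.} This is the main step. We will show the following two inequalities.

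The first is $b_{22}(\kappa)<b_R(\kappa)$ on $[0,\kappa_{22})$. Since $P_{22}$ is increasing in $b$ for $b>0$, it suffices to check $P_{22}(\kappa,b_R(\kappa))>0$. Substituting $b=(1-2\kappa)/14$ gives an explicit quadratic in $\kappa$. We verify its positivity on $[0,\kappa_{22}]$ by checking endpoint values and the location of its vertex. At $\kappa=0$ its value is $15/196+30/14-1>0$.

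The second is $b_R(\kappa)<b_-(\kappa)$ on $[0,\kappa_{11}]$. Here we use that $b_R\le1/14$ lies to the left of the vertex $(27-3\kappa)/88\ge 0.3$ of $P_{11}$. Hence $b_R<b_-$ iff $P_{11}(\kappa,b_R(\kappa))>0$, again a quadratic in $\kappa$. Its value at $\kappa=0$ is $88/196-54/14+8>0$, and it stays positive on the short interval $[0,\kappa_{11}]$ with $\kappa_{11}\approx0.094$.

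Since $\kappa_{11}<\kappa_{22}<\tfrac12$, all curves are defined where the ordering is asserted. The expected obstacle is only the bookkeeping of these one-variable polynomial inequalities. A clean way to handle it is to bound each quadratic below by its minimum over the interval.

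\textbf{Assembling the regions.} We count negative factors in each vertical slice.

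For $0\le\kappa<\kappa_{22}$, the ordering gives exactly one negative factor in the following strips:
\begin{itemize}
\item on $b_{22}<b<b_R$, only $R_2<0$;
\item on $b_-<b<b_+$ (when $\kappa<\kappa_{11}$), only $P_{11}<0$.
\end{itemize}
Below $b_{22}$, both $P_{22}$ and $R_2$ are negative, giving an even count. In the remaining strips every factor is positive.

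For $\kappa_{22}\le\kappa<\tfrac12$, only $R_2$ can be negative, and it is negative exactly when $b<b_R$. For $\kappa\ge\tfrac12$, all three factors are positive.

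This yields exactly $\mathcal S_{\mathrm{low}}\cup\mathcal S_{\mathrm{island}}$ as the set where $\operatorname{Ind}_\infty<0$. The two pieces are disjoint because $b_R<b_-$. Intersecting with $\mathcal P_\infty$ and excluding $P_{11}P_{22}=0$ completes the description, and Corollary~\ref{cor:BF-index-infinite} supplies the spectral conclusions on each region.
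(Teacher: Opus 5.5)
Your proposal is correct and follows the paper's proof. Both arguments use the sign factorization, a quadratic-formula analysis of $P_{22}$ and $P_{11}$ in $b$, and positivity of both polynomials on the resonance line $b=b_R(\kappa)$ to get the ordering, then count the negative factors strip by strip.

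The only minor difference is how positivity on $b=b_R$ becomes the ordering. The paper argues by continuity and non-crossing from the base point $\kappa=0$, whereas you argue pointwise, using monotonicity of $P_{22}$ and the vertex location of $P_{11}$. The restricted polynomials are explicitly $P_{11}(\kappa,b_R(\kappa))=\frac{9(4\kappa+5)^2}{49}$ and $P_{22}(\kappa,b_R(\kappa))=\frac{32\kappa^2+584\kappa+239}{196}$. Both are manifestly positive for $\kappa\geq0$, so your endpoint-and-vertex bookkeeping is immediate.
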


\begin{proof}
The formulas \eqref{eq:b22-curve} and \eqref{eq:bpm-curves} follow
from the quadratic formula applied to \(P_{22}\) and \(P_{11}\),
respectively, viewed as polynomials in \(b\).
The positive root of \(P_{22}(\kappa,\cdot)\) exists precisely for
\(0\leq\kappa<\kappa_{22}\).
Since \(P_{22}(\kappa,0)<0\) in this interval and the leading
coefficient is positive, one has
\[
P_{22}(\kappa,b)<0
\quad\text{for}\quad
0<b<b_{22}(\kappa),
\qquad
P_{22}(\kappa,b)>0
\quad\text{for}\quad
b>b_{22}(\kappa).
\]
For \(\kappa\geq\kappa_{22}\), the polynomial
\(P_{22}(\kappa,b)\) is positive for every \(b>0\).

Likewise, \(P_{11}(\kappa,\cdot)\) has two distinct positive roots
precisely for \(0\leq\kappa<\kappa_{11}\), and
\[
P_{11}(\kappa,b)<0
\quad\Longleftrightarrow\quad
b_-(\kappa)<b<b_+(\kappa).
\]
At \(\kappa=\kappa_{11}\) the two roots coincide, while for
\(\kappa>\kappa_{11}\) one has
\(P_{11}(\kappa,b)>0\) for all \(b>0\).

On the second-harmonic resonance line \(b=b_R(\kappa)\), one
computes
\[
P_{11}\bigl(\kappa,b_R(\kappa)\bigr)
=
\frac{9(4\kappa+5)^2}{49}
>0,
\qquad
P_{22}\bigl(\kappa,b_R(\kappa)\bigr)
=
\frac{32\kappa^2+584\kappa+239}{196}
>0.
\]
At \(\kappa=0\),
\[
0
<
b_{22}(0)
<
b_R(0)
<
b_-(0)
=
\frac14
<
b_+(0)
=
\frac4{11}.
\]
The curves involved are continuous on their respective domains.
Since neither \(P_{22}(\kappa,b_R(\kappa))\) nor
\(P_{11}(\kappa,b_R(\kappa))\) vanishes, the roots of \(P_{22}\) and
\(P_{11}\) cannot cross the resonance graph \(b=b_R(\kappa)\).
Moreover, \(b_-(\kappa)<b_+(\kappa)\) for
\(0\leq\kappa<\kappa_{11}\), with equality only at
\(\kappa=\kappa_{11}\).
This proves \eqref{eq:curve-ordering}.

Finally, \(R_2(\kappa,b)<0\) below \(b=b_R(\kappa)\) and
\(R_2(\kappa,b)>0\) above it.
Combining this observation with the sign information for
\(P_{11}\) and \(P_{22}\), and using
\eqref{eq:sign-index-factorization}, yields precisely the two stable
components in \eqref{eq:stable-set-exact}; the index is positive at
all remaining non-resonant parameters away from
\(P_{11}P_{22}=0\).
\end{proof}

\section{Perturbative Approach to the Separated Eigenvalues}

In this section, we analyze the splitting of the eigenvalues of $\mathcal{L}_{\mu, \epsilon}$ close to 0 for small values of $\mu$ and $\epsilon$, using Kato’s similarity transformation theory \cite[I-§4-6, II-§4]{Kato1966} and \cite{BMV1, BMV3, BMV_ed}. To this end, it is convenient to rewrite the operator $\mathcal{L}_{\mu, \epsilon}$ in \eqref{mathcal L mu e} as
\begin{equation} \label{mathcal L= ichmu+mathscr L}
\mathcal{L}_{\mu, \epsilon} = \im  c_{\kappa,b} \mu + \mathscr{L}_{\mu, \epsilon}, \quad \mu > 0,
\end{equation}
where, using also \eqref{D+mu}, $\mathscr{L}_{\mu, \epsilon}$ is the Hamiltonian operator
\begin{equation} \label{mathscr L mu e}
\mathscr{L}_{\mu, \epsilon} = \mathcal{J} \mathcal{B}_{\mu, \epsilon},
\end{equation}
with $\mathcal{B}_{\mu, \epsilon}$ the self-adjoint operator
\begin{equation} \label{mathcal B mu e}
\mathcal{B}_{\mu, \epsilon} := \begin{bmatrix} 1 + a_{\epsilon}(x)-\kappa\,\tau_{\mu,\e}+b\beta_{\mu,\e} & -( c_{\kappa,b} + p_{\epsilon}(x)) \partial_x - \im \mu p_{\epsilon}(x) \\ \partial_x \circ( c_{\kappa,b} + p_{\epsilon}(x)) + \im \mu p_{\epsilon}(x) & |D + \mu| \end{bmatrix} \ ,
\quad \beta_{\mu,\e} \mbox{ in } \eqref{Sigma} \ . 
\end{equation}
In addition  $\mathscr{L}_{\mu, \epsilon}$   is also complex-reversible, namely it satisfies, by \eqref{reversible},
\begin{equation} \label{mathscr rho=-rho mathscr}
\mathscr{L}_{\mu, \epsilon} \circ \bar{\rho} = - \bar{\rho} \circ \mathscr{L}_{\mu, \epsilon},
\end{equation}
whereas $\mathcal{B}_{\mu, \epsilon}$ is reversibility-preserving, i.e. fulfills \eqref{B rho=rho B}. Note also that $\mathcal{B}_{0, \epsilon}$ is a real operator.

The scalar operator $\im  c_{\kappa,b} \mu \equiv \im  c_{\kappa,b} \mu \,\text{Id}$ just translates the spectrum of $\mathcal{L}_{\mu, \epsilon}$ along the imaginary axis of the quantity $\im  c_{\kappa,b} \mu$, that is, in view of \eqref{mathcal L= ichmu+mathscr L},
\begin{equation}
\sigma(\mathcal{L}_{\mu, \epsilon}) = \im  c_{\kappa,b} \mu + \sigma(\mathscr{L}_{\mu, \epsilon}).
\end{equation}
Thus in the sequel we focus on studying the spectrum of $\mathscr{L}_{\mu, \epsilon}$.

Note also that
$\mathscr L_{0,\epsilon}=\mathcal L_{0,\epsilon}$
for all sufficiently small $|\epsilon|$. At $\epsilon=0$, the
eigenvalue zero has algebraic multiplicity four and geometric
multiplicity three, with generalized kernel
\[
\operatorname{span}
\{f_1^+,f_1^-,f_0^+,f_0^-\}.
\]
Moreover, the spectrum is separated as in
\eqref{eq:sigma-decomposition-L00}.

For sufficiently small $|\epsilon|$,
Proposition~\ref{prop:periodic-generalized-kernel} identifies
$\mathcal V_{0,\epsilon}$ with the four-dimensional generalized
kernel of $\mathscr L_{0,\epsilon}$ and gives
\[
\left(
\mathscr L_{0,\epsilon}
\big|_{\mathcal V_{0,\epsilon}}
\right)^2=0.
\]
If, in addition,
\[
\partial_\epsilon c_\epsilon\neq0,
\]
then
\[
\ker\left(
\mathscr L_{0,\epsilon}
\big|_{\mathcal V_{0,\epsilon}}
\right)
=
\operatorname{span}_{\mathbb C}
\{U_1(\epsilon),U_2(\epsilon)\},
\]
and hence the eigenvalue zero has geometric multiplicity two.
In particular, since
\[
\partial_\epsilon c_\epsilon
=
2c_2\epsilon+\mathcal O(\epsilon^2),
\]
this conclusion holds for all sufficiently small
$\epsilon\neq0$ whenever
$e_{11}(\kappa,b)=2c_2\neq0$.

We remark that, in view of \eqref{D+mu} and the definitions of
$\tau_{\mu,\epsilon}$ and $\beta_{\mu,\epsilon}$, the operator
$\mathscr{L}_{\mu,\epsilon}$ depends analytically on $\mu$.  The operator $\mathscr{L}_{\mu, \epsilon}: Y \subset X \to X$ has domain $Y := H^4(\mathbb{T},\mathbb{C})\times H^1(\mathbb{T},\mathbb{C})$ and range $X := L^2(\mathbb{T},\mathbb{C})\times L^2(\mathbb{T},\mathbb{C})$.

\begin{lemma} \label{kato thm}
Let $\Gamma$ be a closed, counterclockwise-oriented curve around $0$ in the complex plane separating $\sigma' (\mathscr{L}_{0,0}) = \{0\}$ and the other part of the spectrum $\sigma'' (\mathscr{L}_{0,0})$ in \eqref{eq:sigma-decomposition-L00}. There exist $\mu_0,\,\epsilon_0>0$ such that for any $(\mu, \epsilon) \in B(\mu_0) \times B(\epsilon_0)$ the following statements hold:

\begin{enumerate}
    \item \textit{The curve $\Gamma$ belongs to the resolvent set of the operator $\mathscr{L}_{\mu,\epsilon} : Y \subset X \to X$ defined in \eqref{mathscr L mu e}.}
    \item \textit{The operators}
    \begin{equation} \label{Projection P mu e}
        P_{\mu,\epsilon} := - \frac{1}{2\pi \im} \oint_\Gamma (\mathscr{L}_{\mu,\epsilon} - \lambda)^{-1} \ d\lambda : X \to Y
    \end{equation}
    \textit{are well-defined projectors commuting with $\mathscr{L}_{\mu,\epsilon}$, i.e., $P_{\mu,\epsilon}^2 = P_{\mu,\epsilon}$ and $P_{\mu,\epsilon} \mathscr{L}_{\mu,\epsilon} = \mathscr{L}_{\mu,\epsilon} P_{\mu,\epsilon}$. The map $(\mu, \epsilon) \mapsto P_{\mu,\epsilon}$ is analytic from $B(\mu_0) \times B(\epsilon_0)$ to $\mathcal{L}(X,Y)$.}
    \item \textit{The domain $Y$ of the operator $\mathscr{L}_{\mu,\epsilon}$ decomposes as the direct sum}
    \begin{equation} \label{Y=V+ker P}
        Y = \mathcal{V}_{\mu,\epsilon} \oplus \ker(P_{\mu,\epsilon}), \quad \mathcal{V}_{\mu,\epsilon} := \operatorname{Rg}(P_{\mu,\epsilon}) = \ker(\operatorname{Id} - P_{\mu,\epsilon}),
    \end{equation}
    \textit{of closed invariant subspaces, namely $\mathscr{L}_{\mu,\epsilon} : \mathcal{V}_{\mu,\epsilon} \to \mathcal{V}_{\mu,\epsilon}$, $\mathscr{L}_{\mu,\epsilon} : \ker(P_{\mu,\epsilon}) \to \ker(P_{\mu,\epsilon})$. Moreover}
    \begin{equation} \label{spectrum separated by Gamma}
    \begin{aligned}
        \sigma(\mathscr{L}_{\mu,\epsilon}) \cap \{z \in \mathbb{C} \text{ inside } \Gamma\} &= \sigma(\mathscr{L}_{\mu,\epsilon} |_{\mathcal{V}_{\mu,\epsilon}}) = \sigma'(\mathscr{L}_{\mu,\epsilon}), \\
        \sigma(\mathscr{L}_{\mu,\epsilon}) \cap \{z \in \mathbb{C} \text{ outside } \Gamma\} &= \sigma(\mathscr{L}_{\mu,\epsilon} |_{\ker(P_{\mu,\epsilon})}) = \sigma''(\mathscr{L}_{\mu,\epsilon}).
    \end{aligned}
    \end{equation}
    \item \textit{The projectors $P_{\mu,\epsilon}$ are similar to each other; the transformation operators}
    \begin{equation} \label{U transformation operators}
        U_{\mu,\epsilon} := (\operatorname{Id} - (P_{\mu,\epsilon} - P_{0,0})^2)^{-1/2} \big[P_{\mu,\epsilon} P_{0,0} + (\operatorname{Id} - P_{\mu,\epsilon})(\operatorname{Id} - P_{0,0}) \big]
    \end{equation}
    \textit{are bounded and invertible in $Y$ and in $X$, with inverse}
    \begin{equation} \label{U inverse}
        U_{\mu,\epsilon}^{-1} = \big[P_{0,0} P_{\mu,\epsilon} + (\operatorname{Id} - P_{0,0})(\operatorname{Id} - P_{\mu,\epsilon})\big](\operatorname{Id} - (P_{\mu,\epsilon} - P_{0,0})^2)^{-1/2},
    \end{equation}
    \textit{and $U_{\mu,\epsilon} P_{0,0} U_{\mu,\epsilon}^{-1} = P_{\mu,\epsilon}$ as well as $U_{\mu,\epsilon}^{-1} P_{\mu,\epsilon} U_{\mu,\epsilon} = P_{0,0}$. The map $(\mu, \epsilon) \mapsto U_{\mu,\epsilon}$ is analytic from $B(\mu_0) \times B(\epsilon_0)$ to $\mathcal{L}(Y)$.}
    \item \textit{The subspaces $\mathcal{V}_{\mu,\epsilon} = \operatorname{Rg}(P_{\mu,\epsilon})$ are isomorphic to each other: $\mathcal{V}_{\mu,\epsilon} = U_{\mu,\epsilon} \mathcal{V}_{0,0}$. In particular $\dim \mathcal{V}_{\mu,\epsilon} = \dim \mathcal{V}_{0,0} = 4$, for any $(\mu, \epsilon) \in B(\mu_0) \times B(\epsilon_0)$.}
\end{enumerate}
\end{lemma}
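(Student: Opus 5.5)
The plan is to follow the standard Kato argument used in \cite{BMV1,BMV3} and \cite[I-\S4-6]{Kato1966}. The inputs are the analyticity of $(\mu,\epsilon)\mapsto\mathscr L_{\mu,\epsilon}\in\mathcal L(Y,X)$ and the isolation of $0$ in $\sigma(\mathscr L_{0,0})$. Analyticity holds by \eqref{D+mu}, the expansions \eqref{tau_mu_expansion}--\eqref{Sigma}, and the analyticity of $\epsilon\mapsto(p_\epsilon,a_\epsilon)$ and of $\beta_\epsilon,\tau_\epsilon$. Isolation of $0$ is given by \eqref{eq:sigma-decomposition-L00}.

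\textbf{Item 1.} First I check that $\sigma''(\mathscr L_{0,0})$ is at positive distance from $0$. By \eqref{eq:unperturbed-dispersion} at $\mu=0$, the values $\lambda_k^\pm(0)$ with $k\neq0,1$ are $\im(\pm c_{\kappa,b}k\mp\sqrt{k(1+\kappa k^2+bk^4)})$. They are nonzero precisely because $(\kappa,b)\notin\mathfrak R$, and their modulus grows like $\sqrt b\,|k|^{5/2}$. Hence only finitely many lie near the origin, and a circle $\Gamma$ exists.

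Next I show that $\Gamma$ stays in the resolvent set of $\mathscr L_{\mu,\epsilon}$. For $\lambda\in\Gamma$, the resolvent $(\mathscr L_{0,0}-\lambda)^{-1}:X\to Y$ is bounded uniformly in $\lambda$. This follows from the explicit Fourier-multiplier form \eqref{eq:unperturbed-operator}: the determinant of the $2\times2$ symbol behaves like $b k^4|k|$ for large $k$, which controls the $H^4\times H^1$ norms. I then write
\[
\mathscr L_{\mu,\epsilon}-\lambda=\bigl(\mathrm{Id}+(\mathscr L_{\mu,\epsilon}-\mathscr L_{0,0})(\mathscr L_{0,0}-\lambda)^{-1}\bigr)(\mathscr L_{0,0}-\lambda).
\]
Since $\|\mathscr L_{\mu,\epsilon}-\mathscr L_{0,0}\|_{\mathcal L(Y,X)}=\mathcal O(\mu,\epsilon)$, a Neumann series gives invertibility for $(\mu,\epsilon)$ small. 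The resulting resolvent is analytic in $(\mu,\epsilon)$ and uniformly bounded on $\Gamma$.

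\textbf{Items 2 and 3.} The Riesz projector \eqref{Projection P mu e} is then well defined, analytic into $\mathcal L(X,Y)$, and commutes with $\mathscr L_{\mu,\epsilon}$. Idempotence follows from the resolvent identity. The direct sum decomposition \eqref{Y=V+ker P}, the invariance of both summands, and the spectral splitting \eqref{spectrum separated by Gamma} are the standard consequences of \cite[III-\S6.4]{Kato1966}.

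\textbf{Items 4 and 5.} For the transformation operators, continuity of $P_{\mu,\epsilon}$ gives $\|P_{\mu,\epsilon}-P_{0,0}\|<1$ on a small ball. The inverse square root in \eqref{U transformation operators} is therefore defined by its binomial series and is analytic. Kato's identity says that $(P_{\mu,\epsilon}-P_{0,0})^2$ commutes with both projectors. From it one verifies directly that \eqref{U inverse} is the inverse of $U_{\mu,\epsilon}$ and that $U_{\mu,\epsilon}P_{0,0}=P_{\mu,\epsilon}U_{\mu,\epsilon}$. Item 5 follows at once: $U_{\mu,\epsilon}$ maps $\mathrm{Rg}P_{0,0}=\mathcal V_{0,0}$ isomorphically onto $\mathcal V_{\mu,\epsilon}$, so $\dim\mathcal V_{\mu,\epsilon}=4$ by the description of the generalized kernel in \eqref{eq:eigenfunc-L00-kernel}--\eqref{eq:generalized-eigenfunc-f0plus}.

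\textbf{Main obstacle.} The only step that needs care is the uniform resolvent bound from $X$ to $Y$ for the fourth-order operator in item 1. This matters in particular because the perturbations $\epsilon\beta_1$ and $\epsilon\tau_1$ are of top order, so one needs $\|b\beta_{\mu,\epsilon}-b\partial_x^4\|_{H^4\to L^2}=\mathcal O(\epsilon,\mu)$. I would obtain this from the structure of $\beta_\epsilon$ in \eqref{def:Sigma g}: its coefficients are analytic in $\epsilon$ and are multiplied by $(\partial_x+\im\mu)^j$ with $j\le4$. Boundedness on $H^4$ then holds because the coefficients lie in $H^{m-4}$ with $m>11/2$, which embeds into $L^\infty$ with enough regularity.
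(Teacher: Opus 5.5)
Your proposal is correct and is essentially the argument the paper relies on: the paper skips the proof and refers to \cite[Lemma 3.1]{BMV1}, which is exactly this route. That route is a Neumann-series perturbation of the explicit Fourier-multiplier resolvent of $\mathscr L_{0,0}$, uniformly bounded from $X$ to $Y$ on $\Gamma$, followed by Kato's Riesz-projector and transformation-operator theory. (Minor bookkeeping point: after conjugation by $\mathfrak P$, the lower-order coefficients of $\beta_\epsilon$ involve up to five derivatives of $\mathfrak p_\epsilon$, cf.\ \eqref{Sigma 1 expansion}, so they lie in $H^{m-5}$ rather than $H^{m-4}$; this is harmless, since $m>11/2$ still gives $H^{m-5}\subset L^\infty$.)
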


The proof of Lemma \ref{kato thm} is similar to the one of \cite[Lemma 3.1]{BMV1} and we skip it.  Recalling \eqref{mathscr L mu e}-\eqref{mathscr rho=-rho mathscr}, the Hamiltonian and reversible nature of the operator $\mathscr{L}_{\mu,\e}$ imply additional algebraic properties for spectral projectors $P_{\mu,\e}$ and the transformation operators $U_{\mu,\e}$ as follows.  

\begin{lemma} \label{properties of U and P}
For any $(\mu, \epsilon) \in B(\mu_0) \times B(\epsilon_0)$, the following holds true:

\begin{itemize}
    \item[(i)] The projectors $P_{\mu,\epsilon}$ defined in \eqref{Projection P mu e} are skew-Hamiltonian, namely $\mathcal{J} P_{\mu,\epsilon} = P_{\mu,\epsilon}^* \mathcal{J}$, and reversibility preserving, i.e. $\bar{\rho} P_{\mu,\epsilon} = P_{\mu,\epsilon} \bar{\rho}$.
    
    \item[(ii)] The transformation operators $U_{\mu,\epsilon}$ in \eqref{U transformation operators} are symplectic, namely $U_{\mu,\epsilon}^* \mathcal{J} U_{\mu,\epsilon} = \mathcal{J}$, and reversibility preserving.
    
    \item[(iii)] $P_{0,\epsilon}$ and $U_{0,\epsilon}$ are real operators, i.e. $\bar{P}_{0,\epsilon} = P_{0,\epsilon}$ and $\bar{U}_{0,\epsilon} = U_{0,\epsilon}$.
\end{itemize}
    
\end{lemma}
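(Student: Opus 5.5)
The plan is to derive each property from the corresponding symmetry of the resolvent $(\mathscr L_{\mu,\epsilon}-\lambda)^{-1}$, integrated over a contour $\Gamma$ chosen symmetric with respect to the imaginary axis and the real axis (for instance a circle centered at $0$). This is possible since $\sigma'(\mathscr L_{0,0})=\{0\}$ and Lemma~\ref{kato thm} allows any such contour.

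For (i), I would use $\mathscr L_{\mu,\epsilon}=\mathcal J\mathcal B_{\mu,\epsilon}$ with $\mathcal B_{\mu,\epsilon}=\mathcal B_{\mu,\epsilon}^*$ and $\mathcal J^*=-\mathcal J=\mathcal J^{-1}$. These give $\mathscr L_{\mu,\epsilon}^*\mathcal J=\mathcal B_{\mu,\epsilon}\mathcal J^*\mathcal J=\mathcal B_{\mu,\epsilon}$. It follows that $\mathcal J\mathscr L_{\mu,\epsilon}=-\mathcal B_{\mu,\epsilon}=-\mathscr L^*_{\mu,\epsilon}\mathcal J$, and hence
\[
\mathcal J(\mathscr L_{\mu,\epsilon}-\lambda)^{-1}=-(\mathscr L_{\mu,\epsilon}^*+\lambda)^{-1}\mathcal J .
\]
Taking adjoints in \eqref{Projection P mu e} gives
\[
P_{\mu,\epsilon}^*=\frac{1}{2\pi\im}\oint_{\bar\Gamma}(\mathscr L^*_{\mu,\epsilon}-\bar\lambda)^{-1}\,d\bar\lambda ,
\]
with the orientation reversed under conjugation. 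After the substitution $\lambda\mapsto-\bar\lambda$, which maps $\Gamma$ to itself and preserves orientation, the two contour integrals coincide, giving $\mathcal J P_{\mu,\epsilon}=P^*_{\mu,\epsilon}\mathcal J$. The sign bookkeeping here is routine but must be done carefully.

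For reversibility, $\bar\rho$ is antilinear and satisfies $\bar\rho\mathscr L_{\mu,\epsilon}=-\mathscr L_{\mu,\epsilon}\bar\rho$ by \eqref{mathscr rho=-rho mathscr}. This yields
\[
\bar\rho(\mathscr L_{\mu,\epsilon}-\lambda)^{-1}=-(\mathscr L_{\mu,\epsilon}+\bar\lambda)^{-1}\bar\rho .
\]
Pulling $\bar\rho$ through the integral conjugates $d\lambda$ and the factor $-1/(2\pi\im)$. The change of variables $\lambda\mapsto-\bar\lambda$ then closes the identity $\bar\rho P_{\mu,\epsilon}=P_{\mu,\epsilon}\bar\rho$.

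For (ii), I would follow \cite[Lemma 3.2]{BMV1}. Reversibility preservation of $U_{\mu,\epsilon}$ is immediate, because $U_{\mu,\epsilon}$ is built from $P_{\mu,\epsilon}$ and $P_{0,0}$ by sums, products and the power series $(\mathrm{Id}-R)^{-1/2}$ with real coefficients, and all of these commute with $\bar\rho$. Symplecticity is the main algebraic step. From (i) one has $\mathcal J Q=Q^*\mathcal J$ for $Q=P_{\mu,\epsilon},P_{0,0}$ and for $\mathrm{Id}-Q$. Therefore
\[
\bigl[P_{\mu,\epsilon}P_{0,0}+(\mathrm{Id}-P_{\mu,\epsilon})(\mathrm{Id}-P_{0,0})\bigr]^*\mathcal J
=\mathcal J\bigl[P_{0,0}P_{\mu,\epsilon}+(\mathrm{Id}-P_{0,0})(\mathrm{Id}-P_{\mu,\epsilon})\bigr].
\]
Similarly $\mathcal J(P_{\mu,\epsilon}-P_{0,0})^2=((P_{\mu,\epsilon}-P_{0,0})^2)^*\mathcal J$, and so the square-root factor intertwines the same way. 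Combining these, $U^*_{\mu,\epsilon}\mathcal J=\mathcal J U^{-1}_{\mu,\epsilon}$ by \eqref{U inverse}. Here one uses that $(P_{\mu,\epsilon}-P_{0,0})^2$ commutes with both projectors, the standard Kato identity.

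For (iii), at $\mu=0$ the operator $\mathscr L_{0,\epsilon}$ is real, since $\mathcal B_{0,\epsilon}$ is real. Hence $\overline{(\mathscr L_{0,\epsilon}-\lambda)^{-1}}=(\mathscr L_{0,\epsilon}-\bar\lambda)^{-1}$, and the contour symmetric under $\lambda\mapsto\bar\lambda$ gives $\overline{P_{0,\epsilon}}=P_{0,\epsilon}$. Reality of $U_{0,\epsilon}$ then follows from its formula. I expect the main care to be needed in checking that the extended $|D+\mu|$ in \eqref{D+mu} keeps $\mathcal B_{\mu,\epsilon}$ self-adjoint and reversibility-preserving for all complex small $\mu$, which the properties above rely on. For real $\mu$ this is clear, since $\mathrm{sgn}(D)+\Pi_0$ is a real symmetric multiplier.
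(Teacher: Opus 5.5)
Your proposal is correct and is essentially the argument the paper delegates to \cite[Lemma 3.2]{BMV1}. The ingredients are the same: resolvent intertwining with $\mathcal J$, $\bar\rho$ and complex conjugation, and a contour $\Gamma$ symmetric under $\lambda\mapsto\bar\lambda$ and $\lambda\mapsto-\bar\lambda$. When you write out (i), note that the reflection $\lambda\mapsto-\bar\lambda$ reverses rather than preserves the orientation of a symmetric $\Gamma$; this is exactly compensated by the sign in $d\lambda=-d\bar w$, so the identity still holds. The final step, $U_{\mu,\epsilon}^*\mathcal J=\mathcal J U_{\mu,\epsilon}^{-1}$ via the real-coefficient series for $(\mathrm{Id}-(P_{\mu,\epsilon}-P_{0,0})^2)^{-1/2}$, is also as in that proof.

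The complex-$\mu$ concern you raise does not arise, because here $B(\mu_0)=\{\mu\in\mathbb R:|\mu|<\mu_0\}$. For nonreal $\mu$ one only has $\mathcal B_{\mu,\epsilon}^*=\mathcal B_{\bar\mu,\epsilon}$, so (i)--(ii) would in general fail there.
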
 

See \cite[Lemma 3.2]{BMV1} for details. By the previous lemma, the linear involution $\bar{\rho}$ commutes with the spectral projectors $P_{\mu,\epsilon}$ and then $\bar{\rho}$ leaves invariant the subspace $\mathcal{V}_{\mu,\epsilon} = \mathrm{Rg}(P_{\mu,\epsilon})$.

\color{black}
We next identify the generalized kernel of the periodic fiber at
$\mu=0$.  The construction is the analogue of
\cite[Theorem~4.1]{NS}, with the capillary and bending terms included in
the stationary hydroelastic equations.  The proof is given in
Appendix~\ref{app:periodic-generalized-kernel}.

For a scalar periodic function $f$, we write
\[
\langle f\rangle_x
:=
\frac{1}{2\pi}\int_{\mathbb T}f(x)\,\mathrm dx.
\]
Let $\mathfrak P_\epsilon$ be the composition operator introduced in
\eqref{Gneta}, and define the linear change of variables
\begin{equation}\label{eq:T-epsilon-generalized-kernel}
\mathscr T_\epsilon
\begin{bmatrix}h_1\\ h_2\end{bmatrix}
:=
\begin{bmatrix}
(1+\mathfrak p_{\epsilon,x})\,
\mathfrak P_\epsilon h_1
\\[1mm]
\mathfrak P_\epsilon\bigl(h_2-B_\epsilon h_1\bigr)
\end{bmatrix}.
\end{equation}
Thus $\mathscr T_\epsilon=\mathcal P_\epsilon Z_\epsilon^{-1}$ is the
composition of the good-unknown and flattening transformations used in
\eqref{alinhac} and \eqref{LC}.

\begin{proposition}[Generalized kernel of the periodic fiber]
\label{prop:periodic-generalized-kernel}
Let $(\kappa,b)\in\mathcal P_\infty$.  For all sufficiently small
$|\epsilon|$, define
\begin{equation}\label{eq:U1-U2tilde-def}
U_1(\epsilon)
:=
\begin{bmatrix}0\\1\end{bmatrix},
\qquad
\widetilde U_2(\epsilon)
:=
\mathscr T_\epsilon
\begin{bmatrix}
\partial_x\eta_\epsilon\\
\partial_x\psi_\epsilon
\end{bmatrix},
\end{equation}
and
\begin{equation}\label{eq:U3-U4-def}
U_3(\epsilon)
:=
\mathscr T_\epsilon
\begin{bmatrix}
\partial_a\eta_a|_{a=\epsilon}\\
\partial_a\psi_a|_{a=\epsilon}
\end{bmatrix},
\qquad
U_4(\epsilon)
:=
\mathscr T_\epsilon
\begin{bmatrix}1\\0\end{bmatrix},
\end{equation}
where $a\mapsto(\eta_a,\psi_a,c_a)$ is the Stokes branch of
Theorem~\ref{Thm: Stokes expansion}.

For $\epsilon\neq0$, set
\begin{equation}\label{eq:normalized-U2}
U_2(\epsilon)
:=
\frac{1}{\epsilon}\widetilde U_2(\epsilon)
-
\left\langle
\frac{1}{\epsilon}
\bigl(\widetilde U_2(\epsilon)\bigr)_2
\right\rangle_x U_1(\epsilon).
\end{equation}
Then $U_2$ extends real-analytically to $\epsilon=0$, and
\begin{equation}\label{eq:U2-leading-value}
U_2(0)
=
\begin{bmatrix}
-\sin (x)\\ c_{\kappa,b}\cos (x)
\end{bmatrix}
=
c_{\kappa,b}^{1/2}f_1^-.
\end{equation}
Moreover,
\begin{equation}\label{eq:generalized-kernel-relations}
\begin{aligned}
\mathscr L_{0,\epsilon}U_1&=0,
&\qquad
\mathscr L_{0,\epsilon}\widetilde U_2&=0,
&\qquad
\mathscr L_{0,\epsilon}U_2&=0,
\\
\mathscr L_{0,\epsilon}U_3
&=-\partial_\epsilon c_\epsilon\,\widetilde U_2,
&\qquad
\mathscr L_{0,\epsilon}U_4&=-U_1.
\end{aligned}
\end{equation}
The vectors $U_j(\epsilon)$ are real and satisfy
\begin{equation}\label{eq:Uj-reversibility}
\overline\rho U_j=-U_j,
\quad j=1,2,
\qquad
\overline\rho U_j=U_j,
\quad j=3,4,
\end{equation}
as well as
\begin{equation}\label{eq:U2-zero-average}
\left\langle\bigl(U_2(\epsilon)\bigr)_2\right\rangle_x=0.
\end{equation}
Finally,
\begin{equation}\label{eq:V-generalized-kernel-span}
\mathcal V_{0,\epsilon}
=
\operatorname{span}_{\mathbb C}
\{U_1(\epsilon),U_2(\epsilon),U_3(\epsilon),U_4(\epsilon)\},
\end{equation}
and hence
\begin{equation}\label{eq:L-square-zero-on-V}
\left(
\mathscr L_{0,\epsilon}\big|_{\mathcal V_{0,\epsilon}}
\right)^2=0.
\end{equation}
In particular, zero is the only eigenvalue of
$\mathscr L_{0,\epsilon}|_{\mathcal V_{0,\epsilon}}$ and has algebraic
multiplicity four. If
$\partial_\epsilon c_\epsilon\neq0$, then
\[
\ker\left(
\mathscr L_{0,\epsilon}
\big|_{\mathcal V_{0,\epsilon}}
\right)
=
\operatorname{span}_{\mathbb C}
\{U_1(\epsilon),U_2(\epsilon)\},
\]
so that zero has geometric multiplicity two.
\end{proposition}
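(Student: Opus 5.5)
Our plan is to derive every relation from symmetries of the unflattened travelling-wave problem and then transport them through $\mathscr T_\epsilon=\mathcal P_\epsilon Z_\epsilon^{-1}$. Write the stationary problem \eqref{travelingWWstokes} as $\mathcal F(\eta,\psi;c)=0$. Its linearization in $(\eta,\psi)$ at the Stokes wave is exactly $\mathscr A_\epsilon$ in \eqref{first linear eq}, with $c=c_\epsilon$. Since $\mathscr L_{0,\epsilon}=\mathcal L_{0,\epsilon}=\mathscr T_\epsilon\mathscr A_\epsilon\mathscr T_\epsilon^{-1}$ by construction, it suffices to produce the relations for $\mathscr A_\epsilon$.

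\textbf{Kernel and Jordan relations.} Translation invariance gives $\mathcal F(\eta_\epsilon(\cdot+\theta),\psi_\epsilon(\cdot+\theta);c_\epsilon)=0$; differentiating in $\theta$ yields $\mathscr A_\epsilon(\partial_x\eta_\epsilon,\partial_x\psi_\epsilon)^T=0$, so $\mathscr L_{0,\epsilon}\widetilde U_2=0$. Gauge invariance $\psi\mapsto\psi+\text{const}$ holds because $G(\eta)1=0$ and only $\psi_x$ appears. Hence $\mathscr A_\epsilon(0,1)^T=0$. Since $\mathscr T_\epsilon(0,1)^T=(0,1)^T$, this gives $\mathscr L_{0,\epsilon}U_1=0$. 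The relation for $U_2$ then follows by linearity.

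Differentiating $\mathcal F(\eta_a,\psi_a;c_a)=0$ in $a$ gives $\mathscr A_\epsilon(\partial_a\eta_a,\partial_a\psi_a)^T+\partial_\epsilon c_\epsilon(\partial_x\eta_\epsilon,\partial_x\psi_\epsilon)^T=0$, since $\partial_c\mathcal F=(\eta_x,\psi_x)$. This is the relation for $U_3$. For $U_4$ we use the Bernoulli parameter: adding a constant $\zeta$ to the second equation, the perturbed family $\eta=\text{const}$ formally gives $\mathscr A_\epsilon(1,0)^T=-(0,1)^T$. Concretely, we check directly that the lower-left entry of \eqref{first linear eq} applied to $1$ equals $-1$. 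This needs $G(\eta)B+\partial_x(V-c)$ acting on $1$ to vanish; it does, because it is the shape derivative of $G$ in the direction $1$, combined with the identity from the first equation of \eqref{travelingWWstokes}. The result is $\mathscr L_{0,\epsilon}U_4=-U_1$.

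\textbf{Analyticity and parity.} The map $\widetilde U_2$ is analytic and vanishes at $\epsilon=0$, so $\widetilde U_2/\epsilon$ extends analytically. Subtracting its average along $U_1$ enforces \eqref{eq:U2-zero-average}. The leading value \eqref{eq:U2-leading-value} follows from \eqref{exp:Sto}, using $\mathscr T_0=\mathrm{Id}$. Reality and the parity relations \eqref{eq:Uj-reversibility} follow from $\eta_\epsilon$ even, $\psi_\epsilon$ odd, and the fact that $\mathscr T_\epsilon$ commutes with $\bar\rho$. Together these give $\bar\rho U_1=-U_1$, $\bar\rho U_2=-U_2$, and $\bar\rho U_3=U_3$, $\bar\rho U_4=U_4$.

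\textbf{Identification of $\mathcal V_{0,\epsilon}$.} This step is the main obstacle. At $\epsilon=0$ we have $U_3(0)=(\cos x,c_{\kappa,b}\sin x)^T$, proportional to $f_1^+$, and $U_4(0)=f_0^+$. Hence $U_1,\dots,U_4$ at $\epsilon=0$ span the generalized kernel $\mathcal V_{0,0}$, and by continuity they stay linearly independent for small $\epsilon$. Each $U_j$ lies in the generalized kernel of $\mathscr L_{0,\epsilon}$ by the relations above, so it lies in $\operatorname{Rg}P_{0,\epsilon}$, because $0$ lies inside $\Gamma$. Since $\dim\mathcal V_{0,\epsilon}=4$ by Lemma~\ref{kato thm}, equality \eqref{eq:V-generalized-kernel-span} follows.

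The relations show that $\mathscr L_{0,\epsilon}$ maps the span into $\operatorname{span}\{U_1,\widetilde U_2\}$, which is annihilated. This gives \eqref{eq:L-square-zero-on-V}. If $\partial_\epsilon c_\epsilon\neq0$, the images of $U_3$ and $U_4$ are independent, so the rank is $2$ and the kernel is $\operatorname{span}\{U_1,U_2\}$. The delicate points are verifying the regularity needed for $U_3,U_4\in Y$ (guaranteed by the analyticity in Theorem~\ref{Thm: Stokes expansion}) and checking the Bernoulli relation carefully through the flattening.
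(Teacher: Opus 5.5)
Your proposal is correct and follows essentially the same route as the paper: you derive the gauge, translation, amplitude and Bernoulli relations for $\mathscr A_\epsilon$ from symmetries of the stationary problem, transport them through $\mathscr T_\epsilon$, and identify $\mathcal V_{0,\epsilon}$ using linear independence at $\epsilon=0$ together with $\dim\mathcal V_{0,\epsilon}=4$. The one point to state explicitly is why $\mathscr A_\epsilon(1,0)^{\mathsf T}=-(0,1)^{\mathsf T}$. Your direct check needs $G(\eta_\epsilon)B+V_x=0$, together with $\mathrm d\sigma(\eta_\epsilon)[1]=\mathrm dF(\eta_\epsilon)[1]=0$, and this identity holds because in infinite depth $G(\eta+P)=G(\eta)$ for every constant $P$, so that $\mathrm d_\eta G(\eta)[1]=0$ --- it is this vertical-translation invariance, which the paper encodes through the family $(\eta_a+P,\psi_a,c_a)$ with a Bernoulli constant, that does the work, not the first travelling-wave equation.
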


\color{black}

\paragraph{Symplectic and reversible basis of $\mathcal{V}_{\mu,\epsilon}$.} It is convenient to represent the Hamiltonian and reversible operator $\mathscr{L}_{\mu,\epsilon}: \mathcal{V}_{\mu,\epsilon} \to \mathcal{V}_{\mu,\epsilon}$ in a basis which is symplectic and reversible, according to the following definition:

\begin{definition}[Symplectic and reversible basis]  \label{Symplectic and reversible basis}
    A basis $\mathsf{F} := \{ \mathsf{f}_1^+, \mathsf{f}_1^-, \mathsf{f}_0^+, \mathsf{f}_0^- \}$ of $\mathcal{V}_{\mu,\epsilon}$ is \textit{symplectic} if, for any $k, k' = 0,1$,
\begin{equation} \label{basis is symplectic}
    \begin{aligned}
    (\mathcal{J} \mathsf{f}_k^\mp, \mathsf{f}_k^\pm) = \pm 1, ~~(\mathcal{J} \mathsf{f}_k^\sigma, \mathsf{f}_k^\sigma) = 0, \quad \forall \,\sigma = \pm; \\
    \text{if } k \neq k', \text{ then } (\mathcal{J} \mathsf{f}_k^\sigma, \mathsf{f}_{k'}^{\sigma'}) = 0, \quad \forall\, \sigma, \sigma' = \pm.
\end{aligned}
\end{equation}   

This is \textit{reversible} if
\begin{equation} \label{basis is reversible}
    \begin{aligned}
    \bar{\rho} \mathsf{f}_1^+ &= \mathsf{f}_1^+,  ~\bar{\rho} \mathsf{f}_1^- = -\mathsf{f}_1^-,~\bar{\rho} \mathsf{f}_0^+ = \mathsf{f}_0^+,  ~\bar{\rho} \mathsf{f}_0^- = -\mathsf{f}_0^-, \\
    &\text{i.e. } \bar{\rho} \mathsf{f}_k^\sigma = \sigma \mathsf{f}_k^\sigma, \quad \forall\, \sigma = \pm, k = 0,1.
\end{aligned}
\end{equation}
\end{definition} 

We use the following notation along the paper: we denote by $\mathrm{even}(x)$ a real $2\pi$-periodic function which is even in $x$, and by $\mathrm{odd}(x)$ a real $2\pi$-periodic function which is odd in $x$.

\begin{remark}[Parity structure of the reversible basis \eqref{basis is reversible}]
The elements of a reversible basis \( \mathsf{F}=\{\mathsf{f}^+_1,\mathsf{f}^-_1,\mathsf{f}^+_0,\mathsf{f}^-_0\} \) enjoys specific parity properties. Specifically,
\begin{align} \label{Parity structure}
\mathsf{f}_k^+(x) = 
\begin{bmatrix}
\mathit{even}(x) + \im\,\mathit{odd}(x) \\
\mathit{odd}(x) + \im\,\mathit{even}(x)
\end{bmatrix}, \quad
\mathsf{f}_k^-(x) = 
\begin{bmatrix}
\mathit{odd}(x) + \im\,\mathit{even}(x) \\
\mathit{even}(x) + \im\,\mathit{odd}(x)
\end{bmatrix}.
\end{align}
This structure follows from the reversibility of the problem, specifically from the involution \( \overline{\rho} \) defined in equation \eqref{reversible}, which implies that the real and imaginary parts of each component satisfy definite parity conditions.
\end{remark}

\begin{remark}[Symplectic expansion using the basis in \eqref{basis is symplectic}]
We can express any vector \( \mathsf{f} \in \mathcal{V}_{\mu,\epsilon} \) as a linear combination of the symplectic basis:
\begin{align} \label{remark f expansion}
    \mathsf{f} = \alpha_1^+ \mathsf{f}_1^+ + \alpha_1^- \mathsf{f}_1^- + \alpha_0^+ \mathsf{f}_0^+ + \alpha_0^- \mathsf{f}_0^-,
\end{align}
for suitable coefficients \( \alpha_k^\sigma \in \mathbb{C} \). These coefficients are computed by applying the symplectic form \( \mathcal{J} \), taking \( L^2 \)-scalar products with the basis elements, and using the symplecticity \eqref{basis is symplectic}. Therefore, we may rewrite \eqref{remark f expansion} as
\begin{align} \label{expasion of f by using symplectic basis}
    \mathsf{f}=-(\mathcal{J}\mathsf{f},\mathsf{f}^-_1)\mathsf{f}^+_1+(\mathcal{J}\mathsf{f},\mathsf{f}^+_1)\mathsf{f}^-_1-(\mathcal{J}\mathsf{f},\mathsf{f}^-_0)\mathsf{f}^+_0+(\mathcal{J}\mathsf{f},\mathsf{f}^+_0)\mathsf{f}^-_0.
\end{align}
\end{remark}

\section[Matrix Representation of the operator on the finite-dimensional space]{Matrix Representation of $\mathscr{L}_{\mu,\e}$ on $\mathcal{V}_{\mu,\e}$}
Using the transformation operator $U_{\mu,\e}$ in \eqref{U transformation operators}, we construct the basis of $\mathcal{V}_{\mu,\e}$
\begin{equation} \label{F basis set and f}
    \begin{aligned}
        \mathcal{F}&:=\{f^+_1(\mu,\e),f^-_1(\mu,\e),f^+_0(\mu,\e),f^-_0(\mu,\e)\}, \qquad 
        f^\sigma_k(\mu,\e):=U_{\mu,\e} f^{\sigma}_k,~~\sigma=\pm,~k=0,1,
    \end{aligned}
\end{equation}
where 
\begin{align} \label{eigenfunc of mathcall L00 2}
    f^+_1:=\vet{(1+b+\kappa)^{-1/4}\cos(x)}{(1+b+\kappa)^{1/4}\sin(x)}, ~~f^-_1:=\vet{-(1+b+\kappa)^{-1/4}\sin(x)}{(1+b+\kappa)^{1/4}\cos(x)}, ~~f^+_0=\vet{1}{0},~~f^-_0=\vet{0}{1},
\end{align}
form a basis of $\mathcal{V}_{0,0}=\mathrm{Rg}(P_{0,0})$, cf. \eqref{eigenfunc of mathcall L00 2}. Note that the real valued vectors $\{f^\pm_1,f^\pm_0\}$ form a symplectic and reversible basis for $\mathcal{V}_{0,0}$, according to Definition \ref{Symplectic and reversible basis}. Then, by Lemma \ref{kato thm} and Lemma \ref{properties of U and P} we deduce that:
\begin{lemma} \label{F is symplectic and reversible}
    The basis $\mathcal{F}$ of $\mathcal{V}_{\mu,\e}$ defined in \eqref{F basis set and f}, is symplectic and reversible, i.e. satisfies \eqref{basis is symplectic} and \eqref{basis is reversible}. Each map $(\mu,\e) \mapsto f^\sigma_k(\mu,\e)$ is analytic as a map $B(\mu_0)\times B(\mu_0)\rightarrow H^4(\mathbb{T})\times H^1(\mathbb{T})$.
\end{lemma}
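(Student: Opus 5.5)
The plan is to transport the algebraic properties of the unperturbed basis $\{f_1^\pm,f_0^\pm\}$ through the transformation operator $U_{\mu,\e}$. First, check directly from \eqref{eigenfunc of mathcall L00 2} that this basis is symplectic and reversible in the sense of Definition~\ref{Symplectic and reversible basis}. For instance,
\[
\mathcal J f_1^- = \vet{(1+b+\kappa)^{1/4}\cos(x)}{(1+b+\kappa)^{-1/4}\sin(x)}.
\]
With the normalized scalar product \eqref{complex product} this gives $(\mathcal J f_1^-,f_1^+)=\tfrac12+\tfrac12=1$. Similarly $(\mathcal J f_0^-,f_0^+)=1$. All pairings between the $k=1$ and $k=0$ vectors vanish, since $\cos(x)$ and $\sin(x)$ have zero mean and are orthogonal to constants. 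Finally $(\mathcal J f,f)=0$ for every real $f$, because $\mathcal J$ is skew-symmetric. For reversibility, apply $\bar\rho$ in \eqref{reversible}: $f_1^+$ has first component even and second odd, so $\bar\rho f_1^+=f_1^+$. Likewise $\bar\rho f_1^-=-f_1^-$, $\bar\rho f_0^+=f_0^+$ and $\bar\rho f_0^-=-f_0^-$.

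Next, use Lemma~\ref{properties of U and P}(ii). Since $U_{\mu,\e}^*\mathcal J U_{\mu,\e}=\mathcal J$, for any $f,g$ we have
\[
(\mathcal J U_{\mu,\e}f,U_{\mu,\e}g)=(U_{\mu,\e}^*\mathcal J U_{\mu,\e}f,g)=(\mathcal J f,g).
\]
Hence all relations in \eqref{basis is symplectic} are inherited by $f_k^\sigma(\mu,\e)=U_{\mu,\e}f_k^\sigma$. Since $U_{\mu,\e}$ is reversibility preserving, $\bar\rho U_{\mu,\e}=U_{\mu,\e}\bar\rho$, and therefore $\bar\rho f_k^\sigma(\mu,\e)=U_{\mu,\e}\bar\rho f_k^\sigma=\sigma f_k^\sigma(\mu,\e)$. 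By Lemma~\ref{kato thm}(5), $\mathcal V_{\mu,\e}=U_{\mu,\e}\mathcal V_{0,0}$ has dimension $4$, and $U_{\mu,\e}$ is invertible. So the images of a basis of $\mathcal V_{0,0}=\mathrm{Rg}(P_{0,0})$ form a basis of $\mathcal V_{\mu,\e}$.

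For analyticity, Lemma~\ref{kato thm}(4) gives that $(\mu,\e)\mapsto U_{\mu,\e}\in\mathcal L(Y)$ is analytic, with $Y=H^4\times H^1$. Each fixed $f_k^\sigma$ lies in $Y$ (they are trigonometric polynomials), so $(\mu,\e)\mapsto U_{\mu,\e}f_k^\sigma$ is analytic into $Y$. The one point needing a little care is that analyticity holds on the ball where Lemma~\ref{kato thm} applies, which the statement abbreviates as $B(\mu_0)\times B(\mu_0)$; it should be read with the radii $\mu_0,\e_0$ from that lemma.

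There is no serious obstacle: the argument is a direct transfer of structure. The only step requiring attention is the normalization constants in the symplectic check, namely the factor $\frac1{2\pi}$ in \eqref{complex product} together with the weights $(1+b+\kappa)^{\pm1/4}$, which must combine to give exactly $\pm1$.
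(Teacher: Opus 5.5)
Your proposal is correct and follows the paper's proof. Both transport symplecticity and reversibility from the unperturbed basis using $U_{\mu,\epsilon}^*\mathcal{J}U_{\mu,\epsilon}=\mathcal{J}$ and $U_{\mu,\epsilon}\bar\rho=\bar\rho\,U_{\mu,\epsilon}$ (Lemma~\ref{properties of U and P}(ii)), and both get analyticity from that of $(\mu,\epsilon)\mapsto U_{\mu,\epsilon}$. Your explicit check of the unperturbed basis, which the paper only asserts, is a valid addition, as is your remark that the domain should read $B(\mu_0)\times B(\epsilon_0)$.
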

\begin{proof}
By Lemma \ref{properties of U and P}-(ii), the operators $U_{\mu,\epsilon}$ are symplectic and reversibility preserving, i.e. 
$U_{\mu,\epsilon}^* \mathcal{J} U_{\mu,\epsilon} = \mathcal{J}$ and $U_{\mu,\epsilon}\circ\overline{\rho}=\overline{\rho}\circ U_{\mu,\epsilon}$. Recall \eqref{F basis set and f}, i.e. $f_k^\sigma(\mu,\epsilon):=U_{\mu,\epsilon} f_k^\sigma, \sigma=\pm, k=0,1.$
Then for any $k,k',\sigma,\sigma'$,
\begin{align*}
(\mathcal{J} f_k^\sigma(\mu,\epsilon), f_{k'}^{\sigma'}(\mu,\epsilon))
= (\mathcal{J} f_k^\sigma, f_{k'}^{\sigma'}),
\end{align*}
so the symplectic relations \eqref{basis is symplectic} are preserved. Moreover,
\begin{align*}
\overline{\rho} f_k^\sigma(\mu,\epsilon)
= U_{\mu,\epsilon}\circ \overline{\rho} f_k^\sigma
= \sigma f_k^\sigma(\mu,\epsilon),
\end{align*}
which shows that the reversibility conditions \eqref{basis is reversible} hold as well. 
Finally, the analyticity of $f_k^\sigma(\mu,\epsilon)$ follows from the analyticity of 
$U_{\mu,\epsilon}$ (Lemma \ref{kato thm}). 
\end{proof}

We then expand the vectors $f^\sigma_k(\mu,\e)$ in $(\mu,\e)$. We denote by $\mathrm{even}_0(x)$ a real, even, $2\pi$-periodic function with zero space average. 

Throughout this lemma, the symbols $\mathcal O(\cdot)$ denote remainders in
\[
Y=H^4(\mathbb T,\mathbb C)\times H^1(\mathbb T,\mathbb C)
\]
with the indicated parity. The terms $\mathcal O(\mu^2)$ and
$\mathcal O(\epsilon^2)$ denote pure remainders obtained by restricting to
$\epsilon=0$ and $\mu=0$, respectively. Thus
$\mathcal O(\mu^2)=\mu^2 r_\mu(\mu)$ and
$\mathcal O(\epsilon^2)=\epsilon^2 r_\epsilon(\epsilon)$, with analytic
remainders valued in the displayed parity subspaces. The notation
$\mathcal O(\mu^2\epsilon,\mu\epsilon^2)$ collects all remaining higher mixed
terms after the explicit $\mu\epsilon$ contribution, namely terms of the form
$\mu^2\epsilon r_{21}(\mu,\epsilon)+\mu\epsilon^2 r_{12}(\mu,\epsilon)$.

\begin{lemma}[Expansion of the basis $\mathcal{F}$] \label{lem:4.2}
For small values of $(\mu,\epsilon)$, the symplectic and reversible basis $\mathcal{F} = \{f_1^+(\mu,\epsilon), f_1^-(\mu,\epsilon), f_0^+(\mu,\epsilon), f_0^-(\mu,\epsilon)\}$ defined in \eqref{F basis set and f} has the following expansions:

\begin{equation} \label{expansion_f1+}
\begin{aligned}
    f_1^+(\mu,\epsilon)
    &=
    \vet{\delta_{\kappa,b}^{-1} \cos(x)}
        {\delta_{\kappa,b} \sin(x)}
    + \im \frac{\mu}{4} \gamma_{\kappa,b}
    \vet{\delta_{\kappa,b}^{-1} \sin(x)}
        {\delta_{\kappa,b} \cos(x)}
    + \epsilon
    \vet{\alpha_{\kappa,b} \cos(2x)}
        {\beta_{\kappa,b} \sin(2x)}
    \\
    &\quad
    + \mathcal{O}(\mu^2)
    \vet{\mathit{even}_0(x) + \im \mathit{odd}(x)}
        {\mathit{odd}(x) + \im \mathit{even}_0(x)}
    + \mathcal{O}(\epsilon^2)
    \vet{\mathit{even}_0(x)}
        {\mathit{odd}(x)}
    \\
    &\quad
    + \im \mu\epsilon
    \vet{\mathit{odd}(x)}
        {\mathit{even}(x)}
    + \mathcal{O}(\mu^2\epsilon,\mu\epsilon^2).
\end{aligned}
\end{equation}

\begin{equation} \label{expansion_f1-}
\begin{aligned}
    f_1^-(\mu,\epsilon)
    &=
    \vet{-\delta_{\kappa,b}^{-1} \sin(x)}
        {\delta_{\kappa,b} \cos(x)}
    + \im \frac{\mu}{4} \gamma_{\kappa,b}
    \vet{\delta_{\kappa,b}^{-1} \cos(x)}
        {-\delta_{\kappa,b} \sin(x)}
    + \epsilon
    \vet{-\alpha_{\kappa,b} \sin(2x)}
        {\beta_{\kappa,b} \cos(2x)}
    \\
    &\quad
    + \mathcal{O}(\mu^2)
    \vet{\mathit{odd}(x) + \im \mathit{even}_0(x)}
        {\mathit{even}_0(x) + \im \mathit{odd}(x)}
    + \mathcal{O}(\epsilon^2)
    \vet{\mathit{odd}(x)}
        {\mathit{even}(x)}
    \\
    &\quad
    + \im \mu\epsilon
    \vet{\mathit{even}(x)}
        {\mathit{odd}(x)}
    + \mathcal{O}(\mu^2\epsilon,\mu\epsilon^2).
\end{aligned}
\end{equation}

\begin{equation} \label{expansion_f0+}
\begin{aligned}
    f_0^+(\mu,\epsilon)
    &=
    \vet{1}{0}
    + \epsilon \delta_{\kappa,b}
    \vet{\delta_{\kappa,b}^{-1} \cos(x)}
        {-\delta_{\kappa,b} \sin(x)}
    + \mathcal{O}(\epsilon^2)
    \vet{\mathit{even}_0(x)}
        {\mathit{odd}(x)}
    \\
    &\quad
    +  \frac{\mu\epsilon}{4}
    \vet{\delta_{\kappa,b}^{-4} \cos(x)}
        {-\delta_{\kappa,b}^{-2} \sin(x)}
    + \im \mu\epsilon
    \vet{\mathit{odd}(x)}
        {\mathit{even}_0(x)}
    + \mathcal{O}(\mu^2\epsilon,\mu\epsilon^2).
\end{aligned}
\end{equation}

\begin{equation} \label{expansion_f0-}
\begin{aligned}
   f_0^-(\mu,\epsilon)
   &=
    \vet{0}{1}
    + \frac{\mu\epsilon}{2}
    \vet{\delta_{\kappa,b}^{-2}\sin x}
        {\cos x}
    + \im \mu\epsilon
    \vet{\mathit{even}_0(x)}
        {\mathit{odd}(x)}
    + \mathcal{O}(\mu^2\epsilon,\mu\epsilon^2).
\end{aligned}
\end{equation}

where the remainders $\mathcal{O}(\cdot)$ are vectors in $Y = H^4(\mathbb{T}, \mathbb{C}) \times H^1(\mathbb{T}, \mathbb{C})$, $c_{\kappa,b} = \sqrt{1+b+\kappa}$, and the hydroelastic expansion constants are given by:
\begin{equation} \label{hydro_constants}
    \alpha_{\kappa,b} = -\frac{2-13b-\kappa}{14b+2\kappa-1} c_{\kappa,b}^{-1/2}, \quad
    \beta_{\kappa,b} = -\frac{1+b+\kappa}{14b+2\kappa-1} c_{\kappa,b}^{1/2},\quad
    \gamma_{\kappa,b} = \frac{1-\kappa-3b}{1+b+\kappa}, \quad
    \delta_{\kappa,b} = c_{\kappa,b}^{1/2}.
\end{equation}
For $\mu=0$, the basis $\{f_k^\pm(0,\epsilon), \ k=0,1\}$ is purely real and satisfies:
\begin{equation} \label{basis_mu_0}
    f_1^+(0,\epsilon) = \vet{\mathit{even}_0(x)}{\mathit{odd}(x)}, \quad
    f_1^-(0,\epsilon) = \vet{\mathit{odd}(x)}{\mathit{even}(x)},
\end{equation}
\begin{equation}\label{eq:f_0_pm_0_epsilon}
    f_0^+(0,\epsilon) = \vet{1}{0} + \vet{\mathit{even}_0(x)}{\mathit{odd}(x)}, \quad
    f_0^-(0,\epsilon) = \vet{0}{1}.
\end{equation}
\end{lemma}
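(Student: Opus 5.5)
The plan follows the strategy of \cite[Lemma 4.2]{BMV1} and \cite{BMV3}. We Taylor expand the Kato transformation operator $U_{\mu,\epsilon}$ of Lemma \ref{kato thm} to second order, and apply it to the unperturbed basis \eqref{eigenfunc of mathcall L00 2}. Since $P_{\mu,\epsilon}-P_{0,0}=\mathcal O(\mu,\epsilon)$, we have $(\mathrm{Id}-(P_{\mu,\epsilon}-P_{0,0})^2)^{-1/2}=\mathrm{Id}+\frac12(P_{\mu,\epsilon}-P_{0,0})^2+\dots$. Hence, for $f\in\mathcal V_{0,0}$,
\[
U_{\mu,\epsilon}f=f+P_1 f+\Bigl(P_2+\tfrac12 P_1^2\Bigr)f+\text{higher order},
\]
where $P_{\mu,\epsilon}=P_{0,0}+P_1+P_2+\dots$ is the expansion of the Riesz projector \eqref{Projection P mu e} in homogeneous polynomials in $(\mu,\epsilon)$. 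The first-order term is
\[
P_1=\frac{1}{2\pi\im}\oint_\Gamma(\mathscr L_{0,0}-\lambda)^{-1}\mathcal L_1(\mathscr L_{0,0}-\lambda)^{-1}\,\de\lambda,
\]
with $\mathcal L_1=\mu\partial_\mu\mathscr L_{0,0}+\epsilon\partial_\epsilon\mathscr L_{0,0}$. There is an analogous formula for $P_2$ involving $\mathcal L_2$ and $\mathcal L_1(\mathscr L_{0,0}-\lambda)^{-1}\mathcal L_1$.

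\textbf{Step 1: the explicit operators.} From \eqref{mathscr L mu e}--\eqref{mathcal B mu e}, \eqref{D+mu} and Lemmas \ref{lem:pa.exp}--\ref{lem:tau.exp}, $\partial_\mu\mathscr L$ at $(0,0)$ is a Fourier multiplier, with $\im\kappa\cdot 2\partial_x$, $-4\im b\partial_x^3$, $\mathrm{sgn}(D)+\Pi_0$ and the shift of the diagonal by $\im c_{\kappa,b}\mu$ subtracted. The operator $\partial_\epsilon\mathscr L$ involves $p_1,a_1,\tau_1,\beta_1$, which are multiplications by $\cos x,\sin x$ composed with derivatives. It therefore maps the modes $0,\pm1$ only to modes $0,\pm1,\pm2$.

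\textbf{Step 2: resolvent formulas.} The resolvent on $\mathcal V_{0,0}$ is explicit from the Jordan structure: $\mathscr L_{0,0}f_1^\pm=0$, $\mathscr L_{0,0}f_0^-=0$, and $\mathscr L_{0,0}f_0^+=-f_0^-$. So
\[
(\mathscr L_{0,0}-\lambda)^{-1}f=-\frac{f}{\lambda}
\]
for kernel vectors, and
\[
(\mathscr L_{0,0}-\lambda)^{-1}f_0^+=-\frac{f_0^+}{\lambda}+\frac{f_0^-}{\lambda^2}.
\]
On the complementary modes ($k=\pm2$, or the parts of modes $\pm1$ outside $\mathcal V_{0,0}$), the resolvent is a $2\times2$ matrix multiplier, analytic inside $\Gamma$. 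The residue calculus then gives
\[
P_1 f=-P_{0,0}\mathcal L_1\,\mathcal R f-\mathcal R\,\mathcal L_1 f
\]
plus nilpotent corrections from the $1/\lambda^2$ term. Here $\mathcal R$ is the reduced resolvent at $0$ on $\operatorname{Rg}(\mathrm{Id}-P_{0,0})$. Evaluating this on each $f_k^\sigma$ produces the terms $\epsilon(\alpha_{\kappa,b}\cos2x,\beta_{\kappa,b}\sin2x)$: these come from inverting the symbol of $\mathscr L_{0,0}$ at $k=2$, which explains the denominator $14b+2\kappa-1$. The coefficient $\frac{\mu}{4}\gamma_{\kappa,b}$ comes from the $\mu$-derivative at mode $1$ of the non-kernel direction, and $\epsilon\delta_{\kappa,b}(\cdot)$ in $f_0^+$ comes from $\partial_\epsilon\mathscr L f_0^+$ projected off $\mathcal V_{0,0}$. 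For the mixed $\mu\epsilon$ terms in $f_0^\pm$, we use the second-order formula. We only need the coefficient of the first-harmonic real part; the remaining parts are only tracked by parity.

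\textbf{Step 3: parity and the $\mu=0$ structure.} We do not compute the $\mathcal O(\mu^2)$, $\mathcal O(\epsilon^2)$ and mixed remainders explicitly. Their parity forms follow from Lemma \ref{F is symplectic and reversible}: \eqref{Parity structure} gives the parity of $\bar\rho$-even and $\bar\rho$-odd vectors. Lemma \ref{properties of U and P}(iii) shows that the pure-$\epsilon$ parts are real. In addition, $\overline{\mathscr L_{\mu,\epsilon}}=\mathscr L_{-\mu,\epsilon}$ for the analytic extension, so the coefficient of each odd power of $\mu$ is purely imaginary. The zero-average claims ($\mathrm{even}_0$) follow from the symplectic relations with $f_0^-=(0,1)$: the quantity $(\mathcal J f_1^\pm,f_0^-)=0$ forces the first component of $f_1^\pm$ to have zero mean. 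For \eqref{eq:f_0_pm_0_epsilon}, the claim $f_0^-(0,\epsilon)=(0,1)$ holds because $(0,1)\in\ker\mathscr L_{0,\epsilon}$ (Proposition \ref{prop:periodic-generalized-kernel}). Then $P_{0,\epsilon}f_0^-=f_0^-$, so $(P_{0,\epsilon}-P_{0,0})f_0^-=0$ and $(P_{0,\epsilon}-P_{0,0})^2f_0^-=0$, and therefore $U_{0,\epsilon}f_0^-=f_0^-$ exactly.

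The main obstacle will be the mixed $\mu\epsilon$ coefficients of $f_0^\pm$, namely $\frac{\mu\epsilon}{4}(\delta^{-4}\cos x,-\delta^{-2}\sin x)$ and $\frac{\mu\epsilon}{2}(\delta^{-2}\sin x,\cos x)$. These require the second-order projector, including the $1/\lambda^2$ Jordan contributions and the operator $\Pi_0$ from \eqref{D+mu}. I would organize this computation as in \cite[Lemma A.1]{BMV3}, computing $\partial_\mu\partial_\epsilon U_{0,0}f_0^\pm$ through the identity
\[
\partial_\mu\partial_\epsilon U=\tfrac12\bigl(\partial_\mu\partial_\epsilon P\,P_{0,0}-P_{0,0}\,\partial_\mu\partial_\epsilon P\bigr)+\text{commutator terms}.
\]
I would then double-check the result against the symplectic identities $(\mathcal J f_0^+,f_0^-)=1$ and $(\mathcal J f_0^\pm,f_1^\sigma)=0$ at order $\mu\epsilon$.
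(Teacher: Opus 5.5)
Your overall architecture is the paper's: Kato's expansion of $U_{\mu,\epsilon}$ on $\mathcal V_{0,0}$, residues with the Jordan-block resolvent and the explicit resolvents on the first and second harmonics, and $f_0^-(0,\epsilon)=f_0^-$ from $\mathscr L_{0,\epsilon}(0,1)^{\mathsf T}=0$. Your first-order coefficients would come out right.

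\textbf{The main gap is in Step~3, which rests on a false identity.} The relation $\overline{\mathscr L_{\mu,\epsilon}}=\mathscr L_{-\mu,\epsilon}$ holds for the physical multiplier with symbol $|k+\mu|$. It does not hold for the analytic continuation \eqref{D+mu} on which the Kato theory is built. Since $\sgn(D)$ is $\im$ times a real operator while $\Pi_0$ is real, conjugation maps $\mu(\sgn(D)+\Pi_0)$ to $\mu(-\sgn(D)+\Pi_0)$, so
\[
\overline{\mathscr L_{\mu,\epsilon}}-\mathscr L_{-\mu,\epsilon}=2\mu\begin{bmatrix}0&\Pi_0\\0&0\end{bmatrix}\neq0 .
\]
Hence odd powers of $\mu$ need not carry purely imaginary coefficients, and the statement you are proving says they don't. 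The $\mu\epsilon$-coefficients of $f_0^+$ and $f_0^-$ have real parts $\frac14 c_{\kappa,b}^{-3/2}f_{-1}^+$ and $\frac12 c_{\kappa,b}^{-1/2}f_{-1}^-$, generated exactly by $\Pi_0$ through $\dot{\mathscr L}_{0,0}f_0^-=f_0^+$.

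So you have no valid argument that the $\mu\epsilon$-terms of $f_1^\pm$ are purely imaginary. Nor do you have one that the real part of the $\mu\epsilon$-term of $f_0^\pm$ is only the first-harmonic vector you plan to compute. Reversibility alone allows real parts $\vet{\mathit{even}}{\mathit{odd}}$, resp. $\vet{\mathit{odd}}{\mathit{even}}$, in every mode.

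What you need is the bookkeeping of Lemma~\ref{lem:B6}:
\begin{itemize}
\item Write $\dot{\mathscr L}_{0,0}=\im\mathcal M+\mathcal N$, with $\mathcal M$ real and $\mathcal N$ the real block containing $\Pi_0$.
\item Note that $\dot{\mathscr L}'_{0,0}$ is $\im$ times a real operator and that $\mathscr L_{0,0}$ and $\mathscr L'_{0,0}$ are real. Hence every contour term in which $\mathcal N$ does not act is $\im$ times a real vector.
\item Check that $\mathcal N$ acts trivially in $\dot P_{0,0}f_1^\pm$ and in all three terms \eqref{eq:dotPprime-term1}--\eqref{eq:dotPprime-term3} applied to $f_1^\pm$. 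This holds because the second components of $f_1^\pm$ have zero mean and $R(\lambda)\mathscr L'_{0,0}R(\lambda)f_1^\pm$ lies in the second harmonics.
\item In the mixed jet of $f_0^\pm$, $\mathcal N$ acts only inside \eqref{eq:dotPprime-term2}, where it produces the entire real part.
\end{itemize}

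\textbf{A second omission.} You only establish $f_0^-(0,\epsilon)\equiv f_0^-$. The expansions of $f_0^\pm$ contain no pure powers of $\mu$ at all, and the $\mathcal O(\mu^2)$ remainders of $f_1^\pm$ have zero mean in both components. The first-order computation $\dot P_{0,0}f_0^\pm=0$ gives neither fact.

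The missing ingredient is $f_0^\pm(\mu,0)\equiv f_0^\pm$ (Lemma~\ref{lem:B5}). The span of $f_0^+,f_0^-$ is invariant under $\mathscr L_{\mu,0}$, with $\mathscr L_{\mu,0}f_0^+=-(1+\kappa\mu^2+b\mu^4)f_0^-$ and $\mathscr L_{\mu,0}f_0^-=\mu f_0^+$, and its eigenvalues lie inside $\Gamma$. Hence $P_{\mu,0}f_0^\pm=f_0^\pm$ and $U_{\mu,0}f_0^\pm=f_0^\pm$. The symplectic relations with $\mathcal Jf_0^+=(0,-1)^{\mathsf T}$ and $\mathcal Jf_0^-=(1,0)^{\mathsf T}$ then force zero mean in both components of $f_1^\pm(\mu,0)$.

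\textbf{The mixed jet of $U$.} Replace your displayed identity for $\partial_\mu\partial_\epsilon U$ by the precise jet $\dot U'_{0,0}P_{0,0}=(\dot P'_{0,0}-\frac12P_{0,0}\dot P'_{0,0})P_{0,0}$. This is what your own expansion $P_2+\frac12P_1^2$ gives on $\mathcal V_{0,0}$, after using $P_1^2=-P_{0,0}P_2$ there. Unless the unspecified commutator terms restore it, the leading $\frac12\dot P'_{0,0}P_{0,0}$ in your formula would halve the resonant coefficients.
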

\begin{proof}
    The long computations are given in Appendix \ref{secA1}.
\end{proof}

\paragraph{Second basis of $\mathcal V_{\mu,\epsilon}$.}
We now construct from the basis $\mathcal F$ in \eqref{F basis set and f} another symplectic and reversible basis of
$\mathcal V_{\mu,\epsilon}$, with an additional property that will be crucial in the expansion of the matrix
representing $L_{\mu,\epsilon}$.

Recall that, by Lemma~\ref{lem:4.2},
for $\mu=0$ the basis $\{f_k^\sigma(0,\epsilon)\}_{k=0,1,\ \sigma=\pm}$ is real and satisfies
\[
f_k^+(0,\epsilon)=
\begin{bmatrix}
\mathit{even}(x)\\
\mathit{odd}(x)
\end{bmatrix},
\qquad
f_k^-(0,\epsilon)=
\begin{bmatrix}
\mathit{odd}(x)\\
\mathit{even}(x)
\end{bmatrix}.
\]
In particular, the second component of $f_1^-(0,\epsilon)$ is an even function whose space average
is not necessarily zero. Thus we introduce the new basis
\begin{equation}\label{4.8}
\mathcal G:=\mathcal{G}(\mu,\e)=\{g_1^+(\mu,\epsilon),g_1^-(\mu,\epsilon),g_0^+(\mu,\epsilon),g_0^-(\mu,\epsilon)\},
\end{equation}
defined by
\begin{equation}\label{4.9}
g_1^+(\mu,\epsilon):=f_1^+(\mu,\epsilon),\qquad
g_1^-(\mu,\epsilon):=f_1^-(\mu,\epsilon)-n(\mu,\epsilon)\,f_0^-(\mu,\epsilon),
\end{equation}
\begin{equation}\label{4.10}
g_0^+(\mu,\epsilon):=f_0^+(\mu,\epsilon)+n(\mu,\epsilon)\,f_1^+(\mu,\epsilon),\qquad
g_0^-(\mu,\epsilon):=f_0^-(\mu,\epsilon),
\end{equation}
where
\begin{equation}\label{4.11}
n(\mu,\epsilon):=
\frac{(f_1^-(\mu,\epsilon),f_0^-(\mu,\epsilon))}
{\|f_0^-(\mu,\epsilon)\|^2}.
\end{equation}
We also set
\[
\nu_{\kappa,b}:=\frac{b+\kappa}{4c_{\kappa,b}^{3/2}}.
\]

We first note that \(n(\mu,\epsilon)\) is real. Indeed, for the \(L^2\)
inner product and the anti-linear reverser \(\bar\rho\), one has
\[
(f,g)=\overline{(\bar\rho f,\bar\rho g)}.
\]
Since the basis \(\mathcal F\) is reversible, Lemma~\ref{F is symplectic and reversible}
gives
\[
\bar\rho f_1^-(\mu,\epsilon)=-f_1^-(\mu,\epsilon),
\qquad
\bar\rho f_0^-(\mu,\epsilon)=-f_0^-(\mu,\epsilon).
\]
Therefore
\[
\begin{aligned}
(f_1^-(\mu,\epsilon),f_0^-(\mu,\epsilon))=
\overline{
(\bar\rho f_1^-(\mu,\epsilon),
 \bar\rho f_0^-(\mu,\epsilon))
}=
\overline{
(-f_1^-(\mu,\epsilon),
 -f_0^-(\mu,\epsilon))
}=
\overline{
(f_1^-(\mu,\epsilon),f_0^-(\mu,\epsilon))
}.
\end{aligned}
\]
Hence \((f_1^-(\mu,\epsilon),f_0^-(\mu,\epsilon))\) is real. Since
\(\|f_0^-(\mu,\epsilon)\|^2>0\) is real, we conclude that
\[
n(\mu,\epsilon)=\overline{n(\mu,\epsilon)}.
\]
Thus \(n(\mu,\epsilon)\in\mathbb R\). Since the basis $F$ is reversible and symplectic, and $n(\mu,\epsilon)$ is real, the new basis
$\mathcal G$ is again symplectic and reversible.

\begin{lemma}\label{lem:second-basis}
The basis $\mathcal G$ in \eqref{4.8} is symplectic and reversible, namely it satisfies
\eqref{basis is symplectic} and \eqref{basis is reversible}. Moreover, each map
\[
(\mu,\epsilon)\mapsto g_k^\sigma(\mu,\epsilon),\qquad \sigma=\pm,\ k=0,1,
\]
is analytic from $B(\mu_0)\times B(\epsilon_0)$ to $H^4(\mathbb T)\times H^1(\mathbb T)$.
\end{lemma}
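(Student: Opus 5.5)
The plan is a direct verification of the symplectic relations \eqref{basis is symplectic} and the reversibility relations \eqref{basis is reversible} for $\mathcal G$. We use only three inputs: the fact that $\mathcal F$ already satisfies them (Lemma~\ref{F is symplectic and reversible}), the reality of $n(\mu,\epsilon)$ proved just above, and the observation that $\mathcal G$ is obtained from $\mathcal F$ by an explicit $4\times4$ change of coefficients. In the ordered basis $(f_1^+,f_1^-,f_0^+,f_0^-)$, the new vectors are $\mathcal G=\mathcal F M$, where $M=\mathrm{Id}+n(E_{31}-E_{24})$. Here $E_{31}$ records that $g_0^+$ picks up $n f_1^+$, and $E_{24}$ records that $g_1^-$ picks up $-n f_0^-$.

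\textbf{Reversibility.} The reverser $\bar\rho$ is anti-linear, so for real $n$
\[
\bar\rho g_1^-=\bar\rho f_1^- - n\,\bar\rho f_0^- = -f_1^-+nf_0^- = -g_1^-,
\qquad
\bar\rho g_0^+=f_0^++nf_1^+=g_0^+.
\]
The vectors $g_1^+$ and $g_0^-$ are unchanged. This is exactly where the reality of $n$ is essential, since $\bar\rho(nf)=\bar n\,\bar\rho f$.

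\textbf{Symplecticity.} Write $\Omega(f,g):=(\mathcal J f,g)$. We use the relations of $\mathcal F$ together with the antisymmetry $\Omega(g,f)=-\overline{\Omega(f,g)}$, which follows from $\mathcal J^*=-\mathcal J$. Only the pairings involving $g_1^-$ and $g_0^+$ need checking.

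For the diagonal entries:
\[
\Omega(g_1^-,g_1^-)=\Omega(f_1^-,f_1^-)-n\,\Omega(f_1^-,f_0^-)-n\,\Omega(f_0^-,f_1^-)+n^2\Omega(f_0^-,f_0^-)=0,
\]
since every term vanishes by \eqref{basis is symplectic} for $\mathcal F$ (different $k$, or the same vector). The same computation gives $\Omega(g_0^+,g_0^+)=0$.

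For the normalizations: $\Omega(g_1^-,g_1^+)=\Omega(f_1^-,f_1^+)-n\,\Omega(f_0^-,f_1^+)=1$, and similarly $\Omega(g_0^-,g_0^+)=\Omega(f_0^-,f_0^+)+n\,\Omega(f_0^-,f_1^+)=1$.

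For the cross terms with $k\neq k'$, the relevant pairings are
\[
\Omega(g_1^\sigma,g_0^{\sigma'}),\qquad \Omega(g_0^{\sigma'},g_1^\sigma).
\]
The only potentially nonzero one is $\Omega(g_1^-,g_0^+)$, which expands to
\[
\Omega(f_1^-,f_0^+)+n\,\Omega(f_1^-,f_1^+)-n\,\Omega(f_0^-,f_0^+)-n^2\Omega(f_0^-,f_1^+)=0+n-n-0=0.
\]
This is the key cancellation, and it is exactly why the two corrections are chosen with opposite signs and the same coefficient $n$. The remaining cross pairings, such as $\Omega(g_1^+,g_0^+)=n\,\Omega(f_1^+,f_1^+)=0$ and $\Omega(g_1^-,g_0^-)=-n\,\Omega(f_0^-,f_0^-)=0$, vanish directly. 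Their conjugate-reversed counterparts then vanish by antisymmetry.

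\textbf{Analyticity.} By Lemma~\ref{F is symplectic and reversible}, each $f_k^\sigma(\mu,\epsilon)$ is analytic into $Y\subset X$. Note that we want analyticity in $(\mu,\epsilon)$ rather than in $(\mu,\bar\mu)$. The numerator $(f_1^-,f_0^-)$ involves a complex conjugate, so to keep it analytic we replace the sesquilinear pairing by the bilinear pairing $\frac1{2\pi}\int f\cdot g$ applied to $f_1^-(\mu,\epsilon)$ and the analytic continuation of $\overline{f_0^-(\bar\mu,\bar\epsilon)}$. On real $(\mu,\epsilon)$ this agrees with the definition of $n$.

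The denominator satisfies $\|f_0^-(0,0)\|^2=1$ by \eqref{expansion_f0-}, so after shrinking $\mu_0,\epsilon_0$ it stays bounded away from zero. Hence $n$ is analytic and real, and the maps $g_k^\sigma$ are analytic into $H^4\times H^1$ as finite combinations of analytic vectors with analytic coefficients.

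I expect this analyticity point, namely handling the conjugation in the inner product, to be the only delicate step; the symplectic and reversible identities are purely algebraic.
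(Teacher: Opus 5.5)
Your proposal is correct and follows the paper's proof. The paper simply asserts that the symplectic and reversibility relations are inherited from $\mathcal F$ because $n(\mu,\epsilon)$ is real, and that analyticity is inherited from $\mathcal F$; you write out the expansions the paper leaves implicit, including the cancellation $\Omega(g_1^-,g_0^+)=n-n=0$.

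Two cosmetic remarks. First, read as row-times-matrix, $\mathcal G=\mathcal F M$ requires the transpose $M=\mathrm{Id}+n(E_{13}-E_{42})$; this is harmless since you never use $M$. Second, $B(r)$ is a real interval in the paper, so real-analyticity of $n=(f_1^-,f_0^-)/\|f_0^-\|^2$ follows directly from the continuous real-bilinear inner product and $\|f_0^-\|^2>0$; your holomorphic-continuation device is valid but unnecessary.
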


\begin{proof}
The vectors $g_k^\sigma(\mu,\epsilon)$ satisfy \eqref{basis is symplectic} and \eqref{basis is reversible} because the vectors
$f_k^\sigma(\mu,\epsilon)$ satisfy the same properties by Lemma~\ref{F is symplectic and reversible}, and
$n(\mu,\epsilon)$ is real. The analyticity of $g_k^\sigma(\mu,\epsilon)$ follows from the
corresponding analyticity of the basis $F$.
\end{proof}

We now introduce the finite-dimensional matrix representation of
$\mathscr{L}_{\mu,\epsilon}$ in the basis $\mathcal G$.

\begin{lemma}\label{matrix representation mathsf L}
Let $\mathcal{G}$ be the symplectic and reversible basis of
$\mathcal V_{\mu,\epsilon}$ defined in \eqref{4.8}.
Then the matrix representing the restriction
\[
\mathscr L_{\mu,\epsilon}
\big|_{\mathcal V_{\mu,\epsilon}}
:
\mathcal V_{\mu,\epsilon}
\longrightarrow
\mathcal V_{\mu,\epsilon}
\]
with respect to the ordered basis $\mathcal G$ is
\begin{equation}\label{J4Bmuepsilon}
\mathsf L_{\mu,\epsilon}
:=
\left[
\mathscr L_{\mu,\epsilon}
\big|_{\mathcal V_{\mu,\epsilon}}
\right]_{\mathcal G}
=
\mathsf J_4\mathsf B_{\mu,\epsilon},
\qquad
\mathsf J_4
:=
\begin{pmatrix}
\mathsf J_2&0\\
0&\mathsf J_2
\end{pmatrix},
\qquad
\mathsf J_2
:=
\begin{pmatrix}
0&1\\
-1&0
\end{pmatrix},
\end{equation}
where
\begin{equation}\label{Bmuepsilon matrix representation}
\mathsf B_{\mu,\epsilon}
:=
\left(
\big(
\mathcal B_{\mu,\epsilon}\mathsf{g}_j,
\mathsf{g}_i
\big)
\right)_{1\leq i,j\leq4},
\qquad
(\mathsf{g}_1,\mathsf{g}_2,\mathsf{g}_3,\mathsf{g}_4)
:=
(g_1^+,g_1^-,g_0^+,g_0^-).
\end{equation}
The matrix $\mathsf B_{\mu,\epsilon}$ is self-adjoint:
\[
\mathsf B_{\mu,\epsilon}
=
\mathsf B_{\mu,\epsilon}^*.
\]
Moreover, for any $k,k'\in\{0,1\}$ and
$\sigma,\sigma'\in\{+,-\}$,
\begin{equation}\label{B are alternatively real or imaginary}
\big(
\mathcal B_{\mu,\epsilon}g_k^\sigma,
g_{k'}^{\sigma'}
\big)
\in
\begin{cases}
\mathbb R,
&\sigma=\sigma',\\[1mm]
\im\mathbb R,
&\sigma=-\sigma'.
\end{cases}
\end{equation}
In particular, $\mathsf B_{\mu,\epsilon}$ is
reversibility-preserving and
$\mathsf L_{\mu,\epsilon}$ is Hamiltonian and reversible.
\end{lemma}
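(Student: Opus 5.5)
We argue as in \cite[Lemma~3.4]{BMV1}, using only the symplectic and reversible structure of $\mathcal G$ from Lemma~\ref{lem:second-basis} and the properties of $\mathscr L_{\mu,\epsilon}=\mathcal J\mathcal B_{\mu,\epsilon}$.

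\textbf{Step 1: the matrix entries.} Since $\mathcal V_{\mu,\epsilon}$ is invariant, write $\mathscr L_{\mu,\epsilon}\mathsf g_j=\sum_i \mathsf L_{ij}\mathsf g_i$. To extract the coefficients we use the expansion formula \eqref{expasion of f by using symplectic basis}, which holds for any symplectic basis. Each coefficient is a pairing $(\mathcal J\mathscr L_{\mu,\epsilon}\mathsf g_j,\mathsf g_{i'})$. Since $\mathcal J^2=-\mathrm{Id}$, we have $\mathcal J\mathscr L_{\mu,\epsilon}=-\mathcal B_{\mu,\epsilon}$, so every coefficient is $\pm(\mathcal B_{\mu,\epsilon}\mathsf g_j,\mathsf g_{i'})$, with sign and index pattern dictated by \eqref{basis is symplectic}. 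Concretely, the coefficient of $g_k^+$ is $(\mathcal B\mathsf g_j,g_k^-)$ and that of $g_k^-$ is $-(\mathcal B\mathsf g_j,g_k^+)$. Arranging these into a matrix reproduces exactly $\mathsf J_4\mathsf B_{\mu,\epsilon}$, with $\mathsf B$ as in \eqref{Bmuepsilon matrix representation}. Equivalently, in matrix language one uses that the Gram matrix $\big((\mathcal J\mathsf g_j,\mathsf g_i)\big)$ equals $-\mathsf J_4$ (or its transpose, according to the convention), and inverts it. The only care needed here is bookkeeping the sign conventions and the convention for rows versus columns in the complex inner product.

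\textbf{Step 2: self-adjointness.} This follows immediately from $\mathcal B_{\mu,\epsilon}=\mathcal B_{\mu,\epsilon}^*$ on $Y$:
\[
(\mathcal B\mathsf g_j,\mathsf g_i)=(\mathsf g_j,\mathcal B\mathsf g_i)=\overline{(\mathcal B\mathsf g_i,\mathsf g_j)}.
\]

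\textbf{Step 3: the real/imaginary alternation.} We use the identity $(f,g)=\overline{(\bar\rho f,\bar\rho g)}$, already used above for $n(\mu,\epsilon)$, together with $\mathcal B\bar\rho=\bar\rho\mathcal B$ from \eqref{B rho=rho B} and $\bar\rho g_k^\sigma=\sigma g_k^\sigma$. These give
\[
(\mathcal B g_k^\sigma,g_{k'}^{\sigma'})=\overline{(\bar\rho\mathcal B g_k^\sigma,\bar\rho g_{k'}^{\sigma'})}=\sigma\sigma'\,\overline{(\mathcal B g_k^\sigma,g_{k'}^{\sigma'})}.
\]
When $\sigma\sigma'=1$ the entry equals its own conjugate and is real; when $\sigma\sigma'=-1$ it is purely imaginary, which is \eqref{B are alternatively real or imaginary}. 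This pattern is exactly the statement that $\mathsf B$ commutes with the matrix reverser $\bar\rho_4:=\mathrm{diag}(1,-1,1,-1)\circ\overline{\,\cdot\,}$.

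\textbf{Step 4: Hamiltonian and reversible structure.} $\mathsf L_{\mu,\epsilon}=\mathsf J_4\mathsf B_{\mu,\epsilon}$ with $\mathsf B$ self-adjoint, so it is Hamiltonian. For reversibility, $\mathsf J_4$ anticommutes with $\bar\rho_4$, and $\mathsf B$ commutes with $\bar\rho_4$ by Step 3, so $\mathsf L$ anticommutes with $\bar\rho_4$.

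The main obstacle is purely the sign and index bookkeeping in Step 1. No analytic difficulty arises, since all pairings are finite for $\mathsf g_j\in Y$.
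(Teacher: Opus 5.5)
Your proposal is correct and follows the paper's proof essentially step for step. The paper also applies the symplectic expansion \eqref{expasion of f by using symplectic basis} to $h=\mathscr L_{\mu,\epsilon}g_k^\sigma$ with $\mathcal J\mathscr L_{\mu,\epsilon}=-\mathcal B_{\mu,\epsilon}$, gets self-adjointness from $\mathcal B_{\mu,\epsilon}^*=\mathcal B_{\mu,\epsilon}$, and gets the real/imaginary alternation from $(f,g)=\overline{(\bar\rho f,\bar\rho g)}$ together with $\bar\rho g_k^\sigma=\sigma g_k^\sigma$. Your Step~4, checking that $\mathsf J_4$ anticommutes and $\mathsf B_{\mu,\epsilon}$ commutes with $\mathrm{diag}(1,-1,1,-1)\circ\overline{\,\cdot\,}$, simply makes explicit the closing ``in particular'' that the paper states without proof.
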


\begin{proof}
Since $\mathcal G$ is symplectic, every
$h\in\mathcal V_{\mu,\epsilon}$ admits the expansion
\[
\begin{aligned}
h=
-(\mathcal Jh,g_1^-)\,g_1^+
+(\mathcal Jh,g_1^+)\,g_1^- -(\mathcal Jh,g_0^-)\,g_0^+
+(\mathcal Jh,g_0^+)\,g_0^-.
\end{aligned}
\]
Applying this identity to
$h=\mathscr L_{\mu,\epsilon}g_k^\sigma$ and using
\[
\mathscr L_{\mu,\epsilon}
=
\mathcal J\mathcal B_{\mu,\epsilon},
\qquad
\mathcal J^2=-\operatorname{Id},
\]
we obtain
\[
\begin{aligned}
\mathscr L_{\mu,\epsilon}g_k^\sigma
=\big(
\mathcal B_{\mu,\epsilon}g_k^\sigma,
g_1^-
\big)g_1^+
-
\big(
\mathcal B_{\mu,\epsilon}g_k^\sigma,
g_1^+
\big)g_1^- +
\big(
\mathcal B_{\mu,\epsilon}g_k^\sigma,
g_0^-
\big)g_0^+
-
\big(
\mathcal B_{\mu,\epsilon}g_k^\sigma,
g_0^+
\big)g_0^-.
\end{aligned}
\]
This proves
\[
\mathsf L_{\mu,\epsilon}
=
\mathsf J_4\mathsf B_{\mu,\epsilon}.
\]
The self-adjointness of $\mathsf B_{\mu,\epsilon}$ follows from that
of $\mathcal B_{\mu,\epsilon}$.

Finally, since
\[
(f,g)
=
\overline{
(\overline\rho f,\overline\rho g)
},
\qquad
\mathcal B_{\mu,\epsilon}\overline\rho
=
\overline\rho\mathcal B_{\mu,\epsilon},
\]
and
\[
\overline\rho g_k^\sigma
=
\sigma g_k^\sigma,
\]
we have
\[
\begin{aligned}
\big(
\mathcal B_{\mu,\epsilon}g_k^\sigma,
g_{k'}^{\sigma'}
\big)
&=
\overline{
\big(
\overline\rho\mathcal B_{\mu,\epsilon}g_k^\sigma,
\overline\rho g_{k'}^{\sigma'}
\big)
}=
\sigma\sigma'
\overline{
\big(
\mathcal B_{\mu,\epsilon}g_k^\sigma,
g_{k'}^{\sigma'}
\big)
}.
\end{aligned}
\]
Hence the entry is real when $\sigma=\sigma'$ and purely imaginary
when $\sigma=-\sigma'$, proving
\eqref{B are alternatively real or imaginary}.
\end{proof}

Next we provide the expansion of the new basis $\mathcal G$.

\begin{lemma}[New expansion of the basis $\mathcal{G}$]\label{lem:basis-G}
For small values of $(\mu, \epsilon)$, the basis $\mathcal{G}$ defined in \eqref{4.8} has the expansion:

\begin{equation}
\begin{aligned}
g_1^+(\mu,\epsilon)
&=
\vet{\delta_{\kappa,b}^{-1}\cos(x)}
    {\delta_{\kappa,b}\sin(x)}
+ \im\frac{\mu}{4}\gamma_{\kappa,b}
\vet{\delta_{\kappa,b}^{-1}\sin(x)}
    {\delta_{\kappa,b}\cos(x)}
+ \epsilon
\vet{\alpha_{\kappa,b}\cos(2x)}
    {\beta_{\kappa,b}\sin(2x)}
\\
&\quad
+ \mathcal{O}(\epsilon^2)
\vet{\mathit{even}_0(x)}
    {\mathit{odd}(x)}
+ \mathcal{O}(\mu^2)
\vet{\mathit{even}_0(x)+\im\,\mathit{odd}(x)}
    {\mathit{odd}(x)+\im\,\mathit{even}_0(x)}
\\
&\quad
+ \im\mu\epsilon
\vet{\mathit{odd}(x)}
    {\mathit{even}(x)}
+ \mathcal{O}(\mu^2\epsilon,\mu\epsilon^2),
\label{eq:g1_plus}
\end{aligned}
\end{equation}

\begin{equation}
\begin{aligned}
g_1^-(\mu,\epsilon)
&=
\vet{-\delta_{\kappa,b}^{-1}\sin(x)}
    {\delta_{\kappa,b}\cos(x)}
+ \im\frac{\mu}{4}\gamma_{\kappa,b}
\vet{\delta_{\kappa,b}^{-1}\cos(x)}
    {-\delta_{\kappa,b}\sin(x)}
+ \epsilon
\vet{-\alpha_{\kappa,b}\sin(2x)}
    {\beta_{\kappa,b}\cos(2x)}
\\
&\quad
+ \mathcal{O}(\epsilon^2)
\vet{\mathit{odd}(x)}
    {\mathit{even}_0(x)}
+ \mathcal{O}(\mu^2)
\vet{\mathit{odd}(x)+\im\,\mathit{even}_0(x)}
    {\mathit{even}_0(x)+\im\,\mathit{odd}(x)}
\\
&\quad
- \mu\epsilon\,\nu_{\kappa,b}
\vet{0}{1}
+ \im\mu\epsilon
\vet{\mathit{even}(x)}
    {\mathit{odd}(x)}
+ \mathcal{O}(\mu^2\epsilon,\mu\epsilon^2),
\label{eq:g1_minus}
\end{aligned}
\end{equation}

\begin{equation}
\begin{aligned}
g_0^+(\mu,\epsilon)
&=
\vet{1}{0}
+ \epsilon\delta_{\kappa,b}
\vet{\delta_{\kappa,b}^{-1}\cos(x)}
    {-\delta_{\kappa,b}\sin(x)}
+ \frac{\mu\epsilon}{4}
\vet{\delta_{\kappa,b}^{-4}\cos(x)}
    {-\delta_{\kappa,b}^{-2}\sin(x)}
\\
&\quad
+ \mu\epsilon\,\nu_{\kappa,b}
\vet{\delta_{\kappa,b}^{-1}\cos(x)}
    {\delta_{\kappa,b}\sin(x)}
+ \mathcal{O}(\epsilon^2)
\vet{\mathit{even}_0(x)}
    {\mathit{odd}(x)}
\\
&\quad
+ \im\mu\epsilon
\vet{\mathit{odd}(x)}
    {\mathit{even}_0(x)}
+ \mathcal{O}(\mu^2\epsilon,\mu\epsilon^2),
\label{eq:g0_plus}
\end{aligned}
\end{equation}

\begin{align}
g_0^-(\mu,\epsilon)
&=
\vet{0}{1}
+ \frac{\mu\epsilon}{2}
\vet{\delta_{\kappa,b}^{-2}\sin (x)}
    {\cos (x)}
+ \im\mu\epsilon
\vet{\mathit{even}_0(x)}
    {\mathit{odd}(x)}
+ \mathcal{O}(\mu^2\epsilon,\mu\epsilon^2).
\label{eq:g0_minus}
\end{align}
In particular, at $\mu=0$, the basis
$\{g_1^+(0,\epsilon),g_1^-(0,\epsilon),g_0^+(0,\epsilon),g_0^-(0,\epsilon)\}$
is real and satisfies
\begin{equation}
g_1^+(0,\epsilon)
=
\vet{\mathit{even}_0(x)}{\mathit{odd}(x)},
\qquad
g_1^-(0,\epsilon)
=
\vet{\mathit{odd}(x)}{\mathit{even}_0(x)},
\end{equation}
\begin{equation}
g_0^+(0,\epsilon)
=
\vet{1}{0}
+
\vet{\mathit{even}_0(x)}{\mathit{odd}(x)},
\qquad
g_0^-(0,\epsilon)
=
\vet{0}{1}.
\label{eq:g_mu_zero}
\end{equation}
Moreover, for any $\epsilon$,
\begin{equation}
\int_{\mathbb T} (g_1^-(0,\epsilon))_2\,dx=0.
\label{eq:zero_average}
\end{equation}
\end{lemma}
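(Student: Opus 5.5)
The plan is to substitute the expansions of Lemma~\ref{lem:4.2} into the definitions \eqref{4.9}--\eqref{4.11}. The whole computation then reduces to expanding the single real scalar $n(\mu,\epsilon)$ far enough to see which terms survive at each order.

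First I would compute $n(\mu,\epsilon)$. By \eqref{expansion_f0-}, $\|f_0^-(\mu,\epsilon)\|^2=1+\mathcal O(\mu^2\epsilon^2,\ldots)$. The correction terms of order $\mu\epsilon$ in $f_0^-$ are orthogonal to $f_0^-(0,0)$: they involve $\sin x$, $\cos x$, or functions with zero average. Hence the norm equals $1$ up to $\mathcal O(\mu^2\epsilon,\mu\epsilon^2)$.

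The numerator $(f_1^-,f_0^-)$ is determined at leading orders by the following terms:
\begin{itemize}
\item The average of the second component of $f_1^-$, paired against $f_0^-(0,0)=[0,1]^T$.
\item The $\frac{\mu\epsilon}{2}$ term of $f_0^-$, paired against the leading part of $f_1^-$. This contributes
\[
\tfrac{\mu\epsilon}{2}\Bigl(-\tfrac12\delta_{\kappa,b}^{-3}+\tfrac12\delta_{\kappa,b}\Bigr)
=\tfrac{\mu\epsilon}{4}\,\delta_{\kappa,b}^{-3}\bigl(c_{\kappa,b}-1\bigr),
\]
using the normalization $\tfrac1{2\pi}\int\sin^2=\tfrac12$.
\end{itemize}
From \eqref{expansion_f1-}, the second component of $f_1^-$ has no constant term at orders $1$, $\mu$, $\epsilon$, or $\mu^2$, since those components are written with $\mathit{even}_0$. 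The order-$\epsilon^2$ term is $\mathit{even}$, and its average is nonzero in general. The imaginary $\mu\epsilon$ terms pair to imaginary contributions, which must cancel because $n$ is real.

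This gives
\[
n(\mu,\epsilon)=n_{02}\epsilon^2+n_{11}\mu\epsilon+\mathcal O(\mu^2\epsilon,\mu\epsilon^2,\epsilon^3).
\]
Here $n_{11}$ collects the contribution above together with the real $\mu\epsilon$ part of the $f_1^-$ average. The target identity is $n_{11}=\nu_{\kappa,b}=\frac{b+\kappa}{4c_{\kappa,b}^{3/2}}$. Matching this requires the precise $\mu\epsilon$ term of $(f_1^-)_2$ from Appendix~\ref{secA1}, namely its constant mode. I expect this to be the main obstacle: extracting exactly that coefficient from the Kato expansion, and checking that it combines with $\frac{\mu\epsilon}{4}\delta^{-3}(c-1)$ to give $\frac{\mu\epsilon}{4}\delta^{-3}(b+\kappa)$. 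Since $c-1\neq b+\kappa$, the $f_1^-$ average must contribute a nontrivial $\mu\epsilon$ constant. To compute it, I would use the second-order Kato formula, applying $P_{\mu,\epsilon}$'s mixed derivative to $f_1^-$ and projecting onto the zero mode.

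Next I would insert $n$ into the definitions.
\begin{itemize}
\item In $g_1^-=f_1^--nf_0^-$, the term $-n_{11}\mu\epsilon\,[0,1]^T$ produces the displayed $-\mu\epsilon\,\nu_{\kappa,b}$ term. The term $-n_{02}\epsilon^2[0,1]^T$ cancels exactly the average of the $\mathcal O(\epsilon^2)$ even part, turning $\mathit{even}$ into $\mathit{even}_0$.
\item In $g_0^+=f_0^++nf_1^+$, the product $n\,f_1^+(0,0)$ yields $\mu\epsilon\,\nu_{\kappa,b}[\delta^{-1}\cos x,\delta\sin x]^T$, while the $\epsilon^2$ piece is absorbed into $\mathcal O(\epsilon^2)[\mathit{even}_0,\mathit{odd}]^T$. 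The product of $n$ with the higher-order terms of $f_1^+$ falls into the stated remainders.
\item The vectors $g_1^+=f_1^+$ and $g_0^-=f_0^-$ are unchanged.
\end{itemize}

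Finally, the $\mu=0$ statements follow from \eqref{basis_mu_0}--\eqref{eq:f_0_pm_0_epsilon} together with reality of $n$. The zero-average property \eqref{eq:zero_average} holds exactly, not just perturbatively. At $\mu=0$ one has $f_0^-(0,\epsilon)=[0,1]^T$, so $n(0,\epsilon)=\langle (f_1^-(0,\epsilon))_2\rangle_x$, and subtracting $n f_0^-$ removes precisely this average.
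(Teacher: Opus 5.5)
Your route is the paper's: substitute Lemma~\ref{lem:4.2} into \eqref{4.9}--\eqref{4.11} and expand the real scalar $n(\mu,\epsilon)$. Your treatment of the $\epsilon^2$ part of $n$ (turning $\mathit{even}$ into $\mathit{even}_0$ in $g_1^-$) and of the $\mu=0$ statements is fine. The one nontrivial coefficient, however, contains an arithmetic slip that sends the argument astray. The pairing you compute is
\[
\tfrac{\mu\epsilon}{4}\bigl(\delta_{\kappa,b}-\delta_{\kappa,b}^{-3}\bigr)
=\tfrac{\mu\epsilon}{4}\,\delta_{\kappa,b}^{-3}\bigl(\delta_{\kappa,b}^{4}-1\bigr).
\]
Since $\delta_{\kappa,b}=c_{\kappa,b}^{1/2}$, one has $\delta_{\kappa,b}^4=c_{\kappa,b}^2=1+b+\kappa$, not $c_{\kappa,b}$. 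So this single term already equals $\frac{b+\kappa}{4c_{\kappa,b}^{3/2}}\mu\epsilon=\nu_{\kappa,b}\mu\epsilon$, which is exactly how the paper concludes.

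The step you build on the slip would actually fail. There is no real $\mu\epsilon$ contribution from the average of $(f_1^-)_2$. By \eqref{expansion_f1-} (equivalently Lemma~\ref{lem:B6}), the $\mu\epsilon$ coefficient of $f_1^-$ is $\im\vet{\mathit{even}(x)}{\mathit{odd}(x)}$. Its second component is odd and so has zero mean, and its first component pairs with $(f_0^-(0,0))_1=0$. Also, $f_0^-$ has no pure-$\mu$ or pure-$\epsilon$ terms, so no cross terms arise at order $\mu\epsilon$. Your own remark that the imaginary $\mu\epsilon$ pieces cannot contribute to the real number $n$ already shows this.

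Carrying out your planned Kato computation of the zero mode of $\partial_\mu\partial_\epsilon f_1^-(0,0)$ would therefore return zero. Your value $n_{11}=\frac{c_{\kappa,b}-1}{4c_{\kappa,b}^{3/2}}$ would then contradict the lemma rather than prove it. As written, the proposal does not establish the $-\mu\epsilon\,\nu_{\kappa,b}\vet{0}{1}$ term in $g_1^-$ or the $\mu\epsilon\,\nu_{\kappa,b}\vet{\delta_{\kappa,b}^{-1}\cos(x)}{\delta_{\kappa,b}\sin(x)}$ term in $g_0^+$. Correcting to $\delta_{\kappa,b}^4=c_{\kappa,b}^2$ removes your \emph{main obstacle} entirely, and the remaining substitutions go through as you describe.
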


\begin{proof}
Since \(f_0^-(0,\epsilon)=\vet{0}{1}\), the definition of \(n\) gives
\[
g_1^-(0,\epsilon)
=
f_1^-(0,\epsilon)
-
\frac{\left(f_1^-(0,\epsilon),f_0^-(0,\epsilon)\right)}
     {\|f_0^-(0,\e)\|^2}f_0^-(0,\epsilon).
\]
Thus the second component of \(g_1^-(0,\epsilon)\) has zero average; together
with Lemma~\ref{lem:4.2}, this yields
\[
g_1^-(0,\epsilon)
=
\vet{\mathit{odd}(x)}{\mathit{even}_0(x)}.
\]
Using the expansions in Lemma~\ref{lem:4.2}, Fourier orthogonality and parity,
\begin{align*}
\bigl(f_1^-(\mu,\epsilon),f_0^-(\mu,\epsilon)\bigr)
&=
\mu\epsilon
\left(
\vet{-c_{\kappa,b}^{-1/2}\sin (x)}
      {c_{\kappa,b}^{1/2}\cos (x)},
\vet{\frac{\sin (x)}{2c_{\kappa,b}}}
      {\frac{\cos (x)}{2}}
\right)
+\mathcal O(\epsilon^2,\mu^2\epsilon,\mu\epsilon^2)=
\frac{c_{\kappa,b}^2-1}{4c_{\kappa,b}^{3/2}}\,
\mu\epsilon
+\mathcal O(\epsilon^2,\mu^2\epsilon,\mu\epsilon^2).
\end{align*}
Moreover,
\[
\|f_0^-(\mu,\epsilon)\|^2
=
1+\mathcal O(\mu^2\epsilon,\mu\epsilon^2).
\]
Since \(c_{\kappa,b}^2=1+b+\kappa\), it follows that
\[
n(\mu,\epsilon)
=
\nu_{\kappa,b}\mu\epsilon
+\mathcal O(\epsilon^2,\mu^2\epsilon,\mu\epsilon^2),
\qquad
\nu_{\kappa,b}:=
\frac{b+\kappa}{4c_{\kappa,b}^{3/2}}.
\]

Substituting this into
\[
g_1^+=f_1^+,\qquad
g_0^-=f_0^-,\qquad
g_1^-=f_1^--nf_0^-,\qquad
g_0^+=f_0^++nf_1^+,
\]
and using Lemma~\ref{lem:4.2} gives all the claimed expansions. In particular,
the leading corrections are
\[
-\nu_{\kappa,b}\mu\epsilon\vet{0}{1}
\quad\text{in }g_1^-,
\qquad
\nu_{\kappa,b}\mu\epsilon
\vet{c_{\kappa,b}^{-1/2}\cos (x)}
      {c_{\kappa,b}^{1/2}\sin (x)}
\quad\text{in }g_0^+.
\]
The reality at \(\mu=0\) follows from Lemma~\ref{lem:4.2} and
\(n(0,\epsilon)\in\mathbb R\).
\end{proof}

\color{black}
\begin{lemma}[Identification of the odd kernel direction]
\label{lem:G-odd-kernel-direction}
For every sufficiently small $|\epsilon|$, there exists a nonzero real
number $q_\epsilon$ such that
\begin{equation}\label{eq:U2-q-g1minus}
U_2(\epsilon)
=
q_\epsilon\,g_1^-(0,\epsilon).
\end{equation}
Moreover, $q_\epsilon$ depends real-analytically on $\epsilon$ and
$q_0=c_{\kappa,b}^{1/2}$.  Consequently,
\begin{equation}\label{eq:gminus-kernel-L}
\mathscr L_{0,\epsilon}g_1^-(0,\epsilon)=0,
\qquad
\mathscr L_{0,\epsilon}g_0^-(0,\epsilon)=0,
\end{equation}
and
\begin{equation}\label{eq:gminus-kernel-B}
\mathcal B_\epsilon g_1^-(0,\epsilon)=0,
\qquad
\mathcal B_\epsilon g_0^-(0,\epsilon)=0.
\end{equation}
\end{lemma}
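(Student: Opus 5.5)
The idea is to locate $U_2(\epsilon)$ inside $\mathcal V_{0,\epsilon}$ using reversibility, and then to single out the direction $g_1^-(0,\epsilon)$ using the zero-average normalization. By Proposition~\ref{prop:periodic-generalized-kernel}, $U_2(\epsilon)\in\mathcal V_{0,\epsilon}$ is real, satisfies $\overline\rho U_2=-U_2$, and has zero average in its second component. Since the basis $\mathcal G(0,\epsilon)$ is real and reversible (Lemmas~\ref{lem:second-basis} and \ref{lem:basis-G}), the anti-reversible real subspace of $\mathcal V_{0,\epsilon}$ is spanned by $g_1^-(0,\epsilon)$ and $g_0^-(0,\epsilon)=\vet{0}{1}$. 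We therefore write
\[
U_2(\epsilon)=q_\epsilon\, g_1^-(0,\epsilon)+r_\epsilon\, g_0^-(0,\epsilon)
\]
with real coefficients. Concretely, \eqref{expasion of f by using symplectic basis} gives $q_\epsilon=(\mathcal J U_2,g_1^+(0,\epsilon))$ and $r_\epsilon=(\mathcal J U_2,g_0^+(0,\epsilon))$. The coefficients of $g_1^+$ and $g_0^+$ vanish by reversibility: those pairings are real, yet they flip sign under $\overline\rho$.

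Next we average the second components. By \eqref{eq:zero_average}, $(g_1^-(0,\epsilon))_2$ has zero mean; by \eqref{eq:U2-zero-average}, so does $(U_2)_2$. Since $(g_0^-)_2=1$, this forces $r_\epsilon=0$, which proves \eqref{eq:U2-q-g1minus}.

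Real-analyticity of $q_\epsilon$ follows from the pairing formula, because both $U_2(\epsilon)$ (Proposition~\ref{prop:periodic-generalized-kernel}) and $g_1^+(0,\epsilon)$ are analytic. At $\epsilon=0$, \eqref{eq:U2-leading-value} gives $U_2(0)=c_{\kappa,b}^{1/2}f_1^-$, while $g_1^-(0,0)=f_1^-$. Hence $q_0=c_{\kappa,b}^{1/2}\neq0$, and by continuity $q_\epsilon\neq0$ for small $|\epsilon|$.

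For \eqref{eq:gminus-kernel-L}, we divide $\mathscr L_{0,\epsilon}U_2=0$ from \eqref{eq:generalized-kernel-relations} by $q_\epsilon$. The second identity is just $\mathscr L_{0,\epsilon}U_1=0$, since $g_0^-(0,\epsilon)=U_1$. Finally, $\mathscr L_{0,\epsilon}=\mathcal J\mathcal B_{0,\epsilon}$ with $\mathcal J$ invertible, so \eqref{eq:gminus-kernel-B} follows, where $\mathcal B_\epsilon$ stands for $\mathcal B_{0,\epsilon}$.

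The one step requiring care is the claim that no $g_1^+$ or $g_0^+$ component appears. Here we must make sure that, at $\mu=0$, the reality of $U_2$ and $\mathcal G$ together with \eqref{B are alternatively real or imaginary}-type parity arguments makes these symplectic pairings vanish rather than merely be imaginary. I would check this directly: it is a real pairing of a $\overline\rho$-odd vector against a $\overline\rho$-even one, and such a pairing is both real and purely imaginary, hence zero.
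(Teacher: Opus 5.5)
Your proposal is correct and follows the paper's proof step by step. You restrict $U_2(\epsilon)$ to the real anti-reversible subspace $\operatorname{span}_{\mathbb R}\{g_1^-(0,\epsilon),g_0^-(0,\epsilon)\}$, kill the $g_0^-$-coefficient with the zero-average identities, read off $q_0=c_{\kappa,b}^{1/2}$ from $U_2(0)=c_{\kappa,b}^{1/2}f_1^-$, and transfer the kernel relations through the invertible $\mathcal J$. The only cosmetic difference is how the coefficients are extracted: you use the symplectic pairings $(\mathcal J U_2,g_k^\pm(0,\epsilon))$ and show the $g_k^+$-coefficients are both real and purely imaginary, whereas the paper uses uniqueness of real coordinates, applies $\overline\rho$, and writes $q_\epsilon=(U_2,g_1^-)/\|g_1^-\|^2$ for analyticity.
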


\begin{proof}
By Proposition~\ref{prop:periodic-generalized-kernel}, the vector
$U_2(\epsilon)$ is a real element of $\mathcal V_{0,\epsilon}$ and
satisfies
\[
\overline\rho U_2(\epsilon)=-U_2(\epsilon).
\]
At $\mu=0$, Lemma~\ref{lem:basis-G} implies that
$\mathcal G(0,\epsilon)$ is a real basis of $\mathcal V_{0,\epsilon}$ and
\[
\overline\rho g_k^+(0,\epsilon)=g_k^+(0,\epsilon),
\qquad
\overline\rho g_k^-(0,\epsilon)=-g_k^-(0,\epsilon),
\qquad k=0,1.
\]
We first record the following real-linear decomposition:
\begin{equation}\label{eq:real-antireversible-subspace}
\left\{
 h\in\mathcal V_{0,\epsilon}:
 \overline h=h,
 \ \overline\rho h=-h
\right\}
=
\operatorname{span}_{\mathbb R}
\{g_1^-(0,\epsilon),g_0^-(0,\epsilon)\}.
\end{equation}
Indeed, write a real vector $h\in\mathcal V_{0,\epsilon}$ uniquely as
\[
h
=a_1g_1^++b_1g_1^-+a_0g_0^++b_0g_0^-.
\]
Since both $h$ and the basis vectors are real, uniqueness of the expansion
implies $a_1,b_1,a_0,b_0\in\mathbb R$.  Applying $\overline\rho$ and using
$\overline\rho h=-h$ yields $a_1=a_0=0$, proving
\eqref{eq:real-antireversible-subspace}.

It follows that there exist real numbers $q_\epsilon,r_\epsilon$ such that
\begin{equation}\label{eq:U2-minus-decomposition}
U_2(\epsilon)
=
q_\epsilon g_1^-(0,\epsilon)
+
r_\epsilon g_0^-(0,\epsilon).
\end{equation}
By Lemma~\ref{lem:basis-G},
\[
\left\langle
\bigl(g_1^-(0,\epsilon)\bigr)_2
\right\rangle_x=0,
\qquad
 g_0^-(0,\epsilon)=\begin{bmatrix}0\\1\end{bmatrix},
\qquad
\left\langle
\bigl(g_0^-(0,\epsilon)\bigr)_2
\right\rangle_x=1.
\]
Taking the spatial average of the second component of
\eqref{eq:U2-minus-decomposition} and using
\eqref{eq:U2-zero-average}, we obtain $r_\epsilon=0$.  Hence
\eqref{eq:U2-q-g1minus} holds.

The vector $U_2(\epsilon)$ is nonzero for small $|\epsilon|$, because
\eqref{eq:U2-leading-value} gives
$U_2(0)=c_{\kappa,b}^{1/2}f_1^-\neq0$.  Thus $q_\epsilon\neq0$.
Since both $U_2(\epsilon)$ and $g_1^-(0,\epsilon)$ depend
real-analytically on $\epsilon$, one may write
\[
q_\epsilon
=
\frac{
\bigl(U_2(\epsilon),g_1^-(0,\epsilon)\bigr)
}{
\|g_1^-(0,\epsilon)\|^2
},
\]
which proves the real-analytic dependence.  At $\epsilon=0$,
\eqref{eq:U2-leading-value} and
$g_1^-(0,0)=f_1^-$ give $q_0=c_{\kappa,b}^{1/2}$.

By \eqref{eq:generalized-kernel-relations} and
\eqref{eq:U2-q-g1minus},
\[
\mathscr L_{0,\epsilon}g_1^-(0,\epsilon)=0.
\]
Furthermore, Lemma~\ref{lem:basis-G} gives
$g_0^-(0,\epsilon)=U_1$, so
$\mathscr L_{0,\epsilon}g_0^-(0,\epsilon)=0$.
Finally,
\[
\mathscr L_{0,\epsilon}=\mathcal J\mathcal B_\epsilon
\]
and $\mathcal J$ is invertible; therefore
\eqref{eq:gminus-kernel-B} follows.
\end{proof}

\color{black}
We now compute the self-adjoint matrix
$\mathsf B_{\mu,\epsilon}$ defined in
\eqref{Bmuepsilon matrix representation}. We write
\begin{equation}\label{4.19}
\mathsf B_{\mu,\epsilon}
=
\begin{pmatrix}
E&F\\
F^*&G
\end{pmatrix},
\qquad
E=E^*,
\qquad
G=G^*.
\end{equation}
It is useful to distinguish the operator on the full function space from its
matrix on $\mathcal V_{\mu,\epsilon}$. We decompose
\begin{equation}\label{eq:operator-decomposition-B}
\mathcal B_{\mu,\epsilon}
=
\mathcal B_{\epsilon}
+\mathcal B^{\flat}_{\mu}
+\mathcal B^{\sharp}_{\mu,\epsilon}
+\mathcal B^s_{\mu,\epsilon},
\end{equation}
where
\begin{align}
\mathcal B_{\epsilon}
&:=
\begin{pmatrix}
1+a_\epsilon-\kappa\tau_\epsilon+b\beta_\epsilon
&-(c_{\kappa,b}+p_\epsilon)\partial_x\\
\partial_x\circ(c_{\kappa,b}+p_\epsilon)&|D|
\end{pmatrix},
\label{B epsilon}\\[1mm]
\mathcal B^{\flat}_{\mu}
&:=
\begin{pmatrix}
0&0\\
0&\mu\bigl(\sgn(D)+\Pi_0\bigr)
\end{pmatrix},
\label{B b}\\[1mm]
\mathcal B^{\sharp}_{\mu,\epsilon}
&:=
\begin{pmatrix}
0&-\im\mu p_\epsilon\\
\im\mu p_\epsilon&0
\end{pmatrix},
\qquad
\mathcal B^s_{\mu,\epsilon}
:=
\begin{pmatrix}
\mathcal S_{\mu,\epsilon}&0\\
0&0
\end{pmatrix}.
\label{B sharp}
\end{align}
Here
\begin{equation}\label{eq:S-mu-epsilon-expansion}
\mathcal S_{\mu,\epsilon}
=
\sum_{j=1}^4\frac{\mu^j}{j!}\mathcal S_{0,\epsilon}^{(j)},
\end{equation}
with
\[
\mathcal S_{0,\epsilon}^{(1)}
=-\kappa\tau_\epsilon^{(1)}+b\beta_\epsilon^{(1)},
\qquad
\mathcal S_{0,\epsilon}^{(2)}
=-\kappa\tau_\epsilon^{(2)}+b\beta_\epsilon^{(2)},\qquad
\mathcal S_{0,\epsilon}^{(3)}=b\beta_\epsilon^{(3)},
\qquad
\mathcal S_{0,\epsilon}^{(4)}=b\beta_\epsilon^{(4)}.
\]

For $q\in\{\epsilon,\flat,\sharp,s\}$, let $\mathsf{B}^q$ denote the matrix of
$\mathcal B^q$ in the basis $\mathcal G(\mu,\epsilon)$, namely
\begin{equation}\label{eq:matrix-components-Bq}
(\mathsf B^q)_{ij}
:=
\bigl(\mathcal B^q \mathsf g_j(\mu,\epsilon),\mathsf g_i(\mu,\epsilon)\bigr),
\qquad 1\leq i,j\leq4.
\end{equation}
Thus
\begin{equation}\label{eq:matrix-decomposition-B}
\mathsf B_{\mu,\epsilon}
=\mathsf B^\epsilon+\mathsf B^\flat+\mathsf B^\sharp+\mathsf B^s.
\end{equation}

To state the estimates economically, let $\mathcal R_{\mu,\epsilon}$ denote
the class of self-adjoint, reversibility-preserving matrices
$\mathsf R=(\mathsf R_{ij})_{1\leq i,j\leq4}$ of the form
\begin{equation}\label{eq:remainder-class}
\mathsf R=
\begin{pmatrix}
\mathcal O_0&\im\mathcal O_1&\mathcal O_0&\im\mathcal O_2\\
-\im\mathcal O_1&\mathcal O_3&\im\mathcal O_2&\mathcal O_3\\
\mathcal O_0&-\im\mathcal O_2&\mathcal O_0&\im\mathcal O_1\\
-\im\mathcal O_2&\mathcal O_3&-\im\mathcal O_1&\mathcal O_3
\end{pmatrix},
\end{equation}
where
\begin{align}
\mathcal O_0&=\mathcal O(\epsilon^3,\mu\epsilon^2,\mu^2\epsilon,\mu^3),\qquad \mathcal O_1=\mathcal O(\mu\epsilon^2,\mu^2\epsilon,\mu^3),
\label{eq:remainder-orders-1}\\
\mathcal O_2&=\mathcal O(\mu\epsilon,\mu^3),\qquad\mathcal O_3=\mathcal O(\mu^2\epsilon,\mu^3).
\label{eq:remainder-orders-2}
\end{align}
Different occurrences of $\mathcal O_j$ may denote different real-analytic
functions satisfying the corresponding estimates.

We first state the resulting reduced matrix. Its proof is given after the four
components in \eqref{eq:matrix-decomposition-B} have been estimated.

\begin{proposition}[Expansion of the reduced matrix]
\label{lem:matrix-representation-G}
For sufficiently small $(\mu,\epsilon)$,
\begin{equation}\label{eq:reduced-matrix-expansion}
\mathsf B_{\mu,\epsilon}
=
\begin{pmatrix}
e_{11}\epsilon^2-\dfrac{e_{22}}8\mu^2
&\im\dfrac{e_{12}}2\mu&0&0\\[2mm]
-\im\dfrac{e_{12}}2\mu
&-\dfrac{e_{22}}8\mu^2&0&0\\[2mm]
0&0&1+\kappa\mu^2&0\\
0&0&0&\mu
\end{pmatrix}
+\mathsf R_{\mu,\epsilon},
\qquad
\mathsf R_{\mu,\epsilon}\in\mathcal R_{\mu,\epsilon},
\end{equation}
where
\begin{align}
e_{11}
&=-\frac{88b^2+6b\kappa-54b+2\kappa^2+\kappa+8}
{8c_{\kappa,b}(14b+2\kappa-1)},
\label{eq:e11-matrix}\\
e_{12}
&=\frac{1+3\kappa+5b}{c_{\kappa,b}},
\label{eq:e12-matrix}\\
e_{22}
&=-\frac{15b^2+3\kappa^2+22b\kappa+30b+6\kappa-1}
{c_{\kappa,b}^3}.
\label{eq:e22-matrix}
\end{align}
Equivalently, the blocks $E,F,G$ in \eqref{4.19} have the expansions encoded
by \eqref{eq:reduced-matrix-expansion}--\eqref{eq:remainder-orders-2}.
\end{proposition}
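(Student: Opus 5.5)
The plan is to compute the four matrices $\mathsf B^\epsilon,\mathsf B^\flat,\mathsf B^\sharp,\mathsf B^s$ in \eqref{eq:matrix-decomposition-B} separately, entry by entry, and add them. Before any computation, the shape of the answer is fixed by structure. By Lemma~\ref{matrix representation mathsf L}, $\mathsf B_{\mu,\epsilon}$ is self-adjoint, and its entries are real or purely imaginary according to \eqref{B are alternatively real or imaginary}. Every entry is real-analytic in $(\mu,\epsilon)$, so it suffices to determine its Taylor coefficients up to the orders appearing in \eqref{eq:remainder-orders-1}--\eqref{eq:remainder-orders-2}. Two further structural facts remove most unknowns. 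First, at $\mu=0$ Lemma~\ref{lem:G-odd-kernel-direction} gives $\mathcal B_\epsilon g_1^-(0,\epsilon)=\mathcal B_\epsilon g_0^-(0,\epsilon)=0$; hence every entry in the second and fourth rows and columns vanishes identically at $\mu=0$. This is what produces the factors of $\mu$ in $\mathcal O_1,\mathcal O_2,\mathcal O_3$. Second, symmetry in $\mu$ should be exploited: $\mathsf B_{\mu,\epsilon}$ at $-\mu$ relates to its complex conjugate through the analytic extension \eqref{D+mu}. I would check whether the sign-structure also kills the pure $\mu\epsilon$ terms in the $(2,2)$, $(2,4)$ and $(4,4)$ entries, which is what the class $\mathcal O_3$ asserts. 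The fact to use is that $g_k^-$ have $\mu$-corrections only in the opposite parity, as in Lemma~\ref{lem:basis-G}.

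With the shape fixed, I would expand each scalar product using Lemma~\ref{lem:basis-G} together with Lemmas~\ref{lem:pa.exp} and \ref{lem:tau.exp}. For $\mathsf B^\epsilon$, the $(1,1)$ entry at $\mu=0$ is $(\mathcal B_\epsilon g_1^+,g_1^+)$, computed to order $\epsilon^2$. The order-$\epsilon$ term vanishes by Fourier orthogonality, since first-order coefficients live on $\cos x$ against modes $1$ and $2$. The $\epsilon^2$ coefficient collects three contributions: the second-order coefficients $a_2,p_2$, $\kappa\tau_2$ and $b\beta_2$ paired with the leading mode; the cross terms $\epsilon\mathcal B_1$ applied to the $\epsilon$-correction $(\alpha_{\kappa,b}\cos 2x,\beta_{\kappa,b}\sin 2x)$; and the unknown $\mathcal O(\epsilon^2)$ part of $g_1^+$ paired against $\mathcal B_0 f_1^+=0$, which drops out by self-adjointness. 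This must yield $e_{11}$. A consistency check is available: $e_{11}=2c_2$ with $c_2$ from \eqref{expc2}. In fact, the $\epsilon^2$-coefficient can alternatively be obtained from $\mathscr L_{0,\epsilon}U_3=-\partial_\epsilon c_\epsilon \widetilde U_2$ in Proposition~\ref{prop:periodic-generalized-kernel}, and I would use that route to cross-validate. The $(3,3)$ entry at $\mu=\epsilon=0$ is $1$. Its $\epsilon$ and $\epsilon^2$ terms either vanish or are absorbed into $\mathcal O_0$. The $(1,3)$ entry must be shown to be $\mathcal O_0$, that is, with no $\epsilon$, $\epsilon^2$, $\mu$ or $\mu\epsilon$ term. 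This again follows from the structure: $g_0^+(0,\epsilon)$ is, up to a kernel combination, the generalized vector $U_4$, and $\mathcal B_\epsilon U_4=\mathcal J^{-1}(-U_1)$ pairs to zero with $g_1^+$.

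The $\mu$-dependent entries come mostly from $\mathsf B^\flat$ and $\mathsf B^s$. The term $\mu(\sgn D+\Pi_0)$ gives $\mu$ in the $(4,4)$ entry via $\Pi_0$, and gives the $c$-weighted $\im\mu$ coupling in $(1,2)$. The term $\mathcal S_{\mu,0}$ contributes $-\kappa\mu^2\cdot(-1)$ to the $(3,3)$ entry through $(\partial_x+\im\mu)^2$ acting on constants, giving $\kappa\mu^2$. It also contributes the $\mu$ and $\mu^2$ pieces of $\kappa\tau,b\beta$ on mode $1$. For the $(1,2)$ entry, $\mathsf B^\sharp$ enters at order $\mu\epsilon$ and combines with the $\im\frac{\mu}{4}\gamma_{\kappa,b}$ corrections to produce $e_{12}/2$. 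For the $\mu^2$ coefficients in the $(1,1)$ and $(2,2)$ entries, equal to $-e_{22}/8$, I would avoid computing the $\mathcal O(\mu^2)$ basis corrections directly. Instead I would match against the unperturbed dispersion relation \eqref{eq:unperturbed-dispersion} at $\epsilon=0$: the $U$-block eigenvalues must be $\lambda_1^\pm(\mu)-\im c_{\kappa,b}\mu$, and the expansion of $\sqrt{(1+\kappa(1\pm\mu)^2+b(1\pm\mu)^4)(1\pm\mu)}$ to second order fixes $e_{12}$ and $e_{22}$.

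The main obstacle I expect is the $\epsilon^2$ coefficient $e_{11}$ together with the exact vanishing claims in the off-diagonal block $F$. The coefficient requires the second-order flattened bending operator $\beta_2$ and its interaction with the $\epsilon$-corrections of the basis. The vanishing claims require every $\mu\epsilon$ term in $F$ to cancel apart from the allowed $\mathcal O_2$ entries, and these cancellations rely on the precise $\nu_{\kappa,b}$ normalization in $\mathcal G$ and on the kernel identities. I would attack the cancellations first, using only the kernel relations and parity, and reserve the explicit Fourier bookkeeping for the diagonal coefficients.
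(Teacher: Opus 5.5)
Your architecture is the paper's: the splitting $\mathsf B^\epsilon+\mathsf B^\flat+\mathsf B^\sharp+\mathsf B^s$, the kernel identities of Lemma~\ref{lem:G-odd-kernel-direction}, and the quadratic-form expansion for $e_{11}$ (with the check $e_{11}=2c_2$). The gap is in the step where you get the diagonal $\mu^2$-coefficients by matching with \eqref{eq:unperturbed-dispersion}; it does not give what you claim. At $\epsilon=0$ the off-diagonal block vanishes. So the eigenvalues $-\im E_{12}\pm\sqrt{-E_{11}E_{22}}$ of $\mathsf J_2E$ must be $\im(c_{\kappa,b}-\omega(1+\mu))$ and $\im(\omega(1-\mu)-c_{\kappa,b})$, with $\omega(\xi):=\sqrt{\xi(1+\kappa\xi^2+b\xi^4)}$. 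The trace yields $E_{12}=\omega'(1)\mu+\mathcal O(\mu^3)$, hence $e_{12}$. The splitting, however, only yields $E_{11}E_{22}=\frac14\omega''(1)^2\mu^4+\mathcal O(\mu^5)$. This fixes neither the two diagonal coefficients separately nor their sign, because the spectrum of $\begin{pmatrix}-\im E_{12}&e\\-e&-\im E_{12}\end{pmatrix}$ is invariant under $e\mapsto-e$. Since $\operatorname{Ind}_\infty=8e_{11}e_{22}$, the sign of $e_{22}$ is exactly what the proposition must deliver. To close the gap along your route you need two more facts:
(i) at $\epsilon=0$, $\mathcal B_{\mu,0}$ and $U_{\mu,0}$ commute with translations and $n(\mu,0)=0$, so $x\mapsto x+\frac\pi2$ sends $g_1^+(\mu,0)\mapsto g_1^-(\mu,0)$ and $g_1^-(\mu,0)\mapsto-g_1^+(\mu,0)$, whence $E_{11}(\mu,0)=E_{22}(\mu,0)=:e$;
(ii) the eigenvector of $-\im E_{12}+\im e$ is $g_1^++\im g_1^-$, which lies exactly in the Fourier mode $e^{-\im x}$, so it carries the $k=-1$ branch $\im(\omega(1-\mu)-c_{\kappa,b})$. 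This gives $e=\frac12\omega''(1)\mu^2=-\frac{e_{22}}8\mu^2$.
Alternatively, compute directly as the paper does. This is cheaper than you fear: only the first-order corrections $\im\frac\mu4\gamma_{\kappa,b}(\cdots)$ of $g_1^\pm$ are needed, since the $\mathcal O(\mu^2)$ corrections pair with $\mathcal B_0f_1^\pm=0$. The result is $\zeta_{\kappa,b}$ from $\mathsf B^\epsilon$, $-\frac14\gamma_{\kappa,b}c_{\kappa,b}$ from $\mathsf B^\flat$ and $\sigma^s_{\kappa,b}$ from $\mathsf B^s$, summing to $-e_{22}/8$.

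Four smaller points would also fail as written.

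First, the $\mu\mapsto-\mu$ conjugation symmetry you want to use does not hold for the analytic family \eqref{D+mu}. Since $\overline{\sgn(D)}=-\sgn(D)$ but $\overline{\Pi_0}=\Pi_0$, one has $\overline{\mathcal B_{\mu,\epsilon}}=\mathcal B_{-\mu,\epsilon}+2\mu\,\mathrm{diag}(0,\Pi_0)$. A genuine symmetry would make the real entries even in $\mu$, contradicting $(\mathsf B)_{44}=\mu$. The $\mathcal O_3$ bounds come instead from the kernel identities: for $i,j\in\{2,4\}$, $\partial_\mu(\mathsf B)_{ij}(0,\epsilon)=(\partial_\mu\mathcal B_{\mu,\epsilon}|_{\mu=0}\,g_j(0,\epsilon),g_i(0,\epsilon))$. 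All parts except $\Pi_0$ vanish by reality and parity of the basis, and the $\Pi_0$ part gives $1$ for $(4,4)$ and $0$ otherwise by \eqref{eq:zero_average}.

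Second, your account of the $(1,2)$ entry is wrong. $\mathsf B^\sharp=\mathcal O(\mu\epsilon)$ cannot produce an $\epsilon$-independent coefficient, and the $\gamma_{\kappa,b}$-corrections drop out of the linear term because $\mathcal B_0f_1^\pm=0$. The coefficient is $\frac{c_{\kappa,b}}2$ from $\mathsf B^\flat$ plus $\frac{\kappa+2b}{c_{\kappa,b}}$ from $\mathsf B^s$; your trace matching still recovers the sum.

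Third, the $\epsilon$ and $\epsilon^2$ terms of the $(3,3)$ entry cannot be absorbed into $\mathcal O_0$. They must vanish, which is the cancellation $2c_{\kappa,b}^2-4c_{\kappa,b}^2+2c_{\kappa,b}^2=0$ in the paper.

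Fourth, for the $(1,3)$ entry: the real reversible part of $\mathcal V_{0,\epsilon}$ is spanned by $U_3,U_4$. So $g_0^+(0,\epsilon)$ differs from a multiple of $U_4$ by an $\mathcal O(\epsilon)$ multiple of $U_3$, not by a kernel combination. Its contribution is $\mathcal O(\epsilon)\cdot\partial_\epsilon c_\epsilon\cdot\mathcal O(\epsilon)=\mathcal O(\epsilon^3)$, so your conclusion survives once this is stated.
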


\begin{lemma}[The $\mathsf B^\epsilon$-contribution]
\label{Expansion of B epsilon}
Set
\begin{equation}\label{eq:zeta-kappa-b}
\zeta_{\kappa,b}:=\frac18c_{\kappa,b}\gamma_{\kappa,b}^2.
\end{equation}
Then
\begin{equation}\label{eq:B-epsilon-contribution}
\mathsf B^\epsilon
=
\begin{pmatrix}
e_{11}\epsilon^2+\zeta_{\kappa,b}\mu^2&0&0&0\\
0&\zeta_{\kappa,b}\mu^2&0&0\\
0&0&1&0\\
0&0&0&0
\end{pmatrix}
+\mathsf R^\epsilon_{\mu,\epsilon},
\qquad
\mathsf R^\epsilon_{\mu,\epsilon}\in\mathcal R_{\mu,\epsilon}.
\end{equation}
\end{lemma}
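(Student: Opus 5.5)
The plan is to compute the entries $(\mathcal B_\epsilon g_j,g_i)$, with $\mathcal B_\epsilon$ the $\mu$-independent operator \eqref{B epsilon}, by expanding $g_j$ jointly in $(\mu,\epsilon)$ using Lemma~\ref{lem:basis-G}. Since $\mathcal B_\epsilon$ is real, self-adjoint and reversibility-preserving, the resulting matrix is self-adjoint, and by \eqref{B are alternatively real or imaginary} its entries alternate between real and purely imaginary. It therefore suffices to expand each entry as an analytic function of $(\mu,\epsilon)$ and check which Taylor coefficients vanish. For this I will write each entry as
\[
\mathsf B^\epsilon_{ij}=\sum_{a+b\le 2}\mu^a\epsilon^b\,\beta_{ij}^{ab}+\text{(higher orders)},
\]
and use the structure of $\mathcal B_\epsilon$ rather than brute expansion wherever possible.

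\textbf{Step 1: kernel identities.} The key simplification is Lemma~\ref{lem:G-odd-kernel-direction}: $\mathcal B_\epsilon g_1^-(0,\epsilon)=\mathcal B_\epsilon g_0^-(0,\epsilon)=0$ for all $\epsilon$. Writing $g_k^-(\mu,\epsilon)=g_k^-(0,\epsilon)+\mu\,\partial_\mu g_k^-(0,\epsilon)+\mathcal O(\mu^2)$, every entry involving a ``$-$'' vector is at least $\mathcal O(\mu)$. Moreover, for $(\mathcal B_\epsilon g_k^-,g_{k'}^-)$ the term linear in $\mu$ also vanishes, by self-adjointness paired against the kernel vector. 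So these entries are $\mu^2(\mathcal B_\epsilon\partial_\mu g_k^-,\partial_\mu g_{k'}^-)+\dots$. Mixed entries $(\mathcal B_\epsilon g^+,g^-)$ are $\mu(\mathcal B_\epsilon g^+(0,\epsilon),\partial_\mu g^-(0,\epsilon))+\mathcal O(\mu^2)$. This already yields the pattern $\mathcal O_1,\mathcal O_2,\mathcal O_3$ in \eqref{eq:remainder-class}, except for the explicit leading coefficients, which I compute next.

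\textbf{Step 2: evaluate the surviving coefficients.} At $\epsilon=0$, $\mathcal B_0$ is the Fourier multiplier with symbol on mode $\pm1$ given by
\[
\begin{pmatrix}1+\kappa+b& \mp\im c\\ \pm\im c&1\end{pmatrix},
\]
and on mode 0 given by $\mathrm{diag}(1,0)$.

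For the $(2,2)$ entry, $\partial_\mu g_1^-|_{0}=\im\frac{\gamma}{4}(\delta^{-1}\cos x,-\delta\sin x)$ plus $\mathcal O(\epsilon)$. This gives $\mu^2\frac{\gamma^2}{16}\cdot$(quadratic form of $\mathcal B_0$ on that vector). A direct computation should give $\frac{\gamma^2}{16}\cdot 2c=\zeta_{\kappa,b}$, since the vector is the anti-kernel direction with eigenvalue $2c$ in the $2\times2$ symbol. The same computation with $g_1^+$ gives $\zeta\mu^2$ in the $(1,1)$ entry. Its $\mu\epsilon$ and pure $\epsilon$ terms vanish by parity and Fourier orthogonality, because the $\epsilon$ corrections are in modes $0,2$ and $\mathcal B_0 f_1^+=0$.

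The $(3,3)$ entry equals $(\mathcal B_0 f_0^+,f_0^+)=1$ plus $\mathcal O(\epsilon)$ and $\mathcal O(\mu\epsilon)$ terms. The $\epsilon$ coefficient, $(a_1+\text{derivative terms})$ paired with constants, vanishes because $a_1,\tau_{1,0},b_{1,0}$ are pure $\cos x$. The $\epsilon^2$ term is absorbed in $\mathcal O_0$.

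Off-diagonal $(1,3)$: at $\mu=0$ it is real. It is $\mathcal O(\epsilon)$, and its $\epsilon$-linear term vanishes using $\mathcal B_0 f_1^+=0$ and the $\cos x$ structure of $\beta_1,\tau_1,a_1,p_1$ against the constant $f_0^+$; I expect cancellation against $\mathcal B_0$ acting on $\epsilon\delta(\delta^{-1}\cos x,-\delta\sin x)$. The $\epsilon^2$ part is allowed in $\mathcal O_0$.

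\textbf{Step 3: the $\epsilon^2$ coefficient $e_{11}$ of the $(1,1)$ entry.} I will use the identity $\mathcal B_0 g_1^+(0,0)=0$ to write
\[
[\epsilon^2](\mathcal B_\epsilon g_1^+,g_1^+)=(\mathcal B_2 f_1^+,f_1^+)+2(\mathcal B_1 f_1^+,h_1)+(\mathcal B_0 h_1,h_1),
\]
where $h_1=(\alpha\cos2x,\beta\sin2x)$ and $\mathcal B_j$ are the Taylor coefficients of $\mathcal B_\epsilon$ built from $a_j,p_j,\tau_j,\beta_j$ in Lemmas~\ref{lem:pa.exp}--\ref{lem:tau.exp}. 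The $\epsilon^2$ correction of $g_1^+$ drops out because $\mathcal B_0 f_1^+=0$. This is a finite Fourier computation on modes $0,1,2$.

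\textbf{Main obstacle.} I expect the main obstacle to be matching this computation to $e_{11}=2c_2$. To organize it, I would instead use the generalized kernel relation: by Proposition~\ref{prop:periodic-generalized-kernel}, $\mathscr L_{0,\epsilon}U_3=-\partial_\epsilon c_\epsilon\widetilde U_2$. Expressing $g_1^+(0,\epsilon)$ in terms of $U_3,U_4$ modulo the kernel, the entry $(\mathcal B_\epsilon g_1^+,g_1^+)$ reduces to a symplectic pairing $\partial_\epsilon c_\epsilon\,(\mathcal J\widetilde U_2,U_3)\cdot(\text{normalization})^2=2c_2\epsilon^2+\mathcal O(\epsilon^3)$, avoiding the bending algebra. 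The remaining mixed remainders then fall into $\mathcal R_{\mu,\epsilon}$ by the Step 1 bookkeeping.
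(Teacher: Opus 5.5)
There is a genuine gap in how you treat the $(3,3)$- and $(1,3)$-entries. You say their $\epsilon^2$-terms are ``absorbed in'' or ``allowed in'' $\mathcal O_0$. But $\mathcal O_0=\mathcal O(\epsilon^3,\mu\epsilon^2,\mu^2\epsilon,\mu^3)$ contains no pure $\epsilon^2$ term, and no $\mu\epsilon$ term either, which you also leave in the $(3,3)$-entry. The lemma therefore requires the cancellations
\[
\chi(\epsilon):=\bigl(\mathcal B_\epsilon g_0^+(0,\epsilon),g_0^+(0,\epsilon)\bigr)=1+\mathcal O(\epsilon^3),
\qquad
\alpha(\epsilon):=\bigl(\mathcal B_\epsilon g_1^+(0,\epsilon),g_0^+(0,\epsilon)\bigr)=\mathcal O(\epsilon^3).
\]
These are not automatic; they are the nontrivial content of the paper's Step~1:
\[
\chi_2=(\mathcal B_2f_0^+,f_0^+)+2(\mathcal B_1f_0^+,g_{01}^+)+(\mathcal B_0g_{01}^+,g_{01}^+)=2c_{\kappa,b}^2-4c_{\kappa,b}^2+2c_{\kappa,b}^2=0 .
\]
This uses the zero modes of $a_2,\tau_{2,0},b_{2,0}$ and the identity $\mathcal B_1f_0^+=-\mathcal B_0g_{01}^+$. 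The vanishing $\alpha_2=0$ uses Fourier-mode counting together with the zero average of the first component of $g_{12}^+$.

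Your reason for $\alpha_1=0$ is also wrong. $(\mathcal B_1f_1^+,f_0^+)$ is a mean of products of first harmonics, so orthogonality does not kill it. It vanishes only because it equals $c_{\kappa,b}^{-1/2}(c_{\kappa,b}^2-1-b-\kappa)$. The term $(\mathcal B_0f_1^+,g_{01}^+)$ you expect to cancel against is identically zero.

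Step~1 also overstates what the kernel identities give: they yield only $\mathcal O(\mu)$, respectively $\mathcal O(\mu^2)$. The classes $\mathcal O_1,\mathcal O_2$ additionally need the following vanishings:
\begin{itemize}
\item the $\mu\epsilon$-coefficient of the $(1,2)$-entry (modes $0,2$ against mode $1$);
\item the $\mu^2$-coefficient of the $(1,2)$-entry at $\epsilon=0$ (parity);
\item the $\mu^2$-coefficient of the $(2,3)$-entry at $\epsilon=0$, which follows from $g_0^+(\mu,0)=f_0^+$ and the zero averages in Lemma~\ref{lem:B5}.
\end{itemize}

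Your structural idea for $e_{11}$ is a genuinely different route from the paper's, which just substitutes the coefficients of $a_j,p_j,\tau_j,\beta_j$ into the three-term quadratic form. Carried out fully, it also closes the gap above. Both $\{g_1^+(0,\epsilon),g_0^+(0,\epsilon)\}$ and $\{U_3,U_4\}$ are real bases of the reversible part of $\mathcal V_{0,\epsilon}$, so you may write $g_1^+=sU_3+tU_4$ and $g_0^+=s'U_3+t'U_4$. You then have three facts:
\begin{itemize}
\item $\mathcal B_\epsilon U_4=(1,0)^{\mathsf T}$, from $\mathscr L_{0,\epsilon}U_4=-U_1$;
\item $\mathcal B_\epsilon U_3=\partial_\epsilon c_\epsilon\,\mathcal J\widetilde U_2$;
\item the symplectic constraints $\langle(g_1^+)_1\rangle_x=0$ and $\langle(g_0^+)_1\rangle_x=1$, from $\mathcal Jg_0^-(0,\epsilon)=(1,0)^{\mathsf T}$.
\end{itemize}
Set $A:=(\mathcal B_\epsilon U_3,U_3)$, $m:=\langle(U_3)_1\rangle_x=\langle\partial_a\eta_a\rangle_x|_{a=\epsilon}$, and $a(\epsilon):=(\mathcal B_\epsilon g_1^+(0,\epsilon),g_1^+(0,\epsilon))$. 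The constraints give $t=-sm$ and $t'=1-s'm$, and then
\[
\begin{pmatrix}a(\epsilon)&\alpha(\epsilon)\\ \alpha(\epsilon)&\chi(\epsilon)\end{pmatrix}
=
\begin{pmatrix}0&0\\0&1\end{pmatrix}
+(A-m^2)\begin{pmatrix}s^2&ss'\\ ss'&s'^2\end{pmatrix}.
\]
Symplecticity of $\mathscr T_\epsilon$ gives $A=\partial_\epsilon c_\epsilon\,\langle\psi_x\partial_a\eta-\eta_x\partial_a\psi\rangle_x=2c_2c_{\kappa,b}\epsilon^2+\mathcal O(\epsilon^3)$. Moreover $s=c_{\kappa,b}^{-1/2}+\mathcal O(\epsilon)$, $s'=\mathcal O(\epsilon)$ because $g_0^+(0,0)=U_4(0)$, and $m=\mathcal O(\epsilon^2)$ because $\eta_2^{[0]}=0$. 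Hence $e_{11}=2c_2$, $\chi=1+\mathcal O(\epsilon^4)$ and $\alpha=\mathcal O(\epsilon^3)$ follow at once, with no bending algebra.

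What your proposal lacks is exactly this bookkeeping. You cannot argue ``modulo the kernel'': the $U_4$-component of $g_1^+$ is not in $\ker\mathcal B_\epsilon$. It contributes $2st\,m+t^2$, which is negligible only through the constraint $t=-sm$. The same decomposition must then be applied to $g_0^+$ to control the $(1,3)$- and $(3,3)$-entries.
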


\begin{proof}
We divide the proof into the expansion at $\mu=0$ and the dependence on the
Floquet parameter.

\smallskip
\noindent
\emph{Step 1: the matrix at $\mu=0$.}
At $\mu=0$, both the operator $\mathcal B_\epsilon$ and the basis
$\mathcal G(0,\epsilon)$ are real. Since $\mathcal B_\epsilon$ is
self-adjoint and reversibility-preserving, the entries with $i+j$ odd vanish.
\color{black}
By Lemma~\ref{lem:G-odd-kernel-direction},
\[
\mathcal B_\epsilon g_1^-(0,\epsilon)
=
\mathcal B_\epsilon g_0^-(0,\epsilon)
=0.
\]
\color{black}
Hence the second
and fourth rows and columns vanish, and
\begin{equation}\label{eq:Be0-structure}
\mathsf B^\epsilon(0)
=
\begin{pmatrix}
a(\epsilon)&0&\alpha(\epsilon)&0\\
0&0&0&0\\
\alpha(\epsilon)&0&\chi(\epsilon)&0\\
0&0&0&0
\end{pmatrix}
\end{equation}
for real-analytic scalar functions $a,\chi,\alpha$.

We use the expansions
\begin{equation}\label{eq:B-epsilon-operator-expansion}
\mathcal B_\epsilon
=\mathcal B_0+\epsilon\mathcal B_1+\epsilon^2\mathcal B_2
+\mathcal O(\epsilon^3),
\end{equation}
where
\[
\mathcal B_0
=
\begin{bmatrix}
1-\kappa\partial_x^2+b\partial_x^4&-c_{\kappa,b}\partial_x\\
c_{\kappa,b}\partial_x&|D|
\end{bmatrix},
\]
\[
\mathcal B_1
=
\begin{bmatrix}
a_1-\kappa\tau_1+b\beta_1&-p_1\partial_x\\
\partial_x\circ p_1&0
\end{bmatrix},
\qquad
\mathcal B_2
=
\begin{bmatrix}
a_2-\kappa\tau_2+b\beta_2&-p_2\partial_x\\
\partial_x\circ p_2&0
\end{bmatrix}.
\]
By Lemma~\ref{lem:basis-G},
\begin{align}
g_1^+(0,\epsilon)
&=f_1^++\epsilon g_{11}^++\epsilon^2g_{12}^+
+\mathcal O(\epsilon^3),
\label{eq:g1plus-epsilon-expansion}\\
g_0^+(0,\epsilon)
&=f_0^++\epsilon g_{01}^++\epsilon^2g_{02}^+
+\mathcal O(\epsilon^3),
\label{eq:g0plus-epsilon-expansion}
\end{align}
where
\[
f_1^+
=
\begin{bmatrix}
c_{\kappa,b}^{-1/2}\cos (x)\\
c_{\kappa,b}^{1/2}\sin (x)
\end{bmatrix},
\qquad
f_0^+=\begin{bmatrix}1\\0\end{bmatrix},\qquad
g_{11}^+
=
\begin{bmatrix}
\alpha_{\kappa,b}\cos(2x)\\
\beta_{\kappa,b}\sin(2x)
\end{bmatrix},
\qquad
g_{01}^+
=
\begin{bmatrix}
\cos (x)\\-c_{\kappa,b}\sin (x)
\end{bmatrix},
\]
and the first components of $g_{12}^+$ and $g_{02}^+$ have zero spatial
average.

We first compute $a(\epsilon)$. Expanding the quadratic form and using
$\mathcal B_0f_1^+=0$, we obtain
\begin{align*}
a(\epsilon)
={}&\epsilon(\mathcal B_1f_1^+,f_1^+)+\epsilon^2\Bigl[
(\mathcal B_2f_1^+,f_1^+)
+2(\mathcal B_1f_1^+,g_{11}^+)
+(\mathcal B_0g_{11}^+,g_{11}^+)
\Bigr]
+\mathcal O(\epsilon^3).
\end{align*}
The coefficient of $\epsilon$ vanishes: the vector
$\mathcal B_1f_1^+$ contains only the Fourier modes $0$ and $2$, whereas
$f_1^+$ contains only the first harmonics. Therefore
\begin{equation}\label{eq:a-epsilon-expansion}
a(\epsilon)=e_{11}\epsilon^2+\mathcal O(\epsilon^3),
\end{equation}
where
\begin{equation}\label{eq:e11-quadratic-form}
e_{11}
=(\mathcal B_2f_1^+,f_1^+)
+2(\mathcal B_1f_1^+,g_{11}^+)
+(\mathcal B_0g_{11}^+,g_{11}^+).
\end{equation}
Substitution of the coefficients of $a_j,p_j,\tau_j,\beta_j$ gives
\eqref{eq:e11-matrix}. No non-vanishing assumption on $e_{11}$ is used.

We next compute $\chi(\epsilon)$. Since
$\mathcal B_0f_0^+=f_0^+$ and the first components of
$g_{01}^+,g_{02}^+$ have zero average, self-adjointness gives
\begin{align*}
\chi(\epsilon)
={}&1+\epsilon(\mathcal B_1f_0^+,f_0^+)+\epsilon^2\Bigl[
(\mathcal B_2f_0^+,f_0^+)
+2(\mathcal B_1f_0^+,g_{01}^+)
+(\mathcal B_0g_{01}^+,g_{01}^+)
\Bigr]
+\mathcal O(\epsilon^3).
\end{align*}
The linear coefficient vanishes because the first component of
$\mathcal B_1f_0^+$ has zero average. Furthermore,
\[
\mathcal B_1f_0^+
=
\begin{bmatrix}
-2c_{\kappa,b}^2\cos (x)\\
2c_{\kappa,b}\sin (x)
\end{bmatrix},
\qquad
\mathcal B_0g_{01}^+
=
\begin{bmatrix}
2c_{\kappa,b}^2\cos (x)\\
-2c_{\kappa,b}\sin (x)
\end{bmatrix},
\]
and the zero Fourier coefficients of $a_2,\tau_2,\beta_2$ give
\[
(\mathcal B_2f_0^+,f_0^+)=2c_{\kappa,b}^2.
\]
Consequently,
\[
(\mathcal B_2f_0^+,f_0^+)
+2(\mathcal B_1f_0^+,g_{01}^+)
+(\mathcal B_0g_{01}^+,g_{01}^+)
=2c_{\kappa,b}^2-4c_{\kappa,b}^2+2c_{\kappa,b}^2=0,
\]
and hence
\begin{equation}\label{eq:c-epsilon-expansion}
\chi(\epsilon)=1+\mathcal O(\epsilon^3).
\end{equation}

Finally, consider
\[
\alpha(\epsilon)
=(\mathcal B_\epsilon g_1^+(0,\epsilon),g_0^+(0,\epsilon)).
\]
Using \eqref{eq:B-epsilon-operator-expansion}--\eqref{eq:g0plus-epsilon-expansion},
$\mathcal B_0f_1^+=0$, and $\mathcal B_0f_0^+=f_0^+$, we find
\begin{equation}\label{eq:alpha-epsilon-expansion-preliminary}
\alpha(\epsilon)
=\epsilon\alpha_1+\epsilon^2\alpha_2+\mathcal O(\epsilon^3),
\end{equation}
where
\[
\alpha_1
=(\mathcal B_1f_1^+,f_0^+)
+(\mathcal B_0g_{11}^+,f_0^+),
\]
and
\begin{align*}
\alpha_2
={}&(\mathcal B_2f_1^+,f_0^+)
+(\mathcal B_1g_{11}^+,f_0^+)
+(\mathcal B_1f_1^+,g_{01}^+)+(\mathcal B_0g_{12}^+,f_0^+)
+(\mathcal B_0g_{11}^+,g_{01}^+).
\end{align*}
The second term in $\alpha_1$ vanishes because $g_{11}^+$ contains only the
second harmonics. For the first term, the zero Fourier coefficient of the
first component of $\mathcal B_1f_1^+$ is
\[
c_{\kappa,b}^{-1/2}
\left[-\frac{2+b+\kappa}{2}-\frac{\kappa}{2}
-\frac{b}{2}+c_{\kappa,b}^2\right]=0,
\]
because $c_{\kappa,b}^2=1+b+\kappa$. Thus $\alpha_1=0$.

We now show that $\alpha_2=0$. Indeed,
\[
(\mathcal B_0g_{12}^+,f_0^+)=0
\]
because the first component of $g_{12}^+$ has zero average. Moreover,
\[
(\mathcal B_2f_1^+,f_0^+)=0,
\qquad
(\mathcal B_1g_{11}^+,f_0^+)=0:
\]
the coefficients of $\mathcal B_2$ contain only the modes $0$ and $2$, so
$\mathcal B_2f_1^+$ contains only the first and third harmonics, while the
coefficients of $\mathcal B_1$ contain only first harmonics, so
$\mathcal B_1g_{11}^+$ also contains only the first and third harmonics.
Furthermore,
\[
(\mathcal B_0g_{11}^+,g_{01}^+)=0
\]
by orthogonality of the second and first harmonics, and
\[
(\mathcal B_1f_1^+,g_{01}^+)=0
\]
because $\mathcal B_1f_1^+$ contains only the modes $0$ and $2$, whereas
$g_{01}^+$ contains only the first harmonics. Consequently,
\begin{equation}\label{eq:alpha-epsilon-expansion}
\alpha(\epsilon)=\mathcal O(\epsilon^3).
\end{equation}
Combining \eqref{eq:Be0-structure}, \eqref{eq:a-epsilon-expansion},
\eqref{eq:c-epsilon-expansion}, and \eqref{eq:alpha-epsilon-expansion}, we get
\begin{equation}\label{eq:B-epsilon-at-mu-zero}
\mathsf B^\epsilon(0)
=
\begin{pmatrix}
e_{11}\epsilon^2&0&0&0\\
0&0&0&0\\
0&0&1&0\\
0&0&0&0
\end{pmatrix}
+\mathsf R^\epsilon_{0,\epsilon},
\qquad
\mathsf R^\epsilon_{0,\epsilon}\in\mathcal R_{0,\epsilon}.
\end{equation}

\smallskip
\noindent
\emph{Step 2: dependence on $\mu$.}
The operator $\mathcal B_\epsilon$ is independent of $\mu$; all
$\mu$-dependence of $\mathsf B^\epsilon$ comes from the basis. Differentiating
\eqref{eq:matrix-components-Bq}, we obtain
\begin{equation}\label{eq:first-mu-derivative-B-epsilon}
\partial_\mu(\mathsf B^\epsilon)_{ij}(0,\epsilon)
=
(\mathcal B_\epsilon\dot g_j(0,\epsilon),g_i(0,\epsilon))
+(\mathcal B_\epsilon g_j(0,\epsilon),\dot g_i(0,\epsilon)).
\end{equation}
Since $g_1^-(0,\epsilon),g_0^-(0,\epsilon)\in\ker\mathcal B_\epsilon$,
the terms in \eqref{eq:first-mu-derivative-B-epsilon} involving these vectors
vanish by self-adjointness. Substituting the expansions of
$\dot g_j(0,\epsilon)$ from Lemma~\ref{lem:basis-G}, and using parity and the
zero-average property of $g_1^-(0,\epsilon)$, gives
\begin{equation}\label{eq:first-mu-derivative-remainder}
\mu\,\partial_\mu \mathsf B^\epsilon(0,\epsilon)
\in\mathcal R_{\mu,\epsilon}.
\end{equation}
The largest possible linear contribution is the $(2,3)$-entry and its
adjoint. More precisely,
\[
\partial_\mu(\mathsf B^\epsilon)_{23}(0,\epsilon)
=(\mathcal B_\epsilon g_0^+(0,\epsilon),\dot g_1^-(0,\epsilon))
=\im\mathcal O(\epsilon),
\]
whereas the remaining entries satisfy the stronger bounds prescribed by
$\mathcal O_0,\mathcal O_1,$ and $\mathcal O_3$ in
\eqref{eq:remainder-orders-1}--\eqref{eq:remainder-orders-2}.

It remains to compute the quadratic term. At $(\mu,\epsilon)=(0,0)$,
\[
\dot g_0^+(0,0)=\dot g_0^-(0,0)=0,
\qquad
\ddot g_0^+(0,0)=\ddot g_0^-(0,0)=0,
\]
while
\[
\dot g_1^+(0,0)
=\im\frac{\gamma_{\kappa,b}}4
\begin{bmatrix}
c_{\kappa,b}^{-1/2}\sin (x)\\
c_{\kappa,b}^{1/2}\cos (x)
\end{bmatrix},
\qquad\dot g_1^-(0,0)
=\im\frac{\gamma_{\kappa,b}}4
\begin{bmatrix}
c_{\kappa,b}^{-1/2}\cos (x)\\
-c_{\kappa,b}^{1/2}\sin (x)
\end{bmatrix}.
\]
For $i,j\in\{1,2\}$, the terms containing $\ddot g_i$ or $\ddot g_j$ in
$\partial_\mu^2(\mathsf B^\epsilon)_{ij}(0,0)$ vanish because
$\mathcal B_0f_1^\pm=0$. Thus only the quadratic forms involving the first
$\mu$-derivatives remain. Parity gives the vanishing of the off-diagonal
entry, and a direct computation gives
\[
(\mathcal B_0\dot g_1^+(0,0),\dot g_1^+(0,0))
=(\mathcal B_0\dot g_1^-(0,0),\dot g_1^-(0,0))
=\zeta_{\kappa,b}.
\]
Consequently,
\begin{equation}\label{eq:second-mu-derivative-B-epsilon}
\frac{\mu^2}{2}\partial_\mu^2 \mathsf B^\epsilon(0,0)
=
\begin{pmatrix}
\zeta_{\kappa,b}\mu^2&0&0&0\\
0&\zeta_{\kappa,b}\mu^2&0&0\\
0&0&0&0\\
0&0&0&0
\end{pmatrix}.
\end{equation}
By analyticity, replacing $\epsilon=0$ by small $\epsilon$ changes the
quadratic coefficient by $\mathcal O(\epsilon)$, hence contributes only
$\mathcal O(\mu^2\epsilon)$ to the remainder. Combining
\eqref{eq:B-epsilon-at-mu-zero},
\eqref{eq:first-mu-derivative-remainder}, and
\eqref{eq:second-mu-derivative-B-epsilon} proves
\eqref{eq:B-epsilon-contribution}.
\end{proof}

\begin{lemma}[The $\mathsf B^\flat$-contribution]\label{lem:Bflat}
One has
\begin{equation}\label{eq:B-flat-contribution}
\mathsf B^\flat
=
\begin{pmatrix}
-\dfrac{\gamma_{\kappa,b}c_{\kappa,b}}4\mu^2
&\im\dfrac{c_{\kappa,b}}2\mu&0&0\\[2mm]
-\im\dfrac{c_{\kappa,b}}2\mu
&-\dfrac{\gamma_{\kappa,b}c_{\kappa,b}}4\mu^2&0&0\\[2mm]
0&0&0&0\\
0&0&0&\mu
\end{pmatrix}
+\mathsf R^\flat_{\mu,\epsilon},
\qquad
\mathsf R^\flat_{\mu,\epsilon}\in\mathcal R_{\mu,\epsilon}.
\end{equation}
\end{lemma}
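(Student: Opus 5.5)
The plan is to compute the entries directly. By \eqref{B b}, only second components enter:
\[
(\mathsf B^\flat)_{ij}=\mu\,\bigl((\sgn(D)+\Pi_0)(\mathsf g_j)_2,(\mathsf g_i)_2\bigr).
\]
Because of the explicit prefactor $\mu$, each entry only needs the basis of Lemma~\ref{lem:basis-G} to one order less than its target remainder class in \eqref{eq:remainder-class}. I will use $\sgn(D)\cos(kx)=\im\sin(kx)$, $\sgn(D)\sin(kx)=-\im\cos(kx)$ for $k\ge1$, $\sgn(D)1=0$, and the fact that $\Pi_0$ only sees averages.

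\emph{Step 1: a structural vanishing at $\mu=0$.} By Lemma~\ref{lem:basis-G}, $\mathcal G(0,\epsilon)$ is real. For real $u,v$, the pairing $(\sgn(D)u,v)$ is purely imaginary and $(\Pi_0u,v)$ is real. By Lemma~\ref{matrix representation mathsf L}, the entries with $\sigma=\sigma'$ are real, since $\mathcal B^\flat_\mu$ commutes with $\overline\rho$. So at $\mu=0$ these entries reduce to the $\Pi_0$ part, i.e.\ to products of averages of second components. By \eqref{eq:g_mu_zero}, the second components of $g_1^+(0,\epsilon)$ (odd), $g_1^-(0,\epsilon)$ ($\mathit{even}_0$) and $g_0^+(0,\epsilon)$ (odd) have zero average. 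Hence every $\sigma=\sigma'$ entry except $(4,4)$ is $\mu\cdot\mathcal O(\mu)$, i.e.\ $\mathcal O(\mu^2)$.

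\emph{Step 2: the $\mu^2$ coefficients at $\epsilon=0$.} For these entries I compute the $\mu^2$ coefficient at $\epsilon=0$; analyticity then puts the rest in $\mathcal O(\mu^2\epsilon,\mu^3)$, which is within $\mathcal O_0$ and $\mathcal O_3$. For $g_0^\pm$, the $\mu$-linear parts vanish at $\epsilon=0$, so the $(1,3)$ and $(3,3)$ entries, and the $(2,4)$ and $(4,2)$ entries, carry no $\mu^2$ term. For $(1,1)$, the cross terms between $\delta_{\kappa,b}\sin x$ and $\im\frac{\mu}{4}\gamma_{\kappa,b}\delta_{\kappa,b}\cos x$ give
\[
2\cdot\bigl(-\im\delta_{\kappa,b}\cos x,\ \im\tfrac{\mu}{4}\gamma_{\kappa,b}\delta_{\kappa,b}\cos x\bigr)
=-\tfrac{\gamma_{\kappa,b}c_{\kappa,b}}{4}\,\mu ,
\]
and the analogous computation gives the same value for $(2,2)$. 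This yields the $-\frac{\gamma_{\kappa,b}c_{\kappa,b}}4\mu^2$ diagonal entries. The $(1,2)$ entry vanishes by parity, since it is a $\sigma\ne\sigma'$ entry.

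\emph{Step 3: the remaining entries.} The $(4,4)$ entry is $\mu(\Pi_01,1)+\mu\cdot\mathcal O(\mu\epsilon)$. This equals $\mu$ up to $\mathcal O(\mu^2\epsilon)$, because $\sgn(D)1=0$ and the $\mu\epsilon$ corrections of $g_0^-$ are first harmonics, which are orthogonal to $1$.

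For $\sigma\ne\sigma'$ the leading $(2,1)$ term is
\[
\mu\bigl(\sgn(D)\delta_{\kappa,b}\cos x,\ \delta_{\kappa,b}\sin x\bigr)=\im\tfrac{c_{\kappa,b}}{2}\,\mu ,
\]
which gives the $\pm\im\frac{c_{\kappa,b}}2\mu$ entries after taking adjoints. All other contributions to this entry are at least $\mu\epsilon$, with the following order-by-order vanishing:
\begin{itemize}
\item[(a)] The $\mu\epsilon$ terms pair first harmonics with the $\epsilon$-corrections $\beta_{\kappa,b}\sin 2x$, $\beta_{\kappa,b}\cos 2x$, so they vanish by Fourier orthogonality.
\item[(b)] The $\mu^2$ terms vanish by parity.
\item[(c)] Hence this entry is $\im\frac{c_{\kappa,b}}2\mu+\im\mathcal O_1$.
\end{itemize}
The entries $(1,4)$, $(2,3)$ and $(3,4)$ only need to lie in $\im\mathcal O_2$ or $\im\mathcal O_1$. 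For $(3,4)$, the $\mu\epsilon$ term would pair $\sgn(D)1=0$ or $\Pi_0(-\epsilon c_{\kappa,b}\sin x)=0$ with something, so it vanishes, leaving $\mathcal O(\mu\epsilon^2,\mu^2\epsilon)$. The others follow by counting powers.

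\emph{Main obstacle.} I expect the main difficulty to be the bookkeeping that each entry meets its specific class, especially $\mathcal O_1$ for the $(1,2)$ and $(3,4)$ entries and $\mathcal O_3$ for the $(2,2)$, $(2,4)$ and $(4,2)$ entries. This requires ruling out a pure $\mu\epsilon$ term. The plan is to handle it with the reality argument of Step~1 together with the orthogonality of first and second harmonics and the zero-average properties in Lemma~\ref{lem:basis-G}, namely \eqref{eq:zero_average}.
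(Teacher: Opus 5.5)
Your proposal is correct and follows essentially the same route as the paper. Both compute $\mu\bigl((\sgn(D)+\Pi_0)(\mathsf g_j)_2,(\mathsf g_i)_2\bigr)$ entry by entry from the expansions of Lemma~\ref{lem:basis-G}; the diagonal $\mu^2$ coefficients come from the cross terms with the $\mu$-corrections of $g_1^\pm$; and parity, Fourier orthogonality and the zero-average property \eqref{eq:zero_average} control the remainders, with your Step~1 reality argument simply packaging the paper's parity remark systematically.

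There is one small slip. With the convention $(\mathsf B^\flat)_{ij}=(\mathcal B^\flat\mathsf g_j,\mathsf g_i)$, the pairing $\mu\bigl(\sgn(D)\delta_{\kappa,b}\cos x,\delta_{\kappa,b}\sin x\bigr)=\im\frac{c_{\kappa,b}}2\mu$ is the $(1,2)$ entry, not the $(2,1)$ entry; the sign matters downstream for $e_{12}$. Also, in Step~2 only the $\mu^2$ coefficient of that entry vanishes by parity, not the entry itself.
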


\begin{proof}
Set $T:=\sgn(D)+\Pi_0$. Since
$\mathcal B^\flat_\mu=\operatorname{diag}(0,\mu T)$, only the second
components of the basis vectors enter. Using
\[
T\sin(kx)=-\im\cos(kx),
\qquad
T\cos(kx)=\im\sin(kx),
\qquad
T1=1,
\]
together with Lemma~\ref{lem:basis-G}, we obtain
\begin{align*}
(\mathsf B^\flat)_{12}
&=\im\frac{c_{\kappa,b}}2\mu
+\im\mathcal O(\mu\epsilon^2,\mu^2\epsilon,\mu^3),\\
(\mathsf B^\flat)_{11}=(\mathsf B^\flat)_{22}
&=-\frac{\gamma_{\kappa,b}c_{\kappa,b}}4\mu^2
+\mathcal O(\mu^2\epsilon,\mu^3),\\
(\mathsf B^\flat)_{44}
&=\mu+\mathcal O(\mu^2\epsilon,\mu^3).
\end{align*}
For example, the leading term of $(\mathsf B^\flat)_{12}$ is
\[
\left(\im\mu c_{\kappa,b}^{1/2}\sin (x),
  c_{\kappa,b}^{1/2}\sin (x)\right)
=\im\frac{c_{\kappa,b}}2\mu.
\]
The two order-$\mu^2$ cross terms between the leading first harmonic and the
$\mu$-correction of $g_1^\pm$ give the diagonal coefficient. All remaining
entries satisfy the bounds in \eqref{eq:remainder-class}: the $(2,3)$-entry
and its adjoint may be of order $\mu\epsilon$, while the sharper estimates for
the other entries follow from parity and the zero-average property of
$g_1^-(0,\epsilon)$. This proves \eqref{eq:B-flat-contribution}.
\end{proof}

\begin{lemma}[The $\mathsf B^\sharp$-contribution]\label{lem:Bsharp}
The matrix $\mathsf B^\sharp$ belongs to $\mathcal R_{\mu,\epsilon}$.
\end{lemma}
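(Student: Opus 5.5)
The plan is to estimate $\mathsf B^\sharp$ entry by entry and show that it fits the pattern \eqref{eq:remainder-class}. Since
\[
\mathcal B^\sharp_{\mu,\epsilon}=\mu\begin{pmatrix}0&-\im p_\epsilon\\ \im p_\epsilon&0\end{pmatrix}
\]
and $p_\epsilon=\epsilon p_1+\epsilon^2p_2+\mathcal O(\epsilon^3)$ by Lemma~\ref{lem:pa.exp}, every entry has the form
\[
(\mathsf B^\sharp)_{ij}
=\im\mu\Bigl[(p_\epsilon (\mathsf g_j)_1,(\mathsf g_i)_2)-(p_\epsilon(\mathsf g_j)_2,(\mathsf g_i)_1)\Bigr]
=\mathcal O(\mu\epsilon).
\]
This bound alone settles the $(2,3),(3,2),(1,4),(4,1)$ positions, which are only required to be $\im\mathcal O_2=\im\mathcal O(\mu\epsilon,\mu^3)$. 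The operator $\mathcal B^\sharp_{\mu,\epsilon}$ is self-adjoint and reversibility-preserving, being a piece of $\mathcal B_{\mu,\epsilon}$ with real even $p_\epsilon$ (a quick check with $\overline\rho$). The argument of Lemma~\ref{matrix representation mathsf L} therefore applies: $\mathsf B^\sharp$ is self-adjoint, with real entries when $\sigma=\sigma'$ and purely imaginary entries otherwise. So only the magnitudes need checking.

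\emph{Step 1: the diagonal-type entries $\sigma=\sigma'$.} For $\mu=0$ the entries vanish. Write $\mathsf B^\sharp=\im\mu\,M(\mu,\epsilon)$. At $\mu=0$ the basis $\mathcal G(0,\epsilon)$ is real, and $p_\epsilon$ and $\mathcal J$-type pairings are real, so $M(0,\epsilon)_{ij}$ is real. Hence $\im\mu M(0,\epsilon)_{ij}$ is purely imaginary, while the entry must be real when $\sigma=\sigma'$. These entries therefore have no $\mu\epsilon^k$ contribution, and begin at $\mu^2\epsilon$. This gives the $\mathcal O_0$ and $\mathcal O_3=\mathcal O(\mu^2\epsilon,\mu^3)$ bounds required for the $(1,1),(1,3),(3,3),(2,2),(2,4),(4,4)$ positions.

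\emph{Step 2: the $\im\mathcal O_1$ entries $(1,2)$ and $(3,4)$.} These require $\mathcal O(\mu\epsilon^2,\mu^2\epsilon,\mu^3)$, so I must show the $\mu\epsilon$ coefficient vanishes. That coefficient is computed with the $\epsilon=0,\mu=0$ basis and $p_1=-2c_{\kappa,b}\cos x$. For $(1,2)$ this gives $\im\mu\epsilon[(p_1(f_1^-)_1,(f_1^+)_2)-(p_1(f_1^-)_2,(f_1^+)_1)]$. Here $p_1$ times a first harmonic yields only modes $0$ and $2$, which are orthogonal to the first harmonics, so the term vanishes. For $(3,4)$ the coefficient is $\im\mu\epsilon[(p_1\cdot 0,0)-(p_1\cdot1,1)]=\im\mu\epsilon\langle p_1\rangle_x=0$. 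Both vanish by Fourier orthogonality.

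\emph{Step 3.} It remains to confirm that the $\mu\epsilon$ terms in Step 1 are real multiples of $\im$ and therefore genuinely cancel. This also rests on parity: a product of $p_\epsilon$ (even) with components of the prescribed parities \eqref{Parity structure} integrates to zero unless parities match. I would verify this for the $(1,3)$ entry, which must be $\mathcal O_0$ and so may contain $\mu\epsilon^2$ but not $\mu\epsilon$.

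The main obstacle is the bookkeeping in Step 1. The reality argument needs the $\mu$-linear part of the basis, with its purely imaginary first-order corrections, to contribute only at order $\mu^2$. This holds because those corrections multiply the prefactor $\mu$, and I would verify it directly from Lemma~\ref{lem:basis-G}.
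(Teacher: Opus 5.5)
Your proposal is correct and follows essentially the paper's argument. The bound $\mathcal B^\sharp_{\mu,\epsilon}=\mathcal O(\mu\epsilon)$ settles the $(1,4)$ and $(2,3)$ positions. Reality of the frozen basis $\mathcal G(0,\epsilon)$, combined with reversibility, kills the $\mu$-linear part of every $\sigma=\sigma'$ entry; the paper phrases this as vanishing of real quadratic forms plus parity for $(1,3)$ and $(2,4)$. Fourier orthogonality with $p_1=-2c_{\kappa,b}\cos(x)$ removes the $\mu\epsilon$ term in the $(1,2)$ and $(3,4)$ entries. Your Step~3 is redundant, since Step~1 already gives $\mathcal O(\mu^2\epsilon)$ for all $\sigma=\sigma'$ entries. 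The $(3,4)$ coefficient is actually $-\im\mu\epsilon\langle p_1\rangle_x$, a harmless sign slip since it vanishes anyway.
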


\begin{proof}
Since $p_\epsilon=-2c_{\kappa,b}\epsilon\cos (x)+\mathcal O(\epsilon^2)$,
one has $\mathcal B^\sharp_{\mu,\epsilon}=\mathcal O(\mu\epsilon)$. Freezing
the basis at $\mu=0$ gives
\[
(\mathcal B^\sharp_{\mu,\epsilon}g_j(\mu,\epsilon),g_i(\mu,\epsilon))
=
(\mathcal B^\sharp_{\mu,\epsilon}g_j(0,\epsilon),g_i(0,\epsilon))
+\mathcal O(\mu^2\epsilon).
\]
The frozen basis is real. Hence the diagonal quadratic forms vanish, and
parity gives the exact vanishing of the $(1,3)$- and $(2,4)$-entries. The
order-$\mu\epsilon$ contributions to the $(1,2)$- and $(3,4)$-entries vanish
by Fourier orthogonality, so their first possible contributions are
$\mathcal O(\mu\epsilon^2)$. The $(1,4)$- and $(2,3)$-entries may be of order
$\mu\epsilon$, exactly as allowed by $\mathcal O_2$. Therefore
$\mathsf B^\sharp\in\mathcal R_{\mu,\epsilon}$.
\end{proof}

\begin{lemma}[The $\mathsf B^s$-contribution]\label{lem:Bs}
Set
\begin{equation}\label{eq:sigma-s-kappa-b}
\sigma^s_{\kappa,b}
:=\frac{\kappa+6b+\gamma_{\kappa,b}(\kappa+2b)}{2c_{\kappa,b}}.
\end{equation}
Then
\begin{equation}\label{eq:B-s-contribution}
\mathsf B^s
=
\begin{pmatrix}
\sigma^s_{\kappa,b}\mu^2
&\im\dfrac{\kappa+2b}{c_{\kappa,b}}\mu&0&0\\[2mm]
-\im\dfrac{\kappa+2b}{c_{\kappa,b}}\mu
&\sigma^s_{\kappa,b}\mu^2&0&0\\[2mm]
0&0&\kappa\mu^2&0\\
0&0&0&0
\end{pmatrix}
+\mathsf R^s_{\mu,\epsilon},
\qquad
\mathsf R^s_{\mu,\epsilon}\in\mathcal R_{\mu,\epsilon}.
\end{equation}
\end{lemma}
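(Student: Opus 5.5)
Since $\mathcal B^s_{\mu,\epsilon}=\operatorname{diag}(\mathcal S_{\mu,\epsilon},0)$, only the first components of the basis vectors $\mathsf g_j(\mu,\epsilon)$ enter. The entries are therefore
\[
(\mathsf B^s)_{ij}=\bigl(\mathcal S_{\mu,\epsilon}(\mathsf g_j)_1,(\mathsf g_i)_1\bigr).
\]
Because $\mathcal S_{\mu,\epsilon}=\mathcal O(\mu)$, every entry is at least of order $\mu$. The plan is to expand $\mathcal S_{\mu,\epsilon}=\mu\mathcal S^{(1)}_{0,\epsilon}+\frac{\mu^2}{2}\mathcal S^{(2)}_{0,\epsilon}+\mathcal O(\mu^3)$. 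Each $\mathcal S^{(j)}_{0,\epsilon}$ is further expanded in $\epsilon$ using Lemmas~\ref{lem:pa.exp} and \ref{lem:tau.exp}. At $\epsilon=0$ one has the Fourier multipliers
\[
\mathcal S^{(1)}_{0,0}=-2\im\kappa\partial_x+4\im b\partial_x^3,\qquad
\tfrac12\mathcal S^{(2)}_{0,0}=\kappa-6b\partial_x^2 .
\]
On $\cos(kx),\sin(kx)$ these act respectively as $\mp 2\im(\kappa+2bk^2)k$ (mapping $\cos\leftrightarrow\sin$) and as multiplication by $\kappa+6bk^2$. The basis itself is expanded by Lemma~\ref{lem:basis-G}.

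\textbf{Step 1: leading entries.} For the $(1,2)$ entry, take the leading first components $c_{\kappa,b}^{-1/2}\cos x$ of $g_1^+$ and $-c_{\kappa,b}^{-1/2}\sin x$ of $g_1^-$. This gives
\[
\mu\bigl(\mathcal S^{(1)}_{0,0}(-c^{-1/2}\sin x),c^{-1/2}\cos x\bigr)=\im\frac{\kappa+2b}{c_{\kappa,b}}\mu .
\]
For the $(3,3)$ entry, $\mathcal S^{(1)}_{0,0}1=0$ and $\tfrac12\mathcal S^{(2)}_{0,0}1=\kappa$, which gives $\kappa\mu^2$. For the diagonal entries $(1,1)$ and $(2,2)$ at order $\mu^2$ there are two contributions. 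The first is $\frac{\mu^2}{2}(\mathcal S^{(2)}_{0,0}f_1^\pm,f_1^\pm)=\frac{\kappa+6b}{2c}\mu^2$. The second consists of the two cross terms between $\mu\mathcal S^{(1)}_{0,0}$ and the $\im\frac{\mu}{4}\gamma_{\kappa,b}$-corrections of $g_1^\pm$; each contributes $\frac{\gamma(\kappa+2b)}{4c}\mu^2$. Together they produce $\sigma^s_{\kappa,b}\mu^2$. The signs of the cross terms need to be checked carefully. The $(4,4)$ row and column vanish identically up to $\mathcal O(\mu^2\epsilon,\mu^3)$, because the first component of $g_0^-$ is $\mathcal O(\mu\epsilon)$.

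\textbf{Step 2: remainder class.} We must show that all other contributions fit \eqref{eq:remainder-class}. At $\mu=0$ the matrix vanishes, so every term carries a factor $\mu$, and the task reduces to the orders in $\epsilon$. For the real entries, i.e.\ $(1,1),(1,3),(3,3),(2,2),(2,4),(4,4)$ with $\sigma=\sigma'$, the $\mathcal O(\mu)$ term is $\mu(\mathcal S^{(1)}_{0,\epsilon}g_j(0,\epsilon),g_i(0,\epsilon))$. This is the quadratic form of $\mathcal S^{(1)}_{0,\epsilon}$ between real vectors of the same parity. Since $\mathcal S^{(1)}_{0,\epsilon}$ is $\im$ times a real operator that maps even to odd functions, this quantity is purely imaginary. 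Reality of these entries, from \eqref{B are alternatively real or imaginary}, then forces it to vanish. The $\mu\epsilon$ terms with $\mu$-derivatives of the basis are handled the same way. Next, the $(1,3)$ entry at order $\mu^2$ involves first harmonics against constants, and for the leading parts it vanishes by Fourier orthogonality. The $\mathcal O(\mu\epsilon)$ part of the $(1,2)$ and $(3,4)$ entries mixes modes $1$ and $2$ (respectively $0$ and $1$) through $\tau_1,\beta_1$, and must be shown to vanish in order to land in $\mathcal O_1$; again Fourier orthogonality should apply, since first-order coefficients are first harmonics. The $(1,4)$ and $(2,3)$ entries are only required to be $\mathcal O(\mu\epsilon)$, which is automatic.

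The main obstacle I expect is the $(3,4)$ entry and the $\mu\epsilon$ part of $(1,2)$. These involve the first component $\epsilon\cos x$ of $g_0^+$ paired with $\mathcal S^{(1)}_{0,0}$ acting on the $\mathcal O(\mu\epsilon)$ first component of $g_0^-$, and the resulting products are $\mathcal O(\mu^2\epsilon)$. I will track these orders explicitly, entry by entry, to confirm that every remainder lies in the prescribed class $\mathcal O_0,\dots,\mathcal O_3$.
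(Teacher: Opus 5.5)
Your proposal is correct and follows the paper's proof essentially step by step. The shared steps are: only first components enter; the leading entries come from $\mathcal S^{(1)}_{0,0}$, $\mathcal S^{(2)}_{0,0}$ and the $\im\frac{\mu}{4}\gamma_{\kappa,b}$-corrections of $g_1^\pm$; the fourth row and column are disposed of by $u_4=\mathcal O(\mu\epsilon)$; and the remaining bounds follow from the fact that $\mathcal S^{(1)}_{0,\epsilon}$ maps even functions to $\im$ times odd ones, together with Fourier orthogonality.

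One small correction: the $(2,3)$ bound is not automatic. You still need its $\mu$ and $\mu^2$ coefficients to vanish at $\epsilon=0$, and this follows just as for $(1,3)$, since $u_3(\mu,0)\equiv1$ and $u_2(\mu,0)$ has zero mean.
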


\begin{proof}
Write $g_j=(u_j,v_j)^{\mathsf T}$. Since
$\mathcal B^s_{\mu,\epsilon}$ acts only on the first component,
\[
(\mathsf B^s)_{ij}=(\mathcal S_{\mu,\epsilon}u_j,u_i).
\]
At $\epsilon=0$,
\[
\mathcal S_{0,0}^{(1)}=-2\im\kappa\partial_x+4\im b\partial_x^3,
\qquad
\mathcal S_{0,0}^{(2)}=2\kappa-12b\partial_x^2.
\]
Moreover,
\[
u_1^0=c_{\kappa,b}^{-1/2}\cos (x),
\quad
u_2^0=-c_{\kappa,b}^{-1/2}\sin (x),
\quad
u_3^0=1,
\quad
u_4^0=0,
\]
and
\[
\dot u_1^0
=\im\frac{\gamma_{\kappa,b}}4c_{\kappa,b}^{-1/2}\sin (x),
\qquad
\dot u_2^0
=\im\frac{\gamma_{\kappa,b}}4c_{\kappa,b}^{-1/2}\cos (x),
\qquad
\dot u_3^0=\dot u_4^0=0.
\]
Expanding both the operator and the basis in $\mu$ gives
\begin{align*}
(\mathsf B^s)_{12}
&=\im\frac{\kappa+2b}{c_{\kappa,b}}\mu
+\im\mathcal O(\mu\epsilon^2,\mu^2\epsilon,\mu^3),\\
(\mathsf B^s)_{11}=(\mathsf B^s)_{22}
&=\sigma^s_{\kappa,b}\mu^2
+\mathcal O(\mu^2\epsilon,\mu^3),\\
(\mathsf B^s)_{33}
&=\kappa\mu^2+\mathcal O(\mu^2\epsilon,\mu^3).
\end{align*}
Indeed,
\[
(\mathcal S_{0,0}^{(1)}u_2^0,u_1^0)
=\im\frac{\kappa+2b}{c_{\kappa,b}},
\]
while
\begin{align*}
\sigma^s_{\kappa,b}
={}&\frac12(\mathcal S_{0,0}^{(2)}u_1^0,u_1^0)
+(\mathcal S_{0,0}^{(1)}\dot u_1^0,u_1^0)
+(\mathcal S_{0,0}^{(1)}u_1^0,\dot u_1^0).
\end{align*}
The same identity holds with $u_1^0$ replaced by $u_2^0$, and the
$(3,3)$-coefficient follows from
$\mathcal S_{0,0}^{(1)}1=0$ and
$\mathcal S_{0,0}^{(2)}1=2\kappa$.

It remains to estimate the omitted entries. The linear-in-$\mu$ $(1,3)$-entry
vanishes for every $\epsilon$, because
$\mathcal S_{0,\epsilon}^{(1)}$ maps even functions into $\im$ times odd
functions. The $(2,3)$-entry may be of order $\mu\epsilon$ and is absorbed by
$\mathcal O_2$. Finally, $u_4(\mu,\epsilon)=\mathcal O(\mu\epsilon)$, so every
entry in the fourth row or column is
$\mathcal O(\mu^2\epsilon,\mu^3)$. Self-adjointness and reversibility yield the
remaining bounds. This proves \eqref{eq:B-s-contribution}.
\end{proof}

\begin{proof}[Proof of Proposition~\ref{lem:matrix-representation-G}]
Summing \eqref{eq:B-epsilon-contribution},
\eqref{eq:B-flat-contribution}, Lemma~\ref{lem:Bsharp}, and
\eqref{eq:B-s-contribution}, we obtain
\eqref{eq:reduced-matrix-expansion}, except for the identification of the two
combined scalar coefficients. Since $c_{\kappa,b}^2=1+\kappa+b$,
\[
\frac{c_{\kappa,b}}2+
\frac{\kappa+2b}{c_{\kappa,b}}
=
\frac{1+3\kappa+5b}{2c_{\kappa,b}}
=\frac{e_{12}}2.
\]
Furthermore,
\[
\zeta_{\kappa,b}
-\frac{\gamma_{\kappa,b}c_{\kappa,b}}4
+\sigma^s_{\kappa,b}
=-\frac{e_{22}}8.
\]
Indeed, substituting the definitions of $\gamma_{\kappa,b}$,
$\zeta_{\kappa,b}$, and $\sigma^s_{\kappa,b}$ gives
\[
-\frac{e_{22}}8
=
\frac{15b^2+3\kappa^2+22b\kappa+30b+6\kappa-1}
{8c_{\kappa,b}^3}.
\]
Finally, the sum of finitely many matrices in
$\mathcal R_{\mu,\epsilon}$ still belongs to
$\mathcal R_{\mu,\epsilon}$. This proves the proposition.
\end{proof}

\section{Block-Decoupling and Proof of the Main Result}\label{sec:BD}

In this section we complete the proof of Theorem~\ref{Complete BF thm} by block-diagonalizing the
$4\times4$ Hamiltonian and reversible matrix representing $\mathscr{L}_{\mu,\epsilon}$ on
$\mathcal{V}_{\mu,\epsilon}$.
Unlike the finite-depth case, no singular $\mu^{1/2}$--rescaling is needed: in infinite depth the long-wave
block already has the scaling $G_{11}\sim 1$ and $G_{22}\sim\mu$ by Proposition~\ref{lem:matrix-representation-G}.
We work for $0<\mu<\mu_0$, $0<\epsilon<\epsilon_0$, and all transformations below are
symplectic and reversibility-preserving.

Recall from \eqref{J4Bmuepsilon} and Proposition~\ref{lem:matrix-representation-G} that, in the basis
$\mathcal{G}$,
\[
\mathsf{L}_{\mu,\epsilon}=\mathsf J_4 \mathsf B_{\mu,\epsilon},
\qquad
\mathsf B_{\mu,\epsilon}=
\begin{pmatrix}
E & F\\
F^* & G
\end{pmatrix}.
\]
The matrix $\mathsf{L}_{\mu,\epsilon}$ is Hamiltonian and reversible, with block form
\[
\mathsf{L}_{\mu,\epsilon}
=
\mathsf J_4
\begin{pmatrix}
E & F\\
F^* & G
\end{pmatrix}
=
\begin{pmatrix}
\mathsf J_2E & \mathsf J_2F\\
\mathsf J_2F^* & \mathsf J_2G
\end{pmatrix},
\]
where
\[
E=
\begin{pmatrix}
E_{11} & \im E_{12}\\
-\im E_{12} & E_{22}
\end{pmatrix},
\qquad
F=
\begin{pmatrix}
F_{11} & \im F_{12}\\
\im F_{21} & F_{22}
\end{pmatrix},
\qquad
G=
\begin{pmatrix}
G_{11} & \im G_{12}\\
-\im G_{12} & G_{22}
\end{pmatrix}.
\]
The entries are real analytic in $(\mu,\epsilon)$ and satisfy the expansions of
Proposition~\ref{lem:matrix-representation-G}. In particular
\[
G_{11}=1+\kappa\mu^2+\mathcal{O}(\epsilon^3,\mu\epsilon^2,\mu^2\epsilon,\mu^3),\qquad
G_{22}=\mu+\mathcal{O}(\mu^2\epsilon,\mu^3),
\]
\[
G_{12}-E_{12}=-\frac{e_{12}}{2}\mu+\mathcal{O}(\mu\epsilon^2,\mu^2\epsilon,\mu^3),
\]
\[
E_{11}=e_{11}\epsilon^2-\frac{e_{22}}{8}\mu^2+\mathcal{O}(\epsilon^3,\mu\epsilon^2,\mu^2\epsilon,\mu^3),
\qquad
E_{22}=-\frac{e_{22}}{8}\mu^2+\mathcal{O}(\mu^2\epsilon,\mu^3),
\]
and
\[
F_{11}=\mathcal{O}(\epsilon^3,\mu\epsilon^2,\mu^2\epsilon,\mu^3),
\qquad F_{12},F_{21}=\mathcal{O}(\mu\epsilon,\mu^3),
\qquad
F_{22}=\mathcal{O}(\mu^2\epsilon,\mu^3).
\]
The first step below removes the entry $F_{11}$, which carries no factor $\mu$.

\subsection{First step: removal of \(F_{11}\)}

Let
\[
\mathsf Q:=
\begin{pmatrix}
1&0\\
0&0
\end{pmatrix},
\qquad
\mathsf P:=
\begin{pmatrix}
0&0\\
0&1
\end{pmatrix},
\]
and define
\[
m:= -\frac{F_{11}}{G_{11}}.
\]
Since \(G_{11}=1+\kappa\mu^2+\mathcal{O}(\epsilon^3,\mu\epsilon^2,\mu^2\epsilon,\mu^3)\), the function \(m\) is real analytic and satisfies
\[
m=\mathcal{O}(\epsilon^3,\mu\epsilon^2,\mu^2\epsilon,\mu^3).
\]
Set
\[
\widetilde{Y}:=I_4+m
\begin{pmatrix}
0&-\mathsf P\\
\mathsf Q&0
\end{pmatrix}.
\]
Then $\widetilde{Y}$ is symplectic and reversibility-preserving. Indeed, the matrix
\[
\begin{pmatrix}
0&-\mathsf P\\
\mathsf Q&0
\end{pmatrix}
\]
is Hamiltonian and reversibility-preserving in the present real coordinates,
and the special choice of the rank-one projectors \(\mathsf P,\mathsf Q\) gives directly
\[
[\widetilde{Y}]^*\mathsf J_4\widetilde{Y}=\mathsf J_4.
\]

We conjugate
\[ \mathsf{L}_{\mu,\epsilon}^{(1)}
:=
[\widetilde{Y}]^{-1}\mathsf{L}_{\mu,\epsilon}\widetilde{Y}
=
\mathsf J_4 \mathsf B_{\mu,\epsilon}^{(1)},
\qquad
\mathsf B_{\mu,\epsilon}^{(1)}:=[\widetilde{Y}]^* \mathsf B_{\mu,\epsilon}\widetilde{Y}.
\]
Writing
\[
\mathsf B_{\mu,\epsilon}^{(1)}
=
\begin{pmatrix}
E^{(1)} & F^{(1)}\\
[F^{(1)}]^* & G^{(1)}
\end{pmatrix},
\]
a direct multiplication gives
\[
E^{(1)}
=
E+m(\mathsf QF^*+F \mathsf Q)+m^2 \mathsf QG \mathsf Q,\qquad G^{(1)}
=
G-m(\mathsf PF+F^*\mathsf P)+m^2 \mathsf PE \mathsf P,
\]
and
\[
F^{(1)}
=
F+m(\mathsf QG-E \mathsf P)-m^2 \mathsf QF^*\mathsf P.
\]
Equivalently,
\[
E^{(1)}
=
E+
\begin{pmatrix}
2mF_{11}+m^2G_{11} & -\im m F_{21}\\
\im mF_{21} & 0
\end{pmatrix},\qquad G^{(1)}
=
G+
\begin{pmatrix}
0 & \im mF_{21}\\
-\im mF_{21} & -2mF_{22}+m^2E_{22}
\end{pmatrix},
\]
and
\[
F^{(1)}
=
\begin{pmatrix}
F_{11}+mG_{11}
&
\im(F_{12}+mG_{12}-mE_{12}+m^2F_{21})\\
\im F_{21}
&
F_{22}-mE_{22}
\end{pmatrix}.
\]
By the definition of \(m\), the \((1,1)\)-entry vanishes:
\[
F^{(1)}_{11}=F_{11}+mG_{11}=0.
\]
Moreover the corrections to \(E\) and \(G\) are of higher order, so \(E^{(1)}\)
and \(G^{(1)}\) have the same leading expansions as \(E\) and \(G\). Finally
\[
F^{(1)}
=
\begin{pmatrix}
0 & \im F^{(1)}_{12}\\
\im F^{(1)}_{21} & F^{(1)}_{22}
\end{pmatrix},
\]
with
\[
F^{(1)}_{12}=\mathcal{O}(\mu\epsilon,\mu^3),\qquad F^{(1)}_{21}=\mathcal{O}(\mu\epsilon,\mu^3),\qquad F^{(1)}_{22}=\mathcal{O}(\mu^2\epsilon,\mu^3).
\]
Thus after the first step every remaining off-diagonal entry carries the
necessary factor $\mu$.

\subsection{Second step: a homological equation}

We now remove the new off-diagonal block up to higher order terms. Write
\[
D^{(1)}
:=
\begin{pmatrix}
D^{(1)}_1&0\\
0&D^{(1)}_0
\end{pmatrix}
:=
\begin{pmatrix}
\mathsf J_2E^{(1)}&0\\
0&\mathsf J_2G^{(1)}
\end{pmatrix},\qquad R^{(1)}
:=
\begin{pmatrix}
0&\mathsf J_2F^{(1)}\\
\mathsf J_2[F^{(1)}]^*&0
\end{pmatrix}.
\]
We look for a Hamiltonian and reversibility-preserving matrix
\[
S^{(1)}
=
\mathsf J_4
\begin{pmatrix}
0&\Theta\\
\Theta^*&0
\end{pmatrix},
\qquad
\Theta:=\mathsf J_2 \widetilde{X},
\]
where
\[
\widetilde{X}=
\begin{pmatrix}
x_{11}&ix_{12}\\
ix_{21}&x_{22}
\end{pmatrix},
\qquad x_{ij}\in\mathbb R,
\]
such that
\begin{equation}\label{S1-def}
[S^{(1)},D^{(1)}]+R^{(1)}=0.
\end{equation}
This equation is equivalent to the Sylvester equation
\[
D^{(1)}_1 \widetilde{X}-\widetilde{X}D^{(1)}_0=-\mathsf J_2F^{(1)}.
\]
Writing
\[
a:=G^{(1)}_{12}-E^{(1)}_{12},
\qquad
b:=G^{(1)}_{11},
\qquad
c:=E^{(1)}_{22},
\qquad d:=G^{(1)}_{22},
\qquad
e:=E^{(1)}_{11},
\]
the Sylvester equation is equivalent to the real linear system
\[
\mathsf A
\begin{pmatrix}
x_{11}\\
x_{12}\\
x_{21}\\
x_{22}
\end{pmatrix}
=
\begin{pmatrix}
-F^{(1)}_{21}\\
F^{(1)}_{22}\\
-F^{(1)}_{11}\\
F^{(1)}_{12}
\end{pmatrix},\qquad \text{where}\qquad\mathsf A
=
\begin{pmatrix}
a&b&c&0\\
d&a&0&-c\\
e&0&a&-b\\
0&-e&-d&a
\end{pmatrix}.
\]
Since \(F^{(1)}_{11}=0\), the right-hand side is
\[
\begin{pmatrix}
-F^{(1)}_{21}\\
F^{(1)}_{22}\\
0\\
F^{(1)}_{12}
\end{pmatrix}.
\]

The determinant of \(\mathsf A\) is
\[
\det \mathsf A
=
a^4-2a^2(bd+ce)+(bd-ce)^2.
\]
Using
\[
a=-\frac{e_{12}}{2}\mu+\mathcal{O}(\mu\epsilon^2,\mu^2\epsilon,\mu^3),
\qquad
b=1+\kappa\mu^2+\mathcal{O}(\epsilon^3,\mu\epsilon^2,\mu^2\epsilon,\mu^3),
\qquad
d=\mu+\mathcal{O}(\mu^2\epsilon,\mu^3),
\]
\[
c=-\frac{e_{22}}{8}\mu^2+\mathcal{O}(\mu^2\epsilon,\mu^3),
\qquad
e=e_{11}\epsilon^2-\frac{e_{22}}{8}\mu^2+\mathcal{O}(\epsilon^3,\mu\epsilon^2,\mu^2\epsilon,\mu^3),
\]
we obtain
\[
\det\mathsf A
=
\mu^2(1+\mathcal{O}(\epsilon^2,\mu)).
\]
Hence \(\mathsf A\) is invertible for \(0<|\mu|\ll1\). Moreover the explicit
inverse has the structure
\[
\mathsf A^{-1}
=
\frac1\mu
\begin{pmatrix}
\mathcal{O}(\mu)&1+\mathcal{O}(\epsilon^2,\mu)&\mathcal{O}(\mu^2)&\mathcal{O}(\mu^2)\\
\mathcal{O}(\mu)&\mathcal{O}(\mu)&\mathcal{O}(\mu^3)&\mathcal{O}(\mu^2)\\
\mathcal{O}(\epsilon^2,\mu^2)&\mathcal{O}(\epsilon^2,\mu^2)&\mathcal{O}(\mu)&-1+\mathcal{O}(\epsilon,\mu)\\
\mathcal{O}(\mu\epsilon^2,\mu^3)&\mathcal{O}(\epsilon^2,\mu^2)&\mathcal{O}(\mu)&\mathcal{O}(\mu)
\end{pmatrix}.
\]
Therefore, since
\[
F^{(1)}_{12},F^{(1)}_{21}
=
\mathcal{O}(\mu\epsilon,\mu^3),
\qquad F^{(1)}_{22}
=
\mathcal{O}(\mu^2\epsilon,\mu^3),
\]
we get
\[
x_{11}=\mathcal{O}(\mu\epsilon,\mu^2),
\qquad x_{12}=\mathcal{O}(\mu\epsilon,\mu^3),
\qquad
x_{21}=\mathcal{O}(\epsilon,\mu^2),
\qquad
x_{22}=\mathcal{O}(\mu\epsilon,\mu^3).
\]
In particular
\[
\widetilde{X}=\mathcal{O}(\epsilon,\mu^2).
\]

Notice that no singular factor \(\mu^{-1}\) acts on an \(\mathcal{O}(\epsilon^3)\) term:
that term has already been removed through the exact cancellation
\(F^{(1)}_{11}=0\). This is the essential reason for performing the first
block-decoupling step before solving the Sylvester equation.

We now define
\[
\mathsf{L}_{\mu,\epsilon}^{(2)}
:=
e^{S^{(1)}}\mathsf{L}_{\mu,\epsilon}^{(1)}e^{-S^{(1)}}.
\]
Since \(S^{(1)}\) is Hamiltonian and reversibility-preserving, the matrix
\(\mathsf{L}_{\mu,\epsilon}^{(2)}\) is again Hamiltonian and reversible.

Using the Lie expansion and the homological equation
\[
[S^{(1)},D^{(1)}]+R^{(1)}=0,
\]
we get
\[
\mathsf{L}_{\mu,\epsilon}^{(2)}
=
D^{(1)}
+
\frac12[S^{(1)},R^{(1)}]
+
\frac12
\int_0^1
(1-\tau^2)
e^{\tau S^{(1)}}
\operatorname{ad}_{S^{(1)}}^2(R^{(1)})
e^{-\tau S^{(1)}}\,d\tau.
\]
The commutator \(\frac12[S^{(1)},R^{(1)}]\) is block-diagonal. More precisely,
\[
\frac12[S^{(1)},R^{(1)}]
=
\begin{pmatrix}
\mathsf J_2\widetilde E&0\\
0&\mathsf J_2\widetilde G
\end{pmatrix},
\]
where
\[
\widetilde E
=
\operatorname{Sym}\bigl(\mathsf J_2 \widetilde{X} \mathsf J_2[F^{(1)}]^*\bigr),
\qquad
\widetilde G
=
\operatorname{Sym}\bigl([\widetilde{X}]^*F^{(1)}\bigr).
\]
Here
\[
\operatorname{Sym}(A):=\frac12(A+A^*).
\]

We now estimate the entries componentwise. Since \(F^{(1)}_{11}=0\), a direct
multiplication gives
\[
\mathsf J_2\widetilde{X}\mathsf J_2[F^{(1)}]^*
=
\begin{pmatrix}
x_{21}F^{(1)}_{12}
&
\im\bigl(x_{22}F^{(1)}_{21}+x_{21}F^{(1)}_{22}\bigr)
\\
\im x_{11}F^{(1)}_{12}
&
x_{12}F^{(1)}_{21}-x_{11}F^{(1)}_{22}
\end{pmatrix}.
\]
Therefore
\[
\widetilde E_{11}
=
\mathcal{O}(\mu\epsilon^2,\mu^3\epsilon,\mu^5),
\qquad\widetilde E_{12}
=
\mathcal{O}(\mu^2\epsilon^2,\mu^3\epsilon,\mu^5),
\qquad
\widetilde E_{22}
=
\mathcal{O}(\mu^2\epsilon^2,\mu^4\epsilon,\mu^5,\mu^6).
\]
These estimates give the required control of the diagonal correction generated
by the first Lie transform.

Similarly,
\[
[\widetilde{X}]^*F^{(1)}
=
\begin{pmatrix}
x_{21}F^{(1)}_{21}
&
\im\bigl(x_{11}F^{(1)}_{12}-x_{21}F^{(1)}_{22}\bigr)
\\
\im x_{22}F^{(1)}_{21}
&
x_{22}F^{(1)}_{22}+x_{12}F^{(1)}_{12}
\end{pmatrix},
\]
and hence
\[
\widetilde G_{11}
=
\mathcal{O}(\mu\epsilon^2,\mu^3\epsilon,\mu^5),
\qquad\widetilde G_{12}
=
\mathcal{O}(\mu^2\epsilon^2,\mu^3\epsilon,\mu^5),
\qquad
\widetilde G_{22}
=
\mathcal{O}(\mu^2\epsilon^2,\mu^4\epsilon,\mu^5).
\]
Thus \(E^{(1)}+\widetilde E\) and \(G^{(1)}+\widetilde G\) have the same
leading form as \(E\) and \(G\).

It remains to estimate the off-diagonal part generated by the integral
remainder. Write the real parameters $p,r,s,u$ and $q,t$ for the entries of
\[
\widetilde E=
\begin{pmatrix}
p&\im q\\
-\im q&r
\end{pmatrix},
\qquad
\widetilde G=
\begin{pmatrix}
s&\im t\\
-\im t&u
\end{pmatrix}.
\]
From the componentwise bounds above,
\[
p,s=\mathcal{O}(\mu\epsilon^2,\mu^3\epsilon,\mu^5),
\qquad
q,t=\mathcal{O}(\mu^2\epsilon^2,\mu^3\epsilon,\mu^5),
\qquad
r,u=\mathcal{O}(\mu^2\epsilon^2,\mu^4\epsilon,\mu^5).
\]
Since
\[
\widehat F
=
2\bigl(\Theta \mathsf J_2\widetilde G-\widetilde E \mathsf J_2\Theta\bigr),
\qquad
\Theta=\mathsf J_2\widetilde{X},
\]
each entry of $\widehat F$ is a finite bilinear combination of the $p,q,r,s,t,u$
and the $x_{ij}$. Using
\[
x_{11}=\mathcal{O}(\mu\epsilon,\mu^2),\qquad
x_{12},x_{22}=\mathcal{O}(\mu\epsilon,\mu^3),
\qquad
x_{21}=\mathcal{O}(\epsilon,\mu^2),
\]
we obtain
\[
\widehat F_{11},\widehat F_{12},\widehat F_{21}
=
\mathcal{O}(\mu^2\epsilon^3,\mu^3\epsilon^2,\mu^5\epsilon,\mu^7),
\qquad
\widehat F_{22}
=
\mathcal{O}(\mu^3\epsilon^3,\mu^4\epsilon^2,\mu^5\epsilon,\mu^7).
\]
Since $e^{\tau S^{(1)}}=I+\mathcal{O}(\epsilon,\mu^2)$, the same estimates hold for the
off-diagonal block produced by the integral remainder. Consequently
\[
\mathsf{L}_{\mu,\epsilon}^{(2)}
=
\mathsf J_4
\begin{pmatrix}
E^{(2)}&F^{(2)}\\
[F^{(2)}]^*&G^{(2)}
\end{pmatrix},
\]
where $E^{(2)}$ and $G^{(2)}$ have the same leading expansions as $E$
and $G$, and
\begin{align}\label{F2-infinite}
F^{(2)}_{11},F^{(2)}_{12},F^{(2)}_{21}
=
\mathcal{O}(\mu^2\epsilon^3,\mu^3\epsilon^2,\mu^5\epsilon,\mu^7),\qquad F^{(2)}_{22}=
\mathcal{O}(\mu^3\epsilon^3,\mu^4\epsilon^2,\mu^5\epsilon,\mu^7).
\end{align}
In particular
\[
F^{(2)}=\mu^2\,\mathcal{O}(\epsilon^3,\mu\epsilon^2,\mu^3\epsilon,\mu^5),
\]
and the remaining coupling is two orders smaller in $\epsilon$ than
$F^{(1)}$.

\subsection{Complete block-decoupling}

We now completely remove the remaining off-diagonal block. Write
\[
\mathsf{L}_{\mu,\epsilon}^{(2)}
=
D^{(2)}+R^{(2)},
\]
where
\[
D^{(2)}
=
\begin{pmatrix}
\mathsf J_2E^{(2)}&0\\
0&\mathsf J_2G^{(2)}
\end{pmatrix},
\qquad
R^{(2)}
=
\begin{pmatrix}
0&\mathsf J_2F^{(2)}\\
\mathsf J_2[F^{(2)}]^*&0
\end{pmatrix}.
\]
We look for a Hamiltonian and reversibility-preserving matrix
\[
S^{(2)}
=
\mathsf J_4
\begin{pmatrix}
0&\Theta^{(2)}\\
[\Theta^{(2)}]^*&0
\end{pmatrix},
\qquad
\Theta^{(2)}=\mathsf J_2X^{(2)},
\]
such that
\[
e^{\mu S^{(2)}}\mathsf{L}_{\mu,\epsilon}^{(2)}e^{-\mu S^{(2)}}
\]
is block-diagonal. Equivalently,
\[
\Pi_{\mathrm{off}}
\left(
e^{\mu S^{(2)}}
(D^{(2)}+R^{(2)})
e^{-\mu S^{(2)}}
\right)
=0,
\]
where \(\Pi_{\mathrm{off}}\) denotes the projection onto block-off-diagonal
matrices.

Expanding the left-hand side gives
\[
R^{(2)}+\mu[S^{(2)},D^{(2)}]
+\mu^2\mathcal R(S^{(2)})=0,
\]
where \(\mathcal R(S^{(2)})\) is analytic in \((\mu,\epsilon,S^{(2)})\) and at
least quadratic in \(S^{(2)}\). Since \([S^{(2)},D^{(2)}]\) is block-off-diagonal,
this is equivalent to the nonlinear homological equation
\[
[S^{(2)},D^{(2)}]
=
-\frac1\mu R^{(2)}-\mu\,\mathcal R(S^{(2)}),
\]
and therefore to the real Sylvester system
\[
\mathsf A^{(2)}\vec x
=
\vec f(\mu,\epsilon,\vec x),
\qquad
\vec f(\mu,\epsilon,\vec x)=\mu\vec\nu(\mu,\epsilon)+\mu^2\vec g(\mu,\epsilon,\vec x),
\]
where \(\mathsf A^{(2)}\) has the same form as \(\mathsf A\), with the entries
of \(E^{(2)}\) and \(G^{(2)}\), and
\[
\mu^2\vec\nu(\mu,\epsilon)
:=
\begin{pmatrix}
-F^{(2)}_{21}\\
F^{(2)}_{22}\\
-F^{(2)}_{11}\\
F^{(2)}_{12}
\end{pmatrix},
\qquad
\mu^2\vec g=-\mu^2\,\mathcal R(S^{(2)}).
\]
Since \(E^{(2)}\) and \(G^{(2)}\) have the same leading expansions as \(E\) and
\(G\), one still has
\[
\det \mathsf A^{(2)}
=
\mu^2(1+\mathcal{O}(\epsilon^2,\mu)),
\qquad
[\mathsf A^{(2)}]^{-1}
=
\frac1\mu \mathsf B(\mu,\epsilon),
\]
where \(\mathsf B\) is analytic and bounded for small \((\mu,\epsilon)\).
By \eqref{F2-infinite},
\[
F^{(2)}=\mu^2\,\mathcal{O}(\epsilon^3,\mu\epsilon^2,\mu^3\epsilon,\mu^5),
\]
so
\[
\vec\nu(\mu,\epsilon)
=
\mathcal{O}(\epsilon^3,\mu\epsilon^2,\mu^3\epsilon,\mu^5),
\]
and no term of the form \(\epsilon^k/\mu\) appears on the right-hand side.
The fixed point equation \(\mathsf A^{(2)}\vec x=\vec f\) is equivalent to
\[
\vec x
=
\mathsf B(\mu,\epsilon)\vec\nu(\mu,\epsilon)
+\mu\,\mathsf B(\mu,\epsilon)\vec g(\mu,\epsilon,\vec x),
\]
and hence
\[
X^{(2)}
=
\mathcal{O}(\epsilon^3,\mu\epsilon^2,\mu^3\epsilon,\mu^5).
\]
The third component of \(\mu\vec\nu\) is
\(-F^{(2)}_{11}/\mu=\mathcal{O}(\mu\epsilon^3,\mu^2\epsilon^2,\mu^4\epsilon,\mu^6)\),
and is therefore coupled through the non-singular coefficient
\(G^{(2)}_{11}=1+\kappa\mu^2+\mathcal{O}(\epsilon^3,\mu\epsilon^2,\mu^2\epsilon,\mu^3)\), exactly as in the first Sylvester
estimate.

Thus this equation has a unique small analytic solution by the implicit function theorem. The
corresponding matrix \(S^{(2)}\) is Hamiltonian and reversibility-preserving.

Consequently
\[
\mathsf{L}_{\mu,\epsilon}^{(3)}
:=
e^{\mu S^{(2)}}
\mathsf{L}_{\mu,\epsilon}^{(2)}
e^{-\mu S^{(2)}}
=
D^{(2)}+P^{(2)},
\]
where \(P^{(2)}\) is block-diagonal, Hamiltonian and reversible. Moreover
\[
P^{(2)}
=
\mathcal{O}(\mu F^{(2)}X^{(2)})
\]
is of higher order than the corrections already estimated above. Therefore
\[
\mathsf{L}_{\mu,\epsilon}^{(3)}
=
\begin{pmatrix}
\mathsf J_2E^{(3)}&0\\
0& \mathsf J_2G^{(3)}
\end{pmatrix},
\]
where \(E^{(3)}\) and \(G^{(3)}\) are self-adjoint and reversibility-preserving
and have the same leading expansions as \(E\) and \(G\).

We have therefore conjugated the original Hamiltonian and reversible matrix
\(\mathsf{L}_{\mu,\epsilon}\) to a block-diagonal Hamiltonian and reversible
matrix
\[
\mathsf{L}_{\mu,\epsilon}^{(3)}
=
\begin{pmatrix}
\mathsf J_2E^{(3)}&0\\
0&\mathsf J_2G^{(3)}
\end{pmatrix}.
\]
This finishes the symplectic and reversible block-decoupling construction.  We now prove
Theorem~\ref{Complete BF thm} by extracting the two spectral pairs from the two diagonal blocks.
\begin{proof}[Proof of Theorem~\ref{Complete BF thm}]
The operator $\mathcal{L}_{\mu,\epsilon}$ on $\mathcal{V}_{\mu,\epsilon}$ is represented by
\[
\im c_{\kappa,b}\mu\,I_4+
\exp(\mu S^{(2)})\exp(S^{(1)})\,[\widetilde{Y}]^{-1}\,\mathsf{L}_{\mu,\epsilon}\,\widetilde{Y}\,
\exp(-S^{(1)})\exp(-\mu S^{(2)})
=
\im c_{\kappa,b}\mu\,I_4+
\begin{pmatrix}
\mathsf J_2 E^{(3)} & 0\\
0 & \mathsf J_2 G^{(3)}
\end{pmatrix},
\]
where $\widetilde{Y}$, $S^{(1)}$, and $S^{(2)}$ are the symplectic reversible transformations constructed above,
and $E^{(3)}$, $G^{(3)}$ have the same leading expansions as $E$, $G$.
Setting $U:=\im c_{\kappa,b}\mu\,I_2+\mathsf J_2 E^{(3)}$ and $S:=\im c_{\kappa,b}\mu\,I_2+\mathsf J_2 G^{(3)}$, a direct multiplication with
$\mathsf J_2$ yields \eqref{U-S-infinite}.

For the Benjamin--Feir pair, write
\[
U=
\begin{pmatrix}
\im\alpha&\beta\\
\gamma&\im\alpha
\end{pmatrix},
\]
where
\[
\alpha
=
c_{\kappa,b}\mu-E^{(3)}_{12}
=
\frac12\breve{c}_{\kappa,b}\mu
+
\mathcal{O}(\mu\epsilon^2,\mu^2\epsilon,\mu^3),
\]
\[
\beta
=
E^{(3)}_{22}
=
-\frac{e_{22}}{8}\mu^2+\mathcal{O}(\mu^2\epsilon,\mu^3),
\]
and
\[
\gamma
=
-E^{(3)}_{11}
=
-e_{11}\epsilon^2+\frac{e_{22}}{8}\mu^2
+
\mathcal{O}(\epsilon^3,\mu\epsilon^2,\mu^2\epsilon,\mu^3).
\]
Define the Benjamin--Feir discriminant by
\begin{equation}\label{BF-discriminant-block}
\Delta_{\mathrm{BF}}(\kappa,b;\mu,\epsilon)
:=
-64\,\frac{E^{(3)}_{11}E^{(3)}_{22}}{\mu^2}.
\end{equation}
Then $\beta\gamma=\frac{\mu^2}{64}\Delta_{\mathrm{BF}}(\kappa,b;\mu,\epsilon)$, and the leading expansions give
\begin{equation}\label{BFDF}
\Delta_{\mathrm{BF}}(\kappa,b;\mu,\epsilon)
=8e_{22}e_{11}\epsilon^2-e_{22}^2\mu^2
+\mathcal{O}(\epsilon^3,\mu\epsilon^2,\mu^2\epsilon,\mu^3).
\end{equation}
Since $\lambda_1^\pm=\im\alpha\pm\sqrt{\beta\gamma}$, we obtain
\eqref{lambda1-infinite} with
\[
\lambda_1^\pm
=
\im \frac12\breve{c}_{\kappa,b}\mu
+\im\,\mathcal{O}(\mu\epsilon^2,\mu^2\epsilon,\mu^3)
\pm
\frac{\mu}{8}\sqrt{\Delta_{\mathrm{BF}}(\kappa,b;\mu,\epsilon)}.
\]
In particular $\operatorname{Re}\lambda_1^\pm\neq0$ if and only if $\Delta_{\mathrm{BF}}>0$.

For the long-wave block, $S$ has the form
\[
S
=
\begin{pmatrix}
\im c_{\kappa,b}\mu+\im \delta_0&G^{(3)}_{22}\\
-G^{(3)}_{11}&\im c_{\kappa,b}\mu+\im\delta_0
\end{pmatrix},
\qquad
\delta_0=\mathcal{O}(\mu\epsilon^2,\mu^2\epsilon,\mu^3),
\]
where
\[
G^{(3)}_{11}
=
1+\kappa\mu^2+\mathcal{O}(\epsilon^3,\mu\epsilon^2,\mu^2\epsilon,\mu^3),
\qquad
G^{(3)}_{22}
=
\mu+\mathcal{O}(\mu^2\epsilon,\mu^3).
\]
Thus
\[
\det(S-\lambda I)
=
(\lambda-\im c_{\kappa,b}\mu-\im \delta_0)^2
+
G^{(3)}_{11}G^{(3)}_{22},
\]
with
\[
G^{(3)}_{11}G^{(3)}_{22}
=
\mu+\mathcal{O}(\mu\epsilon^2,\mu^2\epsilon,\mu^3),
\]
which is positive for $\mu,\e$ sufficiently small. Equivalently, after absorbing the harmless higher-order terms into the final
remainder,
\[
\lambda_0^\pm
=
\im c_{\kappa,b}\mu+\im\,\mathcal{O}(\mu\epsilon^2,\mu^2\epsilon,\mu^3)
\pm \im\sqrt{\mu}\,(1+\mathcal{O}(\epsilon,\mu)),
\]
which is \eqref{lambda0-infinite}.
\end{proof}

\appendix

\section{Existence of Stokes waves}
In this appendix we provide a short proof of the existence of Stokes waves by using the Crandall--Rabinowitz bifurcation theorem (see for example \cite[Theorem 3.1]{BMV2}).

 \begin{theorem}[Existence of Stokes waves]
Given $\alpha\geq 0$, $m + \frac12 \in \N$, $m > \frac{11}{2}$ and $(\kappa,b) \in (\R_{\geq 0} \times\R_{> 0}) \setminus \fR$, with  $\fR$  in \eqref{def:infinite-R}. There exist
$\e_*=\e_*(\alpha,m,\kappa,b)>0$ and a unique family of solutions to \eqref{travelingWWstokes}
\[
(\eta_\e(x),\psi_\e(x),c_\e)\in H^{\alpha,m}_{\mathtt{ev}} (\T)\times H^{\alpha,m}_{\mathtt{odd}}(\T)\times \R,
\qquad |\e|<\e_*
\]
such that
\[
c_0:=\sqrt{1+\kappa+b},
\]
where  $ H^{\alpha,m}_{\mathtt{ev}}(\T) $ (resp. $ H^{\alpha,m}_{\mathtt{odd}}(\T) $) denotes the space of even (resp. odd) functions in $ H^{\alpha,m}(\T) $. 
\end{theorem}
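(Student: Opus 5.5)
We will apply the Crandall--Rabinowitz theorem in the form of \cite[Theorem 3.1]{BMV2} to the map
\[
\mathcal F(\eta,\psi,c)
:=
\begin{bmatrix}
c\,\eta_x+G(\eta)\psi\\
c\,\psi_x-\eta-\tfrac{\psi_x^2}{2}+\tfrac{1}{2(1+\eta_x^2)}\bigl(G(\eta)\psi+\eta_x\psi_x\bigr)^2+\kappa\sigma(\eta)-b\bigl(\partial_s^2\sigma(\eta)+\tfrac12\sigma(\eta)^3\bigr)
\end{bmatrix},
\]
regarded as a map $H^{\alpha,m}_{\mathtt{ev}}\times H^{\alpha,m}_{\mathtt{odd}}\times\R\to H^{\alpha,m-1}_{\mathtt{odd}}\times H^{\alpha,m-4}_{\mathtt{ev}}$. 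The first step is to check that this is well defined and real analytic near $(0,0,c)$. The Dirichlet--Neumann operator $G(\eta)$ is analytic in $\eta$ on analytic Sobolev spaces of sufficiently high regularity; this is classical and is used in \cite{BMV2,BMV3}. The curvature terms are polynomial in $\eta_x,\eta_{xx},\eta_{xxx},\eta_{xxxx}$ divided by powers of $\sqrt{1+\eta_x^2}$, so they lose four derivatives and are analytic on the algebra $H^{\alpha,m-4}$. This algebra property is what requires $m>11/2$. Parity is preserved because $\mathcal F$ commutes with the reversibility structure $\rho$. Moreover $\mathcal F(0,0,c)=0$ for every $c$.

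Next we compute the linearization
\[
L_c:=\mathrm d_{(\eta,\psi)}\mathcal F(0,0,c)=\begin{bmatrix} c\partial_x & |D|\\ -1+\kappa\partial_x^2-b\partial_x^4 & c\partial_x\end{bmatrix}.
\]
On the mode $(\eta,\psi)=(\cos nx,\ \gamma\sin nx)$, the equation $L_c(\eta,\psi)=0$ reduces to a $2\times2$ system with determinant $c^2n^2-n(1+\kappa n^2+bn^4)$. This determinant vanishes exactly when $c=c_n$. Taking $c_0=c_{\kappa,b}=c_1$, the non-resonance assumption $(\kappa,b)\notin\mathfrak R$ gives $c_n\ne c_1$ for all $n\ge2$. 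The zero mode gives $-\eta_0=0$. Hence $\ker L_{c_0}$ in the even--odd class is spanned by $(\cos x,\ c_0\sin x)$.

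The range has codimension one. Because $c_n^2\sim bn^3\to\infty$, the symbol $|c_0^2n^2-n(1+\kappa n^2+bn^4)|$ grows like $bn^5$. We will verify Fredholmness with index zero by inverting $L_{c_0}$ mode by mode on the complement of $n=1$, keeping track of the exponential weights $e^{\alpha|k|}$ and of the derivative losses, which differ between the two components. This bookkeeping is the step I expect to be the main obstacle: the inverse must gain the correct orders in the two components simultaneously, so the target spaces have to be chosen consistently with the order of the operator.

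Finally we check the transversality condition
\[
\partial_c L_c\big|_{c_0}[(\cos x,\ c_0\sin x)]=(-\sin x,\ c_0\cos x)\notin\operatorname{Rg}L_{c_0}.
\]
The range is characterized by orthogonality to the one-dimensional cokernel, spanned by the adjoint kernel element on mode $1$. Its pairing with $(-\sin x,\ c_0\cos x)$ is proportional to $\partial_c\bigl(c^2-c_1^2\bigr)=2c_0\neq0$. Crandall--Rabinowitz then yields a unique analytic branch $\e\mapsto(\eta_\e,\psi_\e,c_\e)$ with $c_\e|_{\e=0}=c_0$, normalized so that the projection onto the kernel equals $\e$. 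Uniqueness holds in a neighbourhood of the bifurcation point, among nontrivial solutions.
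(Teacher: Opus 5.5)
Your proposal follows the paper's proof: Crandall--Rabinowitz applied to the same map $H^{\alpha,m}_{\mathtt{ev}}\times H^{\alpha,m}_{\mathtt{odd}}\times\R\to H^{\alpha,m-1}_{\mathtt{odd}}\times H^{\alpha,m-4}_{\mathtt{ev}}$ (with analyticity taken from \cite{BMV2}), the same mode-by-mode kernel computation using $(\kappa,b)\notin\fR$, and the same transversality vector $(-\sin x,\,c_0\cos x)\notin\operatorname{Rg}$. The Fredholm bookkeeping you flag as the main obstacle is routine, and the paper only asserts it: on each mode $n\ge2$ the inverse of the $2\times2$ block has entries of size $O(n^{-4})$, except the entry from the first target component into $\psi$, which is $O(n^{-1})$, and this exactly compensates the losses of one and four derivatives (note also that $\mathcal F$ anti-commutes with $\rho$ rather than commuting with it, which is what sends even--odd pairs to odd--even pairs).
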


\begin{proof}

It follows from \cite[Theorem 1.2]{BMV2} that there exists an $\epsilon_0 >0 $ such that the following map is analytic
\begin{align*}
    F:  &(H^{\alpha,m}_{\mathtt{ev}} (\T) \cap B^{\alpha, m}(\epsilon_0))\times H^{\alpha,m}_{\mathtt{odd}}(\T)\times \R \to H^{\alpha,m - 1}_{\mathtt{odd}}(\T) \times H^{\alpha,m - 4}_{\mathtt{ev}} (\T), \\
    F(\eta,\psi,c) &:= \begin{bmatrix}
        c \, \eta_x+G(\eta)\psi \\
        c \, \psi_x -  \eta - \dfrac{\psi_x^2}{2} + 
\dfrac{1}{2(1+\eta_x^2)} \big( G(\eta) \psi + \eta_x \psi_x \big)^2  +\kappa\sigma(\eta)-b\left(\partial^2_s \sigma(\eta)+\frac{1}{2}\sigma(\eta)^3\right).
    \end{bmatrix}
\end{align*}

It is not hard to show that $F(0,0,c) = 0$ for all $c \in \R$. A direct calculation gives that 
\begin{align*}
    \de_{\eta,\psi}F(0,0,c): & H^{\alpha,m}_{\mathtt{ev}} (\T)\times H^{\alpha,m}_{\mathtt{odd}}(\T) \to H^{\alpha,{m - 1}}_{\mathtt{odd}}(\T) \times H^{\alpha,{m - 4}}_{\mathtt{ev}} (\T), \\
    \de_{\eta,\psi}F(0,0,c) \begin{bmatrix}
        \hat{\eta} \\
        \hat{\psi}
    \end{bmatrix} & := \begin{bmatrix}
        c \partial_x & |D| \\
        -1 + \kappa \partial_x^2 - b \partial_x^4 & c \partial_x
    \end{bmatrix} \begin{bmatrix}
        \hat{\eta} \\
        \hat{\psi}
    \end{bmatrix}.
\end{align*}

Our assumptions guarantee that with the choice $c = c_0$ given above, we have
\[
\mathrm{ker} (\de_{\eta,\psi}F(0,0,c_0)) = \mathrm{span} \left\{ \begin{bmatrix}
     \cos(x) \\
    \sqrt{1 + \kappa + b} \sin(x)
\end{bmatrix} \right\}
\]
and 
\begin{align*}
\mathrm{Rg} (\de_{\eta,\psi}F(0,0,c_0)) = \mathrm{span} \left\{ \begin{bmatrix}
   0 \\
    1
\end{bmatrix} \right\} &\oplus  \mathrm{span} \left\{ \begin{bmatrix}
     \sin(x) \\
    \sqrt{1 + \kappa + b} \cos(x)
\end{bmatrix} \right\} \\
&\oplus 
     \overline{\mathrm{span} \left\{ \begin{bmatrix}
    \sin(jx) \\
    0
\end{bmatrix},\begin{bmatrix}
    0 \\
    \cos(jx)
\end{bmatrix}, j \ge 2 \right\}}^{H^{\alpha,{m - 1}}_{\mathtt{odd}}(\T) \times H^{\alpha,{m - 4}}_{\mathtt{ev}} (\T)}.
\end{align*}
So it is clear that the kernel of $\de_{\eta,\psi}F(0,0,c_0)$ is $1$-dimensional, the range of $\de_{\eta,\psi}F(0,0,c_0)$ is closed and the cokernel of $\de_{\eta,\psi}F(0,0,c_0)$ is also $1$-dimensional. Finally we show that $\partial_c \de_{\eta,\psi}F(0,0,c_0) [u^*]$ does not belong to the range of $\de_{\eta,\psi}F(0,0,c_0)$, where $u^*$ is a nonzero element in the $\mathrm{ker} (\de_{\eta,\psi}F(0,0,c_0))$ and here we can choose it to be
\[
u^* := \begin{bmatrix}
     \cos(x) \\
    \sqrt{1 + \kappa + b} \sin(x)
\end{bmatrix}.
\]
By a direct calculation we obtain
\[
\partial_c \de_{\eta,\psi}F(0,0,c_0) [u^*] = \begin{bmatrix}
         \partial_x & 0 \\
        0 &  \partial_x
    \end{bmatrix} \begin{bmatrix}
     \cos(x) \\
    \sqrt{1 + \kappa + b} \sin(x)
\end{bmatrix} = \begin{bmatrix}
     -\sin(x) \\
    \sqrt{1 + \kappa + b} \cos(x)
\end{bmatrix},
\]
which is clearly not in the range of $\de_{\eta,\psi}F(0,0,c_0)$.
Then we can conclude that  $F$ satisfies all the conditions of Crandall--Rabinowitz bifurcation theorem.
\end{proof}

\section{{Expansion of the Stokes waves in infinite depth}}\label{sec:App2}

{In this Appendix we provide the expansions  \eqref{exp:Sto}-\eqref{expcoef}, \eqref{expfe}, 
\eqref{pino1fd}-\eqref{aino2fd}.}
\\[1mm]
\noindent
{\bf Proof  of \eqref{exp:Sto}-\eqref{expcoef}.}
Writing
 \begin{equation}\label{etapsic}
\begin{aligned}
 & \eta_\e(x) = \e \eta_1(x) + \e^2 \eta_2(x) + \mathcal{O}(\e^3) \, , \\
 &  \psi_\e(x) = \e \psi_1(x) + \e^2 \psi_2(x) + \mathcal{O}(\e^3) \, ,  
  \end{aligned}
\qquad \quad c_\e = c_{\kappa,b} + \e c_1 + \e^2 c_2+ \mathcal{O}(\e^3) \, ,  
\end{equation}
where  $\eta_i$ is $even(x)$ and $\psi_i$ is $odd(x)$ for $i=1,2$, 
we solve order by order in $ \e $ the equations \eqref{travelingWWstokes},  
that we rewrite  as
\begin{equation}
\label{Sts}
\begin{cases}
-c \, \psi_x +  \eta   + \dfrac{\psi_x^2}{2} - 
\dfrac{\eta_x^2}{2(1+\eta_x^2)} ( c  -  \psi_x )^2-\kappa\sigma(\eta)+b\left(\partial^2_s \sigma(\eta)+\frac{1}{2}\sigma(\eta)^3\right)  = 0 \\
c \, \eta_x+G(\eta)\psi = 0 	\, ,
\end{cases}
\end{equation}
having substituted $G(\eta)\psi $ with $-c \, \eta_x $  in the first equation.
 We expand the Dirichlet-Neumann operator 
$ G(\eta)=  G_0+ G_1(\eta) + G_2(\eta) + \mathcal{O}(\eta^3)  $
where, according to \cite[formula (2.14)]{CS}, 
\begin{equation} \label{expDiriNeu}
\begin{aligned}
 G_0 & := |D|  \,, \\
 G_1(\eta) & := D \eta D -  G_0\eta G_0 = D \eta D -  |D|\eta  |D|, \\
 G_2(\eta) & := -\frac12  \Big( G_0 
 {\eta}^2 |D|^2 +|D|^2{\eta}^2 G_0 - 2G_0\eta G_0\eta G_0\Big) = -\frac12  \Big(|D| 
 {\eta}^2 |D|^2 +|D|^2{\eta}^2 |D| - 2|D|\eta |D|\eta |D|\Big)\, .
\end{aligned}
\end{equation}
{\bf First order in $ \e $.}  Substituting the expansions in \eqref{etapsic} into \eqref{Sts}, we get the linear system 
\begin{equation}\label{cB0}
 \left\{\begin{matrix} -c_{\kappa,b} (\psi_1)_x + \eta_1-\kappa (\eta_1)_{xx} +b (\eta_1)_{xxxx}= 0 \\
 c_{\kappa,b} (\eta_1)_x + G_0\psi_1 =0 \, ,  \end{matrix}\right.
 \quad  \text{i.e.} \, \vet{\eta_1}{\psi_1} \in \mathit{Ker }\cB_0  \text{ with } \cB_0 := \begin{bmatrix} 1-\kappa\pa^2_{x}+b\pa^4_{x} & -c_{\kappa,b}\pa_x \\ c_{\kappa,b}\pa_x & G_0  \end{bmatrix},  
\end{equation}
where $\eta_1$ is $\mathit{even}(x)$ and $\psi_1$ is $\mathit{odd}(x)$.
\begin{lemma}\label{lem:B0}
If $(\kappa,b)\notin\mathfrak R$,  the kernel of 
 the linear operator $\cB_0$ in \eqref{cB0} is one dimensional and given by
\begin{equation}\label{chk}
\mathit{Ker }\cB_0= \mathit{span}\,\Big\{\vet{\cos(x)}{(1+b+\kappa)^{\frac{1}{2}}\sin(x)} \Big\}.
\end{equation}
\end{lemma}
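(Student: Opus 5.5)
The plan is a direct Fourier computation on the symmetry-restricted spaces. Since $\cB_0$ is a Fourier multiplier matrix, I would expand a candidate kernel element as $\eta=\sum_{k\geq0}\eta^{[k]}\cos(kx)$ (even) and $\psi=\sum_{k\geq1}\psi^{[k]}\sin(kx)$ (odd). Each pair $(\cos(kx),\sin(kx))$ is invariant under $\cB_0$. Hence the kernel equation decouples into a $2\times2$ linear system for each $k\geq1$, together with a scalar equation for $k=0$.

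I would take the modes in order.

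\emph{Mode $k=0$.} Here $\psi$ has no component and $\cB_0$ acts on constants $\eta^{[0]}$ by $1-\kappa\partial_x^2+b\partial_x^4$. This gives $\eta^{[0]}=0$.

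\emph{Modes $k\geq1$.} We have $\partial_x\cos(kx)=-k\sin(kx)$, $\partial_x\sin(kx)=k\cos(kx)$ and $G_0\sin(kx)=k\sin(kx)$. The two equations in \eqref{cB0} reduce to
\[
(1+\kappa k^2+bk^4)\,\eta^{[k]}-c_{\kappa,b}\,k\,\psi^{[k]}=0,
\qquad
-c_{\kappa,b}\,k\,\eta^{[k]}+k\,\psi^{[k]}=0.
\]
The determinant of this system is $k(1+\kappa k^2+bk^4)-c_{\kappa,b}^2k^2=k^2\bigl(c_k^2-c_{\kappa,b}^2\bigr)$, where $c_k^2=(1+\kappa k^2+bk^4)/k$ is as in the introduction.

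\emph{Case $k=1$.} Since $c_1^2=1+\kappa+b=c_{\kappa,b}^2$, the determinant vanishes. The second equation gives $\psi^{[1]}=c_{\kappa,b}\eta^{[1]}$, so this mode contributes exactly the span in \eqref{chk}.

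\emph{Case $k\geq2$.} I must show $c_k^2\neq c_1^2$. Multiplying by $k$, the equation $c_k^2=c_1^2$ reads $1+\kappa k^2+bk^4=k(1+\kappa+b)$. Rearranging, this becomes $(k-1)\bigl(bk(k^2+k+1)+\kappa k-1\bigr)=0$, which for $k\geq2$ is equivalent to $\kappa+(k^2+k+1)b=1/k$. That is exactly $(\kappa,b)\in\mathfrak R_k$ from \eqref{def:infinite-Rn}. Since $(\kappa,b)\notin\mathfrak R$, the determinant is nonzero, and so $\eta^{[k]}=\psi^{[k]}=0$.

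The only step needing care is this factorization. To confirm it, I would expand $k(bk^3+\kappa k-b-\kappa-1)+1$ and check that $k=1$ is a root; the remaining computation is routine. I would also note that the decomposition is legitimate in the function spaces used: kernel elements lie in $H^{\alpha,m}$, so the Fourier series converge and the mode-by-mode argument applies. This yields \eqref{chk}.
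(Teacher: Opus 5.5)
Your proof is correct and follows the paper's argument: you decompose into the invariant Fourier blocks spanned by $(\cos(kx),0)$ and $(0,\sin(kx))$, compute the $2\times2$ determinant $k^2(c_k^2-c_{\kappa,b}^2)$, and use non-resonance to rule out every $k\geq2$. You also make precise two points the paper only sketches: the explicit factorization $(k-1)\bigl(bk(k^2+k+1)+\kappa k-1\bigr)$ identifying the vanishing of the determinant with $\mathfrak R_k$, and the constant mode, where $\cB_0$ acts as the identity on $(\eta^{[0]},0)$.
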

\begin{proof} The action of $\cB_0$ on each subspace spanned by $\footnotesize{\,\Big\{\vet{\cos(kx)}{0}, \vet{0}{\sin(kx)}\Big\}} $, $k\in \N$, is represented by the $2\times 2$ matrix $\footnotesize{ \begin{bmatrix} 1+\kappa k^2+b k^4 & -c_{\kappa,b} k \\ -c_{\kappa,b} k & k  \end{bmatrix}}$. Its determinant is given by
 $$ (1+\kappa k^2+b k^4)k - c_{\kappa,b}^2 k^2\stackrel{\eqref{exp:Sto}}{=} k^2 \Big(\frac{(1+\kappa k^2+b k^4)}{k} -(1+b+\kappa) \Big)$$
 and, provided $(\kappa,b)\notin\mathfrak R$, 
 vanishes  if and only if $k=1$.
Indeed, for $k$ large enough, the determinant goes asymptotically to $+\infty$ so it is uniformly bounded away from zero,  whereas if for some $k \in \N$ it vanishes, it implies that $(\kappa,b) \in \mathfrak R_k \subset \mathfrak R$, absurd.
  For $k=1$ we obtain the kernel of $\cB_0$ given in \eqref{chk}. For $k=0$ it has no kernel since $\psi_1(x)$ is odd.
\end{proof}

We set
$
\eta_1(x) := \cos(x)$, $\psi_1(x) :=(1+b+\kappa)^{\frac{1}{2}} \sin(x) 
$
in agreement with 
\eqref{exp:Sto}. 
\\[1mm]
{\bf Second order in $ \e $.} 
By \eqref{Sts}, and since 
$ c_{\kappa,b}^2 (\eta_1)_x^2 = (G_0\psi_1)^2  $, we get  the linear system
\begin{equation}\label{syslin2}
 \cB_0 \vet{\eta_2}{\psi_2} = \vet{c_1(\psi_1)_x-\frac12 (\psi_1)_x^2 + \frac12 (G_0\psi_1)^2 }{-c_1(\eta_1)_x - G_1(\eta_1)\psi_1} \, , 
\end{equation}
where  $\cB_0$ is the self-adjoint operator in \eqref{cB0}. 
System \eqref{syslin2} admits a solution if and only if its right-hand term is orthogonal to the kernel  of $\cB_0$ in \eqref{chk}, namely
\begin{equation}\label{orth1}
 \Big(\vet{c_1(\psi_1)_x-\frac12 (\psi_1)_x^2 + \frac12 (G_0\psi_1)^2 }{-c_1(\eta_1)_x - G_1(\eta_1)\psi_1}\;,\;\vet{\cos(x)}{(1+b+\kappa)^{\frac{1}{2}}\sin(x)}\Big)=0 \, . 
\end{equation}
In view of the first order expansion \eqref{exp:Sto}, \eqref{expDiriNeu},
  it results $
 [G_0\psi_1](x)=  (1+b+\kappa)^{\frac{1}{2}}\sin(x)$
so that \eqref{orth1} implies
 $c_1=0$, in agrement with \eqref{exp:Sto}. Equation
 \eqref{syslin2} reduces to 
\begin{align}\label{sisto2}
\begin{bmatrix} 1-\kappa\pa^2_{x}+b\pa^4_{x} & -c_{\kappa,b}\pa_x \\ c_{\kappa,b}\pa_x & G_0  \end{bmatrix}
\vet{\eta_2}{\psi_2} 
  = \vet{-\frac{1+b+\kappa}{2}\cos(2x) }{ 0}.
\end{align}
Setting $ \eta_2 = \eta_2^{[0]} +  \eta_2^{[2]} \cos(2x) $ and  $ \psi_2 =
 \psi_2^{[2]}  \sin (2x) $, system 
\eqref{sisto2} amounts to 
\begin{align*}
\left\{ \begin{matrix} \eta_2^{[0]} +\big( (1+4\kappa+16 b)\eta_2^{[2]} -2c_{\kappa,b}  \psi_2^{[2]} \big) \cos(2x)  =  -\frac{1+b+\kappa}{2}\cos(2x)   \\ (-c_{\kappa,b} \eta_2^{[2]}   +   \psi_2^{[2]})\sin(2x)  =   0 \, , \end{matrix}\right.
\end{align*}
which yields $\eta_2^{[0]}=0$ together with the coefficients $\eta_2^{[2]}$ and $\psi_2^{[2]}$ in \eqref{expcoef}. 

\noindent
{\bf Third order in $ \e $.} 
It remains to determine 
$ c_2 $ in \eqref{expc2}.
 We get the linear system 
\begin{equation}\label{syslin3}
 \cB_0 \vet{\eta_3}{\psi_3} = \vet{c_2(\psi_1)_x 
 - (\psi_1)_x (\psi_2)_x - (\eta_1)_x^2 (\psi_1)_x c_{\kappa,b} - \frac{3\kappa}{2}(\eta_1)_x^2(\eta_1)_{xx} +
 (\eta_1)_x (\eta_2)_x c_{\kappa,b}^2+ b\mathfrak{D}(\eta_1)
 }{-c_2(\eta_1)_x - G_1(\eta_1)\psi_2- G_1(\eta_2)\psi_1 - G_2(\eta_1)\psi_1} \, , 
\end{equation}
where 
\begin{align}
    \mathfrak{D}(\eta):=\frac{5}{2}\left((\eta)^3_{xx}+(\eta)^2_x(\eta)_{xxxx}\right)+10(\eta)_x(\eta)_{xx}(\eta)_{xxx}.
\end{align}
System \eqref{syslin3} has 
a solution if and only if the right hand side is orthogonal to the Kernel of
$ \cB_0 $ given in \eqref{chk}. This condition determines uniquely $ c_2 $.
Denoting  $\Pi_1$ the $L^2$-orthogonal projector on span$\, \{\cos(x),\sin(x)\} $, it results 
\begin{align*} 
& c_2 (\psi_1)_x = c_2  (1+b+\kappa)^{\frac{1}{2}} \cos(x)\, , \quad c_2 (\eta_1)_x = -c_2 \sin(x) \, , \quad \Pi_1[ (\psi_1)_x (\psi_2)_x] = 
\psi_2^{[2]}  (1+b+\kappa)^{\frac{1}{2}} \cos(x)\, ,\\ 
& \Pi_1 [c_{\kappa,b} (\eta_1)_x^2 (\psi_1)_x ] = \tfrac14 (1+b+\kappa)\cos(x)\, , \quad \Pi_1\!\left[\frac{3\kappa}{2}(\eta_1)_x^2(\eta_1)_{xx}\right] = -\frac{3\kappa}{8}\cos(x)\,,\\
& \Pi_1[c_{\kappa,b}^2 (\eta_1)_x (\eta_2)_x] = \eta_2^{[2]}(1+b+\kappa) \cos(x) ,\quad\Pi_1[\mathfrak{D}(\eta_1)]=\frac{5}{4}\cos(x)\, ,
\end{align*}
and, in view of \eqref{expDiriNeu}, and \eqref{exp:Sto}, \eqref{expcoef}, 
\begin{align*}
 \Pi_1[ G_1(\eta_1)\psi_2]  = 0 \, ,  \quad 
  \Pi_1[G_2(\eta_1)\psi_1] 
 =   \frac{c_{\kappa,b}}{4} \sin(x) \, ,\quad \Pi_1[G_1(\eta_2)\psi_1] = c_{\kappa,b} \eta_2^{[2]}\sin(x) \, . 
\end{align*}
Imposing the orthogonality condition gives $c_2$ as in   \eqref{expc2}.

\begin{proof}[Proof of \eqref{expfe}] 
 We expand the function $\mathfrak{p}(x) = \epsilon \mathfrak{p}_1(x) + \epsilon^2 \mathfrak{p}_2(x) + \mathcal{O}(\epsilon^3)$ defined by the fixed point equation in infinite depth
$$
\mathfrak{p}(x) = \mathfrak{H}[\eta_\epsilon(x+\mathfrak{p}(x))]
$$
where $\mathfrak{H}$ denotes the Hilbert transform.

We use the expansion of the Stokes wave $\eta_\epsilon(x)$ in \eqref{exp:Sto} and substituting the expansions into the fixed point equation, we get
$$
\epsilon \mathfrak{p}_1 + \epsilon^2 \mathfrak{p}_2 = \mathfrak{H} \left[ \epsilon \cos(x+\mathfrak{p}) + \epsilon^2 \eta_2^{[2]} \cos(2(x+\mathfrak{p})) \right] + \mathcal{O}(\epsilon^3).
$$
Using the Taylor expansion $\cos(x+\mathfrak{p}) = \cos(x) - \mathfrak{p}\sin(x) + \mathcal{O}(\mathfrak{p}^2)$, we solve order by order.

\noindent \textbf{First order in $\epsilon$:}
\begin{align}
  \mathfrak{p}_1(x) = \mathfrak{H}[\cos(x)] = \sin(x).\label{pfra1}  
\end{align}

\noindent \textbf{Second order in $\epsilon$:}
\begin{align}\label{pfra2}
\mathfrak{p}_2(x) = \mathfrak{H} \left[ -\mathfrak{p}_1(x)\sin(x) + \eta_2^{[2]} \cos(2x) \right].
\end{align}

Using $\mathfrak{p}_1(x) = \sin(x)$, we calculate 
\begin{align*}
\mathfrak{p}_2(x) &= \mathfrak{H} \left[ -\frac{1}{2} + \left( \frac{1}{2} + \eta_2^{[2]} \right) \cos(2x) \right] = \frac{2-13b-\kappa}{2(1-14b-2\kappa)} \sin(2x).
\end{align*} 
\end{proof}
\begin{proof}[Proof of Lemma \ref{lem:pa.exp}] 
In view of \eqref{exp:Sto}-\eqref{expcoef}, the expansions of the functions $B$, $V$  in \eqref{espV} and \eqref{espB} are
\begin{align}
 B&= : \e B_1(x) + \e^2 B_2(x) + \mathcal{O}(\e^3) = \e c_{\kappa,b} \sin(x)-\e^{2}\frac{c_{\kappa,b}(16b+4\kappa+1)}{2(14b+2\kappa-1)} \sin(2x)+ \mathcal{O}(\e^3) \label{espB1}  
  \end{align}
  and
  \begin{align}
 V&= : \e V_1(x) + \e^2 V_2(x)  + \mathcal{O}(\e^3) =  \e c_{\kappa,b} \cos(x)+\e^{2} \left(\frac{c_{\kappa,b}}{2}-\frac{c_{\kappa,b}(16b+4\kappa+1)}{2(14b+2\kappa-1)}\cos(2x)\right)+\mathcal{O}(\e^3)
  \, .\label{espV1} 
\end{align}

In  view of  \eqref{def:pa}, denoting derivatives w.r.t $x$ with  a prime and suppressing dependence on $x$ when trivial, we have
\begin{align}
 c_{\kappa,b}+p_\e(x)  &= ( c_{\kappa,b}+\e^2 c_2 - V(x) - V'(x)\mathfrak{p}(x)+\mathcal{O}(\e^3)) (1-\mathfrak{p}'(x)+(\mathfrak{p}'(x))^2+\mathcal{O}(\e^3))\notag \\
&=  c_{\kappa,b} + \e \underbrace{(- V_1 - c_{\kappa,b} \mathfrak{p}_1')}_{=: p_1}+\e^2 \underbrace{\big( c_2 + V_1\mathfrak{p}_1' - V_2 - V_1'\mathfrak{p}_1 - c_{\kappa,b} \mathfrak{p}_2' + c_{\kappa,b} (\mathfrak{p}_1')^2 \big)}_{=:p_2} + \, \mathcal{O}(\e^3) \, .\label{pino12imp}
\end{align} 
Similarly by \eqref{def:pa}
\begin{align}
 1+a_\e(&x) : = \frac{1}{1+\mathfrak{p}_x(x)} - ( c_{\kappa,b}+p_\e(x))B_x(x+\mathfrak{p}(x)) \notag \\
   = &  1+\e\underbrace{\big(-\mathfrak{p}_1'-  c_{\kappa,b} B_1'\big)}_{=:a_1} +\e^2\underbrace{\big((\mathfrak{p}_1')^2-\mathfrak{p}_2'-  c_{\kappa,b} B_2'- c_{\kappa,b} B_1''\mathfrak{p}_1(x)+ B_1' V_1 +  c_{\kappa,b}  B_1'\mathfrak{p}_1' \big)}_{=:a_2}+\mathcal{O}(\e^3)\, . \label{aino12imp}
 \end{align}

By \eqref{exp:Sto}, \eqref{pfra1}, \eqref{pfra2}, \eqref{espB1}, \eqref{espV1} we 
deduce that the functions $p_1 $, $p_2 $, $a_1 $, $a_2 $ in \eqref{pino12imp} and \eqref{aino12imp} have an expansion as in \eqref{pino1fd}-\eqref{aino2fd}.

Finally the operator $\beta_\e$ in \eqref{def:Sigma g} expands as
$\beta_\e = \pa^4_{x} + \e \beta_1 + \e^2 \beta_2 + \mathcal{O}(\e^3)$
where
\begin{align}\label{Sigma 1 expansion}
    \beta_1:=& -5\mathfrak{p}_1'\pa^4_x-10\mathfrak{p}_1''\pa^3_x-10\mathfrak{p}_1'''\pa^2_x-5\mathfrak{p}_1''''\pa_x-\mathfrak{p}_1''''' 
\end{align}
and
\begin{equation} \label{Sigma 2}
\begin{aligned}
\beta_2:=&\left(F^{[4]}_2(x)-5\mathfrak{p}_2'+15(\mathfrak{p}_1')^2\right)\pa^4_x+\left(F^{[3]}_2(x)+60\mathfrak{p}_1'\mathfrak{p}_1''-10\mathfrak{p}_2''\right)\pa^3_x\\ 
    &+\left(F^{[2]}_2(x)+45(\mathfrak{p}_1'')^2+60\mathfrak{p}_1'\mathfrak{p}_1'''-10\mathfrak{p}_2'''\right)\pa^2_x+\left(F^{[1]}_2(x)+30\mathfrak{p}_1'\mathfrak{p}_1''''+60\mathfrak{p}_1''\mathfrak{p}_1'''-5\mathfrak{p}_2''''\right)\pa_x\\
    &+6\mathfrak{p}_1'\mathfrak{p}_1'''''+15\mathfrak{p}_1''\mathfrak{p}_1''''+10(\mathfrak{p}_1''')^2-\mathfrak{p}_2'''''.
\end{aligned}
\end{equation}
Inserting the expansions of $F^{[1]}_2,F^{[2]}_2,F^{[3]}_2,F^{[4]}_2$ in \eqref{F12 F22 F32 F42} and those of $\mathfrak{p}_1$, $\mathfrak{p}_2$ in \eqref{pfra1}, \eqref{pfra2} proves the claimed expansions 
in \eqref{betaj} with coefficients in \eqref{b110}--\eqref{b224}.

Similarly, the operator $\tau_\e$ in \eqref{tau operator} expands as 
$\tau_\e = \pa^2_{x} + \e \tau_1 + \e^2 \tau_2 + \mathcal{O}(\e^3)$
where
\begin{align}\label{tau 1 expansion}
    \tau_1:=& -3\mathfrak{p}_1'\pa^2_x-3\mathfrak{p}_1''\pa_x-\mathfrak{p}_1''' 
\end{align}
and
\begin{equation} \label{tau 2 expansion}
\begin{aligned}
\tau_2:=&\left(\Sigma^{[2]}_2(x)-3\mathfrak{p}_2'+6(\mathfrak{p}_1')^2\right)\pa^2_x+\left(\Sigma^{[1]}_2(x)-3\mathfrak{p}_2''+12\mathfrak{p}_1'\mathfrak{p}_1''\right)\pa_x+3(\mathfrak{p}_1'')^2+4\mathfrak{p}_1'\mathfrak{p}_1'''-\mathfrak{p}_2'''.
\end{aligned}
\end{equation}
Inserting the expansions of $\Sigma^{[1]}_2,\Sigma^{[2]}_2$ in \eqref{Sigma coefficients} and those of $\mathfrak{p}_1$, $\mathfrak{p}_2$ in \eqref{pfra1}, \eqref{pfra2} proves the claimed expansions 
in \eqref{tauj} with coefficients in \eqref{tau11}--\eqref{tau22}.
\end{proof}

\section{The generalized kernel of the periodic fiber}
\label{app:periodic-generalized-kernel}
\color{black}
In this appendix we prove
Proposition~\ref{prop:periodic-generalized-kernel}.  The argument follows
the symmetry construction of \cite[Theorem~4.1]{NS}.  The capillary and
bending forces do not alter the argument, since they are invariant under
horizontal translations and under vertical translations of the entire
infinite-depth fluid domain.

\begin{proof}[Proof of Proposition~\ref{prop:periodic-generalized-kernel}]
Introduce a Bernoulli parameter $P\in\mathbb R$ in the stationary system by
writing
\begin{equation}\label{eq:stationary-system-with-P}
\mathscr F(\eta,\psi,c,P)
:=
\begin{bmatrix}
 c\eta_x+G(\eta)\psi
\\[1mm]
 c\psi_x-\eta-\dfrac{\psi_x^2}{2}
 +\dfrac{\bigl(G(\eta)\psi+\eta_x\psi_x\bigr)^2}
 {2(1+\eta_x^2)}
 +\kappa\sigma(\eta)-bF(\eta)+P
\end{bmatrix}.
\end{equation}
Here the stationary equation is considered on the full periodic
function space, without imposing a zero-mean condition on $\eta$.
For every constant $P\in\mathbb R$, vertical translation of the
infinite-depth fluid domain gives
\[
G(\eta+P)\psi=G(\eta)\psi.
\]
Moreover,
\[
\sigma(\eta+P)=\sigma(\eta),
\qquad
F(\eta+P)=F(\eta),
\]
because the capillary and bending forces depend only on derivatives
of $\eta$. Since
\[
-(\eta_a+P)+P=-\eta_a,
\]
the functions
\begin{equation}\label{eq:two-parameter-Stokes-family}
\eta(a,P)=\eta_a+P,
\qquad
\psi(a,P)=\psi_a,
\qquad
c(a,P)=c_a
\end{equation}
satisfy
\[
\mathscr F(\eta(a,P),\psi(a,P),c(a,P),P)=0.
\]
In particular,
\begin{equation}\label{eq:P-derivatives-Stokes-family}
\partial_P\eta(a,P)=1,
\qquad
\partial_P\psi(a,P)=0,
\qquad
\partial_Pc(a,P)=0.
\end{equation}

Let $\mathscr A_\epsilon$ be the real linearized operator defined in
\eqref{first linear eq}. Differentiating the stationary system gives
\begin{equation}\label{eq:stationary-derivative-linearized-operator}
D_{(\eta,\psi)}
\mathscr F(\eta_\epsilon,\psi_\epsilon,c_\epsilon,0)
=
\mathscr A_\epsilon.
\end{equation}
By construction of the good-unknown and flattening transformations,
\begin{equation}\label{eq:conjugacy-original-flattened}
\mathscr L_{0,\epsilon}
=
\mathscr T_\epsilon
\mathscr A_\epsilon
\mathscr T_\epsilon^{-1}.
\end{equation}

\smallskip
\noindent
{\bf Gauge and translation eigenvectors.}
The stationary equations are invariant under adding a constant to the
surface potential.  Equivalently,
\[
\mathscr A_\epsilon
\begin{bmatrix}0\\1\end{bmatrix}=0.
\]
Since $\mathscr T_\epsilon(0,1)^{\mathsf T}=(0,1)^{\mathsf T}$,
\eqref{eq:conjugacy-original-flattened} gives
\[
\mathscr L_{0,\epsilon}U_1=0.
\]

Next differentiate
$\mathscr F(\eta_\epsilon,\psi_\epsilon,c_\epsilon,0)=0$ with respect to
$x$.  Horizontal translation invariance yields
\[
\mathscr A_\epsilon
\begin{bmatrix}
\partial_x\eta_\epsilon\\
\partial_x\psi_\epsilon
\end{bmatrix}=0.
\]
Applying \eqref{eq:conjugacy-original-flattened}, we obtain
\[
\mathscr L_{0,\epsilon}\widetilde U_2=0.
\]
Since $U_1$ also belongs to the kernel, the normalization
\eqref{eq:normalized-U2} implies
\[
\mathscr L_{0,\epsilon}U_2=0.
\]

We now verify the analyticity at $\epsilon=0$.  The Stokes branch,
$B_\epsilon$, $\mathfrak p_\epsilon$, and $\mathfrak P_\epsilon$ depend
real-analytically on $\epsilon$.  Since
$\eta_0=\psi_0=0$, one has
$\widetilde U_2(0)=0$, and therefore
$\epsilon^{-1}\widetilde U_2(\epsilon)$ has a real-analytic extension to
$\epsilon=0$.  Using
\[
\eta_\epsilon=\epsilon\cos (x)+\mathcal O(\epsilon^2),
\qquad
\psi_\epsilon=\epsilon c_{\kappa,b}\sin (x)
+\mathcal O(\epsilon^2),
\qquad
B_\epsilon=\mathcal O(\epsilon),
\qquad
\mathfrak p_\epsilon=\mathcal O(\epsilon),
\]
in \eqref{eq:T-epsilon-generalized-kernel}, we find
\[
\frac{1}{\epsilon}\widetilde U_2(\epsilon)
=
\begin{bmatrix}
-\sin (x)\\ c_{\kappa,b}\cos (x)
\end{bmatrix}
+
\mathcal O(\epsilon).
\]
The leading second component has zero spatial average, and hence
\eqref{eq:normalized-U2} yields \eqref{eq:U2-leading-value}.

The parity of the Stokes branch gives
$\eta_\epsilon$ even and $\psi_\epsilon$ odd.  Moreover,
$B_\epsilon$ and $\mathfrak p_\epsilon$ are odd, while
$1+\mathfrak p_{\epsilon,x}$ is even and the map
$x\mapsto x+\mathfrak p_\epsilon(x)$ is odd.  Consequently,
$\widetilde U_2$ and $U_2$ have the componentwise parity
\[
\widetilde U_2,\ U_2
=
\begin{bmatrix}\mathit{odd}(x)\\\mathit{even}(x)\end{bmatrix},
\]
whereas $U_1=(0,1)^{\mathsf T}$.  They are therefore real and
anti-reversible, proving \eqref{eq:Uj-reversibility} for $j=1,2$.
The zero-average identity \eqref{eq:U2-zero-average} follows directly from
\eqref{eq:normalized-U2}.

\smallskip
\noindent
{\bf Amplitude and Bernoulli generalized vectors.}
Differentiate
$\mathscr F(\eta(a,0),\psi(a,0),c(a,0),0)=0$ with respect to $a$ and then
set $a=\epsilon$.  Since
\[
\partial_c\mathscr F(\eta_\epsilon,\psi_\epsilon,c_\epsilon,0)
=
\begin{bmatrix}
\partial_x\eta_\epsilon\\
\partial_x\psi_\epsilon
\end{bmatrix},
\]
we obtain
\[
\mathscr A_\epsilon
\begin{bmatrix}
\partial_a\eta_a|_{a=\epsilon}\\
\partial_a\psi_a|_{a=\epsilon}
\end{bmatrix}
=
-\partial_\epsilon c_\epsilon
\begin{bmatrix}
\partial_x\eta_\epsilon\\
\partial_x\psi_\epsilon
\end{bmatrix}.
\]
Applying $\mathscr T_\epsilon$ and using
\eqref{eq:conjugacy-original-flattened}, we get
\[
\mathscr L_{0,\epsilon}U_3
=-\partial_\epsilon c_\epsilon\,\widetilde U_2.
\]

Similarly, differentiating
$\mathscr F(\eta(a,P),\psi(a,P),c(a,P),P)=0$ with respect to $P$ at
$(a,P)=(\epsilon,0)$ and using
\eqref{eq:P-derivatives-Stokes-family}, we obtain
\[
\mathscr A_\epsilon
\begin{bmatrix}1\\0\end{bmatrix}
=-
\begin{bmatrix}0\\1\end{bmatrix}.
\]
Because $\mathscr T_\epsilon(0,1)^{\mathsf T}=U_1$, it follows that
\[
\mathscr L_{0,\epsilon}U_4=-U_1.
\]
The vectors $U_3$ and $U_4$ are real and have componentwise parity
$(\mathit{even}(x),\mathit{odd}(x))^{\mathsf T}$; hence they are reversible.
This completes the proof of
\eqref{eq:generalized-kernel-relations} and
\eqref{eq:Uj-reversibility}.

At $\epsilon=0$, the previous definitions and the first-order Stokes
expansion give
\[
U_1(0)=f_0^-,
\qquad
U_2(0)=c_{\kappa,b}^{1/2}f_1^-,
\qquad
U_3(0)=c_{\kappa,b}^{1/2}f_1^+,
\qquad
U_4(0)=f_0^+.
\]
These vectors are linearly independent.  Their real-analytic dependence on
$\epsilon$ implies that they remain linearly independent for all
sufficiently small $|\epsilon|$.

Relations \eqref{eq:generalized-kernel-relations} imply that each $U_j$
belongs to the generalized kernel of $\mathscr L_{0,\epsilon}$ and is
annihilated by its square.  Since zero lies inside the Riesz contour
$\Gamma$, all four vectors belong to $\mathcal V_{0,\epsilon}$.  By
Lemma~\ref{kato thm},
$\dim\mathcal V_{0,\epsilon}=4$; therefore they form a basis of
$\mathcal V_{0,\epsilon}$, which proves
\eqref{eq:V-generalized-kernel-span}.  The relations also yield
\eqref{eq:L-square-zero-on-V}.  Hence the restriction of
$\mathscr L_{0,\epsilon}$ to $\mathcal V_{0,\epsilon}$ has only the
eigenvalue zero, whose algebraic multiplicity is four.
\end{proof}
\color{black}
\section{Expansion of the Kato Basis}\label{secA1}

In this appendix we prove Lemma \ref{lem:4.2}. We provide the expansion of the basis $f_k^{\pm}(\mu,\epsilon) = U_{\mu,\epsilon} f_k^{\pm}$, $k = 0,1$, in \eqref{F basis set and f}, where $f_k^{\pm}$ defined in \eqref{eigenfunc of mathcall L00 2} belong to the subspace $\mathcal{V}_{0,0} := \operatorname{Rg}(P_{0,0})$. We first Taylor-expand the transformation operators $U_{\mu,\epsilon}$ defined in \eqref{U transformation operators}. We denote $\partial_{\epsilon}$ with a prime and $\partial_{\mu}$ with a dot.
The next lemma follows as \cite[Lemma A.1]{BMV1}
\begin{lemma} \label{U mu epsilon P00}
    The first jets of $U_{\mu,\epsilon} P_{0,0}$ are
\begin{align} \label{A1}
    &U_{0,0} P_{0,0} = P_{0,0}, \quad\quad U'_{0,0} P_{0,0} = P'_{0,0} P_{0,0}, \quad\quad \dot{U}_{0,0} P_{0,0} = \dot{P}_{0,0} P_{0,0},\\ \label{A2}
    &\dot{U}'_{0,0} P_{0,0} = \left( \dot{P}'_{0,0} - \frac{1}{2} P_{0,0} \dot{P}'_{0,0} \right) P_{0,0}, 
\end{align}
where
\begin{align} \label{A3}
    P'_{0,0} &= \frac{1}{2\pi \im} \oint_{\Gamma} (\mathscr{L}_{0,0} - \lambda)^{-1} \mathscr{L}'_{0,0} (\mathscr{L}_{0,0} - \lambda)^{-1} d\lambda,  \\ \label{A4}
    \dot{P}_{0,0} &= \frac{1}{2\pi \im} \oint_{\Gamma} (\mathscr{L}_{0,0} - \lambda)^{-1} \dot{\mathscr{L}}_{0,0} (\mathscr{L}_{0,0} - \lambda)^{-1} d\lambda, 
\end{align}
and
\begin{align} \label{eq:dotPprime-term1}
    \dot{P}'_{0,0} &= -\frac{1}{2\pi \im} \oint_{\Gamma} (\mathscr{L}_{0,0} - \lambda)^{-1} \dot{\mathscr{L}}_{0,0} (\mathscr{L}_{0,0} - \lambda)^{-1} \mathscr{L}'_{0,0} (\mathscr{L}_{0,0} - \lambda)^{-1} d\lambda \\ \label{eq:dotPprime-term2}
    &\quad -\frac{1}{2\pi \im} \oint_{\Gamma} (\mathscr{L}_{0,0} - \lambda)^{-1} \mathscr{L}'_{0,0} (\mathscr{L}_{0,0} - \lambda)^{-1} \dot{\mathscr{L}}_{0,0} (\mathscr{L}_{0,0} - \lambda)^{-1} d\lambda \\ \label{eq:dotPprime-term3}
    &\quad + \frac{1}{2\pi \im} \oint_{\Gamma} (\mathscr{L}_{0,0} - \lambda)^{-1} \dot{\mathscr{L}}'_{0,0} (\mathscr{L}_{0,0} - \lambda)^{-1} d\lambda.
\end{align}
The operators $\mathscr{L}'_{0,0}$ and $\dot{\mathscr{L}}_{0,0}$ are
\begin{align} \label{A6}
    \mathscr{L}'_{0,0}=\begin{bmatrix}
        \partial_x\circ p_1(x)  &0\\
        -a_1(x)+\kappa \tau_1-b\beta_1 &p_1(x)\circ \partial_x
    \end{bmatrix},~~\dot{\mathscr{L}}_{0,0}=\begin{bmatrix}
        0 &\mathrm{sgn}(D)+\Pi_0\\
        2i\kappa \partial_x-4ib\partial_x^3 &0
    \end{bmatrix},
\end{align}
where $\mathrm{sgn}(D)$, $\Pi_0$ are defined in \eqref{D+mu}, \eqref{sgn D} and $a_1(x)$, $p_1(x)$, $\beta_1$ are given in Lemma \ref{lem:pa.exp}.

The operator $\dot{\mathscr{L}}'_{0,0}$ is
\begin{align} \label{eq:mathscr_L_prime_dot_00}
    \dot{\mathscr{L}}'_{0,0}
=
\begin{bmatrix}
\im p_1(x) & 0\\
\im\kappa\big(\tau_{1,1}(x)+2\tau_{1,2}(x)\partial_x\big)
-\im b\displaystyle\sum_{j=1}^4 j\,b_{1,j}(x)\partial_x^{j-1}
& \im p_1(x)
\end{bmatrix}
\end{align}
with $ b_{1,j}(x)$ are given in Lemma \ref{lem:pa.exp}. 
\end{lemma}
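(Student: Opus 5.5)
The jet formulas \eqref{A1}--\eqref{A2} come from expanding $U_{\mu,\epsilon}P_{0,0}$ around $(0,0)$, following \cite[Lemma A.1]{BMV1}. The expressions \eqref{A3}--\eqref{eq:dotPprime-term3} for the derivatives of $P_{\mu,\epsilon}$ follow by differentiating the Riesz integral \eqref{Projection P mu e}. The operators \eqref{A6} and \eqref{eq:mathscr_L_prime_dot_00} are read off directly from \eqref{mathscr L mu e}--\eqref{mathcal B mu e} together with Lemmas \ref{lem:pa.exp}--\ref{lem:tau.exp} and \eqref{D+mu}.

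\emph{Jets of $U_{\mu,\epsilon}P_{0,0}$.} Since $P_{0,0}^2=P_{0,0}$, we have $(\operatorname{Id}-P_{0,0})P_{0,0}=0$, so
\[
U_{\mu,\epsilon}P_{0,0}=(\operatorname{Id}-(P_{\mu,\epsilon}-P_{0,0})^2)^{-1/2}P_{\mu,\epsilon}P_{0,0}.
\]
Write $\Delta:=P_{\mu,\epsilon}-P_{0,0}$, which vanishes to first order at $(0,0)$. Then
\[
(\operatorname{Id}-\Delta^2)^{-1/2}=\operatorname{Id}+\tfrac12\Delta^2+\cdots .
\]
Taking $\partial_\epsilon$, $\partial_\mu$ and $\partial_\mu\partial_\epsilon$ at the origin gives the following.
\begin{itemize}
\item First derivatives: $U'_{0,0}P_{0,0}=P'_{0,0}P_{0,0}$, and similarly with a dot.
\item The mixed derivative: $\dot U'_{0,0}P_{0,0}=\dot P'_{0,0}P_{0,0}+\tfrac12(\dot P_{0,0}P'_{0,0}+P'_{0,0}\dot P_{0,0})P_{0,0}$.
\end{itemize}
To simplify the mixed term, differentiate the identity $P_{\mu,\epsilon}^2=P_{\mu,\epsilon}$ twice:
\[
\dot P'=\dot P'P+P\dot P'+\dot PP'+P'\dot P.
\]
Multiplying on both sides by $P_{0,0}$ gives
\[
P_{0,0}(\dot P_{0,0}P'_{0,0}+P'_{0,0}\dot P_{0,0})P_{0,0}=-P_{0,0}\dot P'_{0,0}P_{0,0}.
\]
From the first-order identity $P'=P'P+PP'$ we also get $P_{0,0}P'_{0,0}P_{0,0}=0$, and the same with a dot. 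Hence
\[
(\dot P_{0,0}P'_{0,0}+P'_{0,0}\dot P_{0,0})P_{0,0}=P_{0,0}(\dot P_{0,0}P'_{0,0}+P'_{0,0}\dot P_{0,0})P_{0,0}.
\]
Combining the last three displays yields \eqref{A2}.

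\emph{Derivatives of the projector.} Differentiate the resolvent in the Riesz integral using
\[
\partial(\mathscr L-\lambda)^{-1}=-(\mathscr L-\lambda)^{-1}(\partial\mathscr L)(\mathscr L-\lambda)^{-1}.
\]
Together with the leading minus sign in \eqref{Projection P mu e}, this gives \eqref{A3}--\eqref{A4}. The mixed derivative produces the two triple products with opposite sign, plus the term containing $\dot{\mathscr L}'_{0,0}$. This is \eqref{eq:dotPprime-term1}--\eqref{eq:dotPprime-term3}. The differentiation under the integral is legitimate by the analyticity in Lemma \ref{kato thm}.

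\emph{The operator derivatives.} Start from $\mathscr L_{\mu,\epsilon}=\mathcal L_{\mu,\epsilon}-\im c_{\kappa,b}\mu$.
\begin{itemize}
\item $\epsilon$-derivative at $\mu=0$: the $\epsilon^1$ coefficients of $p_\epsilon$, $a_\epsilon$, $\tau_\epsilon$, $\beta_\epsilon$ give $\mathscr L'_{0,0}$. The sign of the $(2,1)$ entry comes from $-(1+a_\epsilon)+\kappa\tau-b\beta$.
\item $\mu$-derivative at $\epsilon=0$: the diagonal entries $c_{\kappa,b}(\partial_x+\im\mu)-\im c_{\kappa,b}\mu$ are independent of $\mu$. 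The $(1,2)$ entry contributes $\operatorname{sgn}(D)+\Pi_0$ by \eqref{D+mu}. The $(2,1)$ entry contributes $\partial_\mu[\kappa(\partial_x+\im\mu)^2-b(\partial_x+\im\mu)^4]_{\mu=0}=2\im\kappa\partial_x-4\im b\partial_x^3$.
\item Mixed derivative: the diagonal entries give $\im p_1$. For the $(2,1)$ entry, use $\partial_\mu(\partial_x+\im\mu)^j|_{\mu=0}=\im j\partial_x^{j-1}$ applied to \eqref{tau_mu_expansion}, \eqref{Sigma}. This gives $\im\kappa(\tau_{1,1}+2\tau_{1,2}\partial_x)-\im b\sum_j j\,b_{1,j}\partial_x^{j-1}$.
\end{itemize}

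The only delicate step is the algebraic reduction of the $\tfrac12(\dot P P'+P'\dot P)$ term in \eqref{A2}. It rests entirely on the projector identities above; the rest is mechanical.
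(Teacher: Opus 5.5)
Your argument is correct and is essentially the standard computation behind \cite[Lemma A.1]{BMV1}, to which the paper simply defers. You reduce to $U_{\mu,\epsilon}P_{0,0}=(\mathrm{Id}-(P_{\mu,\epsilon}-P_{0,0})^2)^{-1/2}P_{\mu,\epsilon}P_{0,0}$, simplify the mixed jet via the differentiated identity $P_{\mu,\epsilon}^2=P_{\mu,\epsilon}$, differentiate the Riesz integral, and read off $\mathscr L'_{0,0}$, $\dot{\mathscr L}_{0,0}$, $\dot{\mathscr L}'_{0,0}$ from \eqref{mathscr L mu e}--\eqref{mathcal B mu e}, \eqref{tau_mu_expansion}, \eqref{Sigma} and \eqref{D+mu}.

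One step needs tightening: $(\dot P_{0,0}P'_{0,0}+P'_{0,0}\dot P_{0,0})P_{0,0}=P_{0,0}(\dot P_{0,0}P'_{0,0}+P'_{0,0}\dot P_{0,0})P_{0,0}$ does not follow from $P_{0,0}P'_{0,0}P_{0,0}=0=P_{0,0}\dot P_{0,0}P_{0,0}$ alone. It does follow from the full first-order identities in the form $P'_{0,0}P_{0,0}=(\mathrm{Id}-P_{0,0})P'_{0,0}$ and $\dot P_{0,0}(\mathrm{Id}-P_{0,0})=P_{0,0}\dot P_{0,0}$. These give $\dot P_{0,0}P'_{0,0}P_{0,0}=P_{0,0}\dot P_{0,0}P'_{0,0}$ and, symmetrically, $P'_{0,0}\dot P_{0,0}P_{0,0}=P_{0,0}P'_{0,0}\dot P_{0,0}$.
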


By the Lemma \ref{U mu epsilon P00}, we have the Taylor expansion
\begin{equation} \label{the expandsion of f sigma mu}
    \begin{aligned}
    f_k^\sigma (\mu, \epsilon) &= f_k^\sigma + \epsilon P'_{0,0} f_k^\sigma + \mu \dot{P}_{0,0} f_k^\sigma  + \mu \epsilon \big( \dot{P}'_{0,0} - \frac{1}{2} P_{0,0} \dot{P}'_{0,0} \big) f_k^\sigma + \mathcal{O}(\mu^2, \epsilon^2).
\end{aligned}
\end{equation}
In order to compute the vectors $P'_{0,0} f_k^\sigma$ and $\dot{P}_{0,0} f_k^\sigma$ using \eqref{A3} and \eqref{A4}, it is useful to know the action of $(\mathscr{L}_{0,0} - \lambda)^{-1}$ on the vectors
\begin{equation} \label{A10}
    \begin{aligned}
        f^+_k&:=\vet{(1+b+\kappa)^{-1/4}\cos(kx)}{(1+b+\kappa)^{1/4}\sin(kx)},~~f^-_k:=\vet{-(1+b+\kappa)^{-1/4}\sin(kx)}{(1+b+\kappa)^{1/4}\cos(kx)},\\
        f^+_{-k}&:=\vet{(1+b+\kappa)^{-1/4}\cos(kx)}{-(1+b+\kappa)^{1/4}\sin(kx)},~~f^-_{-k}:=\vet{(1+b+\kappa)^{-1/4}\sin(kx)}{(1+b+\kappa)^{1/4}\cos(kx)},~~k\in\mathbb{N}.
    \end{aligned}
\end{equation}

\begin{lemma}
    The space $Y=H^4(\mathbb{T},\mathbb{C})\times H^1(\mathbb{T},\mathbb{C})$ decomposes as $Y = \mathcal{V}_{0,0} \oplus \mathcal{U} \oplus \mathcal{W}_{Y}$, with
\begin{equation*}
    \mathcal{W}_{Y} = \overline{\bigoplus_{k=2}^{\infty} \mathcal{W}_k}^{Y}
\end{equation*}
where the subspaces $\mathcal{V}_{0,0}, \mathcal{U}$, and $\mathcal{W}_k$, defined below, are invariant under $\mathscr{L}_{0,0}$ and the following properties hold:

\begin{itemize}
\item[(i)] $\mathcal{V}_{0,0} = \text{span}\{f_1^+, f_1^-, f_0^+, f_0^-\}$ is the generalized kernel of $\mathscr{L}_{0,0}$. For any $\lambda \neq 0$ the operator $\mathscr{L}_{0,0} - \lambda : \mathcal{V}_{0,0} \to \mathcal{V}_{0,0}$ is invertible and
    \begin{equation} \label{A11-0}
        (\mathscr{L}_{0,0} - \lambda)^{-1} f_1^+ = -\frac{1}{\lambda} f_1^+, \quad (\mathscr{L}_{0,0} - \lambda)^{-1} f_1^- = -\frac{1}{\lambda} f_1^-,
    \end{equation}
    \begin{equation} \label{A11}
        (\mathscr{L}_{0,0} - \lambda)^{-1} f_0^- = -\frac{1}{\lambda} f_0^-,
    \end{equation}
    \begin{equation} \label{A12}
        (\mathscr{L}_{0,0} - \lambda)^{-1} f_0^+ = -\frac{1}{\lambda} f_0^+ + \frac{1}{\lambda^2} f_0^- .
    \end{equation}

    \item[(ii)] $\mathcal{U} := \text{span}\{ f_{-1}^+, f_{-1}^- \}$. For any $\lambda \neq \pm \im \,2c_{\kappa,b}$ the operator $\mathscr{L}_{0,0} - \lambda : \mathcal{U} \to \mathcal{U}$ is invertible and
    \begin{equation} \label{A13}
    \begin{aligned}
        (\mathscr{L}_{0,0} - \lambda)^{-1} f_{-1}^+ &= \frac{1}{\lambda^2 + 4c_{\kappa,b}^2} \left(-\lambda f_{-1}^+ + 2c_{\kappa,b} f_{-1}^-\right),\\
        (\mathscr{L}_{0,0} - \lambda)^{-1} f_{-1}^- &= \frac{1}{\lambda^2 + 4c_{\kappa,b}^2} \left(-2c_{\kappa,b} f_{-1}^+ - \lambda f_{-1}^-\right).
    \end{aligned}
    \end{equation}

    \item[(iii)] Each subspace $\mathcal{W}_k := \text{span}\{ f_k^+, f_k^-, f_{-k}^+, f_{-k}^- \}$ is invariant under $\mathscr{L}_{0,0}$. Let
    \begin{equation*}
        \mathcal{W}_{L^2} = \overline{\bigoplus_{k=2}^{\infty} \mathcal{W}_k}^{ L^2}.
    \end{equation*}
    For any $|\lambda| < \delta(\tth)$ small enough, the operator $\mathscr{L}_{0,0} - \lambda : \mathcal{W}_{Y} \to \mathcal{W}_{L^2}$ is invertible and for any $f \in \mathcal{W}_{L^2}$,
    \begin{equation} \label{A14}
        (\mathscr{L}_{0,0} - \lambda)^{-1} f = \left( c_{\kappa,b}^2 \partial^2_{x} + |D| (1-\kappa \partial_x^2+b \pa^4_{x}) \right)^{-1} \begin{bmatrix} c_{\kappa,b} \partial_x & -|D|  \\ 1-\kappa \partial_x^2+b \pa^4_{x} & c_{\kappa,b} \partial_x \end{bmatrix} f
        + \lambda \varphi_f(\lambda, \cdot),
    \end{equation}
    for some analytic function $\lambda \mapsto \varphi_f(\lambda, \cdot) \in Y=H^4(\mathbb{T},\mathbb{C})\times H^1(\mathbb{T},\mathbb{C})$.
\end{itemize}
\end{lemma}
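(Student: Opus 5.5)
The plan is to exploit that $\mathscr L_{0,0}$ is a Fourier multiplier matrix. I would first decompose $Y$ according to Fourier modes. For each $k\geq0$, the span of $\{\cos(kx),\sin(kx)\}$ in each component is invariant under $\mathscr L_{0,0}$. On mode $k$ the operator acts by
\[
\begin{bmatrix} c_{\kappa,b}\partial_x & |D|\\ -(1+\kappa k^2+bk^4) & c_{\kappa,b}\partial_x\end{bmatrix},
\]
where the sign term $\mu(\mathrm{sgn}(D)+\Pi_0)$ is absent at $\mu=0$. The vectors $f^\pm_{\pm k}$ in \eqref{A10} span mode $k$, so the splitting $Y=\mathcal V_{0,0}\oplus\mathcal U\oplus\mathcal W_Y$ is obtained by grouping modes: $k=0$ together with the $f_1^\pm$, then the $f_{-1}^\pm$, then $k\geq2$. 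Invariance follows by direct application of the symbol.

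For (i) I would compute the action of $\mathscr L_{0,0}$ on each basis vector. On $f_1^\pm$ one gets $\mathscr L_{0,0}f_1^\pm=0$, which is the dispersion identity $c_{\kappa,b}^2=1+\kappa+b$ checked on the first harmonic. Also $\mathscr L_{0,0}f_0^-=0$ and $\mathscr L_{0,0}f_0^+=-f_0^-$, as in \eqref{eq:generalized-eigenfunc-f0plus}. The resolvent then follows from the Jordan structure: $(N-\lambda)^{-1}=-\lambda^{-1}-\lambda^{-2}N$ whenever $N^2=0$. Applying this to $f_0^+$ gives $-\lambda^{-1}f_0^++\lambda^{-2}f_0^-$, which is \eqref{A12}, and the remaining vectors give \eqref{A11-0}–\eqref{A11}.

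For (ii) I would compute $\mathscr L_{0,0}f_{-1}^+=-2c_{\kappa,b}f_{-1}^-$ and $\mathscr L_{0,0}f_{-1}^-=2c_{\kappa,b}f_{-1}^+$, up to the sign convention fixed by the explicit multiplication; again this uses $1+\kappa+b=c_{\kappa,b}^2$. The restriction is then the $2\times2$ matrix $2c_{\kappa,b}\mathsf J_2$, whose eigenvalues are $\pm 2\im c_{\kappa,b}$. Its resolvent is $(M-\lambda)^{-1}=-(M+\lambda)/(\lambda^2+4c_{\kappa,b}^2)$, using $M^2=-4c_{\kappa,b}^2$, and this yields \eqref{A13}.

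For (iii), on $\mathcal W_k$ with $k\geq2$ the restriction of $\mathscr L_{0,0}$ is invertible. Its determinant is, up to sign, $k^2\bigl(k(1+\kappa k^2+bk^4)-c_{\kappa,b}^2k^2\bigr)/k$. This is nonzero by the non-resonance assumption $(\kappa,b)\notin\mathfrak R$, exactly as in Lemma~\ref{lem:B0}, and it grows like $k^5$. The inverse at $\lambda=0$ is given by the adjugate formula, which is precisely the displayed operator $\bigl(c_{\kappa,b}^2\partial_x^2+|D|(1-\kappa\partial_x^2+b\partial_x^4)\bigr)^{-1}\begin{bmatrix}c_{\kappa,b}\partial_x&-|D|\\ 1-\kappa\partial_x^2+b\partial_x^4&c_{\kappa,b}\partial_x\end{bmatrix}$. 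This is because for a $2\times2$ block $\begin{bmatrix}A&B\\C&A\end{bmatrix}$ of commuting multipliers the inverse is $(A^2-BC)^{-1}\begin{bmatrix}A&-B\\-C&A\end{bmatrix}$, and here $A^2-BC=c_{\kappa,b}^2\partial_x^2+|D|(1+\dots)$.

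The main obstacle is uniformity. The eigenvalues $\lambda_k^\pm(0)$ for $k\geq2$ are bounded away from $0$: they tend to infinity, and none vanishes by non-resonance. So $\mathscr L_{0,0}-\lambda$ is invertible $\mathcal W_Y\to\mathcal W_{L^2}$ for $|\lambda|<\delta$, with mode-wise inverses bounded from $L^2$ to $H^4\times H^1$ uniformly in $k$. This uses that the symbol gains four derivatives on the first component and one on the second. Analyticity in $\lambda$ of the Neumann series around $\lambda=0$ then gives the remainder $\lambda\varphi_f(\lambda,\cdot)$.
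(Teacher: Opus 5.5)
Your proof is correct and follows essentially the same route as the paper. Both decompose into Fourier modes, use the nilpotent Jordan structure on $\mathcal V_{0,0}$ and the explicit $2\times2$ resolvent on $\mathcal U$, invert on each $\mathcal W_k$ via the adjugate formula (with non-resonance excluding a zero determinant), and finish with a Neumann series in $\lambda$. One small slip: the mode-$k$ determinant is $k(1+\kappa k^2+bk^4)-c_{\kappa,b}^2k^2=k^2\bigl((1+\kappa k^2+bk^4)/k-c_{\kappa,b}^2\bigr)$, not $k$ times that; the correct expression is the one consistent with your $k^5$ growth claim, and the slip does not affect the argument.
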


\begin{proof}
    By inspection the spaces $\mathcal{V}_{0,0}, \mathcal{U}$ and $\mathcal{W}_k$ are invariant under $\mathscr{L}_{0,0}$ and, by Fourier series, they decompose $Y=H^4(\mathbb{T},\mathbb{C})\times H^1(\mathbb{T},\mathbb{C})$. Formulas \eqref{A11}-\eqref{A12} follow using that $f_1^+, f_1^-, f_0^-$ are in the kernel of $\mathscr{L}_{0,0}$, and $\mathscr{L}_{0,0} f_0^+ = -f_0^-$. Formula \eqref{A13} follows using that $\mathscr{L}_{0,0} f_{-1}^+ = -2c_{\kappa,b} f_{-1}^-$ and $\mathscr{L}_{0,0} f_{-1}^- = 2c_{\kappa,b} f_{-1}^+$. Let us prove item $(iii)$. Let $\mathcal{W} := \mathcal{W}_{Y}$. The operator $(\mathscr{L}_{0,0} - \lambda \mathrm{Id})|_{\mathcal{W}}$ is invertible for any 
$$\lambda \notin \{ \pm i kc_{\kappa,b} \pm i\sqrt{|k|\left(1+\kappa k^2 + b k^4\right)},\quad k \geq 2,\quad k \in \mathbb{N} \}$$
and 
$$
(\mathscr{L}_{0,0}|_{\mathcal{W}})^{-1} = \left( c_{\kappa,b}^2 \partial_{x}^2 + |D| (1-\kappa \partial_x^2+b \pa_{x}^4) \right)^{-1} \begin{bmatrix} c_{\kappa,b} \partial_x & -|D|  \\ 1-\kappa \partial_x^2+b \pa_{x}^4 & c_{\kappa,b} \partial_x \end{bmatrix} \Big|_{\mathcal{W}}.
$$
By Neumann series, for any $\lambda$ such that 
$$|\lambda| \| (\mathscr{L}_{0,0}|_{\mathcal{W}})^{-1} \|_{\mathcal{L}(\mathcal{W}, Y)} < 1$$
we have 
$$
(\mathscr{L}_{0,0}|_{\mathcal{W}} - \lambda)^{-1} = (\mathscr{L}_{0,0}|_{\mathcal{W}})^{-1} (\text{Id} - \lambda (\mathscr{L}_{0,0}|_{\mathcal{W}})^{-1})^{-1} 
= (\mathscr{L}_{0,0}|_{\mathcal{W}})^{-1} \sum_{k \geq 0} ((\mathscr{L}_{0,0}|_{\mathcal{W}})^{-1} \lambda)^k.
$$
Formula \eqref{A14} follows with 
$$\varphi_f(\lambda, x) := (\mathscr{L}_{0,0}|_{\mathcal{W}})^{-1} \sum_{k \geq 1} \lambda^{k-1} [(\mathscr{L}_{0,0}|_{\mathcal{W}})^{-1}]^k f.$$ 
\end{proof}

To prove Lemma \ref{lem:4.2}, we shall also use the following formulas obtained by \eqref{A6}, \eqref{D+mu}, and \eqref{eigenfunc of mathcall L00 2}:
\begin{equation}\label{A15}
\begin{aligned}
&\mathscr{L}'_{0,0} f^+_1
=\vet{2c_{\kappa,b}^{\frac12}\sin(2x)}{3(5b+\kappa)c_{\kappa,b}^{-\frac12}\cos(2x)},\qquad\mathscr{L}'_{0,0} f^-_1
=\vet{2c_{\kappa,b}^{\frac12}\cos(2x)}{-3(5b+\kappa)c_{\kappa,b}^{-\frac12}\sin(2x)},\\
&\mathscr{L}'_{0,0} f^+_0
=\vet{2c_{\kappa,b}\sin(x)}{2c_{\kappa,b}^2\cos(x)},
\qquad
\mathscr{L}'_{0,0}f^-_0=\vet{0}{0},\\
&\dot{\mathscr{L}}_{0,0} f^+_1
=\vet{-\im c_{\kappa,b}^{\frac12}\cos(x)}{-\im2(\kappa+2b)c_{\kappa,b}^{-\frac12}\sin(x)},
\qquad
\dot{\mathscr{L}}_{0,0} f^-_1
=\vet{\im c_{\kappa,b}^{\frac12}\sin(x)}{-\im2(\kappa+2b)c_{\kappa,b}^{-\frac12}\cos(x)},\\
&\dot{\mathscr{L}}_{0,0} f^+_0=\vet{0}{0},
\qquad
\dot{\mathscr{L}}_{0,0}f^-_0=\vet{1}{0}.
\end{aligned}
\end{equation}
We finally calculate $P'_{0,0} f^\sigma_k$ and $\dot{P}_{0,0} f^\sigma_k$.
\begin{lemma}
One has
\[
P'_{0,0} f^+_1
=\vet{\alpha_{\kappa,b}\cos(2x)}{\beta_{\kappa,b}\sin(2x)},
\qquad
P'_{0,0} f^-_1
=\vet{-\alpha_{\kappa,b}\sin(2x)}{\beta_{\kappa,b}\cos(2x)},
\qquad
P'_{0,0} f^+_0=\delta_{\kappa,b}f^+_{-1},
\]
\[
P'_{0,0} f^-_0=\vet00,
\qquad
\dot P_{0,0}f^+_0=\vet00,
\qquad
\dot P_{0,0}f^-_0=\vet00,
\qquad
\dot P_{0,0}f^+_1=i\frac{\gamma_{\kappa,b}}{4}f^-_{-1},
\qquad
\dot P_{0,0}f^-_1=i\frac{\gamma_{\kappa,b}}{4}f^+_{-1},
\]
where
\[
\alpha_{\kappa,b}
=
-\frac{2-13b-\kappa}{14b+2\kappa-1}\,c_{\kappa,b}^{-1/2},
\qquad
\beta_{\kappa,b}
=
-\frac{1+b+\kappa}{14b+2\kappa-1}\,c_{\kappa,b}^{1/2},
\qquad\gamma_{\kappa,b}
=
\frac{1-\kappa-3b}{1+\kappa+b},
\qquad
\delta_{\kappa,b}=c_{\kappa,b}^{1/2}.
\]
\end{lemma}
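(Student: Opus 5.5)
The plan is to evaluate the contour integrals \eqref{A3}--\eqref{A4} term by term, using the action of the resolvent on the invariant subspaces $\mathcal V_{0,0}$, $\mathcal U$, $\mathcal W_k$ established in the previous lemma. The inputs are the explicit vectors $\mathscr L'_{0,0}f_k^\sigma$ and $\dot{\mathscr L}_{0,0}f_k^\sigma$ listed in \eqref{A15}.

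In each case I would first apply the inner resolvent to $f_k^\sigma\in\mathcal V_{0,0}$ via \eqref{A11-0}--\eqref{A12}. This produces $-\lambda^{-1}f_k^\sigma$, plus the extra term $\lambda^{-2}f_0^-$ when the vector is $f_0^+$. Next I apply $\mathscr L'_{0,0}$ or $\dot{\mathscr L}_{0,0}$, which gives the vectors in \eqref{A15}, using $\mathscr L'_{0,0}f_0^-=\dot{\mathscr L}_{0,0}f_0^+=0$. Then I decompose the result along $\mathcal V_{0,0}\oplus\mathcal U\oplus\mathcal W_Y$ and apply the outer resolvent. Finally I compute residues at $\lambda=0$. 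Only the $\lambda^{-1}$ coefficient survives, because the other poles $\pm\im 2c_{\kappa,b}$ and the spectrum in $\mathcal W$ lie outside $\Gamma$.

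The cases split by harmonic content.

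\begin{itemize}
\item For $P'_{0,0}f_1^\pm$: the vector $\mathscr L'_{0,0}f_1^\pm$ lives only in the second harmonics, hence in $\mathcal W_2$. The integrand is $-\lambda^{-1}$ times the analytic expression \eqref{A14}, so the residue equals $(\mathscr L_{0,0}|_{\mathcal W})^{-1}\mathscr L'_{0,0}f_1^\pm$ up to sign. I would compute this by inverting the $2\times2$ symbol at $k=2$. The determinant is $4(c_{\kappa,b}^2-2(1+4\kappa+16b))$, and it is proportional to $14b+2\kappa-1$; this produces $\alpha_{\kappa,b},\beta_{\kappa,b}$.
\item For $P'_{0,0}f_0^+$: the vector $\mathscr L'_{0,0}f_0^+$ is a first harmonic, which I decompose into $f_1^{\pm}$ and $f_{-1}^\pm$ components. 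Since the term in $f_0^-$ is annihilated by $\mathscr L'_{0,0}$, only $-\lambda^{-1}$ times this vector contributes. The $\mathcal V_{0,0}$-component gives $\lambda^{-2}$, which has zero residue. The $\mathcal U$-component, by \eqref{A13}, has residue $\frac{1}{4c^2_{\kappa,b}}(\ldots)$ evaluated at $\lambda=0$. This should give the $f_{-1}^+$ multiple $\delta_{\kappa,b}$.
\item For $\dot P_{0,0}f_0^\pm$: for $f_0^+$ the relevant vector is $\dot{\mathscr L}_{0,0}f_0^+=0$, and the remaining term $\lambda^{-2}\dot{\mathscr L}f_0^-=\lambda^{-2}f_0^+$ gives $\lambda^{-3}$ and $\lambda^{-4}$ terms with no residue. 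For $f_0^-$ the term $-\lambda^{-1}f_0^+$ gives $\lambda^{-2},\lambda^{-3}$ terms, so its residue vanishes too.
\item For $\dot P_{0,0}f_1^\pm$: I project the first-harmonic vectors onto $f_1^\pm$ (these give double poles, hence zero) and onto $f_{-1}^\pm$, then use \eqref{A13} at $\lambda=0$. This gives $\im\frac{\gamma_{\kappa,b}}4 f_{-1}^\mp$ with $\gamma_{\kappa,b}=(1-\kappa-3b)/c^2_{\kappa,b}$, after simplifying with $c^2_{\kappa,b}=1+b+\kappa$.
\end{itemize}

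The main obstacle I expect is keeping signs and normalizations consistent, namely the $c^{\pm1/4}$ factors in $f_{\pm1}^\pm$, the orientation of $\Gamma$, and the overall sign in \eqref{A3}. The $\mathcal W_2$ inversion that produces $\alpha_{\kappa,b}$ is where algebraic errors are most likely. I would cross-check that result against the second-order Stokes coefficients $\eta_2^{[2]},\mathfrak p_2$, and check $\dot P_{0,0}f_1^\pm$ against the $\epsilon=0$ dispersion relation \eqref{eq:unperturbed-dispersion}.
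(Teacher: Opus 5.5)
Your plan is the same as the paper's proof. You apply the inner resolvent on $\mathcal V_{0,0}$, insert $\mathscr L'_{0,0}f_k^\sigma$ or $\dot{\mathscr L}_{0,0}f_k^\sigma$ from \eqref{A15}, invert the outer resolvent on $\mathcal V_{0,0}$, $\mathcal U$ or $\mathcal W_2$ via \eqref{A11-0}--\eqref{A14}, and keep only the residue at $\lambda=0$. One algebra slip needs fixing: the $k=2$ determinant is $2(1+4\kappa+16b)-4c_{\kappa,b}^2=2(14b+2\kappa-1)$. The expression you wrote, $4\bigl(c_{\kappa,b}^2-2(1+4\kappa+16b)\bigr)=-4(1+7\kappa+31b)$, is not proportional to $14b+2\kappa-1$, and using it would give the wrong denominators in $\alpha_{\kappa,b}$ and $\beta_{\kappa,b}$.
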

\begin{proof}
We first calculate $P'_{0,0} f^+_1$. By the definitions of the operators and the unperturbed basis $f^+_1$, we deduce
\begin{align*}
\mathscr{L}'_{0,0} f^+_1 =\vet{2c_{\kappa,b}^{\frac12}\sin(2x)}{3(5b+\kappa)c_{\kappa,b}^{-\frac12}\cos(2x)} \in \mathcal{W}.
\end{align*}
Note that unlike the finite-depth gravity-capillary case, the constant spatial average term perfectly cancels out due to the infinite-depth hydroelastic dispersion properties. Hence, $\mathscr{L}'_{0,0} f^+_1$ belongs purely to the high-frequency subspace $\mathcal{W}$. Thus, we have
\begin{align*}
P'_{0,0} f^+_1=-\frac{1}{2\pi \im}\oint_{\Gamma} \frac{1}{\lambda} \left(\mathscr{L}_{0,0}-\lambda\right)^{-1}
\vet{2 c_{\kappa,b}^{1/2} \sin(2x)}{3(5b+\kappa)c_{\kappa,b}^{-1/2} \cos(2x)} d\lambda.
\end{align*}
by the residue theorem. Similarly, one may calculate $P'_{0,0} f^-_1$.
Since $\mathscr{L}'_{0,0}f^-_0=\vet{0}{0}$, one trivially has $P'_{0,0}f^-_0=0$.
Next, we calculate $P'_{0,0}f^+_0$. By direct evaluation, we get
\begin{align*}
    \mathscr{L}'_{0,0} f^+_0 = \vet{2c_{\kappa,b}\sin(x)}{2c_{\kappa,b}^2\cos(x)} = 2c_{\kappa,b}^{3/2} f^-_{-1}.
\end{align*}
Then we can write the projection as
\[
P'_{0,0} f^+_0 = -\frac{1}{2\pi \im} \oint_{\Gamma} \frac{1}{\lambda} (\mathscr{L}_{0,0} - \lambda)^{-1} \left( 2c_{\kappa,b}^{3/2} f^-_{-1} \right) d\lambda.
\]
Using the action of the resolvent on the resonant subspace $\mathcal{U}$, we have
\[
P'_{0,0} f^+_0 = -\frac{1}{2\pi \im} \oint_{\Gamma}
\frac{2c_{\kappa,b}^{3/2}}{\lambda (\lambda^2 + 4c_{\kappa,b}^2)} \left( -2c_{\kappa,b} f^+_{-1} - \lambda f^-_{-1} \right) d\lambda.
\]
Applying the Cauchy residue theorem at the simple pole $\lambda=0$, we obtain
\[
P'_{0,0} f^+_0 = c_{\kappa,b}^{1/2} f^+_{-1} = \delta_{\kappa,b} f^+_{-1},
\]
which gives the third identity.

Now, we calculate $\dot{P}_{0,0} f^+_1$. First, the derivative operator yields $\dot{\mathscr{L}}_{0,0} f^+_1 = -\im \vet{c_{\kappa,b}^{1/2} \cos(x)}{2(\kappa+2b) c_{\kappa,b}^{-1/2} \sin(x)}$, which can be decomposed into the basis of $\mathcal{V}_{0,0}$ and $\mathcal{U}$ as follows:
\begin{align*}
    \dot{\mathscr{L}}_{0,0} f^+_1 = -\im \frac{1+3\kappa+5b}{2 c_{\kappa,b}} f^+_1 - \im \frac{1-\kappa -3b}{2 c_{\kappa,b}} f^+_{-1}.
\end{align*}
Substituting this into the contour integral and applying the residue theorem at $\lambda=0$, the $f^+_1$ component yields $0$ residue, and we conclude
\begin{align*}
    \dot{P}_{0,0}f^+_1
    = -\frac{1}{2\pi \im}\oint_\Gamma \frac{1}{\lambda}\left(\mathscr{L}_{0,0}-\lambda\right)^{-1}
    \left( -\im \frac{1-\kappa-3b}{2 c_{\kappa,b}} f^+_{-1} \right) d\lambda
    = \im \frac{1-\kappa-3b}{4c_{\kappa,b}^2} f^-_{-1}
    = \im\frac{\gamma_{\kappa,b}}{4} f^-_{-1}.
\end{align*}
Similarly, following the exact same steps, we have $\dot{P}_{0,0}f^-_1=\im\frac{\gamma_{\kappa,b}}{4} f^+_{-1}$.

Finally, we calculate the Floquet derivatives for the zero modes $f^\pm_0$. In the infinite depth case, we established that $\dot{\mathscr{L}}_{0,0} f^-_0 = f^+_0$ and $\dot{\mathscr{L}}_{0,0} f^+_0 = \vet{0}{0}$. Therefore, we compute
\begin{equation*}
    \begin{aligned}
        \dot{P}_{0,0}f^+_0=&\frac{1}{2\pi \im}\oint_\Gamma \left(\mathscr{L}_{0,0}-\lambda\right)^{-1}\dot{\mathscr{L}}_{0,0}\left(\frac{1}{\lambda^2}f^-_0-\frac{1}{\lambda}f^+_0\right) d\lambda \\
        =&\frac{1}{2\pi \im}\oint_\Gamma \left(\mathscr{L}_{0,0}-\lambda\right)^{-1} \left( \frac{1}{\lambda^2} f^+_0 \right) d\lambda
        = \frac{1}{2\pi \im}\oint_\Gamma \left( -\frac{1}{\lambda^3}f^+_0 + \frac{1}{\lambda^4}f^-_0 \right) d\lambda=0, \\
        \dot{P}_{0,0}f^-_0=&\frac{1}{2\pi \im}\oint_\Gamma \left(\mathscr{L}_{0,0}-\lambda\right)^{-1}\dot{\mathscr{L}}_{0,0}\left(-\frac{1}{\lambda}f^-_0\right) d\lambda \\
        =&\frac{1}{2\pi \im}\oint_\Gamma \left(\mathscr{L}_{0,0}-\lambda\right)^{-1}\left(-\frac{1}{\lambda}f^+_0\right) d\lambda
        = \frac{1}{2\pi \im}\oint_\Gamma \left( \frac{1}{\lambda^2}f^+_0 - \frac{1}{\lambda^3}f^-_0 \right) d\lambda = 0.
    \end{aligned}
\end{equation*}
Notice that despite $\dot{\mathscr{L}}_{0,0} f^-_0 \neq \vet{0}{0}$ for infinite depth, the residues still perfectly vanish. In conclusion, all the formulas are proven.
\end{proof}
So far we have obtained the linear terms of the expansions \eqref{expansion_f1+}, \eqref{expansion_f1-},
\eqref{expansion_f0+} and \eqref{expansion_f0-}. We now provide further information about the expansion of the basis at $\mu = 0$. 
\begin{lemma} 
     The basis $\{ f_k^{\sigma}(0,\e), \quad k = 0,1, \quad \sigma = \pm \}$ is real. For any $\epsilon$ it results $f_0^-(0,\epsilon) \equiv f_0^-$. The property \eqref{eq:f_0_pm_0_epsilon} holds.
\end{lemma}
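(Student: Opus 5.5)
The plan follows \cite[Lemma A.3]{BMV1} and uses three symmetry facts at $\mu=0$: reality of the operators, reversibility, and the gauge invariance of the zero mode. The claims are that $\{f_k^\sigma(0,\epsilon)\}$ is real, that $f_0^-(0,\epsilon)\equiv f_0^-$, and that the parity structure \eqref{eq:f_0_pm_0_epsilon} holds.

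\emph{Reality.} By Lemma~\ref{properties of U and P}(iii), $U_{0,\epsilon}$ is a real operator. The unperturbed vectors $f_k^\sigma$ in \eqref{eigenfunc of mathcall L00 2} are real. Hence $f_k^\sigma(0,\epsilon)=U_{0,\epsilon}f_k^\sigma$ is real.

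\emph{Parity.} By Lemma~\ref{F is symplectic and reversible}, $\bar\rho f_k^\sigma(0,\epsilon)=\sigma f_k^\sigma(0,\epsilon)$. For a real vector, $\bar\rho$ acts as $(\eta,\psi)\mapsto(\eta(-x),-\psi(-x))$. The $+$ vectors are therefore of the form $(\mathit{even},\mathit{odd})$ and the $-$ vectors of the form $(\mathit{odd},\mathit{even})$. This gives \eqref{basis_mu_0} up to the zero-average refinements.

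\emph{The vector $f_0^-$.} I would show that $f_0^-=(0,1)^{\mathsf T}$ is an exact kernel element of $\mathscr L_{0,\epsilon}$ for every $\epsilon$. This holds because $\mathcal B_{0,\epsilon}(0,1)^{\mathsf T}=(-(c_{\kappa,b}+p_\epsilon)\partial_x 1,\ |D|1)^{\mathsf T}=0$. Consequently $f_0^-\in\mathcal V_{0,\epsilon}$, so $P_{0,\epsilon}f_0^-=f_0^-$, and also $P_{0,0}f_0^-=f_0^-$.

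From \eqref{U transformation operators}, $U_{0,\epsilon}f_0^-=(\mathrm{Id}-(P_{0,\epsilon}-P_{0,0})^2)^{-1/2}P_{0,\epsilon}f_0^-$. It remains to check that $(P_{0,\epsilon}-P_{0,0})^2f_0^-=0$. Since $P_{0,\epsilon}f_0^-=P_{0,0}f_0^-=f_0^-$, we get $(P_{0,\epsilon}-P_{0,0})f_0^-=0$. The Neumann series for the inverse square root then acts as the identity on $f_0^-$, and so $f_0^-(0,\epsilon)=f_0^-$.

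\emph{The vector $f_0^+$.} Here I need to show that the first component of $f_0^+(0,\epsilon)-f_0^+$ has zero average. This is the step I expect to be the main obstacle. My plan is to use the symplectic relations together with the previous point. Since $\mathcal J f_0^-=(1,0)^{\mathsf T}$, we have
\[
(\mathcal J f_0^-(0,\epsilon),\,f_0^+(0,\epsilon))=\langle (f_0^+(0,\epsilon))_1\rangle_x .
\]
By \eqref{basis is symplectic} this pairing equals the fixed constant $-1$, independently of $\epsilon$, and at $\epsilon=0$ it is $\langle 1\rangle_x=1$ up to the sign convention. Therefore the average of the first component of $f_0^+(0,\epsilon)$ stays equal to $1$, and the correction belongs to $\mathit{even}_0$. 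The parity of the correction, $(\mathit{even}_0,\mathit{odd})$, follows from reversibility as in the parity step.

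The same argument applied to $f_1^\pm(0,\epsilon)$, pairing with $f_0^-$ via $(\mathcal J f_0^-,f_1^\pm(0,\epsilon))=0$, gives the zero-average statement for the first component of $f_1^+$. This is consistent with \eqref{basis_mu_0}.
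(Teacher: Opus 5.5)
Your argument is correct and is essentially the paper's proof. Reality comes from Lemma~\ref{properties of U and P}(iii). The exact identity $\mathscr L_{0,\epsilon}f_0^-=0$ gives $P_{0,\epsilon}f_0^-=P_{0,0}f_0^-=f_0^-$ and hence $U_{0,\epsilon}f_0^-=f_0^-$; your check that $(P_{0,\epsilon}-P_{0,0})^2f_0^-=0$ spells out a step the paper leaves implicit. Parity follows from reversibility, and the zero averages follow from pairing with $\mathcal J f_0^-=(1,0)^{\mathsf T}$.

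The only slip is the sign. By \eqref{basis is symplectic} one has $(\mathcal J f_0^-(0,\epsilon),f_0^+(0,\epsilon))=+1$, not $-1$. This matches its $\epsilon$-independent value $\langle 1\rangle_x=1$ at $\epsilon=0$, so no appeal to a ``sign convention'' is needed.
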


\begin{proof}
The reality of the basis $f_k^\sigma(0,\epsilon)$ follows from
Lemma~\ref{properties of U and P}-(iii). Moreover, a direct inspection
of \eqref{mathscr L mu e}--\eqref{mathcal B mu e} gives
\[
\mathscr L_{0,\epsilon}f_0^-=0
\]
for every sufficiently small $|\epsilon|$. Therefore,
\[
(\mathscr L_{0,\epsilon}-\lambda)^{-1}f_0^-
=
-\frac{1}{\lambda}f_0^-.
\]
and then, using also the residue theorem,
\[
P_{0,\epsilon} f_0^- = -\frac{1}{2\pi \im} \oint_\Gamma (\mathscr{L}_{0,\epsilon} - \lambda)^{-1} f_0^- \, d\lambda = f_0^-.
\]
In particular $P_{0,\epsilon} f_0^- = P_{0,0} f_0^-$, for any $\epsilon$, and we get, by \eqref{U transformation operators},
\[
f_0^-(0,\epsilon) = U_{0,\epsilon} f_0^- = f_0^-,
\]
for any $\epsilon$.

Let us prove property \eqref{eq:f_0_pm_0_epsilon}. In view of \eqref{Parity structure} and since the basis is real, we know that
\[
f_k^+(0,\epsilon) = \begin{bmatrix}\mathit{even}(x)\\ \mathit{odd}(x)\end{bmatrix}, 
\qquad 
f_k^-(0,\epsilon) = \begin{bmatrix}\mathit{odd}(x)\\ \mathit{even}(x)\end{bmatrix},
\]
for any $k = 0, 1$. By Lemma \ref{F is symplectic and reversible} the basis $\{f_k^\sigma(0,\epsilon)\}$ is symplectic (cf. \eqref{basis is symplectic}) and, since
\[
\mathcal{J} f_0^-(0,\epsilon) = \mathcal{J} f_0^- = \begin{bmatrix}1\\0\end{bmatrix},
\]
for any $\epsilon$, we get
\[
0 = (\mathcal{J} f_0^-(0,\epsilon),\, f_1^+(0,\epsilon)) 
= \left(\begin{bmatrix}1\\0\end{bmatrix}, f_1^+(0,\epsilon)\right), 
\]
and
\[
1 = (\mathcal{J} f_0^-(0,\epsilon),\, f_0^+(0,\epsilon)) 
= \left(\begin{bmatrix}1\\0\end{bmatrix}, f_0^+(0,\epsilon)\right).
\]
Thus the first component of both $f_1^+(0,\epsilon)$ and $f_0^+(0,\epsilon) - \begin{bmatrix}1\\0\end{bmatrix}$ has zero average, proving \eqref{eq:f_0_pm_0_epsilon}.
\end{proof}

\begin{lemma} \label{lem:B5}
    For any small $\mu$, we have $f_0^+(\mu,0) \equiv f_0^+$ and $f_0^-(\mu,0) \equiv f_0^-$. Moreover, the vectors $f_1^\pm(\mu,0)$ have both components with zero space average.
\end{lemma}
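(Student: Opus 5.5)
The plan exploits the fact that at $\epsilon=0$ every operator involved is a Fourier multiplier. Then $\mathscr L_{\mu,0}$, its resolvent, the Kato projectors $P_{\mu,0}$, $P_{0,0}$ and the transformation operator $U_{\mu,0}$ in \eqref{U transformation operators} all commute with the Fourier projectors onto $\operatorname{span}\{e^{\im jx}\}\otimes\mathbb C^2$. I would organise the argument mode by mode, treating the zero mode and the modes $\pm1$ separately.

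\emph{Zero mode.} On constant vectors $[u,v]^{\mathsf T}$, the analytic extension \eqref{D+mu} gives $|D+\mu|\,1=\mu$, since $\operatorname{sgn}(D)1=0$ and $\Pi_01=1$. Combined with \eqref{mathcal L mu e} and \eqref{mathcal L= ichmu+mathscr L}, this shows that $\mathscr L_{\mu,0}$ restricted to $\operatorname{span}\{f_0^+,f_0^-\}$ is the matrix
\[
\begin{pmatrix}0&\mu\\-(1+\kappa\mu^2+b\mu^4)&0\end{pmatrix}.
\]
Its eigenvalues $\pm\sqrt{-\mu(1+\kappa\mu^2+b\mu^4)}$ are $\mathcal O(\sqrt{|\mu|})$, so for $|\mu|<\mu_0$ small they lie inside the contour $\Gamma$ of Lemma~\ref{kato thm}. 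I would verify this point carefully: it uses item~1 of Lemma~\ref{kato thm} and the fact that $\Gamma$ has fixed size independent of $\mu$.

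Since this two-dimensional space is invariant and its spectrum lies inside $\Gamma$, the Riesz integral \eqref{Projection P mu e} acts on it as the identity. Hence $P_{\mu,0}f_0^\sigma=f_0^\sigma=P_{0,0}f_0^\sigma$ for $\sigma=\pm$. Inserting this into \eqref{U transformation operators} gives the following:
\begin{itemize}
\item $P_{\mu,0}P_{0,0}f_0^\sigma=f_0^\sigma$;
\item $(\operatorname{Id}-P_{\mu,0})(\operatorname{Id}-P_{0,0})f_0^\sigma=0$;
\item $(P_{\mu,0}-P_{0,0})^2f_0^\sigma=0$.
\end{itemize}
The operator $(\operatorname{Id}-(P_{\mu,0}-P_{0,0})^2)^{-1/2}$ is defined by a convergent power series in $(P_{\mu,0}-P_{0,0})^2$, so it fixes $f_0^\sigma$. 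Therefore $f_0^\sigma(\mu,0)=U_{\mu,0}f_0^\sigma=f_0^\sigma$.

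\emph{First harmonics.} Let $\Pi_{\pm1}$ denote the projector onto Fourier modes $j=\pm1$ in both components. Since $\mathscr L_{\mu,0}$ is a Fourier multiplier, it commutes with $\Pi_{\pm1}$. The resolvent therefore also commutes with $\Pi_{\pm1}$, and hence so do $P_{\mu,0}$, $P_{0,0}$ and every operator built from them in \eqref{U transformation operators} through products, sums and the convergent binomial series. Because $f_1^\pm=\Pi_{\pm1}f_1^\pm$, we get $f_1^\pm(\mu,0)=U_{\mu,0}\Pi_{\pm1}f_1^\pm=\Pi_{\pm1}U_{\mu,0}f_1^\pm$. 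This vector contains only the modes $\pm1$, so both of its components have zero space average.

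The main point to check is the zero-mode step. Its eigenvalues coalesce at $0$ as $\mu\to0$ with the singular $\sqrt{\mu}$ scale, so one must confirm that the whole zero-mode block lies in $\mathcal V_{\mu,0}$ rather than being split by $\Gamma$. The explicit $2\times2$ computation above settles this.
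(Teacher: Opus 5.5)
Your proposal is correct. The zero-mode step is the same as in the paper, which also computes the $2\times2$ action of $\mathscr L_{\mu,0}$ on $\operatorname{span}\{f_0^+,f_0^-\}$, places the eigenvalues $\pm\im\sqrt{\mu(1+\kappa\mu^2+b\mu^4)}$ inside $\Gamma$, and reads off $U_{\mu,0}f_0^\pm=f_0^\pm$ from \eqref{U transformation operators}. Your check that $(P_{\mu,0}-P_{0,0})^2f_0^\sigma=0$ makes explicit a step the paper leaves implicit.

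For the first harmonics you take a different route. The paper does not use the Fourier-multiplier structure. It combines the symplecticity of $\mathcal F$ (Lemma~\ref{F is symplectic and reversible}) with the zero-mode result just proved:
\[
0=(\mathcal J f_0^\pm(\mu,0),f_1^\sigma(\mu,0))=(\mathcal J f_0^\pm,f_1^\sigma(\mu,0)).
\]
Since $\mathcal Jf_0^+=\vet{0}{-1}$ and $\mathcal Jf_0^-=\vet{1}{0}$, these scalar products vanish exactly when the averages of the second and first components of $f_1^\sigma(\mu,0)$ vanish.

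Your commutation argument with the projector onto the modes $\pm1$ is valid and gives a stronger conclusion: $f_1^\pm(\mu,0)$ is supported entirely on the first harmonics, not merely mean-zero. It also does not depend on the zero-mode step. However, it is tied to $\epsilon=0$, where every operator is a Fourier multiplier.

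The paper's symplectic-orthogonality argument is the same mechanism it uses in the $\epsilon$-direction. In the preceding lemma, $f_0^-(0,\epsilon)=f_0^-$ forces the first components of $f_1^+(0,\epsilon)$ and $f_0^+(0,\epsilon)-f_0^+$ to have zero mean. There the variable coefficients make a mode-by-mode argument unavailable, which is why that mechanism is the more portable one.
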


\begin{proof}
    We consider the unperturbed infinite-depth operator at $\epsilon=0$. In the moving frame, after subtracting the trivial drift $i c_{\kappa,b} \mu$, it takes the form
\[
\mathscr{L}_{\mu,0}
=
\begin{bmatrix}
c_{\kappa,b}\partial_x & |D+\mu| \\
-1+\kappa(\partial_x+\im\mu)^2-b(\partial_x+\im\mu)^4
& c_{\kappa,b}\partial_x
\end{bmatrix}.
\]
    We study its action on the zero-mode subspace
    \[
    \mathcal{Z}:=\mathrm{span}\{f_0^+,f_0^-\}.
    \]
    A direct computation gives
    \[
    \mathscr L_{\mu,0} f_0^+
=
-(1+\kappa\mu^2+b\mu^4)f_0^-,
\qquad
\mathscr L_{\mu,0} f_0^-
=
\mu f_0^+,
    \]

    Hence $\mathscr{Z}_{\mu,0}$ leaves the subspace $\mathcal{Z}$ invariant. The restriction $\mathscr{Z}_{\mu,0}|_{\mathcal{Z}}$ has the eigenvalues
    \[
    \lambda_\pm(\mu)=\pm \im  \sqrt{\mu(1+\kappa\mu^2+b\mu^4)}.
    \]
    For $\mu$ small, these eigenvalues are close to zero and lie strictly inside the contour $\Gamma$. Therefore
    \[
    \mathcal{Z}\subseteq \mathcal{V}_{\mu,0}=\mathrm{Rg}(P_{\mu,0}),
    \]
    and thus
    \[
    P_{\mu,0}f_0^\pm=f_0^\pm.
    \]
    By \eqref{U transformation operators}, this yields
    \[
    f_0^\pm(\mu,0)=U_{\mu,0}f_0^\pm=f_0^\pm.
    \]

    It remains to prove that both components of $f_1^\pm(\mu,0)$ have zero space average. Since the basis $\{f_k^\sigma(\mu,0)\}$ is symplectic, and
    \[
    \mathcal{J}f_0^+(\mu,0)=\mathcal{J}f_0^+=\begin{bmatrix}0\\-1\end{bmatrix},
    \qquad
    \mathcal{J}f_0^-(\mu,0)=\mathcal{J}f_0^-=\begin{bmatrix}1\\0\end{bmatrix},
    \]
    we infer
    \[
    0=(\mathcal{J}f_0^\pm(\mu,0),\,f_1^\sigma(\mu,0)),
    \qquad \sigma=\pm.
    \]
    Therefore the first and second components of $f_1^\pm(\mu,0)$ have zero spatial average.
\end{proof}

\begin{lemma} \label{lem:B6}
    The mixed cross-derivatives
    \[
    (\partial_\mu\partial_\epsilon f_k^\sigma)(0,0)
    =
    \left(\dot{P}_{0,0}' - \frac{1}{2}P_{0,0}\dot{P}_{0,0}'\right)f_k^\sigma
    \]
    satisfy
    \begin{align*}
        (\partial_\mu\partial_\epsilon f_1^+)(0,0) &= \im \vet{\mathit{odd}(x)}{\mathit{even}(x)}, \qquad (\partial_\mu\partial_\epsilon f_1^-)(0,0) = \im \vet{\mathit{even}(x)}{\mathit{odd}(x)}, \\(\partial_\mu\partial_\epsilon f_0^+)(0,0) &= \frac{1}{4c_{\kappa,b}^{3/2}}\,f^+_{-1} + \im \vet{\mathit{odd}(x)}{\mathit{even}_0(x)}, \qquad(\partial_\mu\partial_\epsilon f_0^-)(0,0) 
= \vet{\dfrac{1}{2c_{\kappa,b}}\sin(x)}{\dfrac{1}{2}\cos (x)}
+ \im \vet{\mathit{even}_0(x)}{\mathit{odd}(x)}.
    \end{align*}
\end{lemma}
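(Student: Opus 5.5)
We compute the mixed derivatives from the formula $\dot U'_{0,0}P_{0,0}=(\dot P'_{0,0}-\tfrac12 P_{0,0}\dot P'_{0,0})P_{0,0}$ of Lemma~\ref{U mu epsilon P00}. We split $\dot P'_{0,0}$ into the three contour integrals \eqref{eq:dotPprime-term1}--\eqref{eq:dotPprime-term3}. Each integral is evaluated by applying the resolvent formulas \eqref{A11-0}--\eqref{A14} to the vectors obtained from \eqref{A15}, and then taking residues at $\lambda=0$. The computation is organized so that only the real, non-parity-determined parts are tracked explicitly.

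\textbf{Parity part.} The imaginary parts are fixed without computation. Since $\mathscr L_{\mu,\epsilon}$ is reversible and $U_{\mu,\epsilon}$ commutes with $\bar\rho$ (Lemma~\ref{properties of U and P}), we have $\bar\rho f_k^\sigma(\mu,\epsilon)=\sigma f_k^\sigma(\mu,\epsilon)$. By \eqref{Parity structure}, $\partial_\mu\partial_\epsilon f_k^\sigma(0,0)$ therefore has the stated parity pattern. The imaginary part of $f_1^+$ is of the form $\im(\mathit{odd},\mathit{even})$, and similarly for the other vectors.

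\textbf{Real part.} Since $\dot{\mathscr L}_{0,0}$ is purely imaginary on real vectors, except for the $\mathrm{sgn}(D)+\Pi_0$ entry, the real part of the mixed derivative comes only from the real components of $\dot{\mathscr L}_{0,0}$ and $\dot{\mathscr L}'_{0,0}$. Here $\Pi_0$ is real and $\mathrm{sgn}(D)$ maps $\cos$ to $\im\sin$.
\begin{itemize}
\item For $f_1^\pm$ we show that no real part survives. This uses that $\mathscr L'_{0,0}f_1^\pm$ lives on modes $2$, so the only real contribution would come through $\Pi_0$ acting on a mode-$0$ component, which is absent.
\item For $f_0^-$ we use $\dot{\mathscr L}_{0,0}f_0^-=f_0^+$, $\mathscr L'_{0,0}f_0^-=0$, and $\mathscr L'_{0,0}f_0^+=2c^{3/2}f^-_{-1}$. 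Only \eqref{eq:dotPprime-term2} contributes, through the chain $f_0^-\to f_0^+\to f^-_{-1}$ via the resolvent \eqref{A12} and then \eqref{A13}. Collecting the residue of $\lambda^{-2}(\lambda^2+4c^2)^{-1}$-type terms at $0$ gives a multiple of $f^+_{-1}$ or $f^-_{-1}$. Computing this residue yields $(\tfrac{1}{2c}\sin x,\tfrac12\cos x)^{\mathsf T}$. The term \eqref{eq:dotPprime-term3} acting on $f_0^-$ vanishes, because $\dot{\mathscr L}'_{0,0}$ has $\im p_1$ on the diagonal, which is imaginary.
\item For $f_0^+$, all three integrals are used. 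In \eqref{eq:dotPprime-term1}, $f^-_{-1}$ is sent by $\dot{\mathscr L}_{0,0}$ into a combination of $f_1^\pm$ and $f_{-1}^\pm$ with imaginary coefficients, except for the real $\Pi_0$-free piece. The real residue is collected as a multiple of $f^+_{-1}$, yielding the coefficient $\tfrac{1}{4c^{3/2}}$.
\end{itemize}

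\textbf{Correction term and zero averages.} Next, $-\tfrac12P_{0,0}\dot P'_{0,0}$ removes the $\mathcal V_{0,0}$-component. It must be checked that this correction only affects imaginary $f_1^\pm$ parts, or vanishes. The zero-average claims ($\mathit{even}_0$) follow from the symplectic constraints used in Lemma~\ref{lem:B5}. We differentiate $(\mathcal Jf_0^-(\mu,\epsilon),f_k^\sigma(\mu,\epsilon))=$ const in $\mu\epsilon$, using $f_0^-(0,\epsilon)=f_0^-$ and $f_0^-(\mu,0)=f_0^-$.

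\textbf{Main obstacle.} The hardest step is the residue bookkeeping for $f_0^\pm$, because of the double pole from \eqref{A12} combined with the $\mathcal U$-resolvent \eqref{A13}. Higher-order poles require Taylor-expanding $(\lambda^2+4c^2)^{-1}$. We will do this carefully, and cross-check the real coefficients against the known $\epsilon$-independent identity $f_0^-(0,\epsilon)=f_0^-$ and against the symplectic normalization.
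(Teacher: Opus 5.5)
Your overall plan is the paper's. You split $\dot P'_{0,0}$ into \eqref{eq:dotPprime-term1}--\eqref{eq:dotPprime-term3}, observe that only the $\Pi_0$-part of $\dot{\mathscr L}_{0,0}$ can produce real contributions, and take residues with \eqref{A11}--\eqref{A13}. Your reversibility argument for the parity pattern is cleaner than the paper's term-by-term parity tracking, and your treatment of $f_1^\pm$ and $f_0^-$ is correct.

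\textbf{The gap: the $f_0^+$ bullet.} In \eqref{eq:dotPprime-term1}, $\dot{\mathscr L}_{0,0}$ acts on $R(\lambda)\mathscr L'_{0,0}R(\lambda)f_0^+\in\mathcal U$, that is, on first harmonics, where $\Pi_0$ vanishes. There is no ``real $\Pi_0$-free piece'' of $\dot{\mathscr L}_{0,0}$: both $\mathrm{sgn}(D)$ and $2\im\kappa\partial_x-4\im b\partial_x^3$ are imaginary on real vectors. By your own principle this integral is therefore purely imaginary. Moreover, its pole at $\lambda=0$ has order at most two, so it could not produce $\frac{1}{4c_{\kappa,b}^{3/2}}$.

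The real part comes instead from \eqref{eq:dotPprime-term2}, where $\dot{\mathscr L}_{0,0}$ acts before $\mathscr L'_{0,0}$:
\begin{itemize}
\item the Jordan block gives $R(\lambda)f_0^+=-\lambda^{-1}f_0^++\lambda^{-2}f_0^-$;
\item the $\Pi_0$-part sends $f_0^-\mapsto f_0^+$, while $\dot{\mathscr L}_{0,0}f_0^+=0$;
\item a second use of \eqref{A12} gives $-\lambda^{-3}f_0^++\lambda^{-4}f_0^-$;
\item then $\mathscr L'_{0,0}f_0^+=2c_{\kappa,b}^{3/2}f^-_{-1}$, followed by \eqref{A13}.
\end{itemize}
The $f^-_{-1}$-component has an even integrand and no residue. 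The $f^+_{-1}$-component is $\frac{1}{2\pi\im}\oint_\Gamma\frac{-4c_{\kappa,b}^{5/2}}{\lambda^3(\lambda^2+4c_{\kappa,b}^2)}\,d\lambda=\frac{1}{4c_{\kappa,b}^{3/2}}$. This is where the second-order Taylor term of $(\lambda^2+4c_{\kappa,b}^2)^{-1}$, which you anticipate in your ``main obstacle'', is actually needed.

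\textbf{Smaller corrections.}
\begin{itemize}
\item \eqref{eq:dotPprime-term3} applied to $f_0^-$ does not vanish. One has $\dot{\mathscr L}'_{0,0}f_0^-=(0,\im p_1)^{\mathsf T}$, and the integral equals $-\frac{\im}{2c_{\kappa,b}^{1/2}}f^+_{-1}$. Only its real part is zero, which is all you need.
\item You flag the $-\frac12P_{0,0}\dot P'_{0,0}$ correction but never settle it. The real parts you compute lie in $\mathcal U\subset\ker P_{0,0}$, and $P_{0,0}$ is real, so the correction is purely imaginary.
\item For the $\mathit{even}_0$ claims, differentiating $(\mathcal J f_0^-,f_k^\sigma)$ only covers $f_0^-$, via $(\mathcal J f_0^-,f_0^-)=0$. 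For $f_0^+$ you need $(\mathcal J f_0^+,f_0^+)=0$ together with $\dot P_{0,0}f_0^+=0$.
\item A simpler route to the zero averages: $\mathscr L'_{0,0}$ and $\dot{\mathscr L}'_{0,0}$ have first-harmonic coefficients, while $R(\lambda)$, $\dot{\mathscr L}_{0,0}$ and $P_{0,0}$ preserve Fourier modes. Hence $\partial_\mu\partial_\epsilon f_0^\pm(0,0)$ lies entirely in the first harmonics and has zero average.
\end{itemize}
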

\begin{proof}
    We write
    \[
    (\partial_\mu\partial_\epsilon f_k^\sigma)(0,0)
    =
    \left(\dot{P}_{0,0}' - \frac{1}{2}P_{0,0}\dot{P}_{0,0}'\right)f_k^\sigma,
    \]
    where, according to the Kato expansion,
    \[
    \dot P'_{0,0}=\eqref{eq:dotPprime-term1}+\eqref{eq:dotPprime-term2}+\eqref{eq:dotPprime-term3},
    \]
    with
    \[
    \eqref{eq:dotPprime-term1}=
    -\frac{1}{2\pi\im}\oint_\Gamma
    R(\lambda)\dot{\mathscr L}_{0,0}R(\lambda)\mathscr L'_{0,0}R(\lambda)\,d\lambda,
    \]
    \[
    \eqref{eq:dotPprime-term2}=
    -\frac{1}{2\pi\im}\oint_\Gamma
    R(\lambda)\mathscr L'_{0,0}R(\lambda)\dot{\mathscr L}_{0,0}R(\lambda)\,d\lambda,
    \]
    \[
    \eqref{eq:dotPprime-term3}=
    \frac{1}{2\pi\im}\oint_\Gamma
    R(\lambda)\dot{\mathscr L}'_{0,0}R(\lambda)\,d\lambda,
    \qquad
    R(\lambda):=(\mathscr L_{0,0}-\lambda)^{-1}.
    \]

Differentiating the operator $\mathscr L_{\mu,\epsilon}$ with respect to
$\epsilon$, $\mu$, and then both variables at $(\mu,\epsilon)=(0,0)$, we obtain
\[
\mathscr L'_{0,0}
=
\begin{bmatrix}
\partial_x\circ p_1(x) & 0 \\
-a_1(x)+\kappa\tau_1-b\beta_1 & p_1(x)\partial_x
\end{bmatrix},
\]
\[
\dot{\mathscr L}_{0,0}
=
\begin{bmatrix}
0 & \sgn(D)+\Pi_0 \\
2\im\kappa\partial_x-4\im b\,\partial_x^3 & 0
\end{bmatrix},
\]
and
\[
\dot{\mathscr L}'_{0,0}
=
\begin{bmatrix}
\im p_1(x) & 0 \\
\im\kappa\big(\tau_{1,1}(x)+2\tau_{1,2}(x)\partial_x\big)
-\im b\sum_{j=1}^4 j\,b_{1,j}(x)\partial_x^{j-1}
& \im p_1(x)
\end{bmatrix}.
\]

We next show that the projection $\Pi_0$ in
$\dot{\mathscr L}_{0,0}$ gives no contribution to the mixed derivatives of
$f_1^\pm$. The only terms in $\dot P'_{0,0}$ where $\Pi_0$ can appear are
the two terms containing $\dot{\mathscr L}_{0,0}$, namely $\eqref{eq:dotPprime-term1}$ and
$\eqref{eq:dotPprime-term2}$.

In the term $\eqref{eq:dotPprime-term2}$, one first has
\[
R(\lambda)f_1^\pm=-\frac1\lambda f_1^\pm .
\]
The second component of $f_1^+$ is proportional to $\sin x$, while the
second component of $f_1^-$ is proportional to $\cos x$. Both functions have
zero spatial average. Hence the $\Pi_0$-part of $\dot{\mathscr L}_{0,0}$
vanishes in $\eqref{eq:dotPprime-term2}$.

In the term $\eqref{eq:dotPprime-term1}$, using the formulas for
$\mathscr L'_{0,0}f_1^\pm$, we have
\[
\mathscr L'_{0,0}f_1^+
=
\begin{bmatrix}
2c_{\kappa,b}^{1/2}\sin(2x)\\
3(5b+\kappa)c_{\kappa,b}^{-1/2}\cos(2x)
\end{bmatrix},
\qquad
\mathscr L'_{0,0}f_1^-
=
\begin{bmatrix}
2c_{\kappa,b}^{1/2}\cos(2x)\\
-3(5b+\kappa)c_{\kappa,b}^{-1/2}\sin(2x)
\end{bmatrix}.
\]
Thus $\mathscr L'_{0,0}f_1^\pm$ contains only second Fourier modes.
Since the constant-coefficient resolvent
$R(\lambda)=(\mathscr L_{0,0}-\lambda)^{-1}$ preserves each nonzero
Fourier subspace, the vector
\[
R(\lambda)\mathscr L'_{0,0}R(\lambda)f_1^\pm
\]
has no zero Fourier mode in its second component. Therefore the $\Pi_0$-part
of $\dot{\mathscr L}_{0,0}$ also vanishes in $\eqref{eq:dotPprime-term1}$. Finally, the term
$\eqref{eq:dotPprime-term3}$ contains $\dot{\mathscr L}'_{0,0}$, which has no $\Pi_0$-term.

It remains to keep track of parity. The coefficients
$p_1,a_1,\tau_{1,j}$ and $b_{1,j}$ are first harmonics with the parities
listed in Lemma~\ref{lem:pa.exp}. Moreover, $\partial_x$ changes parity, and
$\sgn(D)$ maps real even zero-average functions to purely imaginary odd
functions and real odd functions to purely imaginary even zero-average
functions. Consequently the three contour-integral contributions
$\eqref{eq:dotPprime-term1}$--$\eqref{eq:dotPprime-term3}$, and hence also
\[
\left(\dot P'_{0,0}
-\frac12 P_{0,0}\dot P'_{0,0}\right)f_1^\pm,
\]
have the parity structure
\[
(\partial_\mu\partial_\epsilon f_1^+)(0,0)
=
\im
\begin{bmatrix}
\mathit{odd}(x)\\
\mathit{even}(x)
\end{bmatrix},
\qquad
(\partial_\mu\partial_\epsilon f_1^-)(0,0)
=
\im
\begin{bmatrix}
\mathit{even}(x)\\
\mathit{odd}(x)
\end{bmatrix}.
\]

    As in the infinite-depth case without the $\kappa\sigma(\eta)$ term, For $f_1^\pm$ the intermediate vectors generated by the resolvent and by the operators above have zero spatial average in the component on which $\Pi_0$ acts. Therefore $\Pi_0$ gives no contribution for $f_1^\pm$.

For $f_0^-$, however, the projector $\Pi_0$ does contribute, since
\[
\dot{\mathscr L}_{0,0}f_0^-=f_0^+.
\]
This produces a real resonant contribution. 

We compute this contribution from the term $\eqref{eq:dotPprime-term2}$:
\[
T_{2,-}:=
-\frac{1}{2\pi\im}\oint_\Gamma
R(\lambda)\mathscr L'_{0,0}R(\lambda)\dot{\mathscr L}_{0,0}R(\lambda)f_0^-\,d\lambda .
\]
Using
\[
R(\lambda)f_0^-=-\frac1\lambda f_0^-,
\qquad
\dot{\mathscr L}_{0,0}f_0^-=f_0^+,
\]
we obtain
\[
R(\lambda)\dot{\mathscr L}_{0,0}R(\lambda)f_0^-
=
\frac1{\lambda^2}f_0^+
-
\frac1{\lambda^3}f_0^- .
\]
Since
\[
\mathscr L'_{0,0}f_0^+=2c_{\kappa,b}^{3/2}f_{-1}^-,
\qquad
\mathscr L'_{0,0}f_0^-=0,
\]
we get
\[
T_{2,-}
=
-\frac{2c_{\kappa,b}^{3/2}}{2\pi\im}
\oint_\Gamma
\frac1{\lambda^2}R(\lambda)f_{-1}^-\,d\lambda .
\]
Using
\[
R(\lambda)f_{-1}^-=
\frac{-2c_{\kappa,b} f_{-1}^+-\lambda f_{-1}^-}{\lambda^2+4c_{\kappa,b}^2},
\]
only the $f_{-1}^-$ component has a residue at $\lambda=0$, and therefore
\[
T_{2,-}^{\mathrm{real}}
=
\frac{1}{2c_{\kappa,b}^{1/2}}f_{-1}^-.
\]
Since
\[
f_{-1}^-=
\vet{c_{\kappa,b}^{-1/2}\sin (x)}{c_{\kappa,b}^{1/2}\cos (x)},
\]
this gives
\[
T_{2,-}^{\mathrm{real}}
=
\vet{\dfrac{\sin (x)}{2c_{\kappa,b}}}{\dfrac{\cos (x)}{2}}.
\]
Therefore
\[
(\partial_\mu\partial_\epsilon f_0^-)(0,0)
=
\vet{\dfrac{\sin (x)}{2c_{\kappa,b}}}{\dfrac{\cos (x)}{2}}
+
\im\vet{\mathit{even}_0(x)}{\mathit{odd}(x)}.
\]
The resulting vectors remain purely imaginary with the parity structure
    \[
    (\partial_\mu\partial_\epsilon f_1^+)(0,0)=\im\vet{\mathit{odd}(x)}{\mathit{even}(x)},
    \qquad
    (\partial_\mu\partial_\epsilon f_1^-)(0,0)=\im\vet{\mathit{even}(x)}{\mathit{odd}(x)},
    \]

    We now compute the real resonant contribution to $(\partial_\mu\partial_\epsilon f_0^+)(0,0)$. As in the case without the $\kappa\sigma(\eta)$ term, the only real contribution comes from the term $\eqref{eq:dotPprime-term2}$:
    \[
    T_2:=
    -\frac{1}{2\pi\im}\oint_\Gamma
    R(\lambda)\mathscr L'_{0,0}R(\lambda)\dot{\mathscr L}_{0,0}R(\lambda)f_0^+\,d\lambda.
    \]

    We first use the Jordan structure at the origin:
    \[
    \mathscr L_{0,0}f_0^+=-f_0^-,
    \qquad
    \mathscr L_{0,0}f_0^-=0.
    \]
    Therefore
    \[
    R(\lambda)f_0^+
    =
    -\frac1\lambda f_0^+ + \frac1{\lambda^2}f_0^-.
    \]
    Next, using
    \[
    \dot{\mathscr L}_{0,0}f_0^+=0,
    \qquad
    \dot{\mathscr L}_{0,0}f_0^-=f_0^+,
    \]
    we obtain
    \[
    \dot{\mathscr L}_{0,0}R(\lambda)f_0^+
    =
    \frac1{\lambda^2}f_0^+.
    \]
    Applying the resolvent once more yields
    \[
    R(\lambda)\dot{\mathscr L}_{0,0}R(\lambda)f_0^+
    =
    \frac1{\lambda^2}R(\lambda)f_0^+
    =
    -\frac1{\lambda^3}f_0^+ + \frac1{\lambda^4}f_0^-.
    \]

    Since
    \[
    \mathscr L'_{0,0}f_0^+=2c_{\kappa,b}^{3/2}f_{-1}^-,
    \qquad
    \mathscr L'_{0,0}f_0^-=0,
    \]
    it follows that
    \[
    \mathscr L'_{0,0}R(\lambda)\dot{\mathscr L}_{0,0}R(\lambda)f_0^+
    =
    -\frac{2c_{\kappa,b}^{3/2}}{\lambda^3}f_{-1}^-.
    \]
    Hence
    \[
    T_2
    =
    \frac{2c_{\kappa,b}^{3/2}}{2\pi\im}\oint_\Gamma
    \frac1{\lambda^3}R(\lambda)f_{-1}^-\,d\lambda.
    \]

    We now use the resolvent formula on the resonant subspace
    \[
    \mathcal U=\mathrm{span}\{f_{-1}^+,f_{-1}^-\},
    \]
    namely
    \[
    R(\lambda)f_{-1}^-=
    \frac{-2c_{\kappa,b} f_{-1}^+-\lambda f_{-1}^-}{\lambda^2+4c_{\kappa,b}^2}.
    \]
    Therefore
    \[
    T_2
    =
    \frac{2c_{\kappa,b}^{3/2}}{2\pi\im}\oint_\Gamma
    \frac{-2c_{\kappa,b} f_{-1}^+-\lambda f_{-1}^-}{\lambda^3(\lambda^2+4c_{\kappa,b}^2)}\,d\lambda.
    \]
    The $f_{-1}^-$-component has no residue at $\lambda=0$, so only the $f_{-1}^+$-component contributes:
    \[
    T_2^{\mathrm{real}}
    =
    \frac{1}{2\pi\im}\oint_\Gamma
    \frac{-4c_{\kappa,b}^{5/2}}{\lambda^3(\lambda^2+4c_{\kappa,b}^2)}\,d\lambda\, f_{-1}^+.
    \]

    Expanding near $\lambda=0$,
    \[
    \frac{1}{\lambda^2+4c_{\kappa,b}^2}
    =
    \frac{1}{4c_{\kappa,b}^2}
    -\frac{\lambda^2}{16c_{\kappa,b}^4}
    +O(\lambda^4),
    \]
    hence
    \[
    \frac{-4c_{\kappa,b}^{5/2}}{\lambda^3(\lambda^2+4c_{\kappa,b}^2)}
    =
    -\frac{c_{\kappa,b}^{1/2}}{\lambda^3}
    +\frac{1}{4c_{\kappa,b}^{3/2}}\frac1\lambda
    +O(\lambda).
    \]
    The residue at $\lambda=0$ is thus
    \[
    \Res_{\lambda=0}
    \frac{-4c_{\kappa,b}^{5/2}}{\lambda^3(\lambda^2+4c_{\kappa,b}^2)}
    =
    \frac{1}{4c_{\kappa,b}^{3/2}}.
    \]
    By the residue theorem we conclude that
    \[
    T_2^{\mathrm{real}}
    =
    \frac{1}{4c_{\kappa,b}^{3/2}}\,f_{-1}^+.
    \]

    Since $f_{-1}^+\in\mathcal U\subset \ker(P_{0,0})$, the projection term
    \[
    -\frac12 P_{0,0}\dot P'_{0,0}f_0^+
    \]
    does not affect this resonant component. Therefore
    \[
    (\partial_\mu\partial_\epsilon f_0^+)(0,0)
    =
    \frac{1}{4c_{\kappa,b}^{3/2}}\,f_{-1}^+
    +\im\vet{\mathit{odd}(x)}{\mathit{even}_0(x)}.
    \]

    This proves the statement.
\end{proof}

This completes the proof of Lemma \ref{lem:4.2}.

\footnotesize

\vspace{1em}
\noindent{\bf Data availability statement:}
Data will be made available on reasonable request\\

\noindent{\bf Conflict of interest:}
We do not have any conflict of interest.\\

\noindent{\bf Acknowledgments:}
T.-Y. Hsiao is supported by the European Union  ERC CONSOLIDATOR GRANT 2023 GUnDHam, Project Number: 101124921. He is deeply grateful to Vera Hur and Zhao Yang for their guidance and support during his doctoral studies, and to Alberto Maspero for his ongoing mentorship. Y. Zhang is supported by the European Union ERC STARTING GRANT 2020 GeoSub, Project Number: 945655.  Views and opinions expressed are however those of the authors only and do not necessarily reflect those of the European Union or the European Research Council. Neither the European Union nor the granting authority can be held responsible for them.

\end{document}